\newcommand{\scal}[2]{\langle #1,#2\rangle}
\newcommand{\rr}[1]{\mathbf R^{#1}}
\newcommand{\zz}[1]{\mathbf Z^{#1}}
\newcommand{\GL}{\operatorname{\mathcal M}}
\newcommand{\nm}[2]{\Vert #1\Vert _{#2}}
\newcommand{\NM}[2]{\left \Vert #1\right \Vert _{#2}}
\newcommand{\nmm}[1]{\Vert #1\Vert }
\newcommand{\op}{\operatorname{Op}}
\newcommand{\sets}[2]{\{ \, #1\, ;\, #2\, \} }
\newcommand{\Sets}[2]{\left \{ \, #1\, ;\, #2\, \right \} }
\newcommand{\ep}{\varepsilon}
\newcommand{\fy}{\varphi}
\newcommand{\cdo}{\, \cdot \, }
\newcommand{\supp}{\operatorname{supp}}
\newcommand{\ON}{\operatorname{ON}}
\newcommand{\tp}{\operatorname{Tp}}
\newcommand{\vrum}{\vspace{0.1cm}}
\newcommand{\Tr}{\operatorname{Tr}}
\newcommand{\nn}[1]{{\mathbf N}^{#1}}
\newcommand{\maclB}{\mathcal B}
\newcommand{\maclH}{\mathcal H}
\newcommand{\maclK}{\mathcal K}
\newcommand{\maclL}{\mathcal L}
\newcommand{\mascE}{\mathscr E}
\newcommand{\mascF}{\mathscr F}
\newcommand{\mascI}{\mathscr I}
\newcommand{\mascS}{\mathscr S}
\numberwithin{equation}{section}          %Detta g?r att man f?r
\newtheorem{thm}{Theorem}
\numberwithin{thm}{section}
\newcommand{\rubrik}{}
\newtheorem{prop}[thm]{Proposition}
\newtheorem{cor}[thm]{Corollary}
\newtheorem{lemma}[thm]{Lemma}
\theoremstyle{definition}
\newtheorem{defn}[thm]{Definition}
\newtheorem{example}[thm]{Example}
\theoremstyle{remark}
\newtheorem{rem}[thm]{Remark}
\author{Wolfram Bauer}
\address{Leibniz Universit{\"a}t Hannover, Institut f{\"u}r Analysis,
Welfengarten 1, 30167 Hannover, Germany}
\email{bauer@math.uni-hannover.de}
\author{Robert Fulsche}
\address{Leibniz Universit{\"a}t Hannover, Institut f{\"u}r Analysis,
Welfengarten 1, 30167 Hannover, Germany}
\email{fulsche@math.uni-hannover.de}
\author{Joachim Toft}
\address{Department of Mathematics,
Linn{\ae}us University, V{\"a}xj{\"o}, Sweden}
\email{joachim.toft@lnu.se}
\title{Convolutions of Orlicz spaces and
Orlicz Schatten classes, with applications to
Toeplitz operators}
\begin{document}

\begin{abstract}
Let $\Phi$ be a Young function.
We study convolution properties for symbol classes $s_{A,\Phi}$,
which consist of all $a$ such that the pseudo-differential
operator $\op _A(a)$ is in the Orlicz
Schatten class $\mascI _\Phi (L^2(\rr d))$.
Especially we prove Young type results for such classes.
We apply the results on Toeplitz operators and prove Orlicz
Schatten properties of such operators.
\end{abstract}

\keywords{Pseudo-differential operators, Weyl quantization,
Quantum Harmonic Analysis}

\subjclass[2010]{Primary: 46E30, 35S05, 47G30\quad Secondary: 47B10,47B35}

\maketitle

\par

%%%%%%%%%%%%%%%%%%
\section{Introduction}
%%%%%%%%%%%%%%%%%%

\par

In the paper we deduce continuity properties
for convolutions between Orlicz spaces and
Orlicz type Schatten-von Neumann symbol classes
in pseudo-differential
calculus. As special cases, we obtain well-known
convolution relations between Lebesgue spaces
and classical Schatten-von Neumann classes,
in quantum
harmonic analysis, given in \cite[Proposition 3.2]{Wer}, by R. Werner.
(See also \cite{Toft3} for an alternative approach,
whose style is similar to the present paper.) 

\par

The Schatten-von Neumann theory offers detailed studies of
linear and compact operators acting on a Hilbert space.
For example, consider a linear operator $T$ acting
on a separable Hilbert space $\maclH$. Then $T$ is compact,
if and only if there is an orthonormal basis $\{ f_j\} _{j=1}^\infty$
of eigenvectors to $|T|$, whose corresponding non-negative and
non-increasing eigenvalues
$\lambda (T)=\{ \lambda _j(T)\} _{j=1}^\infty$ tend to zero as $j$ tends
to infinity, i.{\,}e.
\begin{equation}
\label{Eq:CompCondIntro}
\lim _{j\to \infty}\lambda _j(T) = 0.
\end{equation}
(For notations, see \cite{Ho1,Sim} and Section \ref{sec1}.)

\par

The operator $T$ belongs to $\mascI _p(\maclH )$, the
set of Schatten-von Neumann operators on $\maclH$ of order
$p \in (0,\infty )$, if and only if
$\nm T{\mascI _p}\equiv \nm {\lambda (T)}{\ell ^p}$ is finite.
It follows that
$$
\lambda _j(T)\lesssim j^{-\frac 1p}
$$
when $T\in \mascI _p(\maclH )$, which indeed gives more
concrete information of $\lambda _j$ for large $j$, compared
to \eqref{Eq:CompCondIntro}. We also observe that the smaller
$p$, the faster $\lambda _j(T)$ are imposed to approach $0$
at infinity.

\par

This also indicates that by pushing $p$ closer to
zero, the class $\mascI _p(\maclH )$ becomes closer to
the set of finite-rank operators on $\maclH$. 

\par

The presentation of Schatten-von Neumann classes
in \cite{Sim}, by B. Simon, is not limited to 
$\ell ^p$ norm estimates on the eigenvalues $\lambda _j$
above. Instead, any suitable (quasi-)norm on sequences
can be used.

\par

In this paper we focus on the case when $\{ \lambda _j(T)\} _{j=1}^\infty$
belongs to (quasi-Banach) Orlicz spaces. We recall that any Young function
(or even more general, quasi-Young function)
$\Phi$ delivers the Orlicz space $\ell ^\Phi (\mathbf Z_+)$,
and that for $p\in (0,\infty ]$ one has
\begin{equation}
\begin{aligned}
\label{Eq:OrlGivLeb}
\ell ^p(\mathbf Z_+)
&=
\ell ^{\Phi _{[p]}} (\mathbf Z_+)
\quad \text{when}
\\[1ex]
\Phi _{[p]}(t) &=t^p ,\ p<\infty ,
\quad \text{and}\quad
\Phi _{[\infty ]}(t) =
\begin{cases}
0, & t\le 1,
\\[1ex]
\infty , & t>1
\end{cases}
\end{aligned}
\end{equation}
(also in norms). Here $\mathbf Z_+$ is the set of positive integers.

\par

The Schatten-von Neumann class $\mascI _\Phi (\maclH)$
consists of all $T$ above such that
\begin{equation}
\label{Eq:SchattNormIntro}
\nm T{\mascI _\Phi} \equiv \nm {\lambda (T)}{\ell ^\Phi}
\end{equation}
is finite. Here the norm on the right-hand side
is given in Definition \ref{Def:OrliczSpaces1}
in Section \ref{sec1}.
Evidently, \eqref{Eq:OrlGivLeb} gives
$\mascI _p=\mascI _{\Phi _{[p]}}$, so we regain
the usual Schatten-von Neumann classes by choosing
the Young functions in such ways.
%suitable ways.

\par
 
By straight-forward applications of Schwartz' kernel theorem
and Fourier's inversion formula, we may identify
the spaces $\mascI _\Phi (L^2(\rr d))$ with certain function
and distribution spaces of operator kernels, or with symbols
to pseudo-differential operators.
The Weyl calculus within the theory of
pseudo-differential operators is here of special interest as
it is often suitable in quantizations. For any
$a$ in the Schwartz class $\mascS (\rr {2d})$, the Weyl operator $\op ^w(a)$
is the linear and continuous operator on $\mascS (\rr d)$, given by
$$
(\op ^w(a)f)(x) = (2\pi )^{-d}\iint _{\rr {2d}} a({\textstyle{\frac 12}}(x+y),\xi )
f(y)e^{i\scal {x-y}\xi}\, dyd\xi ,
\quad f\in \mascS (\rr d).
$$
The definition extends to any $a\in \mascS '(\rr {2d})$, in which case
$\op ^w(a)$ is continuous from $\mascS (\rr d)$ to $\mascS '(\rr d)$.
In fact, by Schwartz' kernel theorem and Fourier's inversion formula
it follows that the map $a\mapsto \op ^w(a)$ is a bijection from
$\mascS '(\rr {2d})$ to the set of linear and continuous operators
from $\mascS (\rr d)$ to $\mascS '(\rr d)$.

\par

In particular it follows that $\mascI _\Phi (L^2(\rr d))$
can be identified with the Banach space of distributions
$$
s_\Phi ^w(\rr {2d})
\equiv
\sets {a\in \mascS '(\rr {2d})}{\op ^w(a)\in \mascI _\Phi (L^2(\rr d))},
$$
with norm
\begin{equation}
\label{Eq:WeylSymbOrlNorm}
\nm a{s_\Phi ^w} \equiv (2\pi )^{\frac d2}\nm {\op ^w(a)}{\mascI _\Phi}.
\end{equation}
Then $\mascI _p (L^2(\rr d))$ and its norm, correspond to
$$
s_p^w(\rr {2d})
\equiv
s_{\Phi _{[p]}}^w(\rr {2d})
=
\sets {a\in \mascS '(\rr {2d})}{\op ^w(a)\in \mascI _p(L^2(\rr d))},
$$
and
\begin{equation}
\label{Eq:WeylSymbLebNorm}
\nm a{s_p ^w} \equiv (2\pi )^{\frac d2}\nm {\op ^w(a)}{\mascI _p},
\end{equation}
respectively.
We have included the factor $(2\pi )^{\frac d2}$ on the right-hand sides
of \eqref{Eq:WeylSymbOrlNorm} and
\eqref{Eq:WeylSymbLebNorm}
because then one has $\nm a{s_2^w}=\nm a{L^2}$ (see e.{\,}g. \cite{Toft3}).

\par

The Young type convolution properties established by R. Werner in
\cite[Proposition 3.2]{Wer} are then equivalent to
\begin{equation}
\label{Eq:ConvSchattLeb}
s_p^w*L^q\subseteq s_r^w
\quad \text{and}\quad
s_p^w*s_q^w\subseteq L^r,
\qquad
\frac 1p+\frac 1q=1+\frac 1r,\ p,q,r\in [1,\infty ].
\end{equation}
%%
%Here $s_p^w\equiv s_{\Phi _{[p]}}^w$, i.{\,}e.,
%$$
%s_p^w(\rr {2d})
%\equiv
%\sets {a\in \mascS '(\rr {2d})}{\op ^w(a)\in \mascI _p(L^2(\rr d))},
%$$
%and
%$$
%\nm a{s_p ^w} \equiv \nm {\op ^w(a)}{\mascI _p}.
%$$

\par

In Section \ref{sec3} we extend \eqref{Eq:ConvSchattLeb}
and show that if $c,c_0>0$ and the Young functions
$\Phi _0$, $\Phi _1$, and $\Phi _2$ satisfy
\begin{equation}
\label{Eq:SchattOrlCond}
st_1t_2 \le c(\Phi _0^*(s)\Phi _1(t_1) +\Phi _0^*(s)\Phi _2(t_2)
+ \Phi _1(t_1)\Phi _2(t_2)),
%\quad s,t_1,t_2\in [0,c_0],
\end{equation}
where $s,t_1,t_2\in [0,c_0]$ and  $\Phi_j^*$ denotes the
conjugate Young function, then
\begin{equation}
\label{Eq:ConvSchattOrl}
s_{\Phi _1}^w*L^{\Phi _2}\subseteq s_{\Phi _0}^w
\quad \text{and}\quad
s_{\Phi _1}^w*s_{\Phi _2}^w\subseteq L^{\Phi _0}.
\end{equation}
By choosing $\Phi _1=\Phi _{[p]}$, $\Phi _2=\Phi _{[q]}$
and $\Phi _0=\Phi _{[r]}$, then \eqref{Eq:ConvSchattOrl}
agrees with \eqref{Eq:ConvSchattLeb}.
In this case, \eqref{Eq:SchattOrlCond} holds true under the
conditions on $p$, $q$ and $r$ in \eqref{Eq:ConvSchattLeb}.

\par

More generally, we show that the mapping
properties of the convolutions
in \eqref{Eq:ConvSchattOrl} still hold true
with other pseudo-differential calculi in place
of the Weyl quantization, in the definition of
$s_\Phi ^w$-classes.

\par

In Section \ref{sec4} we deduce dilated convolution
results for Orlicz Schatten-von Neumann classes.
In particular it follows from Theorem \ref{Thm:OrlSchConvL2}
that if $\Phi _0,\Phi _1,\Phi _2$ satisfy \eqref{Eq:SchattOrlCond},
then
$$
a(s\cdo )*b(t\cdo )\in s_{\Phi _0}^w,
\quad \text{when}\quad
a\in s_{\Phi _1}^w,\ b\in s_{\Phi _2}^w,
\ \pm s^{-2}\pm t^{-2}=1,
$$
for some choices of $+$ and $-$ at each $\pm$. More generally,
in Section \ref{sec4} we present a multi-linear version of this
dilated convolution property. An essential ingredient in the
proof of this extension is induction. In order to facilitate the reader
to reach the applications, also taking into account that
the proof is rather comprehensive, we have pushed it to Appendix
\ref{App:A}.

\par

In Section \ref{sec5} we present some applications concerning
Orlicz Schatten-von Neumann properties for Toeplitz operators.
Especially we find that a Toeplitz operator belongs to
$\mascI _\Phi$ if its symbol belongs to $L^\Phi$, and that similar
conclusions can be performed if a suitable dilation of the symbol
belongs to $s_\Phi ^w$.

\par

Finally we remark that in \cite{BaFuTo2} we perform another
approach, based on interpolation techniques, to reach convolution properties
for Orlicz spaces and Orlicz Schatten-von Neumann symbol classes. The final results are similar, but contain differences between the different approaches. Especially, in some aspects, the results here are more general, while in other aspects, the analogous results in \cite{BaFuTo2} are more general.

\par

%%%%%%%%%%%%%%%%%%
\section{Preliminaries}\label{sec1}
%%%%%%%%%%%%%%%%%%

\par

In this section we recall some facts about Orlicz spaces,
pseudo-differential operators, Toeplitz operators and
Schatten-von Neumann classes of Orlicz types. In the
last part we deduce some invariance properties of
symbol classes of Orlicz Schatten-von Neumann types.

\par

\subsection{Orlicz spaces}

\par

Since our investigations involve quasi-Banach spaces which
fail to be Banach spaces, we first recall the definition of such spaces.

\par

Let $\maclB$ be a vector space.
A \emph{quasi-norm} (of order $r\in (0,1]$) on $\maclB$ is a 
map $f\mapsto \nm f{\maclB}$ from $\maclB$ to $\mathbf R$ such that
\begin{alignat}{2}
\nm f{\maclB}&\ge 0, &
\qquad f&\in \maclB ,
\notag
\intertext{with equality, if and only if $f=0$,}
\nm {\alpha f}{\maclB} &= |\alpha |\cdot \nm f{\maclB}, &
\qquad
f &\in \maclB ,\ \alpha \in \mathbf C,
\label{Eq:QBanDilNorm}
\intertext{and}
\nm {f+g}{\maclB} &\le  2^{\frac 1r-1}\big (\nm f{\maclB}+\nm g{\maclB}\big ), &
\qquad
f,g&\in \maclB .
\label{Eq:QBanTriaIneq}
\intertext{If the topology on $\maclB$ is defined through this quasi-norm,
then $\maclB$ is called a quasi-normed space (of order $r$),
or an $r$-normed space.
\newline
\indent
A complete $r$-normed space is called a \emph{quasi-Banach space} (of order $r$),
or an $r$-Banach space. By Aoki and Rolewi{\'c} in \cite{Aoki,Rol} it follows that
if $\maclB$ above is an $r$-Banach space, then there is
an equivalent quasi-norm to the previous one which additionally satisfies}
\nm {f+g}{\maclB}^r &\le  \nm f{\maclB}^r+\nm g{\maclB}^r, &
\qquad
f,g&\in \maclB .
\tag*{(\ref{Eq:QBanTriaIneq})$'$}
\label{Eq:QBanTriaIneq2}
\end{alignat}
From now on we always assume that the quasi-norm in
\eqref{Eq:QBanDilNorm} and \eqref{Eq:QBanTriaIneq}
is chosen such that \ref{Eq:QBanTriaIneq2} holds. For a general discussion
of quasi-Banach spaces, we refer to \cite{Rolewicz}.

\par

\begin{example}
If $r\in (0,1]$, then $L^r(\rr d)$ is an $r$-Banach space. Evidently,
\ref{Eq:QBanTriaIneq2} holds for its $r$-norm
$$
\nm f{L^r}
\equiv
\left ( \int _{\rr d}|f(x)|^r\, dx \right )^{\frac 1r}.
$$
\end{example}

\medspace

For the definition of Orlicz spaces, we need to recall the
definition of Young functions. We recall that a function $\Phi:[0,\infty ] \to
[0,\infty ]$ is called \emph{convex} if
\begin{equation*}
\Phi(s_1 t_1+ s_2 t_2)
\leq s_1 \Phi(t_1)+s_2\Phi(t_2),
\end{equation*}
when
$s_j,t_j\in \mathbf{R}$
satisfy $s_j,t_j \ge 0$ and
$s_1 + s_2 = 1,\ j=1,2$.

\par

\begin{defn}
\label{Def:YoungFunc}
Let $r\in (0,1]$.
\begin{enumerate}
\item A function $\Phi$ from $[0,\infty ]$ to
$[0,\infty ]$
is called a \emph{Young function} if
the following is true:
\begin{itemize}
\item $\Phi$ is convex;

\vrum

\item $\Phi (0)=0$;

\vrum

\item $\lim
\limits _{t\to\infty} \Phi (t)=\Phi (\infty )=\infty$.
\end{itemize}

\vrum

\item A function $\Phi$ from $[0,\infty ]$ to
$[0,\infty ]$
is called a \emph{quasi-Young function} (of order $r$),
or an \emph{$r$-Young function}, if $\Phi (t)=\Phi _0(t^r)$,
for some Young function $\Phi _0$.
\end{enumerate}
\end{defn}

\par

We observe that $\Phi$ %and $\Phi$
in Definition \ref{Def:YoungFunc} might not
be continuous, because we permit
$\infty$ as function value. For example,
for any $a>0$,
$$
\Phi (t)=
\begin{cases}
0,&\text{when}\ t \leq a
\\[1ex]
\infty ,&\text{when}\ t>a
\end{cases}
$$
is convex but discontinuous at $t=a$.

\par

It is clear that $\Phi$ in %and $\Phi$ in
Definition \ref{Def:YoungFunc} is
non-decreasing, because if $0\leq t_1\leq t_2$
and $s\in [0,1]$ is chosen such
that $t_1=st_2$ and $\Phi$ is the same as in
Definition \ref{Def:YoungFunc} (1), then
\begin{equation*}
    \Phi (t_1)=\Phi (st_2+(1-s)0)
%    \\[1ex]
    \leq s\Phi (t_2)+(1-s)\Phi (0)
%    \\[1ex]
    \leq \Phi (t_2),
\end{equation*}
since $\Phi (0) =0$ and $s\in [0,1]$. Hence every
quasi-Young function is increasing.

\par

\begin{defn}\label{Def:OrliczSpaces1}
Let $\Phi$ be a quasi-Young function and $\mu$
be a positive measure on a measurable set
$\Omega$.
Then the Orlicz space
$L^{\Phi}(\Omega)$ consists
of all (equivalence classes of) measurable functions
$f:\Omega \to \mathbf C$ such that
$$
\nm f{L^{\Phi}}
\equiv
\nm f{L^{\Phi}(\mu)}
\equiv
\inf  \Sets{\lambda>0}
{\int_\Omega \Phi 
\left (
\frac{|f(x)|}{\lambda}
\right )
\, d\mu(x)\leq 1}
$$
is finite. Here $f$ and $g$ in $L^{\Phi}(\mu )$
are equivalent if $f=g$ a.e.
\end{defn}

\par

Evidently, if $\Phi$ and $\Phi _0$ are the same as in
Definition \ref{Def:YoungFunc} (2), then
$$
\nm f{L^\Phi (d\mu)}=\nm {|f|}{L^\Phi (d\mu)}
=(\nm {|f|^r}{L^{\Phi _0} (d\mu)})^{\frac 1r}.
$$

\par

We set $L^\Phi (\rr d)=L^\Phi (d\mu )$ when $\Omega =\rr d$
and $d\mu$ is the Lebesgue measure on $\rr d$.
For various kinds
of spaces of sequences we have the following definition.

\par

\begin{defn}
Let $d\mu$ be the standard discrete measure on the discrete space
$\Lambda$.
\begin{itemize}
\item  $\ell _0'(\Lambda )$ is the set of all (formal) sequences $a=\{a_j\} _{j\in \Lambda}$.

\vrum

\item $\ell _0(\Lambda )$ is the set of $\{a_j\} _{j\in \Lambda}$ such that $a_j\neq 0$ for at most
finite numbers of $j\in \Lambda$.

\vrum

\item $\ell ^\Phi (\Lambda )=L^\Phi (d\mu)$ and $\ell ^p (\Lambda )=L^p (d\mu)$, also in
norms.

\vrum

\item $\ell ^\sharp (\Lambda )$ is the set of all sequences
$\{a_j\} _{j\in \Lambda}$ such that for every $\ep >0$ one has $|a_j|>\ep$
for at most finite numbers of $j\in \Lambda$. The topology of
$\ell ^\sharp (\Lambda )$ is given by the norm $\nm \cdo{\ell ^\infty}$. 
\end{itemize}
\end{defn}

\par

Evidently,
$L^{\Phi _{[p]}}(\rr d)$, $\ell ^{\Phi _{[p]}}(\Lambda )$
and their quasi-norms agree with $L^p(\rr d)$
and $\ell ^p(\Lambda )$ and their quasi-norms, respectively,
when $\Phi _{[p]}$ is defined as in \eqref{Eq:OrlGivLeb}.

\par

We observe that $\ell ^\sharp (\mathbf Z_+)$ consists of
all $\{ \lambda _j\} _{j=1}^\infty \in \ell ^\infty (\mathbf Z_+)$
such that
$$
\lim _{j\to \infty} \lambda _j=0.
$$
(JT: Moved this last part into here from p. 9.)

\par

It is well-known that if $\Phi$ and $\Phi _0$ are the same as in
Definition \ref{Def:YoungFunc} (2), then
$L^{\Phi _0}(d\mu )$ is a Banach space and 
$L^{\Phi}(d\mu )$ is a quasi-Banach space of order $r$
(see e.{\,}g.~ Theorem 3 of III.3.2 and Theorem 10 of III.3.3
in \cite{RaoRen1}).

\par

In most situations we assume that the quasi-Young
functions satisfy the $\Delta _2$-condition
(near the origin), whose definition is recalled
as follows.

\par

\begin{defn}\label{Def:Delta2Cond}
Let $\Phi: [0,\infty ] \to [0,\infty]$ be a quasi-Young function.
\begin{itemize}
\item $\Phi$ is said to satisfy a \emph{$\Delta_2$-condition}
if there exists a constant $C>0$ such that
\begin{equation}\label{Eq:Delta2Cond}
\Phi(2t) \leq C \Phi(t) 
\end{equation}
for every $t\in [0,\infty ]$.

\vrum

\item $\Phi$
is called \emph{positive} if $\Phi (t)>0$ when $t>0$.

\vrum

\item $\Phi$
is said to satisfy a \emph{weak local $\Delta_2$-condition}
or a \emph{weak $\Delta_2$-condition near the origin}, if 
there are
constants $T>0$ and $C>0$ such that \eqref{Eq:Delta2Cond}
holds when $t\in [0,T]$.

\vrum

\item $\Phi$
is said to satisfy a \emph{local $\Delta_2$-condition}
or a \emph{$\Delta_2$-condition near the origin}, if $\Phi$ is
positive and satisfies a weak local $\Delta _2$-condition.
%and there are
%constants $r>0$ and $C>0$ such that \eqref{Eq:Delta2Cond}
%holds when $t\in [0,r]$.
%
%\vrum
%
%\item $\Phi$
%is said to satisfy a \emph{strict local $\Delta_2$-condition}
%or a strict $\Delta_2$-condition near the origin, if it satisfies a local
%$\Delta _2$-condition, and that $\Phi (t)>0$ when $t>0$.
\end{itemize}
\end{defn}

\par

Since any quasi-Young function $\Phi$ is non-decreasing and has the limit
$\infty$ at infinity, it follows that $\Phi$ must be positive when it satisfies
a $\Delta _2$-condition.

\par

\begin{rem}\label{Rem:Delta2Cond}
Suppose that $\Phi: [0,\infty ] \to [0,\infty]$
is a quasi-Young function which satisfies a local
$\Delta _2$-condition.
%\eqref{Eq:Delta2Cond}
%when $t\in [0,r]$ for some constants $r>0$ and $C>0$.
Then it follows by straight-forward arguments that
there is a quasi-Young function $\widetilde \Phi$
which satisfies a $\Delta _2$-condition
(on the whole $[0,\infty )$), and such that
$\widetilde \Phi (t)=\Phi (t)$ when $t\in [0,T]$. If in addition
$\Phi$ is a Young function, then $\widetilde \Phi$ can
be chosen as a Young function.
\end{rem}

\par

Several duality properties for Orlicz spaces
can be described in terms of Orlicz spaces
with respect to Young conjugates, given in
the following definition.

\par

\begin{defn}\label{Def:ConjYoungFunc}
Let $\Phi$ be a Young function. Then
the conjugate Young function
$\Phi ^*$ is given
by
\begin{equation}\label{eq-YoungIneq-conjugate}
\Phi ^*(t)
\equiv
\begin{cases}
{\displaystyle{\sup _{s\ge 0} (st - \Phi(s)),}} &
\text{when}\ t \in [0,\infty ),
\\[2ex]
\infty , &
\text{when}\ t=\infty .
\end{cases}
\end{equation}
\end{defn}

\par

\begin{rem}\label{Rem:PhiLeb}
We observe that the conjugate of a Young function is a Young function.
Let $p\in [1,\infty ]$. Its conjugate exponent is denoted by $p'\in [1,\infty ]$,
and should fulfill $\frac 1p+\frac 1{p'}=1$. If $\Phi _{[p]}$ is given by
\eqref{Eq:OrlGivLeb}, then $\Phi _{[p]}^*=\Phi _{[p']}$. In particular,
if $p=1$, then $\Phi _{[1]}(t)=t$ satisfies a $\Delta _2$-condition,
while for $\Phi _{[1]}^*=\Phi _{[\infty ]}$ fails to fulfill any $\Delta _2$-conditions.
%Let $d\mu$ be a positive measure, $p\in (0,\infty ]$,
%and let $\Phi _{[p]}$ be the same as in \eqref{Eq:OrlGivLeb}.
%Then
%$L^{\Phi _{[p]}}(d\mu )$ and its norm is equal to the
%classical Lebesgue space $L^p(d\mu)$ and its norm. In particular,
%for any discrete space $\Lambda$ one has that
%$\ell ^{\Phi _{[p]}}(\Lambda )$ and its norm is equal to the
%classical Lebesgue space $\ell ^p(\Lambda )$ and its norm.
%As a special case we obtain the first equality in
%\eqref{Eq:OrlGivLeb}. \textcolor{magenta}{(RF: I do not get the point here. What in (0.2) do we obtain as a special case of which statement? I have the feeling that the remark is a little out of place here. Maybe we should instead mention something in the spirit of ``$\Phi^\ast$ is always a Young function. It satisfies $\Delta_2$ whenever $\Phi$ does'' here?)}
\end{rem}

\par

The conjugate Young functions appear naturally when discussing duality
for Orlicz type spaces.
We omit the proof of the following result since it can be found in e.{\,}g.
\cite{RaoRen1}.

\par

\begin{prop}
\label{Prop:OrlDuality}
Let $d\mu$ be a positive measure
on the measurable space $\Omega$.
%Then the following is true:
\begin{enumerate}
\item If $\Phi$ is a Young function, then the map
$$
(f,g)\mapsto (f,g)_{L^2(d\mu)} \equiv \int f(x)\overline{g(x)}\, d\mu (x)
$$
is a continuous sesqui-linear map from
$L^{\Phi}(d\mu  )\times L^{\Phi ^*}(d\mu  )$ to $\mathbf C$.
Furthermore, if
$$
%\nmm f \equiv \sup _{\nm g{L^{\Phi ^*}(d\mu )}\le 1}|(f,g)_{L^2(d\mu )}|,
\nmm f \equiv \sup |(f,g)_{L^2(d\mu )}|,
\qquad
f\in L^\Phi (d\mu),
$$
with supremum taken over all $g\in L^{\Phi ^*}(d\mu)$ such that
$\nm g{L^{\Phi ^*}(d\mu )}\le 1$, then
\begin{equation}
\label{Eq:LuxNormEquiv}
\nm f{L^\Phi (d\mu)} \le \nmm f \le 2\nm f{L^\Phi (d\mu)},
\quad
f\in L^\Phi (\mu)\text .
\end{equation}

\vrum

\item If $\Phi$ is a quasi-Young function which satisfies a $\Delta _2$-condition, then
the simple functions are dense in $L^\Phi (d\mu )$.

\vrum

\item  If $\Phi$ is a Young function which
satisfies a $\Delta _2$-condition, then the dual of $L^{\Phi}(d\mu  )$
can be identified with $L^{\Phi ^*}(d\mu  )$ through the form
$(\cdo ,\cdo )_{L^2(d\mu )}$.
%
%
%\vrum
%
%\item if $\Phi$ is a Young function such that
%$\Phi$ or $\Phi ^*$ satisfy a $\Delta _2$-condition, then
%$$
%\nm f{L^\Phi (d\mu )}
%\asymp
%\sup |(f,h_1)_{L^2(d\mu )}|
%\quad \text{and}\quad
%\nm g{L^{\Phi ^*}(d\mu )}
%\asymp
%\sup |(f,h_2)_{L^2(d\mu )}|,
%$$
%where the suprema are taken over all simple functions $h_1$
%and $h_2$  such that $\nm {h_1}{L^{\Phi ^*}}\le 1$ and 
%$\nm {h_2}{L^{\Phi}}\le 1$.
\end{enumerate}
\end{prop}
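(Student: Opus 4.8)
The three parts all rest on \emph{Young's inequality} $st\le\Phi(s)+\Phi^*(t)$ for $s,t\ge 0$, immediate from Definition~\ref{Def:ConjYoungFunc}, together with the elementary fact that $\nm f{L^\Phi(d\mu)}\le 1$ is equivalent to $\int_\Omega\Phi(|f|)\,d\mu\le 1$ when the former is finite. For part~(1) the plan is to integrate Young's inequality applied to $|f|/\nm f{L^\Phi}$ and $|g|/\nm g{L^{\Phi^*}}$ to obtain the Orlicz--H\"older inequality $\int_\Omega|f\overline g|\,d\mu\le 2\nm f{L^\Phi}\nm g{L^{\Phi^*}}$; this yields at once the continuity of $(\cdo,\cdo)_{L^2(d\mu)}$ and the right-hand inequality in \eqref{Eq:LuxNormEquiv}. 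For the left-hand inequality $\nm f{L^\Phi}\le\nmm f$, I would first reduce, using that $\nmm\cdo$ depends only on $|f|$ and is monotone in $|f|$, and monotone truncation $f_n=\min(|f|,n)\chi_{E_n}$ with $\mu(E_n)<\infty$, to the case that $f$ is bounded with support of finite measure. For such $f$ the modular $k\mapsto\int_\Omega\Phi(|f|/k)\,d\mu$ is finite and continuous, hence equal to $1$ at $k=\nm f{L^\Phi}$; choosing $g$ proportional to the right derivative $\Phi'_+(|f|/\nm f{L^\Phi})$ and exploiting the equality case $\Phi(s)+\Phi^*(\Phi'_+(s))=s\Phi'_+(s)$ produces a competitor with $\nm g{L^{\Phi^*}}\le 1$ and $\int_\Omega|f|\,g\,d\mu\ge\nm f{L^\Phi}$, and passing to the limit closes part~(1).

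For part~(2) I would first reduce to a Young function, using $\nm f{L^\Phi}=(\nm{\,|f|^r}{L^{\Phi_0}})^{1/r}$ with $\Phi_0$ a Young function that still satisfies a $\Delta_2$-condition (iterate the bound \eqref{Eq:Delta2Cond}). The decisive consequences of $\Delta_2$ are that $\int_\Omega\Phi(|h|)\,d\mu<\infty$ for every $h\in L^\Phi$ and that $\nm{h_n}{L^\Phi}\to 0$ whenever $\int_\Omega\Phi(|h_n|)\,d\mu\to 0$. Given $f\in L^\Phi$, I would apply this to $h_n=f\cdot\chi_{\Omega\setminus E_n}$ with $E_n=\{|f|\le n\}$, intersected with a ball when $\Omega=\rd$: then $h_n\to 0$ a.e.\ and $\Phi(|h_n|)\le\Phi(|f|)\in L^1(d\mu)$, so dominated convergence gives $\nm{f-f\chi_{E_n}}{L^\Phi}\to 0$; each $f\chi_{E_n}$ is bounded with support of finite measure, hence a uniform limit of simple functions, and uniform approximation on a set of finite measure converges in $L^\Phi$.

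For part~(3), by part~(1) each $g\in L^{\Phi^*}$ induces a bounded functional $\ell_g\equiv(\cdo,g)_{L^2(d\mu)}$ on $L^\Phi(d\mu)$, with $g\mapsto\ell_g$ injective and $\nm g{L^{\Phi^*}}$ comparable to the operator norm of $\ell_g$ by \eqref{Eq:LuxNormEquiv}. The work is surjectivity: given $\ell\in(L^\Phi(d\mu))^*$, I would set $\nu(E)=\ell(\chi_E)$ for $E$ of finite measure; since $\int_\Omega\Phi(\chi_F/\varepsilon)\,d\mu=\Phi(1/\varepsilon)\mu(F)\to 0$ as $\mu(F)\to 0$ for every fixed $\varepsilon>0$, one gets $\nm{\chi_F}{L^\Phi}\to 0$, so $\nu$ is a countably additive, $\mu$-absolutely continuous measure. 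By the Radon--Nikodym theorem (the ambient measure being $\sigma$-finite) there is a locally integrable $g$ with $\ell(\chi_E)=\int_E g\,d\mu$, hence $\ell(s)=\int_\Omega sg\,d\mu$ for every simple $s$. Testing against simple $s$ with $\nm s{L^\Phi}\le 1$ and arbitrary phase, then truncating $g$ and invoking part~(1) applied with $\Phi^*$ in place of $\Phi$ (here $\Phi^{**}=\Phi$ is used), gives $\nm g{L^{\Phi^*}}\lesssim\nm{\ell}{}<\infty$, so $g\in L^{\Phi^*}(d\mu)$. Finally $\ell$ and $\ell_g$ are bounded functionals agreeing on simple functions, which are dense by part~(2), so $\ell=\ell_g$.

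The main obstacle will be the left-hand inequality in \eqref{Eq:LuxNormEquiv}: the near-optimal test functional in $L^{\Phi^*}$ must be built for a $\Phi$ that may be neither differentiable nor strictly convex, and whose modular $k\mapsto\int_\Omega\Phi(|f|/k)\,d\mu$ need not be continuous, which is exactly why the reduction to bounded, finite-measure support functions and the one-sided derivative $\Phi'_+$ are unavoidable. A secondary delicate point will be the verification in part~(3) that the Radon--Nikodym density genuinely lies in $L^{\Phi^*}$; this is where the $\Delta_2$-hypothesis enters decisively, through the density of simple functions from part~(2), and explains why part~(3) fails for Young functions without $\Delta_2$, as already signalled by $\Phi_{[1]}$ and $\Phi_{[\infty]}$ in Remark~\ref{Rem:PhiLeb}.
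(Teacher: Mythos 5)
Your proposal is correct, and it reconstructs exactly the standard argument that the paper itself does not spell out: the statement is given without proof, with a pointer to \cite{RaoRen1}, and your route (Orlicz--H\"older via Young's inequality, the equality case $s\Phi'_+(s)=\Phi(s)+\Phi^*(\Phi'_+(s))$ after truncation for the lower bound in \eqref{Eq:LuxNormEquiv}, dominated convergence under the $\Delta_2$-condition for density of simple functions, and Radon--Nikodym plus that density for the duality) is precisely the proof found there. The only caveat is your explicit use of $\sigma$-finiteness in part (3) (and, in part (2), the truncation should be by sets such as $\{1/n\le |f|\le n\}$, which have finite measure since $\Delta_2$ forces $\Phi$ to be positive, rather than by a ball, since $\Omega$ is a general measure space); this is harmless for the measures actually used in the paper (Lebesgue and counting measure) and is consistent with the treatment in \cite{RaoRen1}.
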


\par

\begin{rem}
\label{Rem:OrlDuality}
If $d\mu$ is the Lebesgue measure on $\Omega =\rr d$, then
Proposition \ref{Prop:OrlDuality} remains true with the set of simple
functions replaced by $C_0^\infty (\rr d)$.
\end{rem}

\par

In the discrete case, there are some additional properties
explained in the following two propositions. The first one shows that
the discrete Orlicz space $\ell ^\Phi (\Lambda )$ is
completely determined by the behavior of $\Phi$ \emph{near origin}. 
We refer to \cite{ToUsNaOz} for the proof, see also \cite{SchFuh}.
Here $h_1(x) \lesssim h_2(x)$ means that there exists $C>0$
such that $h_1(x) \leq C h_2(x)$ for all arguments $x$ in the domain of $h_1$
and $h_2$. We also write $h_1(x)\asymp h_2(x)$ when
$h_1(x) \lesssim h_2(x)\lesssim h_1(x)$. If $\Omega _1$ and $\Omega _2$
are topological spaces, we also use the notation $\Omega _1\hookrightarrow \Omega _2$
when $\Omega _1\subseteq \Omega _2$ and the inclusion
map $\iota : \Omega _1\to \Omega _2$ is continuous.

\par

\begin{prop}
\label{Prop:DiscreteNorms1}
Suppose $\Lambda$ is a discrete space and
$\Phi _1$ and $\Phi _2$ are quasi-Young
functions such that
\begin{equation}
\label{Eq:LocYoungEst}
\Phi _2(t)\lesssim \Phi _1(t)
\quad \text{when}\quad
t\in [0,T],
\end{equation}
for some constant $T>0$. Then
$\ell ^{\Phi _1}(\Lambda )\hookrightarrow \ell ^{\Phi _2}(\Lambda )$, and
\begin{equation}
\nm f{\ell ^{\Phi _2}} \lesssim \nm f{\ell ^{\Phi _1}},
\qquad
f\in \ell ^{\Phi _1}(\Lambda ).
\end{equation}
%%
%for some constant $C_2>0$.
\end{prop}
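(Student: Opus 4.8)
The plan is to reduce everything to a single scalar inequality between the two Luxemburg norms and then exploit that the discrete measure is bounded below on singletons. First I would recall that for the standard counting measure $d\mu$ on $\Lambda$, the Orlicz norm is
\[
\nm f{\ell ^{\Phi}} = \inf \Sets{\lambda >0}{\sum _{j\in \Lambda}\Phi \!\left (\frac{|f_j|}{\lambda}\right )\le 1}.
\]
Fix $f\in \ell ^{\Phi _1}(\Lambda )$ with $f\ne 0$ and set $\lambda _1 = \nm f{\ell ^{\Phi _1}}$. The key observation is that, by the very definition of the infimum together with the fact that each term $\Phi _1(|f_j|/\lambda _1)$ is nonnegative, we have $\Phi _1(|f_j|/\lambda _1)\le 1$ for every $j$; since $\Phi _1$ is increasing with $\Phi _1(\infty )=\infty$, this forces $|f_j|/\lambda _1\le \Phi _1^{-1}(1)<\infty$, i.e. the arguments $|f_j|/\lambda _1$ are uniformly bounded by some absolute constant $M$ depending only on $\Phi _1$. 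This is the step that the continuous case lacks and where the \emph{discreteness} is essential.

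Next I would use \eqref{Eq:LocYoungEst}: there is $C>0$ and $T>0$ with $\Phi _2(t)\le C\,\Phi _1(t)$ for $t\in [0,T]$. Rescaling, pick $\mu = \max(1,M/T)\cdot \lambda _1$ so that $|f_j|/\mu \le T$ for all $j$ (using that $|f_j|/\lambda _1\le M$ and also $|f_j|/\mu \le |f_j|/\lambda _1$ when $\mu \ge \lambda _1$). Then
\[
\sum _{j\in \Lambda}\Phi _2\!\left (\frac{|f_j|}{\mu}\right )
\le C\sum _{j\in \Lambda}\Phi _1\!\left (\frac{|f_j|}{\mu}\right )
\le C\sum _{j\in \Lambda}\Phi _1\!\left (\frac{|f_j|}{\lambda _1}\right )
\le C,
\]
where the middle inequality uses that $\Phi _1$ is non-decreasing and $\mu \ge \lambda _1$. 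If $C\le 1$ this already gives $\nm f{\ell ^{\Phi _2}}\le \mu \lesssim \lambda _1$. If $C>1$, replace $\mu$ by $\mu ' = C'\mu$ where $C'$ is chosen (using convexity of $\Phi _2$, or merely the $r$-homogeneity bookkeeping and monotonicity) so that $\sum _j \Phi _2(|f_j|/\mu ')\le 1$; concretely, for a Young function $\Phi _2(t/C)\le \Phi _2(t)/C$ by convexity and $\Phi _2(0)=0$, so taking $\mu ' = C\mu$ works, and in the $r$-Young case one absorbs the constant after passing through $\Phi _{2,0}$ with the identity $\nm f{\ell ^{\Phi _2}} = (\nm{|f|^r}{\ell ^{\Phi _{2,0}}})^{1/r}$ recorded after Definition \ref{Def:OrliczSpaces1}. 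Either way one concludes $\nm f{\ell ^{\Phi _2}}\le C''\lambda _1 = C''\nm f{\ell ^{\Phi _1}}$ with $C''$ independent of $f$, which also yields $f\in \ell ^{\Phi _2}(\Lambda )$ and hence the continuous inclusion $\ell ^{\Phi _1}(\Lambda )\hookrightarrow \ell ^{\Phi _2}(\Lambda )$.

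The main obstacle is the bookkeeping around the constant $C$ from \eqref{Eq:LocYoungEst}: one must convert the bound $\sum _j \Phi _2(\cdot )\le C$ into a bound $\le 1$ at the cost of enlarging $\lambda$ by a controlled factor, and do this uniformly across both the Young and quasi-Young ($r$-Young) regimes. I would handle the quasi-Young case by reducing to the Young case via $\Phi _j = \Phi _{j,0}(t^r)$ — note \eqref{Eq:LocYoungEst} for $\Phi _1,\Phi _2$ is equivalent to the analogous estimate for $\Phi _{1,0},\Phi _{2,0}$ on $[0,T^r]$ — apply the argument to $\Phi _{1,0},\Phi _{2,0}$ and $|f|^r$, and then take $r$-th roots using the recorded identity. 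A secondary point to be careful about is the case $f=0$, which is trivial, and making sure the uniform-boundedness step is valid even when some $f_j$ vanish (it is, since $\Phi _1(0)=0\le 1$).
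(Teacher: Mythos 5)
Your overall strategy is correct, and in fact the paper gives no proof of this proposition at all: it simply defers to \cite{ToUsNaOz} (see also \cite{SchFuh}). Your argument is essentially the standard one behind that citation: on a discrete space, any admissible $\lambda$ in the Luxemburg infimum forces each single term $\Phi _1(|f_j|/\lambda )\le 1$, hence $|f_j|/\lambda$ is bounded by a constant depending only on $\Phi _1$ (finite because $\Phi _1(t)\to \infty$), so after one fixed rescaling all arguments land in $[0,T]$, \eqref{Eq:LocYoungEst} applies, and the constant $C$ is absorbed by (quasi-)convexity. This is exactly the sense in which $\ell ^\Phi$ only sees $\Phi$ near the origin.

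Two points should be tightened, though neither is a conceptual gap. First, the claim that $\Phi _1(|f_j|/\lambda _1)\le 1$ and $\sum _j\Phi _1(|f_j|/\lambda _1)\le 1$ hold at the exact infimum $\lambda _1=\nm f{\ell ^{\Phi _1}}$ does not follow from ``the very definition of the infimum'': the paper's Young functions need not be left-continuous (they may jump, even to a finite value and then to $\infty$), so the infimum need not be attained. Run the entire argument with an arbitrary admissible $\lambda >\lambda _1$, for which these inequalities do hold by definition, and let $\lambda \downarrow \lambda _1$ at the end; this also quietly settles that $\lambda _1>0$ when $f\neq 0$. Second, your reduction of the quasi-Young case to the Young case via $|f|^r$ tacitly assumes $\Phi _1$ and $\Phi _2$ have the same order $r$. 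Either observe that a quasi-Young function of order $r$ is also quasi-Young of any smaller order (compose the Young part with $s\mapsto s^{r/r'}$, $r'\le r$), so one may work with the common order $\min (r_1,r_2)$, or skip the reduction entirely: convexity is only needed to absorb $C$, and $\Phi _2(t/C^{1/r_2})=\Phi _{2,0}(t^{r_2}/C)\le \Phi _2(t)/C$ does this directly. With these repairs the estimate $\nm f{\ell ^{\Phi _2}}\lesssim \nm f{\ell ^{\Phi _1}}$ follows with a constant independent of $f$, giving the continuous embedding.
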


\par

As a consequence of Proposition \ref{Prop:DiscreteNorms1}, it follows that
if $\Lambda$ is a discrete space, $\Phi$
is a quasi-Young function and $\widetilde \Phi$ is the same as in
Remark \ref{Rem:Delta2Cond}, then $\ell ^\Phi (\Lambda )$
is equal to $\ell ^{\widetilde \Phi} (\Lambda )$, with equivalent norms.

\par

\begin{prop}
\label{Prop:DiscrNmLocDelCond1}
Let $\Lambda$ be a discrete space and $\Phi$
be a quasi-Young function.
%Furthermore, exactly the following two non-overlapping
%situations occur.
\begin{enumerate}
\item If $\Phi$ is positive, then
$\ell ^\Phi (\Lambda )\hookrightarrow \ell ^\sharp (\Lambda )$.

\vrum

\item If $\Phi$ is not positive, then
$\ell ^\Phi (\Lambda )=\ell ^\infty (\Lambda )$,
with equivalent (quasi-) norms.
\end{enumerate}
\end{prop}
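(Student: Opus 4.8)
\emph{Proof proposal.}
The plan is to reduce everything to one elementary observation about the behaviour of $\Phi$ near its unit level set. Recall that $\nm f{\ell^\Phi}=\inf\{\lambda>0:\sum_{j\in\Lambda}\Phi(|f_j|/\lambda)\le 1\}$, and that, $\Phi$ being non-decreasing, the condition $\sum_{j}\Phi(|f_j|/\lambda)\le 1$ holds for every $\lambda>\nm f{\ell^\Phi}$ whenever $f\in\ell^\Phi(\Lambda)$. First I would record that $t_0:=\sup\{t\ge 0:\Phi(t)\le 1\}$ is finite, which is immediate since $\Phi$ is non-decreasing with $\Phi(\infty)=\infty$. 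If $\sum_j\Phi(|f_j|/\lambda)\le 1$, then each summand is at most $1$, hence $|f_j|/\lambda\le t_0$ for all $j$; taking the infimum over admissible $\lambda$ this yields a continuous embedding $\ell^\Phi(\Lambda)\hookrightarrow\ell^\infty(\Lambda)$ with $\nm f{\ell^\infty}\le t_0\,\nm f{\ell^\Phi}$. This estimate is the common backbone of both parts.

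For (1), assume $\Phi$ positive. Given $f\in\ell^\Phi(\Lambda)$, pick $\lambda>0$ with $\sum_j\Phi(|f_j|/\lambda)\le 1$. For $\ep>0$ we have $\Phi(\ep/\lambda)>0$ by positivity, and every index $j$ with $|f_j|>\ep$ contributes $\Phi(|f_j|/\lambda)\ge\Phi(\ep/\lambda)$ to the sum by monotonicity; hence the number of such indices is at most $\Phi(\ep/\lambda)^{-1}<\infty$. Thus $f\in\ell^\sharp(\Lambda)$, and since $\ell^\sharp(\Lambda)$ carries the $\ell^\infty$-norm topology, the embedding $\ell^\Phi(\Lambda)\hookrightarrow\ell^\sharp(\Lambda)$ follows from the norm bound above.

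For (2), assume $\Phi$ not positive, say $\Phi(t_1)=0$ with $t_1>0$, and set $a:=\sup\{t\ge 0:\Phi(t)=0\}$, so $a\ge t_1>0$ and $a<\infty$ (again because $\Phi(\infty)=\infty$); by monotonicity $\Phi(t)=0$ for all $t<a$. The inclusion $\ell^\Phi(\Lambda)\hookrightarrow\ell^\infty(\Lambda)$ with $\nm f{\ell^\infty}\le t_0\,\nm f{\ell^\Phi}$ was already obtained from the backbone estimate. Conversely, for $f\in\ell^\infty(\Lambda)$ and any $\lambda>\nm f{\ell^\infty}/a$ one has $|f_j|/\lambda<a$ for every $j$, so $\Phi(|f_j|/\lambda)=0$ and the defining sum equals zero; thus $\nm f{\ell^\Phi}\le\lambda$, and letting $\lambda\downarrow\nm f{\ell^\infty}/a$ gives $a\,\nm f{\ell^\Phi}\le\nm f{\ell^\infty}$. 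Combining the two inequalities proves $\ell^\Phi(\Lambda)=\ell^\infty(\Lambda)$ with equivalent quasi-norms.

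I do not expect a genuine obstacle here; the only point needing a little care is the step from ``each summand $\le 1$'' to the uniform bound $|f_j|/\lambda\le t_0$, which uses precisely that a (quasi-)Young function cannot stay bounded by $1$ indefinitely, i.e.\ $\Phi(\infty)=\infty$. Incidentally this is compatible with Proposition \ref{Prop:DiscreteNorms1}, which compares two discrete Orlicz spaces through the behaviour of the Young functions near the origin.
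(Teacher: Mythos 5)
Your proof is correct, and it takes a somewhat different route from the paper's. For the set inclusion in (1) the underlying idea is the same (the paper argues by contraposition that infinitely many coordinates above a fixed level force $\sum_j\Phi(|f_j|/\lambda)=\infty$, while you count directly that at most $\Phi(\ep/\lambda)^{-1}$ indices can satisfy $|f_j|>\ep$); the real difference is how the topological statements are obtained. The paper routes both the continuity of the embedding in (1) and the whole of (2) through Proposition \ref{Prop:DiscreteNorms1}, comparing $\Phi$ with $\Phi_{[\infty]}$ of Remark \ref{Rem:PhiLeb} near the origin (a result it quotes from the literature without proof), whereas you extract explicit two-sided estimates straight from the Luxemburg norm: $\nm f{\ell^\infty}\le t_0\nm f{\ell^\Phi}$ with $t_0=\sup\{t:\Phi(t)\le 1\}$, and in case (2) also $a\,\nm f{\ell^\Phi}\le \nm f{\ell^\infty}$ with $a=\sup\{t:\Phi(t)=0\}$. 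Your argument is therefore self-contained and yields explicit constants, at the modest cost of redoing by hand a special case of the comparison principle the paper already has at its disposal; note also that your backbone estimate silently covers the degenerate situation $t_0=0$ (where $\ell^\Phi(\Lambda)=\{0\}$), so no case is lost.
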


\par

\begin{proof}
First we prove (1). Therefore, suppose that $\Phi (t)>0$ when $t>0$.
If $f=\{ f_j \} _{j\in \Lambda}$
satisfies $|f_j|\ge c$ for some constant $c>0$ and infinitely many
$j\in \Lambda$, then 
$$
\sum _{j\in \Lambda} \Phi \left ( \frac {|f_j|}\lambda \right ) =\infty ,
$$
for every $\lambda >0$, which implies
$\nm f{\ell ^\Phi}=\infty$. By combining these relationships
with Proposition \ref{Prop:DiscreteNorms1}, one obtains (1).

\par

Next we prove (2).
Therefore suppose $\Phi (t_0)=0$ for some $t_0>0$. Then
$\Phi (t)=0$ when $t\in [0,t_0]$. In particular,
near the origin, it follows that $\Phi$
agrees with $\Phi _{[\infty ]}$ in Remark
\ref{Rem:PhiLeb}. The conclusions in (2)
now follows from Proposition \ref{Prop:DiscreteNorms1}.
\end{proof}

\par

%We observe that if $\Phi$ in Proposition
%\ref{Prop:DiscrNmLocDelCond1} (1)
%satisfies a local $\Delta _2$-condition,
%then it follows that $\Phi$ satisfies
%a \emph{strict} local $\Delta _2$-condition.

%If $\Lambda$ is discrete, then the set of simple functions
%is equal to $\ell _0(\Lambda )$, the set of all sequences
%$f$ on $\Lambda$ such that $f(j)=0$, all except
%finite numbers of $j\in \Lambda$. In this case, 
%Proposition \ref (1) can be improved as follows.
%
%\par
%
%\begin{prop}
%Let $\Lambda$ be a discrete space, and let
%$\Phi$ be a quasi-Young function such that $\Phi (t)>0$
%when $t>0$. Then $\ell _0(\Lambda )$ is dense in $\ell ^\Phi (\Lambda )$.
%\end{prop}
%
%\par
%
%\begin{proof}
%Evidently, $\ell _0(\Lambda )$ is dense in $\ell ^\sharp (\Lambda )$.
%Hence it suffices to prove that $\ell ^\sharp (\Lambda )\cap $\ell ^\Phi (\Lambda )$$
%\end{proof}

\par

\subsection{Pseudo-differential operators}
Let $\maclL (V_1,V_2)$ be the set of all linear and
continuous operators from the topological vector space
$V_1$ into the topological vector space $V_2$.
For each $K\in \mascS '(\rr {2d})$, there is a unique
$T_K\in \maclL(\mascS (\rr d),\mascS '(\rr d))$ such that
\begin{equation}
\label{Eq:KernelMap}
\scal {T_Kf}g = \scal K{g\otimes f},
\qquad
f,g\in \mascS (\rr d).
\end{equation}

\par

On the other hand, by the kernel theorem of Schwartz,
the following holds true (cf. e.{\,}g.~ \cite{Ho1}).

\par

\begin{lemma}
\label{Lem:KernelThm}
The map $K\mapsto T_K$ from $\mascS '(\rr {2d})$
to $\maclL(\mascS (\rr d),\mascS '(\rr d))$ is
an isomorphism.
\end{lemma}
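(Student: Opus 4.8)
The plan is to verify the three defining features of the kernel correspondence — well-definedness, injectivity, and surjectivity — and then to note bicontinuity. Well-definedness is essentially contained in \eqref{Eq:KernelMap}: for $K\in \mascS '(\rr {2d})$ and a fixed $f\in \mascS (\rr d)$, the map $g\mapsto g\otimes f$ is continuous from $\mascS (\rr d)$ into $\mascS (\rr {2d})$, so $g\mapsto \scal K{g\otimes f}$ is a continuous linear functional on $\mascS (\rr d)$, i.{\,}e.\ defines $T_Kf\in \mascS '(\rr d)$; letting $f$ vary and using the weak-$*$ topology on $\mascS '(\rr d)$ shows $f\mapsto T_Kf$ lies in $\maclL (\mascS (\rr d),\mascS '(\rr d))$. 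Linearity of $K\mapsto T_K$ is immediate.

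For injectivity, suppose $T_K=0$. Then $\scal K{g\otimes f}=0$ for all $f,g\in \mascS (\rr d)$, so $K$ annihilates the algebraic span of the tensors $g\otimes f$; since that span contains all $h_\alpha\otimes h_\beta$ with $h_\alpha$, $h_\beta$ Hermite functions, it is dense in $\mascS (\rr {2d})$, whence $K=0$.

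The crux is surjectivity. Given $T\in \maclL (\mascS (\rr d),\mascS '(\rr d))$, consider the bilinear form $B(f,g)=\scal {Tf}g$ on $\mascS (\rr d)\times \mascS (\rr d)$, which is separately continuous (hence jointly continuous, by barrelledness of the Fréchet space $\mascS (\rr d)$). I would finish in one of two equivalent ways. Route (a): invoke that $\mascS (\rr d)$ is a nuclear Fréchet space, so $\mascS (\rr d)\,\widehat{\otimes}\,\mascS (\rr d)\cong \mascS (\rr {2d})$ and continuous bilinear forms on $\mascS (\rr d)\times \mascS (\rr d)$ correspond to elements of $\mascS '(\rr {2d})$; this directly produces $K$ with $\scal K{g\otimes f}=B(f,g)$, i.{\,}e.\ $T_K=T$. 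Route (b), more self-contained: expand in the Hermite basis $\{h_\alpha\}_{\alpha\in \nn d}$ of $L^2(\rr d)$, which is an unconditional basis of both $\mascS (\rr d)$ and $\mascS '(\rr d)$; set $c_{\alpha\beta}=\scal {Th_\alpha}{h_\beta}$, deduce from the continuity of $T$ a polynomial bound $|c_{\alpha\beta}|\lesssim \eabs{\alpha}^N\eabs{\beta}^N$ for some $N$, and observe that — since the Hermite coefficients of a tempered distribution on $\rr {2d}$ are exactly the families of polynomial growth — the series $K=\sum_{\alpha,\beta}c_{\alpha\beta}\,h_\beta\otimes h_\alpha$ converges in $\mascS '(\rr {2d})$; a direct computation on $g\otimes f$, using the rapidly decaying Hermite coefficients of $f$ and $g$, gives $\scal K{g\otimes f}=\scal {Tf}g$, so $T_K=T$.

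It remains to note that the bijection $K\mapsto T_K$ is a homeomorphism for the natural topologies; this follows either from the open mapping theorem in the (DF/Fréchet–Schwartz) category at hand, or, concretely, from route (b), where the inverse $T\mapsto K$ is read off the Hermite coefficients and is visibly continuous. The main obstacle is clearly the surjectivity step: it is where the nuclearity of $\mascS (\rr d)$ (route (a)) or, equivalently, the two-sided characterization of $\mascS '$ through polynomially bounded Hermite coefficients (route (b)) genuinely enters; the remaining steps are formal. Since this is the classical Schwartz kernel theorem, one may alternatively simply cite \cite{Ho1}.
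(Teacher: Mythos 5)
Your argument is correct, and it is the classical proof of the Schwartz kernel theorem. Note, however, that the paper does not prove Lemma \ref{Lem:KernelThm} at all: it simply records the statement and cites \cite{Ho1}, which is exactly the alternative you mention in your closing sentence, so your writeup supplies the argument the citation points to rather than paralleling an argument in the paper. Both of your routes for surjectivity are sound: separate continuity of $B(f,g)=\scal {Tf}g$ upgrades to joint continuity because $\mascS (\rr d)$ is a Fr\'echet (hence barrelled) space, and then either nuclearity, via $\mascS (\rr d)\,\widehat{\otimes}\,\mascS (\rr d)\cong \mascS (\rr {2d})$, or the Hermite route produces the kernel; in route (b) the bound $|c_{\alpha \beta}|\lesssim \eabs{\alpha}^N\eabs{\beta}^N$ does follow from the joint continuity estimate evaluated on Hermite functions, the polynomially bounded Hermite coefficients do define an element of $\mascS '(\rr {2d})$, and the verification $\scal K{g\otimes f}=\scal {Tf}g$ is consistent with the convention \eqref{Eq:KernelMap}. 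The only point you would need to make explicit in a full writeup is which topology on $\maclL (\mascS (\rr d),\mascS '(\rr d))$ is intended when the bijection is called an isomorphism (the paper leaves this implicit too); your injectivity and well-definedness steps are fine as stated.
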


\par

Let $A$ belong to $\GL (d,\mathbf R)$, the set of all $d\times d$-matrices with
entries in $\mathbf R$.
Then the pseudo-differential operator $\op _A(a)$ with symbol
$a\in \mascS (\rr {2d})$
is the linear and continuous operator on $\mascS (\rr d)$, defined
by
\begin{equation}\label{e0.5}
(\op _A(a)f)(x)
=
(2\pi  ) ^{-d}\iint a(x-A(x-y),\xi )f(y)e^{i\scal {x-y}\xi }\,
dyd\xi .
\end{equation}
For general $a\in \mascS '(\rr {2d})$, the
pseudo-differential operator $\op _A(a)$ is defined as the continuous
operator from $\mascS(\rr d)$ to $\mascS '(\rr d)$ with
distribution kernel given by
\begin{equation}\label{atkernel}
K_{a,A}(x,y)=(2\pi )^{-\frac d2}(\mascF _2^{-1}a)(x-A(x-y),x-y).
\end{equation}
Here $\mascF _2F$ is the partial Fourier transform of $F(x,y)\in
\mascS '(\rr {2d})$ with respect to the $y$ variable. This
definition makes sense, since the mappings
\begin{equation}\label{homeoF2tmap}
\mascF _2\quad \text{and}\quad F(x,y)\mapsto F(x-A(x-y),x-y)
\end{equation}
are homeomorphisms on $\mascS '(\rr {2d})$.
In particular, the map $a\mapsto K_{a,A}$ is a homeomorphism on
$\mascS '(\rr {2d})$, and a combination of this fact and
Lemma \ref{Lem:KernelThm} gives the following.

\par

\begin{lemma}
\label{Lem:KernelPseudoDiffThm}
Let $A\in \GL (d,\mathbf R)$.
Then the map $a\mapsto \op _A(a)$
from $\mascS '(\rr {2d})$
to $\maclL(\mascS (\rr d),\mascS '(\rr d))$ is
an isomorphism.
\end{lemma}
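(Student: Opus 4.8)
The plan is to establish Lemma~\ref{Lem:KernelPseudoDiffThm} by factoring the map $a\mapsto \op_A(a)$ through the kernel correspondence of Lemma~\ref{Lem:KernelThm}. Concretely, $\op_A(a)$ is by definition the operator $T_{K_{a,A}}\in \maclL(\mascS(\rr d),\mascS'(\rr d))$ whose distribution kernel is $K_{a,A}$, given by \eqref{atkernel}. Thus $\op_A = T_{(\cdot)}\circ \kappa_A$, where $\kappa_A\colon a\mapsto K_{a,A}$ and $T_{(\cdot)}\colon K\mapsto T_K$. Lemma~\ref{Lem:KernelThm} already asserts that $T_{(\cdot)}$ is an isomorphism from $\mascS'(\rr{2d})$ onto $\maclL(\mascS(\rr d),\mascS'(\rr d))$, so it suffices to show that $\kappa_A$ is an isomorphism (homeomorphism) of $\mascS'(\rr{2d})$ onto itself; the composition of two isomorphisms is again an isomorphism.

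First I would observe that $\kappa_A$ is the composition of the two maps displayed in \eqref{homeoF2tmap}: the partial Fourier transform $\mascF_2$ (up to the harmless normalizing constant $(2\pi)^{-d/2}$), followed by the linear substitution $F(x,y)\mapsto F(x-A(x-y),x-y)$. The partial Fourier transform $\mascF_2$ is a well-known homeomorphism on $\mascS'(\rr{2d})$, with continuous inverse $\mascF_2^{-1}$. For the substitution, I would write it as pullback by the linear map $L_A\colon (x,y)\mapsto (x-A(x-y),\,x-y)$ on $\rr{2d}$. This $L_A$ is a bijective linear map: in block form it sends $(x,y)$ to $((I-A)x+Ay,\ x-y)$, and one checks its determinant is nonzero (it equals $\det(-(I))$ up to sign after row reduction, in any case a nonzero constant independent of $A$), so $L_A\in\GL(2d,\mathbf R)$ and $L_A^{-1}$ is again linear. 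Pullback by an invertible linear map is a homeomorphism on $\mascS'$ (it preserves $\mascS$, is continuous there, and dualizes with the Jacobian factor), with inverse given by pullback along $L_A^{-1}$. Hence $\kappa_A$, being a composition $\,\text{(const)}\cdot (L_A\text{-pullback})\circ \mascF_2$, is a homeomorphism on $\mascS'(\rr{2d})$, and in particular a linear isomorphism.

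Combining these, $\op_A = T_{(\cdot)}\circ \kappa_A$ is a composition of isomorphisms, hence itself an isomorphism from $\mascS'(\rr{2d})$ onto $\maclL(\mascS(\rr d),\mascS'(\rr d))$, and moreover its inverse is $\kappa_A^{-1}\circ (T_{(\cdot)})^{-1}$, which is continuous. I would also note that on the dense subspace $\mascS(\rr{2d})$ the abstract definition \eqref{atkernel} of the kernel is consistent with the integral formula \eqref{e0.5}, so the two descriptions of $\op_A(a)$ agree; this is really just unraveling Fourier's inversion formula and is the kind of routine verification alluded to in the text preceding Lemma~\ref{Lem:KernelThm}.

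The only mild obstacle I anticipate is the bookkeeping around $L_A$: verifying once and for all that the affine-linear change of variables in \eqref{homeoF2tmap} is genuinely invertible (i.e.\ $L_A\in\GL(2d,\mathbf R)$) rather than merely injective, and that pullback by it — together with $\mascF_2$ — is bicontinuous on $\mascS'(\rr{2d})$ in the weak-$*$ (or strong) topology. Since the excerpt already states that both maps in \eqref{homeoF2tmap} are homeomorphisms on $\mascS'(\rr{2d})$, I may simply invoke that, in which case the proof reduces to the one-line composition argument above; I expect the authors' proof to do exactly this.
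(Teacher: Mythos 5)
Your proposal is correct and follows exactly the paper's route: the paper also obtains the lemma by noting that $a\mapsto K_{a,A}$ is a homeomorphism on $\mascS'(\rr{2d})$ (via \eqref{homeoF2tmap}, i.e.\ $\mascF_2$ composed with pullback by the invertible linear change of variables) and then composing with the kernel-theorem isomorphism of Lemma \ref{Lem:KernelThm}. Your extra verification that $L_A\in\GL(2d,\mathbf R)$ for every matrix $A$ is the same routine check the paper leaves implicit.
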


\par

If $I_d\in \GL (d,\mathbf R)$ is the identity matrix and
$A=\frac 12I_d$, then $\op _A(a)$ is equal to the Weyl
pseudo-differential operator $\op ^w(a)$ for $a$. If instead $A=0$, then the standard
(Kohn-Nirenberg) representation $\op (a)=a(x,D)$ is obtained.

\par

As a consequence of Lemma \ref{Lem:KernelPseudoDiffThm},
it follows that for each $a_1\in \mascS '(\rr {2d})$ and $A_1,A_2\in
\GL (d,\mathbf R)$, there is a unique $a_2\in \mascS '(\rr {2d})$ such that
$\op _{A_1}(a_1) = \op _{A_2} (a_2)$. The relationship between $a_1$ and $a_2$
is given by
\begin{equation}\label{Eq:CalculiTransform}
\op _{A_1}(a_1) = \op _{A_2}(a_2)
\quad \Leftrightarrow \quad
e^{i\scal {A_1D_\xi }{D_x}}a_1(x,\xi )
=
e^{i\scal {A_2D_\xi }{D_x}}a_2(x,\xi ),
\end{equation}
cf. \cite{Ho1,Toft17}.
%Note here that the right-hand side makes sense, since
%it is equivalent to $\widehat a_2=e^{i(t_2-t_1)\scal x \xi }\widehat a_1$,
%and that the map $a\mapsto e^{it\scal x \xi }a$ is continuous on
%$\mascS '$.

\par

Let $A\in \GL (d,\mathbf R)$ and $a\in \mascS '(\rr {2d})$ be
fixed. Then $a$ is called a rank-one element with respect to $A$, if
the corresponding pseudo-differential operator is of rank-one,
i.{\,}e.
\begin{equation}\label{Eq:PseudoRankOne}
\op _A(a)f=(2\pi )^{-\frac d2}(f,f_2)_{L^2}f_1, \qquad f\in \mascS(\rr d),
\end{equation}
for some $f_1,f_2\in \mascS '(\rr d)$ and with the $L^2({\bf R}^d)$
inner product $( \cdot , \cdot )_{L^2}$. (We sometimes include factors of
the form $(2\pi)^{\pm \frac d2}$ in order for receiving other convenient
formulae.) By
straight-forward computations it follows that \eqref{Eq:PseudoRankOne}
is fulfilled, if and only if $a=W_{f_1,f_2}^{A}$, where $W_{f_1,f_2}^{A}$
is the $A$-Wigner distribution, defined by the formula
\begin{equation}\label{Eq:WignerDistDef}
W_{f_1,f_2}^{A}(x,\xi ) \equiv \mascF (f_1(x+A\cdo
)\overline{f_2(x-(I-A)\cdo )} )(\xi ),
\end{equation}
which takes the form
$$
W_{f_1,f_2}^{A}(x,\xi ) =(2\pi )^{-\frac d2} \int
f_1(x+Ay)\overline{f_2(x-(I-A)y) }e^{-i\scal y\xi}\, dy,
$$
when $f_1,f_2\in \mascS (\rr d)$. 
From these relations it follows that
\begin{equation}
\label{Eq:AWigPseudoLink2}
\op _A(W_{f_1,f_2}^A) = (2\pi)^{-\frac d2} f_1\otimes \overline {f_2},
\qquad
f_1,f_2\in \mascS '(\rr d),
\end{equation}
when identifying operators with their kernels.

\par

By combining these facts
with \eqref{Eq:CalculiTransform}, it follows that
\begin{equation}\label{Eq:WignerDistRelDiffMat}
e^{i\scal {A_1D_{\xi}}{D_x }} W_{f_1,f_2}^{A_1}
=
e^{i\scal {A_2D_{\xi}}{D_x }}W_{f_1,f_2}^{A_2}
\end{equation}
for each $f_1,f_2\in \mascS '(\rr d)$ and real $d\times d$-matrices
$A_1$ and $A_2$.

\par

For future references we note the link
\begin{multline}\label{Eq:AWigPseudoLink}
(\op _A(a)f,g)_{L^2(\rr d)}=(2\pi )^{-\frac d2}(a,W_{g,f}^A)_{L^2(\rr {2d})},
\\[1ex]
a\in \mascS '(\rr {2d}) \quad\text{and}\quad f,g\in \mascS(\rr d)
\end{multline}
between pseudo-differential operators and Wigner distributions,
which follows by straight-forward computations (see e.{\,}g.
(1.11) in \cite{Toft17}).

\par

Since
the Weyl case is particularly important, we set
$W_{f_1,f_2}^{A}=W_{f_1,f_2}$ when $A=\frac 12\cdot I_d$, i.{\,}e.
$W_{f_1,f_2}$ is the usual (cross-)Wigner distribution of $f_1$ and
$f_2$.
For the Weyl case we also observe the
convenient formula
\begin{equation}
\label{Eq:WeylOpsAdj}
\op ^w(a)^* = \op ^w(\overline a),
\end{equation}
concerning the adjoint of Weyl quantizations
with respect to the $L^2$ form. That is,
$$
(\op ^w(a)f,g)_{L^2(\rr d)}
=
(f, \op ^w(\overline a)g)_{L^2(\rr d)},
\quad
f,g\in \mascS (\rr d).
$$

\par

\subsection{Toeplitz operators}

\par

Next we recall some facts about Toeplitz operators.
The Toeplitz operator $\tp _{\phi _1,\phi _2}(a)$,
with symbol  $a\in \mascS (\rr {2d})$, and window functions
$\phi _1$ and $\phi _2$ in $\mascS (\rr d)$, is defined by
the formula
\begin{equation}\label{Toepdef}
(\tp _{\phi _1,\phi _2}(a)f_1,f_2)_{L^2(\rr d)} = 
(a(2\cdo )W_{f_1, {\phi _1}},W_{f_2, {\phi _2}})_{L^2(\rr {2d})}
\end{equation}
when $f_1,f_2\in \mascS (\rr d)$.

\par

The definition of $\tp
_{\phi _1,\phi _2}(a)$ extends in several ways
(cf.~ e.{\,}g.~ \cite{AbdCorTof,CorGro1,HeWon,Toft3} and the references
therein).
In several of these extensions, we
interpret Toeplitz operators as pseudo-differential operators, using
the fact that
\begin{equation}\label{Eq:ToepWeyl}
\begin{aligned}
\operatorname{Tp}_{\phi _1,\phi _2}(a) &= \op _A(a\ast u)
\quad \text{when}
\\[1ex]
u(X) &= (2\pi)^{-\frac d2}W_{\phi _2,\phi _1}^A(-X).
\end{aligned}
\end{equation}
This formula holds when $\phi _1,\phi _2$ are suitable window functions on $\rr d$, for any 
$A\in \GL (d,\mathbf R)$ and appropriate distribution
$a$ on $\rr {2d}$. Here $*$ is the
usual convolution between Schwartz functions and tempered
distributions, and here we also use
capitals
$$
X=(x,\xi )\in \rr {2d},\quad Y=(y,\eta )\in \rr {2d},
\qquad x,y,\xi ,\eta \in \rr d,
$$
to denote elements in the phase space $\rr {2d}$.

\par

The relation \eqref{Eq:ToepWeyl} is well-known in the Weyl case
(see e.{\,}g. \cite{Fol}), and when $A=tI_d$ for some
$t\in \mathbf R$
(cf. e.{\,}g. \cite[(1.23)]{Toft13} and the references therein). For general
$A\in \GL (d,\mathbf R)$, \eqref{Eq:ToepWeyl} is an immediate
consequence of the case $A=tI_d$, \eqref{Eq:CalculiTransform}
\eqref{Eq:WignerDistRelDiffMat}, and the fact that
$$
e^{i\scal {AD_{\xi}}{D_x }} (a\ast u) =
a * (e^{i\scal {AD_{\xi}}{D_x }} u),
$$
which follows via integration by parts.

\par

\subsection{Schatten-von Neumann classes}
In what follows we recall some facts on Schatten-von Neumann operators,
given in \cite{Sim}. Let $T$ be a linear and continuous map from the
Hilbert space $\maclH _1$
into the Hilbert space $\maclH _2$, {and let $j\ge 1$ be an integer.
Also let
$\mascI _{0,j}(\maclH _1,\maclH _2)$ be the set of all linear and
continuous operators from $\maclH _1$ to $\maclH _2$ with rank
at most $j-1$.
The \emph{singular value} of $T$ of order $j$
%of order $j\in \mathbf Z_+$ is 
is defined by
$$
\sigma _j(T)=\sigma _j(T;\maclH _1,\maclH_2)
\equiv
\inf _{T_0\in \mascI _{0,j}}\nm {T-T_0}{\maclH _1\to \maclH _2},
\qquad j\in \mathbf Z_+ .
$$
%where the infimum is taken over all linear operators $T_0$ from
%$\maclH _1$ to $\maclH _2$ of rank at most $j-1$.
Evidently
$\sigma _j(T;\maclH _1,\maclH_2)$ decreases with $j$, and
$\sigma _1(T;\maclH _1,\maclH_2)$ is equal to the operator
norm $\nm T{\maclH _1\to \maclH _2}$ of $T$.

\par

Throughout the paper, all Hilbert spaces are assumed to be separable,
and observe that this is always the case for Hilbert spaces which
are continuously embedded in $\mascS '(\rr d)$,
in view of \cite[Proposition 1.2]{RaToVi}.
However, we note that most parts of what is described here also
hold when $\maclH _1$ and $\maclH _2$ are allowed to be non-separable.

\par

In the following definition we present a broad family of
Schatten-von Neumann classes.

\par

\begin{defn}\label{Def:GeneralSchattenClasses}
Let $\maclH _1$, $\maclH_2$ be Hilbert spaces,
$T$ be a linear operator from $\maclH _1$ to $\maclH _2$,
and let $\maclB \subseteq \ell _0'(\mathbf Z_+)$ be a quasi-Banach space.
%$\{ a_j\} _{j=1}^\infty$
\begin{enumerate}
\item The $\maclB$ Schatten-von Neumann (quasi-)norm of $T$ is given by
$$
\nm T{\mascI _\maclB}=\nm T{\mascI _\maclB (\maclH _1,\maclH _2)}
\equiv
\nm { \{ \sigma _j(T;\maclH_1,\maclH_2)\} _{j=1}^\infty}{\maclB}.
$$

\vrum

\item The $\maclB$ Schatten-von Neumann class
$\mascI _\maclB = \mascI _\maclB (\maclH _1,\maclH _2)$
consists of all linear and continuous operators $T$ from $\maclH _1$
to $\maclH _2$ such that
$\nm T{\mascI _{\maclB} (\maclH _1,\maclH _2)}$ is finite. 
The topology
of $\mascI _\maclB = \mascI _\maclB (\maclH _1,\maclH _2)$
is given through the (quasi-)norm
$\nm \cdo {\mascI _\maclB (\maclH _1,\maclH _2)}$.
\end{enumerate}
%$\Phi$ be a quasi-Young function.
%\begin{enumerate}
%\item The $\Phi$ Schatten-von Neumann (quasi-)norm of $T$ is given by
%$$
%\nm T{\mascI _\Phi}=\nm T{\mascI _\Phi (\maclH _1,\maclH _2)}
%\equiv
%\nm { \{ \sigma _j(T;\maclH_1,\maclH_2)\} _{j=1}^\infty}{\ell ^\Phi (\mathbf Z_+)}.
%$$
%
%\vrum
%
%\item The $\Phi$ Schatten-von Neumann class
%$\mascI _\Phi = \mascI _\Phi (\maclH _1,\maclH _2)$
%consists of all linear and continuous operators $T$ from $\maclH _1$
%to $\maclH _2$ such that
%$\nm T{\mascI _\Phi (\maclH _1,\maclH _2)}$ is finite. 
%The topology
%of $\mascI _\Phi = \mascI _\Phi (\maclH _1,\maclH _2)$
%is given through the (quasi-)norm
%$\nm \cdo {\mascI _\Phi (\maclH _1,\maclH _2)}$.
%\end{enumerate}
\end{defn}

\par

\begin{defn}
\label{Def:OrliczSchattenClasses}
Let $\Phi$ be a quasi-Young function, $p\in (0,\infty ]$
and let $\maclH _1$ and $\maclH _2$ be Hilbert spaces.
Then let
$$
\mascI _\Phi = \mascI _{\ell ^\Phi},
\quad
\mascI _p = \mascI _{\ell ^p}
\quad \text{and}\quad
\mascI _\sharp = \mascI _{\ell ^\sharp}.
$$
\begin{itemize}
\item
The space $\mascI _\Phi (\maclH _1,\maclH _2)$
is called the Orlicz Schatten-von Neumann class with respect to
$\Phi$, $\maclH _1$ and $\maclH _2$, or the $\Phi$-Schatten class.

\vrum

\item
The  space $\mascI _p (\maclH _1,\maclH _2)$ is called the
(classical) Schatten-von Neumann class with respect to 
$p$, $\maclH _1$ and $\maclH _2$, or the $p$-Schatten class.
\end{itemize}
\end{defn}

\par

%Let $\maclH _1$, $\maclH _2$ be Hilbert spaces, and $p\in (0,\infty ]$.
%Then the classical Schatten-von Neumann class $\mascI _p(\maclH _1,\maclH _2)$
%and its topology are obtained by letting $\mascI _p=\mascI _{\Phi _{[p]}}$
%at each occurrence in Definition \ref{Def:OrliczSchattenClasses}, when
%$\Phi _{[p]}$ is given by \eqref{Eq:OrlGivLeb}.
%

%By the spectral theorem it follows that 
%$\mascI _p(\maclH _1,\maclH _2)$, $\mascI _p(\maclH _1,\maclH _2)$
%and their topologies,
%given in Definition \ref{Def:OrliczSchattenClasses} agree with analogous
%definitions given through \eqref{Eq:OrlGivLeb} and \eqref{Eq:SchattNormIntro}.
We observe that $\mascI _p(\maclH _1,\maclH _2)$ increases with
$p$, and that $\mascI _\Phi (\maclH _1,\maclH _2)$ decreases with $\Phi$.
In fact, for the latter conclusion, it suffices to detect the decreasing property
with respect to $\Phi$
near origin, which is shown in the following proposition.
The result follows from Proposition
\ref{Prop:DiscreteNorms1} and the definitions.
The details are left for the reader.

\par

\begin{prop}\label{Prop:SchattenEmbed}
Let $\maclH _1$ and $\maclH _2$ be Hilbert spaces, and let
$\Phi _1$ and $\Phi _2$ be quasi-Young functions such that for some
$T>0$ it holds
$$
\Phi _2(t)\lesssim \Phi _1(t),
%\quad \text{and}\quad
%\Phi (t)>0,
\quad \text{when}\quad
t\in (0,T].
$$
Then
$\mascI _{\Phi _1}(\maclH _1,\maclH _2)
\hookrightarrow
\mascI _{\Phi _2}(\maclH _1,\maclH _2)$.
\end{prop}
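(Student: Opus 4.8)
The plan is to reduce the statement entirely to the discrete embedding already recorded in Proposition~\ref{Prop:DiscreteNorms1}. The point is that for a fixed pair of Hilbert spaces $\maclH_1,\maclH_2$, the Schatten–von Neumann norm of an operator $T$ is \emph{by definition} (see Definition~\ref{Def:OrliczSchattenClasses} together with Definition~\ref{Def:GeneralSchattenClasses}) nothing but the $\ell^{\Phi}$-norm of the sequence of singular values $\sigma(T)=\{\sigma_j(T;\maclH_1,\maclH_2)\}_{j=1}^\infty\in\ell_0'(\mathbf Z_+)$. Thus the statement to be proved is simply the pointwise-on-sequences inequality $\nm{\sigma(T)}{\ell^{\Phi_2}}\lesssim\nm{\sigma(T)}{\ell^{\Phi_1}}$, valid for \emph{all} sequences, specialized to singular-value sequences; no genuinely operator-theoretic input is needed beyond the definitions.

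Concretely, the steps are as follows. First, observe that the hypothesis $\Phi_2(t)\lesssim\Phi_1(t)$ for $t\in(0,T]$ is exactly the hypothesis~\eqref{Eq:LocYoungEst} of Proposition~\ref{Prop:DiscreteNorms1} with $\Lambda=\mathbf Z_+$ (the behaviour at $t=0$ is irrelevant since $\Phi_1(0)=\Phi_2(0)=0$, and the inequality extends trivially from $(0,T]$ to $[0,T]$). Second, apply Proposition~\ref{Prop:DiscreteNorms1}: it yields the continuous inclusion $\ell^{\Phi_1}(\mathbf Z_+)\hookrightarrow\ell^{\Phi_2}(\mathbf Z_+)$ together with the norm estimate $\nm f{\ell^{\Phi_2}}\lesssim\nm f{\ell^{\Phi_1}}$ for every $f\in\ell^{\Phi_1}(\mathbf Z_+)$. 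Third, let $T\in\mascI_{\Phi_1}(\maclH_1,\maclH_2)$; by definition $\sigma(T)\in\ell^{\Phi_1}(\mathbf Z_+)$ with $\nm T{\mascI_{\Phi_1}}=\nm{\sigma(T)}{\ell^{\Phi_1}}$. Feeding $f=\sigma(T)$ into the inclusion of the previous step gives $\sigma(T)\in\ell^{\Phi_2}(\mathbf Z_+)$, hence $T\in\mascI_{\Phi_2}(\maclH_1,\maclH_2)$, and the norm estimate becomes
\begin{equation*}
\nm T{\mascI_{\Phi_2}(\maclH_1,\maclH_2)}=\nm{\sigma(T)}{\ell^{\Phi_2}}\lesssim\nm{\sigma(T)}{\ell^{\Phi_1}}=\nm T{\mascI_{\Phi_1}(\maclH_1,\maclH_2)}.
\end{equation*}
This establishes both the set inclusion and continuity of the embedding map, which is the content of $\mascI_{\Phi_1}(\maclH_1,\maclH_2)\hookrightarrow\mascI_{\Phi_2}(\maclH_1,\maclH_2)$.

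There is essentially no obstacle here; the proof is a one-line unwinding of definitions followed by citation of Proposition~\ref{Prop:DiscreteNorms1}, which is precisely why the excerpt itself says ``the details are left for the reader.'' The only point requiring the slightest care is the bookkeeping that the $\maclB$-Schatten norm in Definition~\ref{Def:GeneralSchattenClasses} with $\maclB=\ell^{\Phi}$ coincides with $\nm{\sigma(T)}{\ell^{\Phi}}$ — which is immediate — and that Proposition~\ref{Prop:DiscreteNorms1} applies with the discrete space $\Lambda=\mathbf Z_+$. I would therefore write the proof in two or three sentences at most, in the same telegraphic style as the surrounding material, rather than spelling out the chain of equalities above.
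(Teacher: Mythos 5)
Your proof is correct and is exactly the argument the paper intends: the paper itself disposes of this proposition by remarking that it "follows from Proposition \ref{Prop:DiscreteNorms1} and the definitions," i.e.\ by applying the discrete embedding $\ell^{\Phi_1}(\mathbf Z_+)\hookrightarrow\ell^{\Phi_2}(\mathbf Z_+)$ to the singular-value sequence, which is precisely what you do. No gaps; your write-up just spells out the details the paper leaves to the reader.
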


\par

We let $\mascI _0(\maclH _1,\maclH _2)$ be the
space of all linear operators from $\maclH_ 1$ to
$\maclH _2$ with finite rank. Then
$$
\mascI _0(\maclH _1,\maclH _2)
=
\bigcup _{j=1}^\infty \mascI _{0,j}(\maclH _1,\maclH _2),
$$
It follows that
%the $\sharp$ Schatten-von Neumann class
$\mascI _\sharp = \mascI _\sharp (\maclH _1,\maclH _2)$
consists of all $T\in \mascI _\infty (\maclH _1, \maclH _2)$
such that
$$
\{\sigma _j(T;\maclH _1,\maclH _2)\} _{j=1}^\infty \in \ell ^\sharp (\mathbf Z_+),
$$
with topology is given through the norm
$\nm \cdo {\mascI _\infty (\maclH _1,\maclH _2)}$.

\par

We notice that
$$
\mascI _1(\maclH _1,\maclH _2),
\quad
\mascI _2(\maclH _1,\maclH _2),
\quad
\mascI _\sharp (\maclH _1,\maclH _2),
\quad \text{and}\quad
\mascI _\infty(\maclH _1,\maclH _2),
$$
%and their norms,
are the spaces of trace-class, Hilbert-Schmidt, compact, and linear
and continuous operators from $\maclH _1$ to $\maclH _2$, respectively,
also in norms.

\par

We set
$$
\mascI _{\maclB}(\maclH ) = \mascI _{\maclB}(\maclH ,\maclH),
\quad
\mascI _p(\maclH ) = \mascI _p(\maclH ,\maclH)
\quad \text{and}\quad
\mascI _\Phi (\maclH ) = \mascI _\Phi (\maclH ,\maclH)
$$
when $\maclH$ is a Hilbert space.
By straight-forward application of the spectral theorem, it follows
that the definition of $\mascI _p(\maclH )$, $\mascI _\Phi (\maclH )$
and their norms
in Definition \ref{Def:OrliczSchattenClasses} agree
with the definitions of those quantities in the introduction.
In fact, we have the following.

\par

\begin{prop}
\label{Prop:OrlSchattTop}
Let $\maclH _1$ and $\maclH _2$ be Hilbert spaces,
and $\Phi$ be a quasi-Young function of order $r\in (0,1]$.
Then $\mascI _\Phi (\maclH _1,\maclH _2)$ is a quasi-Banach
space of order $r$, and
\begin{equation}
\label{Eq:AltOrlSchattNorm}
\nm T{\mascI _\Phi}
=
\sup \nm { \{ (Tf_j,g_j)_{\maclH _2}\}_{j=1}^\infty}{\ell ^\Phi (\mathbf Z_+)},
\quad
T\in \mascI _\infty (\maclH _1,\maclH _2).
\end{equation}
Here the supremum is taken over all orthonormal sequences
$\{ f_j\} _{j=1}^\infty \in \ON (\maclH _1)$
and
$\{ g_j\} _{j=1}^\infty \in \ON (\maclH _2)$.
\end{prop}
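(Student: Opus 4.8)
The plan is to establish \eqref{Eq:AltOrlSchattNorm} first, and then derive the quasi-Banach property from it together with the known structure of $\ell^\Phi(\mathbf Z_+)$. For the norm identity, recall that any $T\in\mascI_\infty(\maclH_1,\maclH_2)$ admits a Schmidt (singular-value) decomposition: there are orthonormal sequences $\{e_j\}$ in $\maclH_1$ and $\{h_j\}$ in $\maclH_2$ with $Tf=\sum_j \sigma_j(T)(f,e_j)_{\maclH_1}h_j$. Choosing $f_j=e_j$ and $g_j=h_j$ gives $(Tf_j,g_j)_{\maclH_2}=\sigma_j(T)$, so the right-hand side of \eqref{Eq:AltOrlSchattNorm} is at least $\nm{\{\sigma_j(T)\}}{\ell^\Phi}=\nm T{\mascI_\Phi}$. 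For the reverse inequality, fix arbitrary orthonormal sequences $\{f_j\}\in\ON(\maclH_1)$, $\{g_j\}\in\ON(\maclH_2)$ and set $c_j=(Tf_j,g_j)_{\maclH_2}$. The key classical fact (a consequence of Ky Fan's maximum principle / the characterization of singular values) is that the sequence $\{|c_j|\}$ is dominated, in the sense of Hardy--Littlewood--Pólya majorization of partial sums of decreasing rearrangements, by $\{\sigma_j(T)\}$: $\sum_{j=1}^{N}|c_j|^\ast\le\sum_{j=1}^{N}\sigma_j(T)$ for all $N$, where $\ast$ denotes the decreasing rearrangement. Since $\ell^\Phi(\mathbf Z_+)$ is a rearrangement-invariant space whose (quasi-)norm is monotone under such majorization — this is where the convexity of $\Phi_0$ (with $\Phi(t)=\Phi_0(t^r)$) enters, via the usual argument that $\sum\Phi_0(a_j)\le\sum\Phi_0(b_j)$ whenever $\{a_j\}$ is majorized by $\{b_j\}$ and $\Phi_0$ is convex increasing with $\Phi_0(0)=0$ — we get $\nm{\{c_j\}}{\ell^\Phi}\le\nm{\{\sigma_j(T)\}}{\ell^\Phi}=\nm T{\mascI_\Phi}$. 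Taking the supremum over all such sequences yields the claimed equality.

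For the quasi-Banach property, note first that $\mascI_\Phi=\mascI_{\ell^\Phi}$ is defined as in Definition \ref{Def:GeneralSchattenClasses} with $\maclB=\ell^\Phi(\mathbf Z_+)$, which is a quasi-Banach space of order $r$. That $\nm\cdo{\mascI_\Phi}$ satisfies the homogeneity \eqref{Eq:QBanDilNorm} is immediate from $\sigma_j(\alpha T)=|\alpha|\sigma_j(T)$. For the $r$-triangle inequality \ref{Eq:QBanTriaIneq2}, I would use the representation \eqref{Eq:AltOrlSchattNorm}: for $S,T\in\mascI_\infty$ and fixed orthonormal $\{f_j\},\{g_j\}$, one has $|((S+T)f_j,g_j)_{\maclH_2}|\le|(Sf_j,g_j)_{\maclH_2}|+|(Tf_j,g_j)_{\maclH_2}|$ pointwise in $j$, hence, using monotonicity of the $\ell^\Phi$-norm and its $r$-triangle inequality (which holds since $\ell^\Phi$ is $r$-Banach), $\nm{\{((S+T)f_j,g_j)\}}{\ell^\Phi}^r\le\nm{\{(Sf_j,g_j)\}}{\ell^\Phi}^r+\nm{\{(Tf_j,g_j)\}}{\ell^\Phi}^r\le\nm S{\mascI_\Phi}^r+\nm T{\mascI_\Phi}^r$; taking the supremum over $\{f_j\},\{g_j\}$ gives \ref{Eq:QBanTriaIneq2} for $\mascI_\Phi$. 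Positive-definiteness is clear since $\nm T{\mascI_\Phi}=0$ forces all singular values to vanish. Completeness follows because $\mascI_\Phi\hookrightarrow\mascI_\infty$ (from Proposition \ref{Prop:SchattenEmbed}, since $\Phi(t)\gtrsim\Phi_{[\infty]}(t)$ near origin, or more directly since $\sigma_1(T)=\nm{\{\sigma_j(T)\}}{\ell^\infty}\lesssim\nm T{\mascI_\Phi}$ by Proposition \ref{Prop:DiscreteNorms1}), so a Cauchy sequence in $\mascI_\Phi$ converges in operator norm to some $T\in\mascI_\infty$; a standard Fatou-type argument applied to $\{\sigma_j\}$ — using lower semicontinuity of each $\sigma_j$ under operator-norm convergence together with the Fatou property of $\ell^\Phi$ — shows $T\in\mascI_\Phi$ and $T_n\to T$ in the $\mascI_\Phi$ quasi-norm.

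I expect the main obstacle to be the majorization step underlying the reverse inequality in \eqref{Eq:AltOrlSchattNorm}, i.e.\ verifying carefully that $\sum_{j\le N}|(Tf_j,g_j)|^\ast\le\sum_{j\le N}\sigma_j(T)$ for arbitrary (not necessarily complete, not necessarily aligned) orthonormal sequences, and then that the $\ell^\Phi$ quasi-norm respects this majorization. The first part is classical — it can be extracted from Simon's book \cite{Sim} (it is essentially the statement that the singular values of $PTQ$ for partial isometries $P,Q$ are dominated by those of $T$, combined with a diagonal-extraction argument) — but stating it in the exact generality needed (with a pair of sequences, not a single one, and truncation to finite $N$) requires some care. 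The second part, monotonicity of $\nm\cdo{\ell^\Phi}$ under Hardy--Littlewood--Pólya majorization, reduces to the convexity of $\Phi_0$ via Karamata's inequality applied termwise after rearrangement; for the quasi-Young case one passes through $\Phi(t)=\Phi_0(t^r)$ and works with $|c_j|^r$. Everything else — homogeneity, the $r$-triangle inequality, and completeness — is then a routine transcription of the corresponding facts for $\ell^\Phi(\mathbf Z_+)$ through the representation \eqref{Eq:AltOrlSchattNorm}.
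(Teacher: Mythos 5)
For Young functions $\Phi$ your argument is correct and is in substance the paper's own: the paper obtains \eqref{Eq:AltOrlSchattNorm} by citing \cite[Proposition 2.6]{Sim}, and what you do is unpack that citation (the singular value decomposition gives the lower bound; Ky Fan weak majorization together with monotonicity of the convex Orlicz norm under Hardy--Littlewood--P\'olya majorization gives the upper bound). Your derivation of the $r$-triangle inequality from \eqref{Eq:AltOrlSchattNorm} is exactly the paper's second step, and your remarks on homogeneity, positive definiteness and completeness supply details the paper leaves implicit; these parts are fine.

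The genuine gap is the quasi-Young case of order $r<1$, which the statement covers. Weak majorization $\sum_{j\le N}|c_j|^*\le\sum_{j\le N}\sigma_j(T)$ is not preserved under the concave map $t\mapsto t^r$, so ``working with $|c_j|^r$'' does not reduce matters to the convex case: majorization of $\{|c_j|\}$ by $\{\sigma_j(T)\}$ does not give $\sum_j\Phi_0(|c_j|^r)\le\sum_j\Phi_0(\sigma_j(T)^r)$. Worse, the upper bound you need fails outright for $\Phi(t)=t^p$ with $p<1$ (a quasi-Young function of order $p$) when $\maclH_1,\maclH_2$ are infinite-dimensional: let $Tf=(f,e)_{\maclH_1}h$ with unit vectors $e\in\maclH_1$, $h\in\maclH_2$, so that $\nm T{\mascI_\Phi}=1$, and choose orthonormal $f_1,\dots,f_N$ and $g_1,\dots,g_N$ with $|(f_j,e)_{\maclH_1}|=|(h,g_j)_{\maclH_2}|=N^{-1/2}$; then $|(Tf_j,g_j)_{\maclH_2}|=N^{-1}$ and $\sum_{j\le N}|(Tf_j,g_j)_{\maclH_2}|^p=N^{1-p}\to\infty$, so the supremum on the right of \eqref{Eq:AltOrlSchattNorm} is infinite while the left-hand side equals $1$. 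Hence no majorization argument can close this step, and your route establishes the identity only for Young $\Phi$. To be fair, the paper's proof does not really resolve this either --- it appeals to \cite[Proposition 2.6]{Sim} (proved there for symmetric, i.e.\ convex, norming functions) ``and its proof'' with details left to the reader --- but since both you and the paper obtain the $r$-triangle inequality \emph{through} \eqref{Eq:AltOrlSchattNorm}, a complete argument for $r<1$ must instead prove the quasi-norm and completeness properties of $\mascI_\Phi$ directly from the singular values, e.g.\ via $\sigma_{j+k-1}(T_1+T_2)\le\sigma_j(T_1)+\sigma_k(T_2)$ and Rotfel'd-type estimates, rather than through the supremum formula.
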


\par

\begin{proof}
If $\Phi$ is a Young function, then the identity
\eqref{Eq:AltOrlSchattNorm} follows from  \cite[Proposition 2.6]{Sim}.
For general quasi-Young functions $\Phi$,
\eqref{Eq:AltOrlSchattNorm} follows as well from  \cite[Proposition 2.6]{Sim},
and its proof. The details are left for the reader.
%
%spectral theorem in
%combination with the identity
%$$
%\sigma _j(T)=\sigma _j(U_2\circ T\circ U_1),
%\quad
%j\ge 1,
%$$
%when $U_k$ are unitary mappings on $\maclH _k$, $k=1,2$.
%(Cf..)

\par

If $T_1,T_2\in \mascI _\infty (\maclH _1,\maclH _2)$, then
\eqref{Eq:AltOrlSchattNorm} gives
\begin{align*}
\nm {T_1+T_2}{\mascI _\Phi}^r
&=
\sup \nm { \{ ((T_1+T_2)f_j,g_j)_{\maclH _2}\}_{j=1}^\infty}{\ell ^\Phi}^r
\\[1ex]
&\le
\sup \left (
\nm { \{ (T_1f_j,g_j)_{\maclH _2}\}_{j=1}^\infty}{\ell ^\Phi}^r
+
\nm { \{ (T_2f_j,g_j)_{\maclH _2}\}_{j=1}^\infty}{\ell ^\Phi}^r
\right )
\\[1ex]
&\le
\sup \nm { \{ (T_1f_j,g_j)_{\maclH _2}\}_{j=1}^\infty}{\ell ^\Phi}^r
+
\sup \nm { \{ (T_2f_j,g_j)_{\maclH _2}\}_{j=1}^\infty}{\ell ^\Phi}^r
\\[1ex]
&=
\nm {T_1}{\mascI _\Phi}^r + \nm {T_2}{\mascI _\Phi}^r,
\end{align*}
which gives the result.
\end{proof}

\par

\begin{rem}
Let $\maclH _1$ and $\maclH _2$ be Hilbert spaces, and let
$\maclB$ be a quasi-Banach space of sequences
%$\{ a_j\} _{j=1}^\infty$
on $\mathbf Z_+$ of order $r\in (0,1]$.
%Let $\mascI _{\maclB}(\maclH _1,\maclH _2)$,
%Schatten-von Neumann class with respect
%to $\maclB$, be the set of all $T\in \mascI _\infty (\maclH _1,\maclH _2)$
%such that $\{ \sigma _j(T)\} _{j=1}^\infty \in \maclB$. We equip
%$\mascI _{\maclB}(\maclH _1,\maclH _2)$ with the norm
%$$
%\nm T{\mascI _{\maclB}}
%=
%\nm T{\mascI _{\maclB} (\maclH _1,\maclH _2)}
%\equiv
%\nm { \{ \sigma _j(T)\} _{j=1}^\infty}{\maclB},
%\quad
%T\in \mascI _{\maclB} (\maclH _1,\maclH _2).
%$$
By similar arguments as in the proof of
Proposition \ref{Prop:OrlSchattTop} we see that
$\mascI _{\maclB} (\maclH _1,\maclH _2)$ is a quasi-Banach
space of the same order as $\maclB$, and
\begin{equation}
%\label{Eq:AltOrlSchattNorm}
\nm T{\mascI _{\maclB}} = \sup \nm { \{ (Tf_j,g_j)_{\maclH _2}\}_{j=1}^\infty}{\maclB},
\quad
T\in \mascI _\infty (\maclH _1,\maclH _2).
\end{equation}
Here the supremum is again taken over all orthonormal sequences
$\{ f_j\} _{j=1}^\infty \in \ON (\maclH _1)$
and
$\{ g_j\} _{j=1}^\infty \in \ON (\maclH _2)$.
(See also
\cite[Proposition 2.6]{Sim},
and its proof.)

\par

%Let $r\in (0,1]$ be the order of $\maclB$, and s
Suppose additionally
that the following holds true:
\begin{enumerate}
\item $\maclB$ is norm invariant under permutations, i.{\,}e.,
$$
\nm a{\maclB}=\nm b{\maclB},
$$ 
for all sequences $\{ a_j\} _{j=1}^\infty$ and $\{ b_j\} _{j=1}^\infty$
such that $b_j=a_{\sigma (j)}$, $j\in \mathbf Z_+$, for some permutation
$\sigma$ on $\mathbf Z_+$;

\vrum

\item $e_{j_0}\equiv
\{ \delta _{j,j_0}\} _{j=1}^\infty \in \maclB$ for some $j_0\in \mathbf Z_+$.
\end{enumerate}
Then $\mascI _r(\maclH _1,\maclH _2)\subseteq \mascI _{\maclB}(\maclH _1,\maclH _2)$,
and
\begin{equation}
\label{Eq:BSchattClassSchattEst}
\nm T{\mascI _{\maclB}(\maclH _1,\maclH _2)}
\le
c\nm T{\mascI _r(\maclH _1,\maclH _2)},
\quad
c=\nm {e_{j_0}}{\maclB}.
\end{equation}

\par

In fact, by a combination of (1) and (2) it follows that
$$
c=\nm {e_{j_0}}{\maclB}
$$
for \emph{every} $j_0\in \mathbf Z_+$. This gives
\begin{align*}
\nm T{\mascI _{\maclB}}^r
&=
\nm {\{ \sigma _j(T)\} _{j=1}^\infty}{\maclB}^r
=
\NM {\sum_{j=1}^\infty \sigma _j(T)e_j}{\maclB}^r
\\[1ex]
&\le
\sum _{j=1}^\infty \sigma _j(T)^r\nm {e_j}{\maclB}^r
=
c^r\sum _{j=1}^\infty \sigma _j(T)^r
=
c^r\nm T{\mascI _r}^r,
\end{align*}
and the assertion follows.
\end{rem}

\par

We observe that (1) and (2) hold true when
$\maclB = \ell ^\Phi (\mathbf Z_+)$ for some quasi-Young function $\Phi$.
Hence \eqref{Eq:BSchattClassSchattEst} gives
\begin{equation}
\tag*{(\ref{Eq:BSchattClassSchattEst})$'$}
\label{Eq:BSchattClassSchattEstOrl}
\nm T{\mascI _{\Phi}(\maclH _1,\maclH _2)}
\le
\frac 1{\Phi ^{-1}(1)}\nm T{\mascI _r(\maclH _1,\maclH _2)},
\end{equation}
provided the order of $\Phi$ is equal to $r$.

\par

By Proposition \ref{Prop:OrlSchattTop} it follows that
$$
\mascI _\Phi (\maclH _1,\maclH _2),
\quad
\mascI _p (\maclH _1,\maclH _2)
\quad \text{and}\quad
\mascI _\sharp (\maclH _1,\maclH _2)
$$
are Banach spaces when $\Phi$ is a Young function
and $p\in [1,\infty ]$.

\par

Properties valid for both $\mascI _\infty (\maclH _1,\maclH _2)$ 
and $\mascI _{0,j} (\maclH _1,\maclH _2)$ sometimes impose
similar properties for Schatten-von Neumann classes, which is indicated
in the following proposition. 

\par

\begin{prop}
\label{Prop:ContSchattInvariance}
Let $\maclH _1$ and $\maclH _2$ be Hilbert spaces, and let
$\maclB$ be a quasi-Banach space of sequences on $\mathbf Z_+$.
Suppose that $\maclL$ is a linear map on $\mascI _\infty (\maclH _1,\maclH _2)$
such that the following conditions are fulfilled:
\begin{itemize}
\item $\maclL$ is a bijective isometry on $\mascI _\infty (\maclH _1,\maclH _2)$;
%there are constants $C_1,C_2>0$ such that 
%$$
%C_1\nm T{\mascI _\infty}\le \nm {\maclL T}{\mascI _\infty}\le C_2\nm T{\mascI _\infty},
%\quad
%T\in \mascI _\infty (\maclH _1,\maclH _2)\text ;
%$$

\vrum

\item $\maclL$ restricts to a bijection on $\mascI _{0,j} (\maclH _1,\maclH _2)$
for every $j\in \mathbf Z_+$.
\end{itemize}
Then the following is true:
\begin{enumerate}
\item if $T\in \mascI _\infty (\maclH _1,\maclH _2)$ and $j\in \mathbf Z_+$,
then $\sigma _j(\maclL (T))=\sigma _j(T)$;

\vrum

\item $\maclL$ restricts to a bijective isometry on
$\mascI _{\maclB} (\maclH _1,\maclH _2)$.
\end{enumerate}
\end{prop}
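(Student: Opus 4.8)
The plan is to prove (1) first, since (2) then falls out immediately from the very definition of the $\maclB$-Schatten norm. For (1), the key observation is that the operator norm $\nm{\cdot}{\maclH _1\to \maclH _2}$, when restricted to $\mascI _\infty(\maclH _1,\maclH _2)$, coincides with $\sigma _1=\nm{\cdot}{\mascI _\infty}$; hence the hypothesis that $\maclL$ is an isometry on $\mascI _\infty$ says exactly that $\nm{\maclL (S)}{\maclH _1\to\maclH _2}=\nm S{\maclH _1\to\maclH _2}$ for every $S\in \mascI _\infty (\maclH _1,\maclH _2)$. Fix $T\in \mascI _\infty (\maclH _1,\maclH _2)$ and $j\in \mathbf Z_+$. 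Every $T_0\in \mascI _{0,j}(\maclH _1,\maclH _2)$ is of finite rank, hence compact, so $T-T_0\in \mascI _\infty$ and, by linearity of $\maclL$ together with the isometry property, $\nm{\maclL (T)-\maclL (T_0)}{\maclH _1\to\maclH _2}=\nm{\maclL (T-T_0)}{\maclH _1\to\maclH _2}=\nm{T-T_0}{\maclH _1\to\maclH _2}$. Since $\maclL$ restricts to a \emph{bijection} of $\mascI _{0,j}(\maclH _1,\maclH _2)$, the image $\maclL (T_0)$ runs through all of $\mascI _{0,j}(\maclH _1,\maclH _2)$ as $T_0$ does; consequently
$$
\sigma _j(\maclL (T))=\inf _{S_0\in \mascI _{0,j}}\nm{\maclL (T)-S_0}{\maclH _1\to\maclH _2}=\inf _{T_0\in \mascI _{0,j}}\nm{\maclL (T)-\maclL (T_0)}{\maclH _1\to\maclH _2}=\inf _{T_0\in \mascI _{0,j}}\nm{T-T_0}{\maclH _1\to\maclH _2}=\sigma _j(T),
$$
which is (1).

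For (2), part (1) shows that $T$ and $\maclL (T)$ have identical singular-value sequences, so by Definition \ref{Def:GeneralSchattenClasses} one has $\nm{\maclL (T)}{\mascI _\maclB}=\nm{\{\sigma _j(\maclL (T))\}_{j}}{\maclB}=\nm{\{\sigma _j(T)\}_{j}}{\maclB}=\nm T{\mascI _\maclB}$ for every $T\in \mascI _\infty (\maclH _1,\maclH _2)$; in particular $\maclL$ maps $\mascI _\maclB(\maclH _1,\maclH _2)$ into itself. Now $\maclL^{-1}$ is again a bijective isometry on $\mascI _\infty (\maclH _1,\maclH _2)$ and, being the inverse of a bijection, also restricts to a bijection on each $\mascI _{0,j}(\maclH _1,\maclH _2)$; applying the previous paragraph to $\maclL^{-1}$ gives $\nm{\maclL^{-1}(T)}{\mascI _\maclB}=\nm T{\mascI _\maclB}$ and $\maclL^{-1}(\mascI _\maclB)\subseteq \mascI _\maclB$. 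Combining the two inclusions, $\maclL$ restricts to a norm-preserving bijection of $\mascI _\maclB(\maclH _1,\maclH _2)$ onto itself, which is (2).

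The argument involves no hard analysis; the only points that require care are purely organizational. First, one must invoke "isometry on $\mascI _\infty$" with the correct norm — the operator norm, equal to $\sigma _1$ — so that the infimum defining $\sigma _j$ can legitimately be transported through $\maclL$. Second, one must check that $\maclL^{-1}$ inherits both hypotheses (bijective isometry on $\mascI _\infty$, and a bijection on each $\mascI _{0,j}$), since otherwise (1) would only yield an isometric embedding of $\mascI _\maclB$ rather than a bijection. Both are routine to verify, so I expect the proof to be short.
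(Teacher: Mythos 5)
Your proposal is correct and follows essentially the same route as the paper: the same transport of the infimum defining $\sigma _j$ through $\maclL$ using the isometry on $\mascI _\infty$ and the bijection on each $\mascI _{0,j}$, and then (2) from the definition of $\nm \cdo {\mascI _{\maclB}}$ (the paper leaves the details, including the check for $\maclL ^{-1}$, to the reader, which you simply spell out). No gaps.
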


\par

\begin{proof}
For any $j\in \mathbf Z_+$ we have
\begin{align*}
\sigma _j(\maclL (T))
&=
\inf _{T_0\in \mascI _{0,j}}\nm {\maclL (T)-T_0}{\mascI _\infty}
=
\inf _{T_0\in \mascI _{0,j}}\nm {\maclL (T-T_0)}{\mascI _\infty}
\\[1ex]
&=
\inf _{T_0\in \mascI _{0,j}}\nm {T-T_0}{\mascI _\infty}
=
\sigma _j(T),
\end{align*}
and (1) follows.

\par

The assertion (2) follows from the definitions and (1).
\end{proof}

%Let $\Phi _{[p]}$ be the same as in Remark \ref{Rem:PhiLeb}.
%Then $\ell ^{\Phi _{[p]}}(\mathbf Z_+)=\ell ^p(\mathbf Z_+)$, also in norms. Hence
%$\mascI _{\Phi _{[p]}}(\maclH _1,\maclH _2)$ and its norm, is equal to
%$\mascI _p(\maclH _1,\maclH _2)$ and its norm.
%
%\par

%We recall that
%$$
%\mascI _1(\maclH _1,\maclH _2),
%\quad
%\mascI _2(\maclH _1,\maclH _2),
%\quad
%\mascI _\sharp (\maclH _1,\maclH _2),
%\quad \text{and}\quad
%\mascI _\infty (\maclH _1,\maclH _2)
%$$
%are the sets of trace-class, Hilbert-Schmidt, linear compact,
%and linear continuous operators from $\maclH _1$ and $\maclH _2$.
We observe that H{\"o}lder's inequality holds for compositions. That is,
if the quasi-Young functions $\Phi _k$ satisfy
\begin{equation}\label{Eq:HolderYoungFuncCond}
\Phi _0(t_1t_2) \le \Phi _1(t_1) +\Phi _2(t_2),
\qquad
t_1,t_2\in [0,t_0],
\end{equation}
for some $t_0>0$, and
\begin{equation}\label{Eq:TkDefs}
T_k\in \mascI _{\Phi _k}(\maclH _{k-1},\maclH _k),\quad k=1,2,
\end{equation}
for some Hilbert spaces $\maclH _k$, $k=0,1,2$, then 
\begin{equation}\label{Eq:HolderOrliczComp}
\begin{gathered}
T_2\circ T_1 \in \mascI _{\Phi _0}(\maclH _0,\maclH _2)
\\[1ex]
\text{and}\quad
\nm {T_2\circ T_1}{\mascI _{\Phi _0}(\maclH _0,\maclH _2)}
\le
2\nm {T_1}{\mascI _{\Phi _1}(\maclH _0,\maclH _1)}
\nm {T_2}{\mascI _{\Phi _2}(\maclH _1,\maclH _2)}.
\end{gathered}
\end{equation}
(See e.{\,}g. \cite[3.3, Lemma 6]{RaoRen1} in combination with
\cite[Theorem 2.8]{Sim} and their proofs.)
Another condition
on the quasi-Young functions $\Phi _0$, $\Phi _1$ and $\Phi _2$
to guarantee that \eqref{Eq:HolderOrliczComp} should hold is
\begin{equation}\label{Eq:HolderYoungFuncCond2}
\Phi _1^{-1}(s)\Phi _2^{-1}(s) \le \Phi _0^{-1}(s).
\end{equation}
Here we observe that the condition
\eqref{Eq:HolderYoungFuncCond2}
implies \eqref{Eq:HolderYoungFuncCond}.
For the reader's convenience we have included a proof of this
fact in Appendix \ref{App:B}
(see Proposition \ref{Prop:HolderYoungFuncCondEsts}).

%\begin{prop}\label{Prop:SchattenEmbed}
%Let $\maclH _1$ and $\maclH _2$ be Hilbert spaces, and let
%$\Phi _1$, $\Phi _2$,  and $\Phi$ be quasi-Young functions such that for some
%$t_0>0$ it holds
%$$
%\Phi _2(t)\le C\Phi _1(t),
%%\quad \text{and}\quad
%%\Phi (t)>0,
%\quad \text{when}\quad
%t\in (0,t_0],
%$$
%for some constant $C>0$. Then the following is true:
%\begin{enumerate}
%\item $\mascI _{\Phi _1}(\maclH _1,\maclH _2)$ is continuously embedded in
%$\mascI _{\Phi _2}(\maclH _1,\maclH _2)$;
%
%\vrum
%
%\item if $\Phi (t)>0$ when $t>0$, then the following is true:
%\begin{itemize}
%\item $\mascI _{\Phi}(\maclH _1,\maclH _2)$ is continuously embedded in
%$\mascI _\sharp (\maclH _1,\maclH _2)$;
%
%\vrum
%
%\item $\mascI _0(\maclH _1,\maclH _2)$ is dense in
%$\mascI _{\Phi}(\maclH _1,\maclH _2)$;
%%
%%\vrum
%%
%%\item if in addition $\Phi$ is a Young function, then
%%the dual of $\mascI _{\Phi}(\maclH _1,\maclH _2)$
%%is given by $\mascI _{\Phi}(\maclH _1,\maclH _2)$
%\end{itemize}
%
%\vrum
%
%\item if $\Phi (t_0)=0$, then $\mascI _{\Phi}(\maclH _1,\maclH _2)$
%is equal to $\mascI _\infty (\maclH _1,\maclH _2)$, with
%equivalent (quasi-)norms.
%\end{enumerate}
%\end{prop}

\par

%Since positivity outside origin for quasi-Young functions play important
%role for discrete Orlicz spaces as well as for Orlicz Schatten-von Neumann
%classes, we say that the quasi-Young function $\Phi$ is \emph{positive}
%when $\Phi (t)>0$ for every $t>0$.
%
%\par

By choosing the quasi-Young functions as in Remark \ref{Rem:PhiLeb},
where the involved Lebesgue exponents should satisfy the
usual H{\"o}lder condition
%the relations
% of the form \eqref{Eq:LebYoungFunc}, the relations
\eqref{Eq:HolderYoungFuncCond}--\eqref{Eq:HolderOrliczComp},
we may avoid the factor $2$ in \eqref{Eq:HolderOrliczComp},
giving the usual H{\"o}lder's inequality for operator compositions.
That is, we have
$$
T_2\circ T_1 \in \mascI _{p_0}(\maclH _0,\maclH _2)
$$
and 
$$%\quad \text{and}\quad
\nm {T_2\circ T_1}{\mascI _{p_0}(\maclH _0,\maclH _2)}
\le
\nm {T_1}{\mascI _{p_1}(\maclH _0,\maclH _1)}
\nm {T_2}{\mascI _{p_2}(\maclH _1,\maclH _2)}, 
$$
when
$$
p_0,p_1,p_2\in (0,\infty ],
\quad
\frac 1{p_1}+\frac 1{p_2} = \frac 1{p_0}
\quad \text{and}\quad
T_k\in \mascI _{p_k}(\maclH _{k-1},\maclH _k),\ k=1,2.
$$

\par

For the adjoint $T^*$ and the absolute value $|T|\equiv (T^*\circ T)^{\frac 12}$
of $T\in \mascI _\infty (\maclH _1,\maclH _2)$, we have
$$
T\in \mascI _\Phi (\maclH _1,\maclH _2)
\quad \Leftrightarrow \quad
T^*\in \mascI _\Phi (\maclH _2,\maclH _1)
\quad \Leftrightarrow \quad
|T|\in \mascI _\Phi (\maclH _1),
$$
%when $p\in [1,\infty ]$ and $\Phi$ is a Young function. More specifically,
when $\Phi$ is a quasi-Young function. Moreover,
$$
\nm T{\mascI _\Phi (\maclH _1,\maclH _2)}
=
\nm {T^*}{\mascI _\Phi (\maclH _2,\maclH _1)}
=
\nm {\, |T|\, }{\mascI _\Phi (\maclH _1)}.
$$

\par

We recall that for any trace-class operator $T\in \mascI _1(\maclH )$
acting on the Hilbert space $\maclH$, the trace
$$
\Tr (T) = \Tr _{\maclH}(T) \equiv \sum _{j=1}^\infty (Tf_j,f_j)_\maclH
$$
is well-defined. The trace is
independent of the choice of orthonormal basis $\{ f_j\} _{j=1}^\infty$ in $\maclH$,
and satisfies
$$
|\Tr _{\maclH}(T)| \le \nm T{\mascI _1(\maclH)},
$$
with equality, if and only if there is a non-zero constant $c\in \mathbf C$ such that
$cT$ is a positive semi-definite operator. Hence,
if $\Phi$ is a Young function, then \eqref{Eq:HolderOrliczComp}
shows that
$$
(T_1,T_2)_{\mascI _2} \equiv \Tr _{\maclH _1}(T_2^*\circ T_1)
$$
is well-defined when $T_1\in \mascI _{\Phi}(\maclH _1,\maclH _2)$
and $T_2\in \mascI _{\Phi ^*}(\maclH _1,\maclH _2)$, and
\begin{equation}
\label{Eq:OpScalProd}
|(T_1,T_2)_{\mascI _2}|
\le
2\nm {T_1}{\mascI _{\Phi}(\maclH _1,\maclH _2)}
\nm {T_2}{\mascI _{\Phi ^*}(\maclH _1,\maclH _2)}.
%\quad 
%T_1\in \mascI _{\Phi}(\maclH _1,\maclH _2),\ 
%T_2\in \mascI _{\Phi ^*}(\maclH _1,\maclH _2),
\end{equation}
If $p,p'\in [1,\infty ]$, $T_1\in \mascI _{p}(\maclH _1,\maclH _2)$
and $T_2\in \mascI _{p'}(\maclH _1,\maclH _2)$, then
\eqref{Eq:OpScalProd} is improved into
$$
|(T_1,T_2)_{\mascI _2}|
\le
\nm {T_1}{\mascI _{p}(\maclH _1,\maclH _2)}
\nm {T_2}{\mascI _{p'}(\maclH _1,\maclH _2)}.
%\quad 
%T_1\in \mascI _{p}(\maclH _1,\maclH _2),\ 
%T_2\in \mascI _{p'}(\maclH _1,\maclH _2).
$$
We also remark that $\mascI _2(\maclH _1,\maclH _2)$
is a Hilbert space with the scalar product $(\cdo ,\cdo )_{\mascI _2}$
(cf. Remark \ref{Rem:PhiLeb}).

%when
%\quad \text{when}\quad

\par

The following two propositions are straight-forward consequences of 
the spectral theorem, and Propositions \ref{Prop:OrlDuality}
and \ref{Prop:DiscrNmLocDelCond1}.
(See e.{\,}g. \cite{Sim}.) Here and in what
follows we identify operators with their kernels, and we let $\ON (\maclH)$
be the set of all orthonormal sets in the Hilbert space $\maclH$.

\par

\begin{prop}\label{Prop:SpectralComp}
Let $T$ be a linear operator from the Hilbert space $\maclH _1$ to
the Hilbert space $\maclH _2$.
Then $T\in \mascI _\sharp (\maclH _1,\maclH _2)$,
if and only if there is a non-negative decreasing sequence
$\lambda =\{ \lambda _j\} _{j=1}^\infty \in \ell ^\sharp (\mathbf Z_+)$,
$\{ f_{k,j} \} _{j=1}^\infty \in \ON (\maclH _k)$, $k=1,2$, and such that
\begin{equation}\label{Eq:SpectExp0}
T=\sum _{j=1}^\infty \lambda _jf_{2,j}\otimes f_{1,j},
\qquad
\nm T{\mascI _\sharp (\maclH _1,\maclH _2)} = \nm \lambda{\ell ^\sharp}
\text .
\end{equation}
\end{prop}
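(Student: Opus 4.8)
The plan is to derive Proposition \ref{Prop:SpectralComp} essentially as a refinement of the standard singular value (Schmidt) decomposition for compact operators, keeping careful track of which decay condition on the singular values corresponds to membership in $\mascI _\sharp$.

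First I would recall that $\mascI _\sharp (\maclH _1,\maclH _2)$ was defined as the set of $T\in \mascI _\infty (\maclH _1,\maclH _2)$ with $\{\sigma _j(T)\} _{j=1}^\infty \in \ell ^\sharp (\mathbf Z_+)$, i.{\,}e.\ with $\sigma _j(T)\to 0$, and we already noted that $\mascI _\infty$ is precisely the compact operators and $\mascI _\sharp$ is a reformulation of the same class with the topology of $\mascI _\infty$. So for the ``only if'' direction, given such a $T$, I would invoke the spectral theorem for the non-negative compact operator $|T|=(T^*T)^{1/2}$ on $\maclH _1$: there is an orthonormal set $\{ f_{1,j}\} _{j=1}^\infty \subseteq \maclH _1$ of eigenvectors of $|T|$ with eigenvalues $\mu _j\ge 0$, which may be arranged to be non-increasing, and $|T|=\sum _j\mu _jf_{1,j}\otimes f_{1,j}$. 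Using the polar decomposition $T=U|T|$ with $U$ a partial isometry, set $f_{2,j}=Uf_{1,j}$; discarding those $j$ with $\mu _j=0$ (and relabelling if the sum is finite) one checks $\{ f_{2,j}\} \subseteq \maclH _2$ is orthonormal and $T=\sum _j\mu _jf_{2,j}\otimes f_{1,j}$ converges in operator norm. The standard fact is that this non-increasing sequence $\{\mu _j\}$ coincides with the singular values $\{\sigma _j(T)\}$ (this is exactly the content of the approximation-number characterization used to define $\sigma _j$, cf.\ \cite{Sim}); hence $\lambda \equiv \{\mu _j\} = \{\sigma _j(T)\} \in \ell ^\sharp$, and $\nm T{\mascI _\sharp} = \nm {\{\sigma _j(T)\}}{\ell ^\sharp} = \nm \lambda {\ell ^\sharp}$ by Definition \ref{Def:OrliczSchattenClasses} and the definition of $\mascI _\maclB$.

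For the ``if'' direction, suppose $T=\sum _{j}\lambda _jf_{2,j}\otimes f_{1,j}$ with $\lambda $ non-negative, decreasing, in $\ell ^\sharp (\mathbf Z_+)$, and the $\{ f_{k,j}\}$ orthonormal in $\maclH _k$. Since $\lambda _j\to 0$, the partial sums $T_N=\sum _{j\le N}\lambda _jf_{2,j}\otimes f_{1,j}$ are finite-rank and $\nm {T-T_N}{\maclH _1\to \maclH _2}=\sup _{j>N}\lambda _j=\lambda _{N+1}\to 0$, so $T\in \mascI _\infty (\maclH _1,\maclH _2)$. Moreover $T_{j-1}\in \mascI _{0,j}(\maclH _1,\maclH _2)$, so $\sigma _j(T)\le \nm {T-T_{j-1}}{\maclH _1\to\maclH _2}=\lambda _j$; the reverse inequality $\sigma _j(T)\ge \lambda _j$ follows from the min-max / orthogonality argument (testing against the span of $f_{1,1},\dots ,f_{1,j}$: any operator $T_0$ of rank $\le j-1$ has a unit vector $g$ in that span with $T_0g=0$, whence $\nm {T-T_0}{\maclH _1\to\maclH _2}\ge \nm {Tg}{\maclH _2}\ge \lambda _j$). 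Hence $\sigma _j(T)=\lambda _j$, so $\{\sigma _j(T)\}=\lambda \in \ell ^\sharp$, giving $T\in \mascI _\sharp (\maclH _1,\maclH _2)$ with $\nm T{\mascI _\sharp}=\nm \lambda {\ell ^\sharp}$, and \eqref{Eq:SpectExp0} holds.

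I expect the only genuinely delicate point to be the bookkeeping when the expansion is finite versus infinite (the statement allows $\{ f_{k,j}\} _{j=1}^\infty$ to be infinite orthonormal sets, so if $T$ has finite rank one pads with further orthonormal vectors and zero $\lambda _j$'s), together with justifying the identification $\sigma _j(T)=\lambda _j$ cleanly from the definition of $\sigma _j$ as an approximation number rather than appealing to an external theorem; both are routine and are essentially recorded in \cite{Sim}, so I would present them briefly and refer there for details.
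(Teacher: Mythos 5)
Your argument is correct and follows exactly the route the paper intends: the paper gives no detailed proof, stating only that the proposition is a straight-forward consequence of the spectral theorem (with a reference to \cite{Sim}), and your polar-decomposition/spectral-theorem expansion together with the identification $\sigma _j(T)=\lambda _j$ via approximation numbers is precisely that standard argument, including the padding remark for the finite-rank case.
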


\par

\begin{prop}
\label{Prop:Spectral0}
Let $T$ be a linear operator from the Hilbert space $\maclH _1$ to
the Hilbert space $\maclH _2$, and let $\Phi$ be a quasi-Young function.
\begin{enumerate}
\item 
If $\Phi$ is positive, then
$\mascI _\Phi (\maclH _1,\maclH _2)$ is continuously embedded in
$\mascI _\sharp (\maclH _1,\maclH _2)$, and
$T\in \mascI _\Phi (\maclH _1,\maclH _2)$, if and only if
$\lambda$ in  \eqref{Eq:SpectExp0}
satisfies $\nm {\lambda}{\ell ^\Phi}<\infty$, and
\begin{equation}\label{Eq:SpectEstOrlA}
\nm T{\mascI _\Phi (\maclH _1,\maclH _2)} = \nm \lambda{\ell ^\Phi}.
\end{equation}

\vrum

\item If $\Phi$ is not positive, then
$\mascI _\Phi (\maclH _1,\maclH _2)$ is equal to
$\mascI _\infty (\maclH _1,\maclH _2)$ with equivalent (quasi-)norms.

\vrum

\item If $\Phi$ satisfies a local $\Delta _2$-condition, then
$\mascI _0(\maclH _1,\maclH _2)$ is dense in
$\mascI _\Phi (\maclH _1,\maclH _2)$
and in $\mascI _\sharp (\maclH _1,\maclH _2)$.

\vrum

\item If $\Phi$ satisfies a local $\Delta _2$-condition and in addition
is a Young function, then
the dual of $\mascI _\Phi (\maclH _1,\maclH _2)$ 
is equal to $\mascI _{\Phi ^*} (\maclH _1,\maclH _2)$, also in norms, 
through the form $(\cdo ,\cdo )_{\mascI _2}$.
\end{enumerate}
\end{prop}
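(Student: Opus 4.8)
The plan is to transfer each of the four statements from operators to their sequences of singular values and then quote the Orlicz sequence space results already recorded, exactly as in the classical case of \cite{Sim}. Write $\lambda (T)=\{ \sigma _j(T;\maclH _1,\maclH _2)\} _{j=1}^\infty$; by Definitions \ref{Def:GeneralSchattenClasses} and \ref{Def:OrliczSchattenClasses}, $\nm T{\mascI _\Phi}=\nm {\lambda (T)}{\ell ^\Phi}$ whenever the right-hand side is meaningful, and the coefficient sequence appearing in Proposition \ref{Prop:SpectralComp} is exactly $\lambda (T)$. For (1), let $\Phi$ be positive. If $T\in \mascI _\Phi$ then $\lambda (T)\in \ell ^\Phi (\mathbf Z_+)$; since $\Phi (t)>0$ for $t>0$, any $\ell ^\Phi$-sequence tends to $0$, so $\lambda (T)\in \ell ^\sharp (\mathbf Z_+)$ and $T\in \mascI _\sharp$, and continuity of this embedding follows from Proposition \ref{Prop:DiscrNmLocDelCond1}(1), the fact that $\mascI _\sharp$ carries the topology of $\mascI _\infty$, and $\nm T{\mascI _\infty}=\nm {\lambda (T)}{\ell ^\infty}$. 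Identity \eqref{Eq:SpectEstOrlA} and the stated characterization are then immediate from the definitions and Proposition \ref{Prop:SpectralComp}. For (2), Proposition \ref{Prop:DiscrNmLocDelCond1}(2) gives $\ell ^\Phi (\mathbf Z_+)=\ell ^\infty (\mathbf Z_+)$ with equivalent quasi-norms when $\Phi$ is not positive, whence $\mascI _\Phi =\mascI _{\ell ^\Phi}=\mascI _{\ell ^\infty}=\mascI _\infty$ with equivalent quasi-norms, directly from Definition \ref{Def:GeneralSchattenClasses}.

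For (3), assume $\Phi$ satisfies a local $\Delta _2$-condition. By Remark \ref{Rem:Delta2Cond} and Proposition \ref{Prop:DiscreteNorms1} we may replace $\Phi$ by a quasi-Young function satisfying a $\Delta _2$-condition on all of $[0,\infty )$ without changing $\ell ^\Phi (\mathbf Z_+)$ up to equivalent norms; Proposition \ref{Prop:OrlDuality}(2), with $\mu$ the counting measure on $\mathbf Z_+$, then makes $\ell _0(\mathbf Z_+)$ dense in $\ell ^\Phi (\mathbf Z_+)$, so the tails $\nm {\{ \lambda _j\} _{j>N}}{\ell ^\Phi}$ of any $\lambda \in \ell ^\Phi$ tend to $0$ as $N\to \infty$. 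If $T$ lies in $\mascI _\Phi$ (respectively in $\mascI _\sharp$) with decomposition $T=\sum _j\lambda _jf_{2,j}\otimes f_{1,j}$ as in Proposition \ref{Prop:SpectralComp}, then the partial sums $T_N=\sum _{j\le N}\lambda _jf_{2,j}\otimes f_{1,j}$ are in $\mascI _0$ and $\nm {T-T_N}{\mascI _\Phi}=\nm {\{ \lambda _j\} _{j>N}}{\ell ^\Phi}\to 0$ (respectively $\nm {T-T_N}{\mascI _\infty}=\sup _{j>N}\lambda _j\to 0$ since $\lambda \in \ell ^\sharp$), giving the density.

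For (4), let $\Phi$ be a Young function satisfying a local $\Delta _2$-condition. One inclusion is \eqref{Eq:OpScalProd}: the form $(\cdo ,\cdo )_{\mascI _2}$ embeds $\mascI _{\Phi ^*}(\maclH _1,\maclH _2)$ continuously into the dual of $\mascI _\Phi (\maclH _1,\maclH _2)$. For surjectivity, given a bounded functional $L$ on $\mascI _\Phi$, fix orthonormal bases $\{ e_j\}$ of $\maclH _1$ and $\{ \epsilon _k\}$ of $\maclH _2$, set $b_{jk}=L(\epsilon _k\otimes e_j)$, and let $T_2$ be the (bounded) operator with matrix $(\overline {b_{jk}})$ in these bases, normalized so that $(\epsilon _k\otimes e_j,T_2)_{\mascI _2}=b_{jk}$; a direct computation then gives $L(A)=\Tr (T_2^*A)$ for every finite-rank $A$ with matrix supported in a finite block. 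To get $T_2\in \mascI _{\Phi ^*}$, let $T_2^{(N)}$ be the compression of $T_2$ to the first $N$ basis vectors on both sides, with singular value decomposition $T_2^{(N)}=\sum _j\sigma _j^{(N)}\eta _j\otimes \xi _j$; for any finitely supported non-increasing $\tau \ge 0$ with $\nm \tau {\ell ^\Phi}\le 1$, the finite-rank $A=\sum _j\tau _j\eta _j\otimes \xi _j$ has matrix supported in the first block, $\nm A{\mascI _\Phi}=\nm \tau {\ell ^\Phi}\le 1$, and $\sum _j\sigma _j^{(N)}\tau _j=\Tr (T_2^*A)=L(A)\le \nmm L\cdot \nm A{\mascI _\Phi}\le \nmm L$; taking the supremum over such $\tau$ (dense in the unit ball of $\ell ^\Phi$ by (3) and permutation invariance) and invoking the discrete form of Proposition \ref{Prop:OrlDuality}(1) with $\Phi$ and $\Phi ^*$ interchanged — legitimate since $\Phi ^{**}=\Phi$ — yields $\nm {T_2^{(N)}}{\mascI _{\Phi ^*}}\lesssim \nmm L$ uniformly in $N$. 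Since $T_2^{(N)}\to T_2$ in the strong operator topology and, by \eqref{Eq:AltOrlSchattNorm}, the Orlicz--Schatten norm is lower semicontinuous in that topology, $T_2\in \mascI _{\Phi ^*}$ with $\nm {T_2}{\mascI _{\Phi ^*}}\lesssim \nmm L$; finally \eqref{Eq:OpScalProd} and the density in (3) extend $L=(\cdo ,T_2)_{\mascI _2}$ from the block-supported operators to all of $\mascI _\Phi$, and tracking the constants in Proposition \ref{Prop:OrlDuality} yields the identification, also of norms. The main obstacle is exactly this last argument: since $\Phi ^*$ need not satisfy a $\Delta _2$-condition, $\mascI _0$ need not be dense in $\mascI _{\Phi ^*}$, so the representing operator cannot be produced by completing $\mascI _0$ in the $\mascI _{\Phi ^*}$-norm and must instead be reconstructed from its matrix, its membership in $\mascI _{\Phi ^*}$ being read off from uniform bounds on finite-rank truncations via lower semicontinuity.
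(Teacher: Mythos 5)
Your proof is correct and follows exactly the route the paper indicates for this proposition: parts (1)--(3) are reduced, via the singular-value decomposition, to the discrete Orlicz facts in Propositions \ref{Prop:DiscreteNorms1}, \ref{Prop:DiscrNmLocDelCond1} and \ref{Prop:OrlDuality}, and part (4) carries out the standard Simon-style duality argument (matrix representation of the functional, uniform $\mascI _{\Phi ^*}$-bounds on finite compressions via the discrete duality with $\Phi ^{**}=\Phi$, and lower semicontinuity through \eqref{Eq:AltOrlSchattNorm}) that the paper delegates to the spectral theorem and the cited references. The only points to tighten are minor: the boundedness of $T_2$ should be deduced from the uniform bounds on its compressions rather than asserted when the matrix is introduced, and the final extension step also uses the (easy) density of block-supported finite-rank operators in $\mascI _\Phi$, which follows from $\mascI _1\hookrightarrow \mascI _\Phi$ together with part (3).
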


\par

\begin{rem}
\label{Rem:SpectralFiniteRank}
Let $\maclH _1$ and $\maclH _2$ be Hilbert spaces.
Then it follows that $T\in \mascI _0(\maclH _1,\maclH _2)$
consists of all expansions \eqref{Eq:SpectExp0} such that
$\lambda \in \ell _0(\mathbf Z_+)$.
\end{rem}

\par

\begin{rem}
It follows that the joint statements of (1) and (2) in
Proposition \ref{Prop:Spectral0} are essentially equivalent to
the joint statements of the following.
\begin{enumerate}
\item[(1)$'$]
\emph{
$\mascI _\Phi (\maclH _1,\maclH _2)$ is continuously embedded in
$\mascI _\sharp (\maclH _1,\maclH _2)$, if and only if $\Phi$ is positive.}

\vrum

\item[(2)$'$]
\emph{
$\mascI _\Phi (\maclH _1,\maclH _2)$ is equal to
$\mascI _\infty (\maclH _1,\maclH _2)$ with equivalent (quasi-)norms,
if and only if $\Phi$ is \emph{not} positive.}
\end{enumerate}
\end{rem}

\par

We finish the section by recalling the following
special case of \cite[Theorem 6.1]{CheSigTof},
concerning operators with kernels in $\mascS (\rr {2d})$ when
acting on $L^2(\rr {d})$. Here $\mathbf N =\{ 0,1,2,\dots \}$
is the set of all natural numbers.

\par

\begin{prop}
\label{Prop:SchwKerOpProp}
Let $\Phi$ be a quasi-Young function. Then the map $K\mapsto T_K$
from $\mascS (\rr {2d})$ to $\mascI _\Phi (L^2(\rr d))$ is a continuous
injection. Furthermore, if $K\in \mascS (\rr {2d})$ and $T=T_K$
has the expansion \eqref{Eq:SpectExp0} for some
$\{ f_{k,j}\} _{j=1}^\infty \in \ON (\maclH _k)$, then
the following is true:
\begin{enumerate}
\item $\{ j^N\lambda _j \} _{j=1}^\infty \in \ell ^\infty (\mathbf Z_+)$, for every
$N\ge 0$;

\vrum

\item if $\lambda _j>0$, then $f_{1,j},f_{2,j}\in \mascS (\rr d)$ and 
$$
\{ j^N\nm {x^\alpha D^\beta f_{k,j}}{L^\infty} \} _{j=1}^\infty \in \ell ^\infty (\mathbf Z_+),
\qquad
N\ge 0,\ k=1,2,\ \alpha ,\beta \in \nn d.
$$
\end{enumerate}
\end{prop}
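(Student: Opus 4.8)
The plan is to reduce everything to Hermite expansions. Let $\{h_\alpha\}_{\alpha\in\nn d}$ be the Hermite orthonormal basis of $L^2(\rr d)$ consisting of the eigenfunctions of the harmonic oscillator $H=-\Delta+|x|^2$, so that $\{h_\alpha\otimes h_\beta\}_{\alpha,\beta\in\nn d}$ is an orthonormal basis of $L^2(\rr{2d})$. I recall the well-known facts that $F\in\mascS(\rr{2d})$ if and only if its Hermite coefficients $c_{\alpha,\beta}$ satisfy $\sup_{\alpha,\beta}(1+|\alpha|+|\beta|)^N|c_{\alpha,\beta}|<\infty$ for every $N\ge 0$, and that these suprema form a system of seminorms on $\mascS(\rr{2d})$ equivalent to the usual ones. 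Since $c_{\alpha,\beta}=(T_Kh_\beta,h_\alpha)_{L^2(\rr d)}$ up to a complex conjugation that is irrelevant for the estimates below, the operator $T_K$ is represented by a matrix decaying faster than any power of $1+|\alpha|+|\beta|$.

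First I would prove the continuous injection together with (1). For $R>0$ let $T^{(R)}$ be the operator with kernel $\sum_{|\alpha|\le R,\,|\beta|\le R}c_{\alpha,\beta}\,h_\alpha\otimes h_\beta$; it has rank at most $\#\{\alpha:|\alpha|\le R\}\lesssim(1+R)^d$, while bounding the operator norm of $T_K-T^{(R)}$ by its Hilbert--Schmidt norm and using the rapid decay of $\{c_{\alpha,\beta}\}$, a routine summation gives, for each $N\ge 0$, a bound $\nm{T_K-T^{(R)}}{L^2(\rr d)\to L^2(\rr d)}\lesssim(1+R)^{-N}$ with implied constant controlled by a Schwartz seminorm of $K$. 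Choosing $R\asymp j^{1/d}$ then yields $\sigma_j(T_K)\lesssim j^{-N}$ for every $N\ge 0$, which is (1). In particular $\{\sigma_j(T_K)\}_{j=1}^\infty$ belongs to $\ell^r(\mathbf Z_+)$ for every $r>0$, with $\ell^r$ norm dominated by a Schwartz seminorm of $K$; if $r$ is the order of $\Phi$, then $\Phi(t)\lesssim t^r$ near the origin, so Proposition \ref{Prop:DiscreteNorms1} gives $\nm{T_K}{\mascI_\Phi}=\nm{\{\sigma_j(T_K)\}}{\ell^\Phi}\lesssim\nm{\{\sigma_j(T_K)\}}{\ell^r}$, whence $T_K\in\mascI_\Phi(L^2(\rr d))$ with $\nm{T_K}{\mascI_\Phi}$ dominated by a Schwartz seminorm of $K$. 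This is the asserted continuity; injectivity is immediate from the kernel theorem (Lemma \ref{Lem:KernelThm}), since $T_K=0$ forces $K=0$.

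For the remaining (``furthermore'') part --- (1) being already established --- suppose $K\in\mascS(\rr{2d})$ and $T=T_K$ has the expansion \eqref{Eq:SpectExp0}. Then the $f_{1,j}$ are eigenfunctions of the positive operator $S_1=T^*T$ with eigenvalues $\lambda_j^2$, and the $f_{2,j}$ are eigenfunctions of $S_2=TT^*$. Both $S_1$ and $S_2$ again have Schwartz kernels --- for instance $S_1$ has kernel $\int\overline{K(z,x)}K(z,y)\,dz\in\mascS(\rr{2d})$ --- and an operator with a Schwartz kernel maps $L^2(\rr d)$ continuously into $\mascS(\rr d)$; hence, whenever $\lambda_j>0$, the identity $f_{k,j}=\lambda_j^{-2}S_kf_{k,j}$ gives $f_{1,j},f_{2,j}\in\mascS(\rr d)$. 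To obtain the decay rates one conjugates $T$ by powers of $H$: since $K\in\mascS(\rr{2d})$, the operator $H^MTH^M$ has Schwartz kernel $H_x^MH_y^MK$, so by the part already proved its singular values decay faster than any power of $j$; quantifying, in the spirit of the first part, how these bounds control the Schwartz seminorms of the singular vectors $f_{k,j}$ against the singular values $\lambda_j$ then yields the stated conclusions. This last step is exactly the argument carried out in \cite[Theorem 6.1]{CheSigTof}, of which the present statement is the Schwartz-class specialization.

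I expect the main obstacle to be precisely this final step. The continuous injection, assertion (1), and the bare membership $f_{k,j}\in\mascS(\rr d)$ all follow from Hermite truncation and an elementary bootstrap; what requires genuine work is quantifying the rate at which the Schwartz seminorms of the $j$-th singular vectors decay, which is where the factorization machinery of \cite{CheSigTof} enters.
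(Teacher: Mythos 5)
Measured against the paper, your proposal actually does more: the paper gives no proof of this proposition at all, but simply recalls it as a special case of \cite[Theorem 6.1]{CheSigTof}. Your Hermite-truncation argument for the first half is correct and self-contained: the rank bound $\lesssim (1+R)^d$ for the truncated operator, the Hilbert--Schmidt estimate of the tail with constants controlled by Schwartz seminorms of $K$, and the choice $R\asymp j^{1/d}$ do give $\sigma _j(T_K)\lesssim _N j^{-N}$ for every $N$, which is (1); the passage $\ell ^r\hookrightarrow \ell ^\Phi$ via Proposition \ref{Prop:DiscreteNorms1} (using $\Phi (t)\lesssim t^r$ near the origin for a quasi-Young function of order $r$) legitimately converts this into continuity of $K\mapsto T_K$ into $\mascI _\Phi (L^2(\rr d))$, and injectivity is immediate from Lemma \ref{Lem:KernelThm}. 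The bootstrap $f_{k,j}=\lambda _j^{-2}S_kf_{k,j}$ with $S_1=T^*T$, $S_2=TT^*$ having Schwartz kernels correctly gives $f_{1,j},f_{2,j}\in \mascS (\rr d)$. So where the paper merely cites, you prove (1), the continuity and injectivity, and the membership part of (2).

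The quantitative seminorm decay in (2) you do not prove: you sketch conjugation by powers of $H$ and then defer to \cite[Theorem 6.1]{CheSigTof}, which is the same citation the paper itself relies on for the whole proposition, so this is not a defect relative to the paper, though as a blind proof it leaves the key estimate open, as you yourself flag. One caution, however: as literally printed, (2) cannot be what the cited theorem delivers, and no argument can close it in that form. Since $\{ f_{k,j}\} _j$ is orthonormal, the case $N=2$, $\beta =0$ of (2), applied to all $|\alpha |\le d+1$, would give $\nm {f_{k,j}}{L^\infty}\lesssim j^{-2}$ and $\nm {f_{k,j}}{L^1}\lesssim j^{-2}$, hence $\nm {f_{k,j}}{L^2}^2\le \nm {f_{k,j}}{L^\infty}\nm {f_{k,j}}{L^1}\lesssim j^{-4}\to 0$, contradicting $\nm {f_{k,j}}{L^2}=1$ whenever infinitely many $\lambda _j$ are positive (e.g.\ for the Mehler kernel). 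The seminorm bounds must therefore be coupled with the singular values, e.g.\ rapid decay of $\lambda _j\nm {x^\alpha D^\beta f_{k,j}}{L^\infty}$ in $j$, which is exactly the form your phrase about controlling the seminorms ``against the singular values'' suggests and what your $H$-conjugation idea can plausibly produce; making that precise is the genuine content of \cite[Theorem 6.1]{CheSigTof}, and it is the one step your proposal leaves unproven.
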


\par

%%%%%%%%%%%%%%%%%%%%%%%%%%%
\section{Schatten symbol classes}
\label{sec2}
%%%%%%%%%%%%%%%%%%%%%%%%%%%

\par

In this section we introduce general Schatten-von Neumann
symbol classes with respect to quasi-Banach spaces $\maclB$
of sequences, as spaces of distributions whose corresponding
pseudo-differential operators should belong to Schatten-von Neumann
classes with respect to $\maclB$ on $L^2(\rr d)$.
By choosing these quasi-Banach spaces in suitable ways, we
obtain analogous Orlicz Schatten-symbol classes.
Thereafter we show some of properties for Orlicz
Schatten-symbol classes, e.{\,}g.~ spectral
resolutions, and their invariance under translations and modulations.

\par

%\subsection{The definition of Orlicz Schatten symbol classes,
%and their spectral properties}
%
%\par

We begin with defining symbol classes which correspond to
Schatten-von Neumann operators acting on $L^2(\rr d)$.

\par

\begin{defn}\label{Def:PseudoSchattenClasses}
Let $\maclB \subseteq \ell _0'(\mathbf Z_+)$ be a quasi-Banach space,
and let $A\in \GL (d,\mathbf R)$.
\begin{itemize}
\item $s_{A,\maclB}(\rr {2d})$ consists of all $a\in \mascS '(\rr {2d})$
such that $\op _A(a)\in \mascI _{\maclB} (L^2(\rr d))$. The
topology of $s_{A,\maclB}(\rr {2d})$ is given through the quasi-norm
$$
\nm {a}{s_{A,\maclB}}\equiv (2\pi )^{\frac d2}\nm {\op_A(a)}{\mascI _{\maclB} },
\qquad a\in \mascS '(\rr {2d}).
$$

\vrum

\item $s_{A,0}(\rr {2d})$ consists of all $a\in \mascS '(\rr {2d})$
such that $\op _A(a)\in \mascI _0 (L^2(\rr d))$.

\vrum

\item $s_{A,\flat}(\rr {2d})=s_{A,0}(\rr {2d})\cap \mascS (\rr {2d})$.
\end{itemize}
\end{defn}

\par

If $\Phi$ is a quasi-Young function, $p\in (0,\infty ]$,
$\Phi _{[p]}$ is the same as in Remark \ref{Rem:PhiLeb},
then let
\begin{align*}
s_{A,\Phi}(\rr {2d})
\equiv s_{A,\ell ^{\Phi}}(\rr {2d}),
\quad
s_{A,p}(\rr {2d})&\equiv s_{A,\Phi _{[p]}}(\rr {2d})
\\[1ex]
\text{and}\qquad
s_{A,\sharp}(\rr {2d})&\equiv s_{A,\ell ^{\sharp}}(\rr {2d}),
\end{align*}
with corresponding norms given by
$$
\nm \cdo {s_{A,\Phi}}\equiv \nm \cdo {s_{A,\ell ^\Phi}},
\quad
\nm \cdo {s_{A,p}}\equiv \nm \cdo {s_{A,\Phi _{[p]}}}
\quad \text{and}\quad
\nm \cdo {s_{A,\sharp}}\equiv \nm \cdo {s_{A,\ell ^{\sharp}}},
$$
respectively.

\par

By the definitions and Lemma \ref{Lem:KernelPseudoDiffThm},
we obtain the following. The details are left for the reader.

\par

\begin{lemma}
\label{Lemma:SymbOpMap}
Let $\maclB \subseteq \ell _0'(\mathbf Z_+)$ be a quasi-Banach space,
and let $A\in \GL (d,\mathbf R)$. Then
\begin{equation}
\label{Eq:SymbOpMap}
a\mapsto (2\pi )^{\frac d2}\op _A(a)
\end{equation}
is an isometric isomorphism from $s_{A,\maclB} (\rr {2d})$ to $\mascI _{\maclB} (L^2(\rr d))$.
%, and from $s_{A,\sharp} (\rr {2d})$ to $\mascI _\sharp (L^2(\rr d))$.
\end{lemma}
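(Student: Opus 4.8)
The plan is to unwind the definitions and reduce everything to Lemma \ref{Lem:KernelPseudoDiffThm} together with the fact (recorded just after Definition \ref{Def:OrliczSchattenClasses} and in the subsequent discussion) that $\mascI_{\maclB}(L^2(\rr d))$ is a quasi-Banach space whose quasi-norm is $\nm{T}{\mascI_{\maclB}}=\nm{\{\sigma_j(T)\}_{j=1}^\infty}{\maclB}$. First I would fix $A\in\GL(d,\mathbf R)$ and the quasi-Banach sequence space $\maclB$, and consider the map $\Lambda_A\colon a\mapsto (2\pi)^{d/2}\op_A(a)$. By Definition \ref{Def:PseudoSchattenClasses}, $a\in s_{A,\maclB}(\rr{2d})$ exactly when $\op_A(a)\in\mascI_{\maclB}(L^2(\rr d))$, so $\Lambda_A$ maps $s_{A,\maclB}(\rr{2d})$ into $\mascI_{\maclB}(L^2(\rr d))$; and by the very definition of the quasi-norm $\nm{a}{s_{A,\maclB}}\equiv (2\pi)^{d/2}\nm{\op_A(a)}{\mascI_{\maclB}}=\nm{\Lambda_A(a)}{\mascI_{\maclB}}$, so $\Lambda_A$ is isometric. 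Linearity of $\Lambda_A$ is clear since $a\mapsto\op_A(a)$ is linear.

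Next I would establish surjectivity. Given $T\in\mascI_{\maclB}(L^2(\rr d))$, in particular $T\in\maclL(\mascS(\rr d),\mascS'(\rr d))$ after the usual identification, so by Lemma \ref{Lem:KernelPseudoDiffThm} there is a unique $a\in\mascS'(\rr{2d})$ with $\op_A(a)=(2\pi)^{-d/2}T$, i.e. $\Lambda_A(a)=T$. Since $\op_A(a)=(2\pi)^{-d/2}T\in\mascI_{\maclB}(L^2(\rr d))$, this $a$ lies in $s_{A,\maclB}(\rr{2d})$, giving surjectivity. Injectivity of $\Lambda_A$ is immediate either from the isometry property (an isometry on a quasi-normed space is injective) or directly from the injectivity part of Lemma \ref{Lem:KernelPseudoDiffThm}. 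Hence $\Lambda_A$ is a linear isometric bijection, which is exactly the assertion.

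The only point that needs a word of care — and the closest thing to an obstacle — is making sure the topology on $s_{A,\maclB}(\rr{2d})$ really is the one induced by this quasi-norm (so that ``isometric isomorphism'' carries topological content), but this is part of Definition \ref{Def:PseudoSchattenClasses} itself, which declares the topology of $s_{A,\maclB}$ to be given by $\nm{\cdot}{s_{A,\maclB}}$; combined with the isometry, bicontinuity of $\Lambda_A$ and its inverse is automatic, and completeness of $s_{A,\maclB}$ transfers from that of $\mascI_{\maclB}(L^2(\rr d))$. Thus the proof is a short chain: definitions $\Rightarrow$ $\Lambda_A$ well-defined and isometric; Lemma \ref{Lem:KernelPseudoDiffThm} $\Rightarrow$ $\Lambda_A$ bijective; done. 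I expect no genuine difficulty here — the statement is essentially a bookkeeping consequence of the kernel theorem for $\op_A$ and the way the $s_{A,\maclB}$-norm was defined — which is presumably why the authors leave it to the reader.
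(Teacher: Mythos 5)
Your argument is correct and is exactly the route the paper intends: the paper states the lemma follows "by the definitions and Lemma \ref{Lem:KernelPseudoDiffThm}" and leaves the details to the reader, and your write-up supplies precisely those details (isometry from the definition of $\nm{\cdot}{s_{A,\maclB}}$ together with homogeneity of the quasi-norm, surjectivity and injectivity from Lemma \ref{Lem:KernelPseudoDiffThm} after noting that any $T\in\mascI_{\maclB}(L^2(\rr d))$ defines an element of $\maclL(\mascS(\rr d),\mascS'(\rr d))$). No gaps; nothing further is needed.
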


\par

In particular, it follows from the previous lemma
that the spaces  $s_{A,\Phi}(\rr {2d})$ and  $s_{A,p}(\rr {2d})$ in Definition 
\ref{Def:PseudoSchattenClasses}
are quasi-Banach spaces, since similar facts hold true for the spaces in
Definition \ref{Def:OrliczSchattenClasses}. If in addition $\Phi$ is a Young function
and $p\in [1,\infty ]$, then $s_{A,\Phi}(\rr {2d})$ and $s_{A,p}(\rr {2d})$
are Banach spaces.
Also notice that $s_{A,\sharp}(\rr {2d})$ is a Banach space.

\par

Since the mappings \eqref{atkernel} and \eqref{homeoF2tmap}
are homeomorphisms on $\mascS (\rr {2d})$, the following
result follows from Proposition \ref{Prop:SchwKerOpProp}.
The details are left for the reader. (See also \cite{Fol}.)
%Here and
%in what follows, if $V_1$ and $V_2$ are topological vector spaces
%then we write $V_1\hookrightarrow V_2$ when $V_1\subseteq V_2$,
%and the inclusion map $\iota : V_1\to V_2$ is continuous.

\par

\begin{prop}
Let $\Phi$ be a quasi-Young function and $A\in \GL (d,\mathbf R)$. Then
$\mascS (\rr {2d})\hookrightarrow s_{A,\Phi}(\rr {2d})$.
\end{prop}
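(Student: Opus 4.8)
The plan is to deduce the inclusion $\mascS(\rr{2d})\hookrightarrow s_{A,\Phi}(\rr{2d})$ directly from the already‐recorded fact that, by Definition~\ref{Def:PseudoSchattenClasses} and Lemma~\ref{Lemma:SymbOpMap}, membership of $a$ in $s_{A,\Phi}(\rr{2d})$ with control of its norm is equivalent to membership of $\op_A(a)$ in $\mascI_\Phi(L^2(\rr d))$ with control of its Schatten norm, and then to trace this back through the kernel correspondence to Proposition~\ref{Prop:SchwKerOpProp}. Concretely, given $a\in\mascS(\rr{2d})$, one forms the kernel $K_{a,A}$ via \eqref{atkernel}; since $\mascF_2$ and the linear substitution $F(x,y)\mapsto F(x-A(x-y),x-y)$ in \eqref{homeoF2tmap} are homeomorphisms of $\mascS(\rr{2d})$ onto itself, the map $a\mapsto K_{a,A}$ restricts to a homeomorphism of $\mascS(\rr{2d})$. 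Hence $K_{a,A}\in\mascS(\rr{2d})$, and $\op_A(a)=T_{K_{a,A}}$ with kernel in the Schwartz class.

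Next I would invoke Proposition~\ref{Prop:SchwKerOpProp}: the map $K\mapsto T_K$ from $\mascS(\rr{2d})$ to $\mascI_\Phi(L^2(\rr d))$ is a continuous injection. Composing the two continuous maps $a\mapsto K_{a,A}$ (continuous $\mascS(\rr{2d})\to\mascS(\rr{2d})$) and $K\mapsto T_K$ (continuous $\mascS(\rr{2d})\to\mascI_\Phi(L^2(\rr d))$) gives that $a\mapsto\op_A(a)$ is a continuous injection from $\mascS(\rr{2d})$ into $\mascI_\Phi(L^2(\rr d))$. Finally, multiplying by the constant $(2\pi)^{d/2}$ and applying Lemma~\ref{Lemma:SymbOpMap}, which identifies $\mascI_\Phi(L^2(\rr d))$ isometrically with $s_{A,\Phi}(\rr{2d})$ under $a\mapsto(2\pi)^{d/2}\op_A(a)$, we conclude that the inclusion $\mascS(\rr{2d})\subseteq s_{A,\Phi}(\rr{2d})$ holds and that the inclusion map is continuous, i.e. $\mascS(\rr{2d})\hookrightarrow s_{A,\Phi}(\rr{2d})$.

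There is essentially no analytic obstacle here: all the work was done in Proposition~\ref{Prop:SchwKerOpProp}, and the remaining point is the bookkeeping of continuity through the chain of identifications. The only mild subtlety worth a sentence in the write-up is continuity of $K\mapsto T_K$ into $\mascI_\Phi$ rather than merely finiteness of the Schatten norm for each fixed $K$; but this is exactly the content of the cited proposition (it asserts \emph{continuous} injection), so one may just quote it. Accordingly the proof can be stated in two or three lines, as the paper indicates by leaving the details to the reader, and I would simply point to Proposition~\ref{Prop:SchwKerOpProp} together with the homeomorphism property of \eqref{atkernel}--\eqref{homeoF2tmap} and Lemma~\ref{Lemma:SymbOpMap}.
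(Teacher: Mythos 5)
Your argument is correct and coincides with the paper's intended proof: the paper likewise obtains the result by noting that $a\mapsto K_{a,A}$ is a homeomorphism on $\mascS(\rr{2d})$ (via \eqref{atkernel}--\eqref{homeoF2tmap}) and then composing with the continuous injection $K\mapsto T_K$ of Proposition~\ref{Prop:SchwKerOpProp}, finishing with the isometric identification of Lemma~\ref{Lemma:SymbOpMap}. Nothing is missing.
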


\par

If $\Phi$ is a Young function, then let
\begin{equation}
\label{Eq:SymbDual}
\begin{aligned}
(a,b)_{s_{A,2}}
&\equiv
(2\pi )^{d} (\op _A(a),\op _A(b))_{\mascI _2},
%=
%(a,b)_{L^2(\rr {2d})},
\\[1ex]
a &\in s_{A,\Phi} (\rr {2d}),\ b\in s_{A,\Phi ^*} (\rr {2d}).
\end{aligned}
\end{equation}
By a straight-forward application of Fourier's inversion formula
we obtain $s_{A,2}(\rr {2d})=L^2(\rr {2d})$, with equality in norms
and scalar products. Due to \eqref{Eq:SymbDual},
we therefore define
$$
(a,b)_{L^2}\equiv (a,b)_{s_{A,2}},
\quad \text{when}\quad
a \in s_{A,\Phi} (\rr {2d}),\ b\in s_{A,\Phi ^*} (\rr {2d}).
$$

\par

Since the Weyl case is especially important we also set
$$
s_{\maclB}^w=s_{A,\maclB},
\quad
s_\Phi ^w=s_{A,\Phi},
\quad
s_p ^w=s_{A,p},
\quad
s_0^w=s_{A,0},
\quad \text{and}\quad
\quad
s_\flat ^w=s_{A,\flat},
$$
when $A= {\textstyle{\frac{1}{2}}} \cdot I_d$,
$\maclB \subseteq \ell _0'(\mathbf Z_+)$ is a quasi-Banach space,
$\Phi$ is a quasi-Young function, and $p\in (0,\infty ]$.

\par

The assertions
(2) and (3) in the following proposition follow by combining
\eqref{Eq:AWigPseudoLink2}, \eqref{Eq:HolderOrliczComp},
Proposition \ref{Prop:Spectral0}, and Lemma
\ref{Lemma:SymbOpMap}.
The details are left for the reader.
Here and in what follows we let $\check f(x)=f(-x)$,
for any $f\in \mascS '(\rr d)$ and $x\in \rr d$.

\par

\begin{prop}\label{Prop:Spectral0Symb}
Let $a\in \mascS '(\rr {2d})$, 
$\maclB \subseteq \ell _0'(\mathbf Z_+)$ be a quasi-Banach space,
$\Phi$ be a quasi-Young function,
and let $A\in \GL (d,\mathbf R)$.
\begin{enumerate}
\item If $B=I-A$, then
$$
a\in s_{A,\maclB}(\rr {2d})
\quad \Leftrightarrow \quad
\check a\in s_{A,\maclB}(\rr {2d})
\quad \Leftrightarrow \quad
\overline a\in s_{B,\maclB}(\rr {2d}),
$$
and
\begin{equation}
\label{Eq:SymbCheckConjNorms}
\nm a{s_{A,\maclB}} = \nm {\check a}{s_{A,\maclB}}
=
\nm {\overline a}{s_{B,\maclB}}\text .
\end{equation}

\vrum

\item $a\in s_{A,\sharp} (\rr {2d})$,
if and only if there is a non-negative decreasing sequence
$\lambda =\{ \lambda _j\} _{j=1}^\infty \in \ell ^\sharp (\mathbf Z_+)$,
$\{ f_{k,j} \} _{j=1}^\infty \in \ON (L^2(\rr d))$, $k=1,2$, and such that
\begin{equation}\label{Eq:SpectExp0Symb}
a=\sum _{j=1}^\infty \lambda _jW_{f_{2,j},f_{1,j}}^A,
\qquad
\nm a{s _{A,\sharp}}
=
\nm \lambda{\ell ^\sharp}
\text .
\end{equation}

\vrum

\item If in addition $\Phi$ is positive,
then $a\in s _{A,\Phi} (\rr {2d})$, if and only if
$\lambda$ in  \eqref{Eq:SpectExp0Symb}
satisfies $\nm {\lambda}{\ell ^\Phi}<\infty$, and
\begin{equation}\label{Eq:SpectEstOrlB}
\nm a{s_{A,\Phi}}
= 
\nm \lambda{\ell ^\Phi} .
\end{equation}
\end{enumerate}
\end{prop}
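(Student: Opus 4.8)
The plan is to deduce Proposition \ref{Prop:Spectral0Symb} directly from the corresponding operator-side statements, transporting everything through the isometric isomorphism $a\mapsto (2\pi)^{d/2}\op_A(a)$ of Lemma \ref{Lemma:SymbOpMap} together with the rank-one correspondence \eqref{Eq:AWigPseudoLink2}. For part (1), one first notes that $\op_A(\check a)$ and $\op_A(\overline a)$ are, up to unitary conjugation and the matrix change $A\leftrightarrow B=I-A$ encoded in \eqref{Eq:CalculiTransform}--\eqref{Eq:WignerDistRelDiffMat}, the operators $\op_A(a)$ composed with simple unitaries on $L^2(\rr d)$ (parity and complex conjugation), or their adjoints. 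Concretely, $\op^w(\overline a)=\op^w(a)^*$ by \eqref{Eq:WeylOpsAdj}, and passing to general $A$ via $B=I-A$ one gets $\op_B(\overline a)=\op_A(a)^*$; similarly $\op_A(\check a)=U\op_A(a)U$ for the parity operator $U$. Since Schatten-von Neumann norms are invariant under composition with unitaries and under taking adjoints (as recorded in the paragraph around $\nm T{\mascI_\Phi}=\nm{T^*}{\mascI_\Phi}=\nm{|T|}{\mascI_\Phi}$), the equivalences and the norm identities \eqref{Eq:SymbCheckConjNorms} follow immediately. The one point requiring a short verification is the exact bookkeeping of which unitary and which matrix shift is needed; this is routine given \eqref{Eq:CalculiTransform}.

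For parts (2) and (3), I would apply Proposition \ref{Prop:SpectralComp} and Proposition \ref{Prop:Spectral0}(1) to the operator $T=(2\pi)^{d/2}\op_A(a)\in\mascI_\sharp(L^2(\rr d))$ (respectively $\mascI_\Phi$). Those give an expansion $T=\sum_{j\ge1}\lambda_j\, f_{2,j}\otimes f_{1,j}$ with $\{f_{k,j}\}\in\ON(L^2(\rr d))$, $\lambda$ nonnegative decreasing, and $\nm T{\mascI_\sharp}=\nm\lambda{\ell^\sharp}$ (resp.\ $\nm T{\mascI_\Phi}=\nm\lambda{\ell^\Phi}$). Now by \eqref{Eq:AWigPseudoLink2}, each rank-one kernel $(2\pi)^{-d/2}f_{2,j}\otimes\overline{f_{1,j}}$ corresponds to the $A$-Wigner distribution $W^A_{f_{2,j},f_{1,j}}$, so dividing out the normalizing constant one obtains $a=\sum_j\lambda_j W^A_{f_{2,j},f_{1,j}}$ with $\nm a{s_{A,\sharp}}=(2\pi)^{d/2}\nm{\op_A(a)}{\mascI_\sharp}=\nm T{\mascI_\sharp}=\nm\lambda{\ell^\sharp}$, which is \eqref{Eq:SpectExp0Symb}; the convergence of the series in $\mascS'(\rr{2d})$ follows from convergence in $\mascI_\sharp$ together with the continuity of the isomorphism of Lemma \ref{Lemma:SymbOpMap}. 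Conversely, any such expansion assembles, via \eqref{Eq:AWigPseudoLink2}, into an operator of the stated form, and positivity of $\Phi$ is exactly what is needed (through Proposition \ref{Prop:Spectral0}(1)) for $a\in s_{A,\Phi}$ to be equivalent to $\nm\lambda{\ell^\Phi}<\infty$ with equality of norms \eqref{Eq:SpectEstOrlB}.

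The main obstacle, such as it is, lies in part (1): one must be careful that conjugating by the parity operator does \emph{not} change the matrix $A$ (it only flips $\check a$), whereas complex conjugation of the symbol is the operation that forces the shift $A\mapsto I-A$ via \eqref{Eq:CalculiTransform}. Getting these two operations cleanly separated — and checking that the relevant maps on $L^2(\rr d)$ are genuinely unitary (or anti-unitary, in which case one still has Schatten-norm invariance) — is where a little care is needed. Everything else is a transcription of the Hilbert-space spectral picture to symbols through the fixed isometry, and I would present it compactly, leaving the constant-chasing to the reader as the surrounding text already does for the neighbouring results.
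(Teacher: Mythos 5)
Your proposal is correct and follows essentially the same route as the paper: part (1) rests on the identities $\op _A(\check a)=U\op _A(a)U$ (parity conjugation, i.e.\ kernel $K(-x,-y)$) and $\op _B(\overline a)=\op _A(a)^*$ (kernel $\overline{K(y,x)}$), which preserve singular values and hence all $\mascI _{\maclB}$-norms, while (2) and (3) are the operator-level spectral statements of Propositions \ref{Prop:SpectralComp} and \ref{Prop:Spectral0} transported through Lemma \ref{Lemma:SymbOpMap} and \eqref{Eq:AWigPseudoLink2}. The only cosmetic difference is that you cite the $\mascI _\Phi$ adjoint-invariance remark where the paper argues directly with singular values (which is what one needs for general $\maclB$), but the underlying argument is the same.
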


\par

\begin{proof}
Let $B=I-A$, $a\in s_{A,\infty}(\rr {2d})$, and let $K$  be the kernel of
$\op _A(a)$. Then it follows by straight-forward computations
that the kernels of $\op _A(\check a)$ and $\op _B( \overline a )$
are
$$
K_1(x,y)=K(-x,-y)
\quad \text{and}\quad
K_2(x,y)=\overline {K(y,x)},
$$
respectively. Especially it follows that the singular values of
$\op _A(a)$, $\op _A(\check a)$ and $\op _B( \overline a )$
agree. This gives (1).

\par

As remarked above, the assertions (2) and (3) follow from
\eqref{Eq:HolderOrliczComp}, Proposition
\ref{Prop:Spectral0} and Lemma \ref{Lemma:SymbOpMap}. 
\end{proof}

\par

\begin{cor}
\label{Cor:Spectral0Symb}
Let $a\in \mascS '(\rr {2d})$, $\Phi$ be a quasi-Young function,
and let $A\in \GL (d,\mathbf R)$. Then the following is true:
\begin{enumerate}
\item $a\in s_{A,0}(\rr {2d})$, if and only if \eqref{Eq:SpectExp0Symb}
holds for some non-negative decreasing sequence $\lambda \in \ell _0(\mathbf Z_+)$;

\vrum

\item  $a\in s_{A,\flat}(\rr {2d})$, if and only if \eqref{Eq:SpectExp0Symb}
holds for some non-negative decreasing sequence $\lambda \in \ell _0(\mathbf Z_+)$,
and in addition $f_{k,j}\in \mascS (\rr d)$, when $j\ge 1$, $k=1,2$ and
$\lambda _j\neq 0$.
\end{enumerate}
\end{cor}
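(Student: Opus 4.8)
The plan is to derive Corollary \ref{Cor:Spectral0Symb} directly from Proposition \ref{Prop:Spectral0Symb} together with the description of finite-rank operators via their spectral expansions. The key observation is that the classes $s_{A,0}$ and $s_{A,\flat}$ sit inside $s_{A,\sharp}$: indeed $\mascI _0(L^2(\rr d))\subseteq \mascI _\sharp (L^2(\rr d))$, so by Lemma \ref{Lemma:SymbOpMap} any $a\in s_{A,0}(\rr {2d})$ already lies in $s_{A,\sharp}(\rr {2d})$, and hence by part (2) of Proposition \ref{Prop:Spectral0Symb} admits an expansion of the form \eqref{Eq:SpectExp0Symb} with a non-negative decreasing sequence $\lambda \in \ell ^\sharp (\mathbf Z_+)$. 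It then remains only to pin down \emph{which} sequences $\lambda$ occur, and \emph{which} window functions $f_{k,j}$, under the additional requirements defining $s_{A,0}$ and $s_{A,\flat}$.

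For part (1): by Lemma \ref{Lemma:SymbOpMap}, $a\in s_{A,0}(\rr {2d})$ if and only if $\op _A(a)\in \mascI _0(L^2(\rr d))$, i.{\,}e.\ $\op _A(a)$ has finite rank. By Remark \ref{Rem:SpectralFiniteRank}, the finite-rank operators are exactly those whose spectral expansion \eqref{Eq:SpectExp0} has $\lambda \in \ell _0(\mathbf Z_+)$. Transporting this through the isometric isomorphism \eqref{Eq:SymbOpMap} and using the correspondence $\op _A(W_{f_2,f_1}^A)=(2\pi )^{-d/2}f_2\otimes \overline{f_1}$ from \eqref{Eq:AWigPseudoLink2} (which identifies the rank-one pieces $f_{2,j}\otimes f_{1,j}$ of the operator expansion with the terms $W_{f_{2,j},f_{1,j}}^A$ of the symbol expansion), one sees that \eqref{Eq:SpectExp0Symb} holds with $\lambda \in \ell _0(\mathbf Z_+)$ precisely when $a\in s_{A,0}(\rr {2d})$. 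This is really just Proposition \ref{Prop:Spectral0Symb}(2) with the extra bookkeeping that $\ell _0(\mathbf Z_+)\subseteq \ell ^\sharp (\mathbf Z_+)$ and that finiteness of the rank corresponds to finiteness of the number of nonzero $\lambda _j$.

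For part (2): recall $s_{A,\flat}(\rr {2d})=s_{A,0}(\rr {2d})\cap \mascS (\rr {2d})$. So one direction is immediate: if $a$ has an expansion \eqref{Eq:SpectExp0Symb} with $\lambda \in \ell _0(\mathbf Z_+)$ and all relevant $f_{k,j}\in \mascS (\rr d)$, then $a$ is a finite sum of Wigner distributions of Schwartz functions, hence $a\in \mascS (\rr {2d})$, and by part (1) also $a\in s_{A,0}(\rr {2d})$, so $a\in s_{A,\flat}(\rr {2d})$. Conversely, if $a\in s_{A,\flat}(\rr {2d})$, then $a\in \mascS (\rr {2d})$, so the kernel $K_{a,A}$ lies in $\mascS (\rr {2d})$ (the maps \eqref{atkernel}, \eqref{homeoF2tmap} being homeomorphisms on $\mascS (\rr {2d})$), and $\op _A(a)=T_{K_{a,A}}$ has finite rank. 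Then Proposition \ref{Prop:SchwKerOpProp} — more precisely its part (2) — guarantees that in the spectral expansion the windows satisfy $f_{1,j},f_{2,j}\in \mascS (\rr d)$ whenever $\lambda _j\neq 0$; combined with part (1), which gives $\lambda \in \ell _0(\mathbf Z_+)$, this yields the claimed characterization.

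I do not anticipate a serious obstacle here; the corollary is a repackaging of already-established facts. The one point requiring mild care is the passage between the operator-side expansion $T=\sum \lambda _j f_{2,j}\otimes f_{1,j}$ and the symbol-side expansion $a=\sum \lambda _j W_{f_{2,j},f_{1,j}}^A$, i.{\,}e.\ making sure the Wigner distributions in \eqref{Eq:SpectExp0Symb} are matched to the rank-one operators in \eqref{Eq:SpectExp0} with the normalization of \eqref{Eq:AWigPseudoLink2}, and that this matching respects membership in $\mascS$. Since this bijection is exactly what underlies Proposition \ref{Prop:Spectral0Symb}, the cleanest write-up simply cites that proposition, Remark \ref{Rem:SpectralFiniteRank}, and Proposition \ref{Prop:SchwKerOpProp}, and leaves the routine verifications to the reader.
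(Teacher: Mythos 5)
Your proposal is correct, and its overall structure coincides with the paper's: part (1) is obtained, exactly as in the paper, from Lemma \ref{Lemma:SymbOpMap}, Remark \ref{Rem:SpectralFiniteRank} and Proposition \ref{Prop:Spectral0Symb} (with the normalization bookkeeping of \eqref{Eq:AWigPseudoLink2}), and the right-to-left implication in (2) is the same observation that a finite sum of Wigner distributions of Schwartz functions lies in $s_{A,0}(\rr {2d})\cap \mascS (\rr {2d})=s_{A,\flat}(\rr {2d})$. The one place where you diverge is the left-to-right implication in (2): you invoke Proposition \ref{Prop:SchwKerOpProp}\,(2), which indeed applies here since $a\in \mascS (\rr {2d})$ gives $K_{a,A}\in \mascS (\rr {2d})$ and the symbol expansion \eqref{Eq:SpectExp0Symb} transfers to an operator expansion of the form \eqref{Eq:SpectExp0} (up to the harmless factor $(2\pi )^{\frac d2}$), so the citation is legitimate. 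The paper instead avoids this heavier tool -- Proposition \ref{Prop:SchwKerOpProp} rests on the external result \cite[Theorem 6.1]{CheSigTof}, which handles general infinite expansions -- and exploits the finite rank directly: writing the Schwartz kernel as $K(x,y)=(2\pi)^{-\frac d2}\sum _{j=1}^N\lambda _jf_{2,j}(x)\overline {f_{1,j}(y)}$ and pairing against the orthonormal functions, one recovers $\lambda _{j_0}f_{2,j_0}(x)=(2\pi )^{\frac d2}(K(x,\cdo ),f_{1,j_0})_{L^2}$ and $\lambda _{j_0}f_{1,j_0}(y)=(2\pi )^{\frac d2}(f_{2,j_0},K(\cdo ,y))_{L^2}$, which are visibly Schwartz. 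Your route is shorter to write but relies on the stronger proposition; the paper's route is self-contained and elementary at this point, which is why it is spelled out rather than cited. Either argument proves the corollary.
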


\par

\begin{proof}
The assertion (1) follows from the definitions, Remark \ref{Rem:SpectralFiniteRank}
and Proposition \ref{Prop:Spectral0Symb}.

\par

Suppose that $a\in s_{A,\flat}(\rr {2d})$ has the expansion
\eqref{Eq:SpectExp0Symb}, and let $K$ be the kernel of the operator
$\op _A(a)$. Since the map $a\mapsto K_{a,A}$ in \eqref{atkernel}
is a homeomorphism on $\mascS (\rr {2d})$, and since
$\lambda \in \ell _0(\mathbf Z_+)$ in view of (1), it follows that
$$
K(x,y) = (2\pi)^{-\frac d2}\sum _{j=1}^N \lambda _j f_{2,j}(x)\overline{f_{1,j}(y)}
\in \mascS (\rr {2d}),
$$
when $N\ge 0$ is the rank of $\op _A(a)$.
For any $j_0\le N$, we obtain $\lambda _{j_0}>0$ and
\begin{align*}
\lambda _{j_0}f_{2,j_0}(x)
&=
\sum _{j=1}^N \lambda _{j}f_{2,j}(x)(f_{1,j_0},f_{1,j})_{L^2}
=
(2\pi )^{\frac d2}(K(x,\cdo ),f_{1,j_0})_{L^2}\in \mascS (\rr d),
\intertext{and}
\lambda _{j_0}f_{1,j_0}(y)
&=
\sum _{j=1}^N \lambda _{j}f_{1,j}(y)(f_{2,j_0},f_{2,j})_{L^2}
=
(2\pi )^{\frac d2}(f_{2,j_0},K(\cdo ,y))_{L^2}\in \mascS (\rr d),
\end{align*}
which gives the implication from the left to right in (2).

\par

On the  other hand, if $a$ has the expansion
\eqref{Eq:SpectExp0Symb}
for some $\lambda \in \ell _0(\mathbf Z_+)$,
and with $f_{k,j}\in \mascS (\rr d)$, when $j\ge 1$, $k=1,2$ and
$\lambda _j\neq 0$, then
$$
a\in s_{A,0}(\rr {2d})\cap \mascS (\rr {2d})=s_{A,\flat}(\rr {2d}).
$$
This gives the implication from the right to left in (2), and thereby the result.
\end{proof}

\par

\begin{rem}
\label{Rem:RankOne}
As an immediate consequence of
\eqref{Eq:SpectExp0Symb}
and
\eqref{Eq:SpectEstOrlB}, it follows that an operator
$\op _A(a)$ on $L^2(\rr d)$ is a rank-one operator, if and only if
its symbol $a$ is equal to $W_{f,g}^A$ for some $f,g\in L^2(\rr d)\setminus 0$,
and that
$$
\nm {W_{f,g}^A}{\mascI _\Phi} = \frac 1{\Phi ^{-1}(1)}\nm f{L^2}\nm g{L^2}
\quad \text{and}\quad
\nm {W_{f,g}^A}{\mascI _\sharp} = \nm f{L^2}\nm g{L^2},
$$
for any positive quasi-Young function $\Phi$.
\end{rem}

\par

The next proposition corresponds to
Propositions \ref{Prop:SchattenEmbed} and \ref{Prop:Spectral0}
on the level of symbols to pseudo-differential operators.

\par

\begin{prop}
\label{Prop:DiscreteNorms2}
Suppose that $\Phi$, $\Phi _1$ and $\Phi _2$ are
quasi-Young functions, and that $A\in \GL (d,\mathbf R)$.
\begin{enumerate}
\item If \eqref{Eq:LocYoungEst} is true for some $T>0$, then
$s_{A,\Phi _1}(\rr {2d})\hookrightarrow s_{A,\Phi _2}(\rr {2d})$.

\vrum

\item If $\Phi$ is positive, then
$s_{A,\Phi}(\rr {2d})\hookrightarrow s_{A,\sharp}(\rr {2d})$.

\vrum

\item If $\Phi$ is not positive, then
$s_{A,\Phi}(\rr {2d})=s_{A,\infty}(\rr {2d})$,
with equivalent (quasi-)norms.

\vrum

\item If $\Phi$ satisfies a local $\Delta _2$-condition, then
$s_{A,\flat}(\rr {2d})$ and $s_{A,0}(\rr {2d})$ are dense in $s_{A,\Phi}(\rr {2d})$
and in $s_{A,\sharp}(\rr {2d})$.

\vrum

\item If $\Phi$ satisfies a local $\Delta _2$-condition and in addition is
a Young function, then
the dual of $s_{A,\Phi}(\rr {2d})$ is equal to $s_{A,\Phi ^*}(\rr {2d})$,
through the $L^2$ form $(\cdo ,\cdo )_{L^2}$.
\end{enumerate}
\end{prop}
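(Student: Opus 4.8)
The plan is to transport all five statements from the operator side to the symbol side via the isometric isomorphism $a\mapsto (2\pi )^{\frac d2}\op _A(a)$ of Lemma \ref{Lemma:SymbOpMap}, which identifies $s_{A,\maclB}(\rr {2d})$ with $\mascI _{\maclB}(L^2(\rr d))$, and more specifically $s_{A,\Phi}(\rr {2d})$, $s_{A,\sharp}(\rr {2d})$, $s_{A,0}(\rr {2d})$ with $\mascI _\Phi (L^2(\rr d))$, $\mascI _\sharp (L^2(\rr d))$, $\mascI _0(L^2(\rr d))$ respectively. Under this identification, parts (1)--(3) are immediate reformulations of the corresponding parts of Proposition \ref{Prop:Spectral0} (together with Proposition \ref{Prop:SchattenEmbed} for part (1)): the inclusion maps and the coincidence of spaces are preserved because the isomorphism is isometric, so continuity of an inclusion on one side is continuity on the other.

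For part (4), I would argue as follows. By part of Proposition \ref{Prop:Spectral0}, $\mascI _0(L^2(\rr d))$ is dense in $\mascI _\Phi (L^2(\rr d))$ and in $\mascI _\sharp (L^2(\rr d))$ whenever $\Phi$ satisfies a local $\Delta _2$-condition; transporting via Lemma \ref{Lemma:SymbOpMap} gives that $s_{A,0}(\rr {2d})$ is dense in $s_{A,\Phi}(\rr {2d})$ and in $s_{A,\sharp}(\rr {2d})$. To get the finer statement that the smaller space $s_{A,\flat}(\rr {2d})=s_{A,0}(\rr {2d})\cap \mascS (\rr {2d})$ is already dense, I would take $a\in s_{A,0}(\rr {2d})$ with finite spectral expansion $a=\sum_{j=1}^N\lambda_jW^A_{f_{2,j},f_{1,j}}$ from Corollary \ref{Cor:Spectral0Symb}(1), approximate each $f_{k,j}\in L^2(\rr d)$ in $L^2$-norm by Schwartz functions, and use Remark \ref{Rem:RankOne} (continuity of $(f,g)\mapsto W^A_{f,g}$ into $\mascI _\Phi$, with norm $\tfrac{1}{\Phi^{-1}(1)}\nm f{L^2}\nm g{L^2}$) to conclude that the resulting finite sums of Wigner distributions of Schwartz functions, which lie in $s_{A,\flat}(\rr {2d})$ by Corollary \ref{Cor:Spectral0Symb}(2), converge to $a$ in $s_{A,\Phi}$; density of $s_{A,0}$ in $s_{A,\Phi}$ then finishes the argument, and likewise for $s_{A,\sharp}$.

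For part (5), when $\Phi$ is a Young function satisfying a local $\Delta _2$-condition, Proposition \ref{Prop:Spectral0}(4) identifies the dual of $\mascI _\Phi (L^2(\rr d))$ with $\mascI _{\Phi^*}(L^2(\rr d))$ through $(\cdo ,\cdo )_{\mascI _2}$. Pulling this back along Lemma \ref{Lemma:SymbOpMap} and using the definition of $(a,b)_{s_{A,2}}=(2\pi)^d(\op_A(a),\op_A(b))_{\mascI _2}$ together with the convention $(a,b)_{L^2}\equiv(a,b)_{s_{A,2}}$ and the fact $s_{A,2}(\rr{2d})=L^2(\rr{2d})$, one obtains that the dual of $s_{A,\Phi}(\rr {2d})$ is $s_{A,\Phi^*}(\rr {2d})$ via the $L^2$-form; isometry of the identification yields equality of norms. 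One must also note here that $\Phi^*$ is again a Young function (Remark \ref{Rem:PhiLeb}), so the pairing is well defined.

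The bulk of the work is genuinely bookkeeping, since everything is a transport of Proposition \ref{Prop:Spectral0} through an isometric isomorphism. The one place needing an actual (short) argument rather than a citation is the refinement in part (4) from $s_{A,0}$ to $s_{A,\flat}$: one must check that approximating the window functions in $L^2$ produces convergence in the $s_{A,\Phi}$-quasinorm and not merely weakly, which is exactly what Remark \ref{Rem:RankOne} supplies via the bound $\nm{W^A_{f,g}}{\mascI _\Phi}=\tfrac{1}{\Phi^{-1}(1)}\nm f{L^2}\nm g{L^2}$ and the triangle (or $r$-triangle) inequality for the finitely many terms. I expect that to be the only subtle point; the remaining details are routine and can be left to the reader, as is done elsewhere in the paper.
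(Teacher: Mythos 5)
Your proposal is correct, and for parts (1)--(3), (5) and the density of $s_{A,0}$ in (4) it is exactly the paper's argument: transport Propositions \ref{Prop:SchattenEmbed} and \ref{Prop:Spectral0} through the isometric identification of Lemma \ref{Lemma:SymbOpMap}. The one step where you diverge is the density of $s_{A,\flat}$ in (4). The paper first reduces, via \ref{Eq:BSchattClassSchattEstOrl}, to approximating a finite-rank symbol in the $s_{A,r}$-norm ($r$ the order of $\Phi$) and then invokes Gram--Schmidt orthonormalization of the Schwartz approximants so as to stay within spectral expansions of the form \eqref{Eq:SpectExp0Symb}; you instead estimate directly in the $s_{A,\Phi}$-quasi-norm, using sesquilinearity of $(f,g)\mapsto W^A_{f,g}$ together with the rank-one norm formula of Remark \ref{Rem:RankOne} and the $r$-triangle inequality, which is a clean way to avoid Gram--Schmidt altogether (note that the local $\Delta_2$-condition forces $\Phi$ to be positive, so Remark \ref{Rem:RankOne} indeed applies). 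The only small repair needed is your citation of Corollary \ref{Cor:Spectral0Symb}\,(2) for membership of the approximants in $s_{A,\flat}$: that corollary is phrased for expansions with \emph{orthonormal} families, which your Schwartz approximants need not be. Either orthonormalize as in the paper, or simply observe that a finite sum $\sum_j\lambda_jW^A_{\phi_{2,j},\phi_{1,j}}$ with $\phi_{k,j}\in\mascS(\rr d)$ has operator kernel $(2\pi)^{-\frac d2}\sum_j\lambda_j\phi_{2,j}\otimes\overline{\phi_{1,j}}\in\mascS(\rr{2d})$, so by \eqref{atkernel} its symbol lies in $\mascS(\rr{2d})$ and has finite rank, hence lies in $s_{A,\flat}(\rr{2d})=s_{A,0}(\rr{2d})\cap\mascS(\rr{2d})$ directly from the definition; with that adjustment your argument is complete.
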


\par

\begin{proof}
The assertions (1), (2), (4) and (5) follows by a straight-forward
combination of 
Propositions \ref{Prop:SchattenEmbed}, \ref{Prop:Spectral0},
and Lemma \ref{Lemma:SymbOpMap}. From these results
it also follows that $s_{A,0}(\rr {2d})$ in (4) is dense in
$s_{A,\Phi}(\rr {2d})$ and in $s_{A,\sharp}(\rr {2d})$.

\par

It remains to prove the density properties of $s_{A,\flat}(\rr {2d})$ in (4).
Let $r\in (0,1]$ be the order of $\Phi$ and $a\in s_{A,0}(\rr {2d})$.
In view of \ref{Eq:BSchattClassSchattEstOrl},
it suffices to approximate $a$
with elements in $s_{A,\flat}(\rr {2d})$
in the $s_{A,r}(\rr {2d})$ norm.
However, this follows by a combination of Corollary
\ref{Cor:Spectral0Symb} with
the fact that $\mascS (\rr d)$ is dense in $L^2(\rr d)$, and suitable
estimates in Grahm-Schmidt's orthogonalization method. The
details are left for the reader.
\end{proof}

\par

\begin{rem}
\label{Rem:SmallSchattDenseSchwartz}
Additionally to (4) in Proposition \ref{Prop:DiscreteNorms2},
we recall that $s_{A,\flat}(\rr {2d})$ is dense in $\mascS (\rr {2d})$.
This is, for example, a straight-forward consequence of
\cite[Theorem 6.1]{CheSigTof}.
\end{rem}

\par

\begin{rem}
\label{Rem:SchattCalculTransf}
Let $A_1$ and $A_2$ be $d\times d$-matrices,
$a_1,a_2\in \mascS '(\rr {2d})$ be such that
\eqref{Eq:CalculiTransform} holds, and let
$\maclB \subseteq \ell _0'(\mathbf Z_+)$.
Then it follows
from \eqref{Eq:WignerDistRelDiffMat},
\eqref{Eq:AWigPseudoLink},
Proposition \ref{Prop:Spectral0Symb},
and Corollary \ref{Cor:Spectral0Symb} that
the following is true:
\begin{itemize}
\item $a_1\in s_{A_1,\maclB}(\rr {2d})$, if and only if
$a_2\in s_{A_2,\maclB}(\rr {2d})$, and
%$\nm {a_1}{s_{A_1,\Phi}} = \nm {a_2}{s_{A_2,\Phi}}$;
$$
\nm {a_1}{s_{A_1,\maclB}} = \nm {a_2}{s_{A_2,\maclB}} \text ;
$$
%
%\vrum
%
%\item $a_1\in s_{A_1,\sharp}(\rr {2d})$, if and only if
%$a_2\in s_{A_2,\sharp}(\rr {2d})$, and
%%$\nm {a_1}{s_{A_1,\sharp}} = \nm {a_2}{s_{A_2,\sharp}}$;
%$$
%\nm {a_1}{s_{A_1,\sharp}} = \nm {a_2}{s_{A_2,\sharp}} \text ;
%$$

\vrum

\item $a_1\in s_{A_1,0}(\rr {2d})$, if and only if
$a_2\in s_{A_2,0}(\rr {2d})$;

\vrum

\item $a_1\in s_{A_1,\flat}(\rr {2d})$, if and only if
$a_2\in s_{A_2,\flat}(\rr {2d})$.
\end{itemize}
%Lemma \ref{Lemma:SymbOpMap} that
%$$
%a_1\in s_{A_1,\Phi}(\rr {2d})
%\quad \Leftrightarrow \quad
%a_2\in s_{A_2,\Phi}(\rr {2d}),
%$$
%and then
%$$
%\nm {a_1}{s_{A_1,\Phi}} = \nm {a_2}{s_{A_2,\Phi}}.
%$$
%The same holds true with $\sharp$ in place of $\Phi$ at each occurrence.
\end{rem}

\par

In some applications, we also need equivalent norms for Schatten-von Neumann
classes of Orlicz types, in analogous ways as in Proposition \ref{Prop:OrlDuality}
for ordinary Orlicz spaces. In fact, we have the following.

\par

\begin{prop}
\label{Prop:SchattSymbEquivNorm}
Let $\Phi$ be a Young function, $A\in \GL (d,\mathbf R)$, and let
\begin{equation}
\nmm a \equiv \sup |(a,b)_{L^2}|,
\end{equation}
where the supremum is taken over all $b\in s_{A,\Phi ^*}(\rr {2d})$ such that
$\nm b{s_{A,\Phi ^*}}\le 1$. Then
\begin{equation}
\nm a{s_{A,\Phi}} \le \nmm a \le 2\nm a{s_{A,\Phi}},
\qquad
a\in s_{A,\Phi}(\rr {2d}).
\end{equation}
\end{prop}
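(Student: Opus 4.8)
The statement is the operator-symbol analogue of Proposition~\ref{Prop:OrlDuality}(1), so the natural strategy is to transfer that result from $L^2(\rr d)$-operators to symbols via the isometric isomorphism $a\mapsto (2\pi)^{d/2}\op_A(a)$ of Lemma~\ref{Lemma:SymbOpMap}, combined with the Hilbert--Schmidt pairing $(\cdo,\cdo)_{\mascI_2}$ and the identification \eqref{Eq:SymbDual}. First I would recall from \eqref{Eq:SymbDual} that for $a\in s_{A,\Phi}(\rr{2d})$ and $b\in s_{A,\Phi^*}(\rr{2d})$ one has
$$
(a,b)_{L^2}=(2\pi)^d(\op_A(a),\op_A(b))_{\mascI_2}=(2\pi)^d\Tr_{L^2(\rr d)}\big(\op_A(b)^*\circ\op_A(a)\big),
$$
so that, writing $T_a=(2\pi)^{d/2}\op_A(a)$ and $T_b=(2\pi)^{d/2}\op_A(b)$, we get $(a,b)_{L^2}=(T_a,T_b)_{\mascI_2}$ while $\nm a{s_{A,\Phi}}=\nm{T_a}{\mascI_\Phi}$ and $\nm b{s_{A,\Phi^*}}=\nm{T_b}{\mascI_{\Phi^*}}$. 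Thus $\nmm a=\sup\{|(T_a,T)_{\mascI_2}|\,;\,T\in\mascI_{\Phi^*}(L^2(\rr d)),\ \nm T{\mascI_{\Phi^*}}\le 1\}$, since the map $b\mapsto T_b$ is onto $\mascI_{\Phi^*}(L^2(\rr d))$, and the claim reduces to the operator-level inequality
$$
\nm T{\mascI_\Phi}\le\sup\big\{|(T,S)_{\mascI_2}|\,;\,S\in\mascI_{\Phi^*},\ \nm S{\mascI_{\Phi^*}}\le 1\big\}\le 2\nm T{\mascI_\Phi}.
$$

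**Reducing to sequences.** For the operator inequality I would use the spectral resolution: by Proposition~\ref{Prop:OrlSchattTop} (or directly the singular-value decomposition), write $T=\sum_j\lambda_j\, g_j\otimes\overline{f_j}$ with $\{f_j\},\{g_j\}$ orthonormal and $\lambda=\{\lambda_j\}=\{\sigma_j(T)\}$, so $\nm T{\mascI_\Phi}=\nm\lambda{\ell^\Phi}$. The upper bound $|(T,S)_{\mascI_2}|\le 2\nm T{\mascI_\Phi}\nm S{\mascI_{\Phi^*}}$ is exactly \eqref{Eq:OpScalProd}, so the right inequality $\nmm a\le 2\nm a{s_{A,\Phi}}$ is immediate. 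For the left inequality, given $\lambda\in\ell^\Phi(\mathbf Z_+)$, Proposition~\ref{Prop:OrlDuality}(1) applied to the discrete measure produces, for any $\ep>0$, a sequence $\mu=\{\mu_j\}$ with $\nm\mu{\ell^{\Phi^*}}\le 1$ and $\sum_j\lambda_j\mu_j\ge(1-\ep)\nm\lambda{\ell^\Phi}$ (indeed $\ge \nm\lambda{\ell^\Phi}$ itself if one uses that the Luxemburg norm equals the sup over the Orlicz-norm unit ball, which is \eqref{Eq:LuxNormEquiv} read in the other direction); one may take $\mu_j\ge 0$ and finitely supported. Then set $S=\sum_j\mu_j\, g_j\otimes\overline{f_j}\in\mascI_{\Phi^*}(L^2(\rr d))$, which is finite-rank with $\nm S{\mascI_{\Phi^*}}=\nm\mu{\ell^{\Phi^*}}\le 1$ by Proposition~\ref{Prop:Spectral0}(1), and compute
$$
(T,S)_{\mascI_2}=\Tr\big(S^*\circ T\big)=\sum_j\lambda_j\mu_j\ge(1-\ep)\nm\lambda{\ell^\Phi}=(1-\ep)\nm T{\mascI_\Phi},
$$
using orthonormality of $\{f_j\}$ and $\{g_j\}$ to evaluate the trace. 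Letting $\ep\to 0$ gives $\sup_S|(T,S)_{\mascI_2}|\ge\nm T{\mascI_\Phi}$, hence $\nmm a\ge\nm a{s_{A,\Phi}}$.

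**Main obstacle.** The only genuinely delicate point is the lower bound: one must exhibit a \emph{competitor} $b\in s_{A,\Phi^*}(\rr{2d})$ (equivalently $S\in\mascI_{\Phi^*}(L^2(\rr d))$) that nearly attains the pairing, and check it actually lies in $\mascI_{\Phi^*}$ with the right norm. Choosing $S$ diagonal in the singular system of $T$ handles this cleanly, because Proposition~\ref{Prop:Spectral0}(1) equates the Schatten $\Phi^*$-norm of such an $S$ with the $\ell^{\Phi^*}$-norm of its coefficient sequence, reducing everything to the scalar duality of Proposition~\ref{Prop:OrlDuality}(1); one subtlety is that $\Phi^*$ need not satisfy a $\Delta_2$-condition (e.g.\ $\Phi=\Phi_{[1]}$, $\Phi^*=\Phi_{[\infty]}$), so I would avoid any density/duality argument requiring $\Delta_2$ and instead rely solely on the Luxemburg-norm/Orlicz-norm equivalence \eqref{Eq:LuxNormEquiv}, which holds for all Young functions. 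A second routine check is that $S$ so constructed is genuinely compact (finite-rank suffices, by using a truncation of $\mu$) so that $(\cdo,\cdo)_{\mascI_2}$ and the trace computation are legitimate; this follows since $\mu$ may be taken finitely supported. Assembling these pieces gives both inequalities and completes the proof.
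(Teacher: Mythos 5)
Your reduction to the operator level via Lemma \ref{Lemma:SymbOpMap} and \eqref{Eq:SymbDual}, the upper bound from \eqref{Eq:OpScalProd}, and the lower bound for compact $T$ by pairing against a competitor $S$ that is diagonal in the singular system of $T$ and invoking the scalar duality \eqref{Eq:LuxNormEquiv} is exactly the route the paper takes. However, there is a genuine gap: your lower-bound argument begins with ``write $T=\sum_j\lambda_j\,g_j\otimes\overline{f_j}$'' with $\lambda_j=\sigma_j(T)$, which presupposes that $T=\op_A(a)$ (up to the constant) is compact. A Young function $\Phi$ need not be positive (e.g.\ $\Phi=\Phi_{[\infty]}$, which is precisely the conjugate relevant to $\Phi^*$-duality when $\Phi^{**}$ comes from $\Phi_{[1]}$), and by Proposition \ref{Prop:Spectral0}(2) in that case $\mascI_\Phi(L^2(\rr d))=\mascI_\infty(L^2(\rr d))$, so $s_{A,\Phi}(\rr{2d})$ contains symbols of non-compact operators. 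For such $T$ no expansion $T=\sum_j\sigma_j(T)\,g_j\otimes\overline{f_j}$ exists, and Proposition \ref{Prop:OrlSchattTop}, which you cite as an alternative, only supplies the supremum formula \eqref{Eq:AltOrlSchattNorm}, not a spectral resolution; so your construction of the near-optimal $S$ collapses in this case, and equivalence of norms is not enough because the first inequality $\nm a{s_{A,\Phi}}\le\nmm a$ is claimed with constant $1$.

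The paper closes exactly this case by a separate argument: when $\Phi$ is not positive, it uses \eqref{Eq:AltOrlSchattNorm} to pick orthonormal sequences $\{f_{k,j}\}$ with $\nm T{\mascI_\Phi}\le\nm{\{(Tf_{1,j},f_{2,j})\}_{j}}{\ell^\Phi}+\ep/2$, then uses \eqref{Eq:LuxNormEquiv} to pick $\mu$ with $\nm\mu{\ell^{\Phi^*}}\le 1$ nearly attaining that $\ell^\Phi$-norm, and sets $S=\sum_j\mu_jf_{2,j}\otimes f_{1,j}$, obtaining $\nm T{\mascI_\Phi}\le\nmm T+\ep$. Your proof would be complete if you added this branch (or otherwise justified the lower bound without a singular-value decomposition); as written, it only establishes the proposition when $\Phi$ is positive.
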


\par

The previous proposition is a straight-forward consequence
of Lemma \ref{Lemma:SymbOpMap} and the following.

\par

\begin{prop}
\label{Prop:SchattEquivNorm}
Let $\Phi$ be a Young function, $\maclH _1$ and $\maclH _2$
be Hilbert spaces, and let
\begin{equation}
\nmm T \equiv \sup |(T,S)_{\mascI _2}|,
\end{equation}
where the supremum is taken over all
$S\in \mascI _{\Phi ^*}(\maclH _1,\maclH _2)$ such that
$\nm S{\mascI _{\Phi ^*}}\le 1$. Then
\begin{equation}
\label{Eq:AltNormSchatEst}
\nm T{\mascI _{\Phi}} \le \nmm T \le 2\nm T{{\mascI _{\Phi}}},
\qquad
T\in \mascI _{\Phi}(\maclH _1,\maclH _2).
\end{equation}
\end{prop}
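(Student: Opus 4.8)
The plan is to deduce both inequalities in \eqref{Eq:AltNormSchatEst} from Proposition~\ref{Prop:OrlDuality}, transported to the Schatten setting via the spectral theorem, exactly as the dual characterization of $\ell^\Phi$-norms is transported to $\mascI_\Phi$ in Proposition~\ref{Prop:OrlSchattTop}. First I would treat the upper bound $\nmm T\le 2\nm T{\mascI_\Phi}$. For $S\in\mascI_{\Phi^*}(\maclH_1,\maclH_2)$ the Hölder-type estimate \eqref{Eq:OpScalProd} gives $|(T,S)_{\mascI_2}|\le 2\nm T{\mascI_\Phi}\nm S{\mascI_{\Phi^*}}$, and taking the supremum over $\nm S{\mascI_{\Phi^*}}\le 1$ yields $\nmm T\le 2\nm T{\mascI_\Phi}$ immediately. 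So this half is essentially a restatement of \eqref{Eq:OpScalProd}.

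The substance is the lower bound $\nm T{\mascI_\Phi}\le\nmm T$. Here I would use the singular value decomposition of $T$: write $T=\sum_j\sigma_j(T)\,g_j\otimes f_j$ with $\{f_j\}\in\ON(\maclH_1)$, $\{g_j\}\in\ON(\maclH_2)$ and $\sigma_j=\sigma_j(T)$ non-increasing, so that $\nm T{\mascI_\Phi}=\nm{\{\sigma_j\}}{\ell^\Phi}$. By Proposition~\ref{Prop:OrlDuality}(1), applied with $\Omega=\mathbf Z_+$ and $d\mu$ the counting measure, one has $\nm{\{\sigma_j\}}{\ell^\Phi}\le\sup\big|\sum_j\sigma_j\overline{c_j}\big|$ with the supremum over $\{c_j\}\in\ell^{\Phi^*}$ with $\nm{\{c_j\}}{\ell^{\Phi^*}}\le1$ (indeed the inequality $\nm f{L^\Phi}\le\nmm f$ in \eqref{Eq:LuxNormEquiv} is the relevant direction here, and no factor $2$ is lost). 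Given such a sequence $\{c_j\}$, I would build the competitor operator $S=\sum_j c_j\,g_j\otimes f_j$ (one may take the $c_j\ge0$ after absorbing phases, since $\sigma_j\ge0$). Then $S$ has singular values $\{c_j\}$ rearranged decreasingly, so by Proposition~\ref{Prop:Spectral0}(1), or directly from Definition~\ref{Def:GeneralSchattenClasses}, $\nm S{\mascI_{\Phi^*}}=\nm{\{c_j\}}{\ell^{\Phi^*}}\le1$; and a direct computation gives $(T,S)_{\mascI_2}=\Tr(S^*\circ T)=\sum_j\sigma_j\overline{c_j}=\sum_j\sigma_j c_j$. Hence $\nm T{\mascI_\Phi}=\nm{\{\sigma_j\}}{\ell^\Phi}\le\sup_j|(T,S)_{\mascI_2}|\le\nmm T$, completing the proof.

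Two points need a little care. First, if $T$ has infinite rank the sum defining $S$ must converge in $\mascI_\infty$; this is fine because $\{c_j\}\in\ell^{\Phi^*}\subseteq\ell^\infty$ means $c_j$ is bounded, but to make $S$ lie in $\mascI_{\Phi^*}$ one should note $\{c_j\}\in\ell^{\Phi^*}$ already forces the partial sums to converge in $\mascI_{\Phi^*}$-norm when $\Phi^*$ satisfies a $\Delta_2$-condition, and otherwise one truncates: replace $S$ by $S_N=\sum_{j\le N}c_j\,g_j\otimes f_j$, note $(T,S_N)_{\mascI_2}=\sum_{j\le N}\sigma_j c_j\to\sum_j\sigma_j c_j$, and $\nm{S_N}{\mascI_{\Phi^*}}\le\nm{\{c_j\}}{\ell^{\Phi^*}}\le1$, so the supremum in $\nmm T$ is still at least $\sum_j\sigma_j c_j$. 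Second, one must check that the inner form $(T,S)_{\mascI_2}$ in the statement is indeed $\Tr(S^*\circ T)$ and is well-defined on $\mascI_\Phi\times\mascI_{\Phi^*}$, which is exactly the content of the discussion around \eqref{Eq:OpScalProd}. I expect the main (though still modest) obstacle to be this convergence/truncation bookkeeping in the case where $\Phi^*$ fails the $\Delta_2$-condition, so that simple operators need not be dense in $\mascI_{\Phi^*}$; the truncation argument above sidesteps it cleanly.
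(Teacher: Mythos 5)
Your upper bound is exactly the paper's (it is just \eqref{Eq:OpScalProd}), and your lower-bound argument for \emph{compact} $T$ coincides in substance with the paper's: the paper also pairs $T$ against ``diagonal'' competitors $S=\sum_j\mu_j f_{2,j}\otimes f_{1,j}$ built from the spectral expansion of $T$ and invokes the $\ell^\Phi$--$\ell^{\Phi^*}$ duality of Proposition \ref{Prop:OrlDuality} (the first inequality in \eqref{Eq:LuxNormEquiv}, with no loss of a factor $2$). Your truncation remark correctly disposes of the convergence issue for $S$, so that part is fine.

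The genuine gap is elsewhere: you assume throughout that $T$ admits a singular value decomposition $T=\sum_j\sigma_j(T)\,g_j\otimes f_j$, i.e.\ that $T$ is compact. This is automatic only when $\Phi$ is positive (then $\mascI_\Phi\hookrightarrow\mascI_\sharp$ by Proposition \ref{Prop:Spectral0}). But the proposition is stated for arbitrary Young functions, and a Young function may vanish on an interval $[0,a]$ (e.g.\ $\Phi=\Phi_{[\infty]}$), in which case $\mascI_\Phi(\maclH_1,\maclH_2)=\mascI_\infty(\maclH_1,\maclH_2)$ contains non-compact operators; for such $T$ (say $T=I$ on an infinite-dimensional space) there is no norm-convergent rank-one expansion, $\|T\|_{\mascI_\Phi}=\|\{\sigma_j(T)\}\|_{\ell^\Phi}$ still makes sense, but your construction of the competitor $S$ from the SVD is unavailable. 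Moreover one cannot simply quote the equivalence $\ell^\Phi=\ell^\infty$ of Proposition \ref{Prop:DiscrNmLocDelCond1}(2), since the claimed inequality $\|T\|_{\mascI_\Phi}\le\|T\|$ is exact, not up to constants. The paper closes this case separately: given $\ep>0$ it uses the alternative norm formula \eqref{Eq:AltOrlSchattNorm} of Proposition \ref{Prop:OrlSchattTop} to pick orthonormal sequences $\{f_{k,j}\}$ with $\|T\|_{\mascI_\Phi}\le\|\{(Tf_{1,j},f_{2,j})\}_{j}\|_{\ell^\Phi}+\ep/2$, then chooses a near-optimal $\mu\in\ell^{\Phi^*}$ via \eqref{Eq:LuxNormEquiv} and forms $S=\sum_j\mu_j f_{2,j}\otimes f_{1,j}$, obtaining $\|T\|_{\mascI_\Phi}\le\nmm T+\ep$. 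You would need to add an argument of this type (or restrict to positive $\Phi$) to make the proof complete.
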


\par

\begin{proof}
%[Proof of Proposition \ref{Prop:SchattEquivNorm}]
Let $T\in \mascI _{\Phi}(\maclH _1,\maclH _2)$. The second
inequality in \eqref{Eq:AltNormSchatEst} follows from \eqref{Eq:OpScalProd}.

\par

In order to prove the first inequality in \eqref{Eq:AltNormSchatEst} we first assume that
$T$ is compact. Then $T$ is given by
\eqref{Eq:SpectExp0}, for some $\{ f_{k,j} \} _{j=1}^\infty \in \ON (\maclH _k)$, $k=1,2$,
and some non-negative non-decreasing sequence
$\lambda =\{ \lambda _j\} _{j=1}^\infty \in \ell ^\Phi (\mathbf Z_+)$.

\par

Let $B$ be the unit ball in $\mascI _{\Phi ^*}(\maclH _1,\maclH _2)$, and let
$B_T$ be the set of all elements $S$ with expansions
\begin{equation}
\label{Eq:DualExp}
S=\sum _{j=1}^\infty \mu _jf_{2,j}\otimes f_{1,j},
\qquad
\nm \mu{\ell ^{\Phi ^*}}\le 1.
\end{equation}
It follows that $B_T\subseteq B$, because
$\nm S{\mascI _{\Phi ^*}}=\nm \mu{\ell ^{\Phi ^*}}\le 1$ for $S$ in
\eqref{Eq:DualExp}.

\par

By Proposition \ref{Prop:OrlDuality} we obtain
\begin{equation}
\begin{aligned}
\label{Eq:EquivSchatNormRevEst}
\nm T{\mascI _{\Phi}}
&=
\nm \lambda{\ell ^\Phi}
\le
\sup _{\nm \mu{\ell ^{\Phi ^*}}\le 1}|(\lambda ,\mu)_{\ell ^2}|
\\[1ex]
&=
\sup _{S\in B_T}|(T,S)_{\mascI _2}|
\le
\sup _{S\in B}|(T,S)_{\mascI _2}|
=
\nmm T.
\end{aligned}
\end{equation}
This proves the result when $T$ is compact.

\par

If $\Phi$ is positive,
then $T$ is compact, and the result follows from
\eqref{Eq:EquivSchatNormRevEst}.

\par

Therefore assume that $\Phi$ is not positive, and let $\ep >0$.
By Proposition \ref{Prop:OrlSchattTop}, there are sequences
$\{ f_{k,j}\} _{j=1}^\infty \in \ON (\maclH _k)$, $k=1,2$, and such that
$$
\nm T{\mascI _\Phi}
\le
\nm {\{ (Tf_{1,j},f_{2,j})_{\maclH _2}\} _{j=1}^\infty}{\ell ^\Phi}+\frac \ep 2.
$$
Now we choose $S$ as in \eqref{Eq:DualExp}, and such that
$$
\nm {\{ (Tf_{1,j},f_{2,j})_{\maclH _2}\} _{j=1}^\infty}{\ell ^\Phi}
\le
\left |
\sum _{j=1}^\infty
(Tf_{1,j},f_{2,j})_{\maclH _2}\overline \mu _j
\right | +\frac \ep 2.
$$
This is possible in view of \eqref{Eq:LuxNormEquiv}. A combination
of these estimates yields
$$
\nm T{\mascI _\Phi}
\le
\left |
\sum _{j=1}^\infty
(Tf_{1,j},f_{2,j})_{\maclH _2}\overline \mu _j
\right | +\ep
\le
\nmm T+\ep.
$$
Since $\ep >0$ is arbitrary, we get the first inequality in
\eqref{Eq:AltNormSchatEst} in this case as well, and the result follows.
\end{proof}

\par

Proposition \ref{Prop:SymbTranslDil} below explains mapping
properties for Schatten class symbols and some
other symbol classes under dilations and modulations, i.{\,}e. under
mappings of the form
\begin{equation}
\label{Eq:SymbTranslDil}
a\mapsto a(\cdo -X)
\quad \text{and}\quad
a\mapsto e^{i\scal \cdo \Xi}a ,
\qquad
a\in \mascS '(\rr {2d}).
\end{equation}
Here $s_{A,0,1}(\rr {2d})=\{ 0\}$
and $s_{A,0,j}(\rr {2d})$ is the set of
all functions of the form
$$
\sum _{k=1}^{j-1} W_{f_{k},g_{k}},
\qquad
f_k,g_k\in L^2(\rr d),\ k=1,\dots ,j-1,
$$
when $j\ge 2$. We observe that $a\mapsto \op _A(a)$
is bijective from $s_{A,0,j}(\rr {2d})$ to $\mascI _{0,j}(L^2(\rr d))$,
in view of \eqref{Eq:AWigPseudoLink2}. We also set
\begin{equation}
\label{Eq:SymbRankSingValDef}
%\operatorname{Rank} _A(a)\equiv \rank (\op _A(a))
%\quad \text{and}\quad
\widetilde \sigma _{A,j}(a)\equiv (2\pi )^{\frac d2}\sigma _j(\op _A(a)),
\end{equation}
when $A\in \GL (d,\mathbf R)$, $a\in \mascS '(\rr {2d})$
and $j\in \mathbf Z_+$. By \eqref{Eq:AWigPseudoLink2}
and Lemma \ref{Lemma:SymbOpMap} it follows that 
\begin{equation}
\widetilde \sigma _{A,j}(a) = \inf _{a_0\in s_{A,0,j}}\nm {a-a_0}{s_{A,\infty}},
\qquad
a\in \mascS '(\rr {2d}),\ j\in \mathbf Z_+.
\end{equation}

\par

\begin{prop}
\label{Prop:SymbTranslDil}
Let $\maclB \subseteq \ell _0'(\mathbf Z_+)$ be a quasi-Banach space, let
$A\in \GL (d,\mathbf R)$, and let $X,\Xi \in \rr {2d}$. Then the following is true:
\begin{enumerate}
\item 
the mappings in \eqref{Eq:SymbTranslDil}
restrict to isometric isomorphisms on $s_{A,\maclB}(\rr {2d})$.
In particular,
\begin{equation}
\label{Eq:SchattTranslDil}
%\begin{aligned}
\nm {e^{i\scal \cdo \Xi}a(\cdo -X)}{s_{A,\maclB}}
%&=
%\nm {e^{i\scal \cdo \Xi}a}{s_{A,\maclB}}
=
\nm a{s_{A,\maclB}},
%\\[1ex]
\qquad
a\in s_{A,\maclB}(\rr {2d}),\ X,\Xi \in \rr {2d}\text ;
%\end{aligned}
\end{equation}

\vrum

\item
the mappings in \eqref{Eq:SymbTranslDil}
restrict to a bijection on $s_{A,0,j}(\rr {2d})$ when $j\in \mathbf Z_+$
and to isomorphisms on $s_{A,0}(\rr {2d})$ and $s_{A,\flat}(\rr {2d})$;

\vrum

\item if $j\in \mathbf Z_+$, then
\begin{equation}
\widetilde \sigma _{A,j}(a(\cdo -X)) = \widetilde \sigma _{A,j}(e^{i\scal \cdo \Xi}a)
= \widetilde \sigma _{A,j}(a).
\end{equation}
\end{enumerate}
%$A\in \GL (d,\mathbf R)$. Then the following is true:
%\begin{enumerate}
%\item if $X\in \rr {2d}$, then the map $a\mapsto a(\cdo -X)$
%on $\mascS (\rr {2d})$
%extends uniquely to an isometric isomorphism on $s_{A,\Phi}(\rr {2d})$;
%
%\vrum
%
%\item
%% if $\Xi \in \rr {2d}$ and in additions $A$ and $I-A$ are invertible,
% if $\Xi \in \rr {2d}$, then the map $a\mapsto e^{i\scal \cdo \Xi}a$
%on $\mascS (\rr {2d})$
%extends uniquely to an isometric isomorphism on $s_{A,\Phi}(\rr {2d})$.
%\end{enumerate}
\end{prop}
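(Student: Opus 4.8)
The plan is to exhibit, for each fixed $X,\Xi \in \rr {2d}$, unitary operators $U,V$ on $L^2(\rd)$ and a unimodular constant $c\in \mathbf C$, all depending on $X$, $\Xi$ and $A$ but not on $a$, such that
\begin{equation*}
\op _A\big ( e^{\im \scal \cdo \Xi}a(\cdo -X)\big )
=
c\cdot U\circ \op _A(a)\circ V,
\qquad
a\in \mascS '(\rr {2d}).
\end{equation*}
Once this factorization is in hand, (1) and (3) follow almost formally. Indeed, the map $\maclL _{X,\Xi}\colon T\mapsto cUTV$ on $\mascI _\infty (L^2(\rd))$ is linear and agrees on $\op _A(\mascS '(\rr {2d}))$ with the symbol translation--modulation; it is a bijective isometry of $\mascI _\infty (L^2(\rd))$ (composition with the unitaries $U,V$ and multiplication by $c$ with $|c|=1$ preserve the operator norm, and the inverse is $T\mapsto \overline c\, U^{-1}TV^{-1}$), and it restricts to a bijection of $\mascI _{0,j}(L^2(\rd))$ for every $j$, since the rank is unchanged under composition with the invertible operators $cU$ and $V$. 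Proposition \ref{Prop:ContSchattInvariance}, applied with $\maclH _1=\maclH _2=L^2(\rd)$, then gives $\sigma _j(\maclL _{X,\Xi}(T))=\sigma _j(T)$ for all $j$ and that $\maclL _{X,\Xi}$ restricts to a bijective isometry of $\mascI _{\maclB}(L^2(\rd))$. Transporting this back to symbols via Lemma \ref{Lemma:SymbOpMap} and the identity \eqref{Eq:SymbRankSingValDef} yields the isometry \eqref{Eq:SchattTranslDil} and the equalities in (3).

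I would verify the factorization by computing the distribution kernel through \eqref{atkernel}, and it suffices to treat a translation $a\mapsto a(\cdo -X)$ and a modulation $a\mapsto e^{\im \scal \cdo \Xi}a$ separately, the general case being their composition. Using that $\mascF _2$ turns translation in the second phase--space variable into a character, one finds, with $X=(X_1,X_2)$,
\begin{equation*}
K_{a(\cdo -X),A}(x,y) = e^{\im \scal {X_2}{x-y}}\, K_{a,A}(x-X_1,y-X_1),
\end{equation*}
so a symbol translation amounts to conjugating $\op _A(a)$ by the time--frequency shift $M_{X_2}\circ T_{X_1}$, where $T_c$ and $M_\xi$ denote translation by $c$ and modulation by $\xi$ on $L^2(\rd)$; this is a single unitary, so here $c=1$ and $V=U^{-1}$. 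For a modulation, multiplying $\mascF _2^{-1}a$ by a character in the first variable and translating its frequency variable leads, after a short computation, to a kernel of the form $e^{\im (\scal x\alpha +\scal y\beta )}K_{a,A}(x-c_1,y-c_2)$ with $\alpha ,\beta \in \rd$ depending linearly on $\Xi _1$ and $c_1,c_2\in \rd$ depending linearly on $\Xi _2$, all through $A$; multiplication of a kernel by $e^{\im (\scal x\alpha +\scal y\beta )}$ corresponds to $T\mapsto M_\alpha \circ T\circ M_{-\beta}$ and the shift $(x,y)\mapsto (x-c_1,y-c_2)$ to $T\mapsto T_{c_1}\circ T\circ T_{-c_2}$, i.e.\ to composition on each side with a unitary, so one again arrives at $c\cdot U\circ \op _A(a)\circ V$ with $U,V$ unitary and $|c|=1$. (Alternatively, one could first reduce to the Weyl quantization $A=\frac 12 I_d$ via \eqref{Eq:CalculiTransform} and Remark \ref{Rem:SchattCalculTransf}, which is legitimate because $e^{\im \scal {AD_\xi}{D_x}}$ commutes with translations and its effect on modulations is again a unitary time--frequency shift, and then invoke the classical formulas.)

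Finally, (2) beyond the isometry statement follows from the same factorization together with the identification of symbol classes with operator classes: by \eqref{Eq:AWigPseudoLink2}, $a\mapsto \op _A(a)$ is a bijection of $s_{A,0,j}(\rr {2d})$ onto $\mascI _{0,j}(L^2(\rd))$, so the bijectivity of $\maclL _{X,\Xi}$ on $\mascI _{0,j}(L^2(\rd))$ transfers to bijectivity of the maps \eqref{Eq:SymbTranslDil} on $s_{A,0,j}(\rr {2d})$; since $s_{A,0}(\rr {2d})=\bigcup _j s_{A,0,j}(\rr {2d})$, these maps are bijections on $s_{A,0}(\rr {2d})$ as well, and as $s_{A,\flat}(\rr {2d})=s_{A,0}(\rr {2d})\cap \mascS (\rr {2d})$ while the dilation and modulation maps in \eqref{Eq:SymbTranslDil} are homeomorphisms on $\mascS (\rr {2d})$ and on $\mascS '(\rr {2d})$, they restrict to isomorphisms on $s_{A,\flat}(\rr {2d})$ and on $s_{A,0}(\rr {2d})$. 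The main obstacle I anticipate is the kernel bookkeeping in the middle paragraph: one must verify that both the left and the right factors come out unitary (not merely bounded and invertible) and keep track of the unimodular scalar produced when a translation is commuted past a modulation; once that is settled, everything else is a direct appeal to Proposition \ref{Prop:ContSchattInvariance}, Lemma \ref{Lemma:SymbOpMap} and \eqref{Eq:AWigPseudoLink2}.
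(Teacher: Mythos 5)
Your proposal is correct, and it finishes exactly as the paper does (Proposition \ref{Prop:ContSchattInvariance} applied to the induced map $\maclL _{X,\Xi}$ on $\mascI _\infty (L^2(\rr d))$, then transport back to symbols via Lemma \ref{Lemma:SymbOpMap} and the identification of $s_{A,0,j}$ with $\mascI _{0,j}$), but the key covariance step is obtained by a genuinely different device. You derive from \eqref{atkernel} the operator-level factorization $\op _A\big (e^{i\scal \cdo \Xi}a(\cdo -X)\big )=c\, U\circ \op _A(a)\circ V$ with $U,V$ unitary time--frequency shifts, which simultaneously yields the $\mascI _\infty$-isometry and the preservation of $\mascI _{0,j}$; the computation goes through as you sketch it (for a symbol modulation one gets $\alpha =(I-A)^{T}\Xi _1$, $\beta =A^{T}\Xi _1$, $c_1=-A\Xi _2$, $c_2=(I-A)\Xi _2$, and with your kernel conventions the right-hand factor is $M_{\beta}$ rather than $M_{-\beta}$ --- a harmless sign, since only unitarity is used). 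The paper never writes such a conjugation formula: it instead proves the covariance identities \eqref{Eq:WignerTransl} for the $A$-Wigner distributions, gets assertion (2) from the fact that translation and modulation of symbols permute the rank-one symbols $W^A_{f,g}$ (with the maps $S_{k,z}$ acting on the windows), and obtains the isometry on $s_{A,\infty}$ from the duality expression $\nm a{s_{A,\infty}}=\sup _{f,g}|(a,W^A_{f,g})_{L^2}|$ combined with the fact that the $S$-maps are bijections of the unit sphere of $L^2(\rr d)$; only then does it define $\maclL _{X,\Xi}$ and invoke Proposition \ref{Prop:ContSchattInvariance}. Your route buys an explicit unitary-equivalence statement and avoids the duality argument; the paper's route avoids kernel and sign bookkeeping and produces the Wigner covariance identities in a form reusable for the finite-rank classes. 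Your treatment of (2) via the bijection $a\mapsto \op _A(a)$ on $s_{A,0,j}$ and the fact that the maps \eqref{Eq:SymbTranslDil} preserve $\mascS (\rr {2d})$ is a valid substitute for the paper's argument that the $S$-maps preserve $\mascS (\rr d)$.
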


\par

\begin{proof}
Let
$$
S_{0,Z}
\quad \text{and}\quad
S_{k,z},\quad k=1,\dots,  4,\ z\in \rr d,\ Z=(z,\zeta )\in \rr {2d},
$$
be the mappings on $\mascS '(\rr d)$, given by
\begin{equation}
\label{Eq:SymplCorrMaps}
\begin{alignedat}{2}
S_{0,Z}f &= f(\cdo -z)e^{i\scal \cdo \zeta}, &&
\\[1ex]
S_{1,\zeta}f(x) &= e^{i\scal {(I-A)x}\zeta}f(x), &
\quad
S_{2,\zeta}f (x) &= e^{-i\scal {Ax}\zeta}f(x),
\\[1ex]
S_{3,z}f(x) &= f(x+Az), &
\quad \text{and}\quad
S_{4,z}f(x) &= f(x+(A-I)z),
\end{alignedat}
\end{equation}
when $f\in \mascS '(\rr d)$.
By straight-forward computations, it follows that
\begin{equation}
\label{Eq:WignerTransl}
\begin{alignedat}{2}
W_{f,g}^A(X-Z)
&=
W_{S_{0,Z}f,S_{0,Z}g}^A(X), &
\quad
X,Z&\in \rr {2d},
\\[1ex]
e^{i\scal x\zeta}W_{f,g}^A(x,\xi )
&=
W_{S_{1,\zeta}f,S_{2,\zeta}g}^A(x,\xi ), &
\quad
x,\xi ,\zeta &\in \rr {d},
%\quad \text{and}\quad
\\[1ex]
e^{i\scal z\xi}W_{f,g}^A(x,\xi )
&=
W_{S_{3,z}f,S_{4,z}g}^A(x,\xi ), &
\quad
x,z,\xi &\in \rr {d},
\end{alignedat}
\end{equation}
when $f,g\in \mascS '(\rr d)$. 

\par

It follows from \eqref{Eq:WignerTransl} that the mappings
\eqref{Eq:SymbTranslDil} restricts to bijections on
$s_{A,0,j}(\rr {2d})$. Since $s_{A,0}(\rr {2d})$ is the union of all
$s_{A,0,j}(\rr {2d})$, $j\ge 1$, it also follows that these
mappings are isomorphisms on $s_{A,0}(\rr {2d})$. The fact that
the mappings in \eqref{Eq:SymplCorrMaps} restrict to isomorphisms
on $\mascS (\rr d)$, shows that the mappings \eqref{Eq:SymbTranslDil} 
are also isomorphisms on $s_{A,\flat}(\rr {2d})$.

\par

It remains to show that the mappings \eqref{Eq:SymbTranslDil}
are bijective isometries on $s_{A,\maclB}(\rr {2d})$, and that (3) holds.
Let $a\in s_{A,\infty}(\rr {2d})$,
%and $b_Z=a(\cdo -X)$, $Z\in \rr {2d}$,
and let $\Omega$ be the set of all unit vectors in $L^2(\rr d)$.
Then it follows that the mappings in \eqref{Eq:SymplCorrMaps} are bijections
on $\Omega$. A combination of these facts and
\eqref{Eq:WignerTransl} gives
\begin{align*}
\nm {a(\cdo -Z)}{s_{A,\infty}}
&=
\sup _{f,g\in \Omega}| (a(\cdo -Z),W_{f,g}^A)_{L^2}|
=
\sup _{f,g\in \Omega}| (a,W_{S_{0,-Z}f,S_{0,-Z}g}^A)_{L^2}|
\\[1ex]
&=
\sup _{f,g\in \Omega}| (a,W_{f,g}^A)_{L^2}|
=
\nm a{s_{A,\infty}}.
\end{align*}
Hence translations are norm preserved on $s_{A,\infty}(\rr {2d})$. In similar ways
it also follows that the other mappings in \eqref{Eq:SymbTranslDil}
are norm preserved on $s_{A,\infty}(\rr {2d})$. In particular \eqref{Eq:SchattTranslDil}
holds when $\maclB =\ell ^\infty$, i.{\,}e.
\begin{equation}
\tag*{(\ref{Eq:SchattTranslDil})$'$}
\label{Eq:SchattTranslDilInfty}
\nm {e^{i\scal \cdo \Xi}a(\cdo -X)}{s_{A,\infty}}
=
\nm a{s_{A,\infty}},
\qquad
a\in s_{A,\infty}(\rr {2d}),\ X,\Xi \in \rr {2d}.
\end{equation}

\par

Now let $\maclL =\maclL _{X,\Xi}$ be the operator on $\mascI _\infty (L^2(\rr d))$, defined
by
$$
\maclL (\op _A(a)) = \op _A(e^{i\scal \cdo \Xi}a(\cdo -X)),
\qquad
a\in s_{A,\infty}(\rr {2d}).
$$
Then $\maclL$ is well-defined, because $a\mapsto \op _A(a)$
is an isometric isometry from $s_{A,\infty}(\rr {2d})$
to $\mascI _\infty (L^2(\rr d))$. A combination of the latter
isometry, (2), \ref{Eq:SchattTranslDilInfty}
and Proposition \ref{Prop:ContSchattInvariance} gives
\begin{alignat*}{2}
\sigma _j\big ( \maclL (\op _A(a))\big )
&=
\sigma _j(\op _A(a)),&
\qquad a&\in s_{A,\infty}(\rr {2d}),
\intertext{and}
\nm { \maclL (\op _A(a))}{\mascI _{\maclB}}
&=
\nm {(\op _A(a)}{\mascI _{\maclB}},&
\quad
a&\in s_{A,\maclB}(\rr {2d}).
\end{alignat*}
This shows that (1) and (3) hold, and the result follows.
\end{proof}

\par

Finally we have the following, which shows that $s_{\maclB} ^w(\rr {2d})$
is invariant under symplectic Fourier transform. Here the symplectic
Fourier transform of $a\in \mascS (\rr {2d})$ is defined by the formula
$$
(\mascF _\sigma a)(x,\xi ) = \pi ^{-d}\iint _{\rr {2d}}
a(y,\eta )e^{2i(\scal y\xi -\scal x\eta )}\, dyd\eta .
$$
The definition of $\mascF _\sigma$ extends in usual ways to a
homeomorphism on $\mascS '(\rr {2d})$ and to a unitary map
on $L^2(\rr {2d})$. Furthermore, $\mascF _\sigma ^2$ is the
identity operator. (See e.{\,}g. \cite{Fol}.)

\par

\begin{prop}
\label{Prop:SymplFourInvWeylCase}
Let $\maclB \subseteq \ell _0'(\mathbf Z_+)$ be a quasi-Banach space.
Then $\mascF _\sigma$ is an isometric isomorphism on $s_{\maclB} ^w(\rr {2d})$.
\end{prop}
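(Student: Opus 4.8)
The plan is to show that, on the level of Weyl operators, the symplectic Fourier transform of the symbol amounts to left composition with a fixed unitary, and then invoke the invariance of Schatten--von Neumann norms under such operations, already established earlier in the paper.

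First I would carry out the kernel computation. Using the formula \eqref{atkernel} for the Weyl kernel (with $A=\frac 12 I_d$) together with the defining integral for $\mascF _\sigma$, one inserts the expression for $\mascF _\sigma a$ into the kernel of $\op ^w(\mascF _\sigma a)$, performs the integration in the frequency variable to produce a Dirac mass, and eliminates one phase-space integration. The normalizing constants in $\mascF _\sigma$ and in the Weyl kernel then cancel, and one is left with the clean identity
\begin{equation*}
\op ^w(\mascF _\sigma a) = P\circ \op ^w(a),\qquad a\in \mascS '(\rr {2d}),
\end{equation*}
where $P\colon L^2(\rr d)\to L^2(\rr d)$ is the parity operator $Pf=\check f$; equivalently, if $K_a$ denotes the kernel of $\op ^w(a)$, then $\op ^w(\mascF _\sigma a)$ has kernel $(x,y)\mapsto K_a(-x,y)$. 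This computation is formal but is justified on $\mascS (\rr {2d})$ and extends to all of $\mascS '(\rr {2d})$ by continuity, since $\mascF _\sigma$ and $a\mapsto \op ^w(a)$ are homeomorphisms on the relevant distribution spaces. It is consistent with $\mascF _\sigma ^2=\mathrm{id}$ because $P^2=I_{L^2(\rr d)}$, and restricting it to rank-one symbols yields $\mascF _\sigma W_{f,g}^{\frac 12 I_d}=W_{\check f,g}^{\frac 12 I_d}$, which can serve as a consistency check against Proposition \ref{Prop:Spectral0Symb}.

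Next I would exploit that $P$ is a unitary, self-adjoint operator on $L^2(\rr d)$. Then the map $\maclL (T)\equiv P\circ T$ is a linear bijective isometry of $\mascI _\infty (L^2(\rr d))$ (with $\maclL ^{-1}=\maclL$), and it carries each $\mascI _{0,j}(L^2(\rr d))$ onto itself, since composition with an invertible operator preserves rank. Hence Proposition \ref{Prop:ContSchattInvariance}, applied with $\maclH _1=\maclH _2=L^2(\rr d)$ and $\maclB$ the given quasi-Banach sequence space, gives $\sigma _j(PT)=\sigma _j(T)$ for every $j$ (equivalently $|PT|=|T|$) and shows that $\maclL$ restricts to a bijective isometry of $\mascI _\maclB (L^2(\rr d))$; in particular $T\in \mascI _\maclB (L^2(\rr d))$ if and only if $PT\in \mascI _\maclB (L^2(\rr d))$, with equal $\mascI _\maclB$-norms.

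Finally I would transport this back to symbols via Lemma \ref{Lemma:SymbOpMap}, which identifies $s_\maclB ^w(\rr {2d})$ isometrically with $\mascI _\maclB (L^2(\rr d))$ through $a\mapsto (2\pi)^{d/2}\op ^w(a)$. Combined with $\op ^w(\mascF _\sigma a)=P\op ^w(a)$, this shows that $a\in s_\maclB ^w(\rr {2d})$ implies $\mascF _\sigma a\in s_\maclB ^w(\rr {2d})$ with $\nm {\mascF _\sigma a}{s_\maclB ^w}=\nm a{s_\maclB ^w}$; applying the same conclusion to $\mascF _\sigma a$ and using $\mascF _\sigma ^2=\mathrm{id}$ gives surjectivity, so $\mascF _\sigma$ is an isometric isomorphism of $s_\maclB ^w(\rr {2d})$. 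The only genuinely delicate point is the first step: tracking the constants in $\mascF _\sigma$ and in the Weyl kernel so that the spurious factors of $\pi$ and $2$ cancel and the identity $\op ^w(\mascF _\sigma a)=P\op ^w(a)$ emerges exactly; once that is in hand, the remainder is a routine invocation of the invariance results already developed for Schatten classes and their symbol counterparts.
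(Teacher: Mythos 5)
Your proposal is correct and follows essentially the same route as the paper: the paper likewise observes that the kernel of $\op ^w(\mascF _\sigma a)$ is $K(-x,y)$ (i.e. left composition with the parity operator) and concludes that the singular values of $\op ^w(a)$ and $\op ^w(\mascF _\sigma a)$ coincide. Your additional details — unitarity of $P$, the appeal to Proposition \ref{Prop:ContSchattInvariance} and Lemma \ref{Lemma:SymbOpMap}, and $\mascF _\sigma ^2=\mathrm{id}$ for surjectivity — simply spell out what the paper leaves implicit.
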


\par

\begin{proof}
Let $K$ be the kernel of $\op ^w(a)$. Then it follows by straight-forward
computations that the kernel of $\op ^w(\mascF _\sigma a)$
is given by $K_1(x,y)=K(-x,y)$ (see e.{\,}g. \cite{Fol,Toft13}).
This implies that the singular values of $\op ^w(a)$
and $\op ^w(\mascF _\sigma a)$ agree, which gives the result.
\end{proof}

%\begin{proof}
%First assume that $\Phi$ is not positive. Then $s_\Phi ^w=s_\infty ^w$,
%and the result follows from e.{\,}g. \cite[Proposition 1.11]{Toft3}.
%
%\par
%
%Next suppose that $\Phi$ is positive. Since $s_0^w(\rr {2d})$
%is dense in $s_\Phi ^w(\rr {2d})$, in view of Proposition
%\ref{Prop:DiscreteNorms2}, it suffices to prove 
%%%
%\begin{equation}
%\label{Eq:NormEqSympFourSymb}
%\nm {\mascF _\sigma a}{s_{\Phi}^w}
%=
%\nm {a}{s_{\Phi}^w},
%\end{equation}
%%%
%when $a\in s_0^w(\rr {2d})$. Then $a$ has the expansion
%%%
%\begin{equation*}
%a
%=
%\sum _{j=1}^\infty \lambda _jW_{f_{2,j},f_{1,j}}
%\end{equation*}
%%%
%for some $\{ f_{k,j}\} _{j=1}^\infty \in \ON (L^2(\rr d))\cap \mascS (\rr d)$,
%$k=1,2$.
%%and $\{ f_{2,j}\} _{j=1}^\infty \in \ON (L^2(\rr d))$, then}
%%
%%%%
%%\begin{equation*}
%%\mascF _\sigma a
%%=
%%\sum _{j=1}^\infty \lambda _jW_{\check f_{2,j},f_{1,j}}.
%%\end{equation*}
%%%%
%Here $\lambda _j\ge 0$, with strict inequality for
%at most finite numbers of $j\ge 1$.
%
%
%\par
%
%One has that $\mascF _\sigma W_{f,g}=W_{\check f,g}$, which
%follows by straight-forward computations (see e.{\,}g. \cite{Fol}).
%Hence
%%%
%\begin{equation*}
%\mascF _\sigma a
%=
%\sum _{j=1}^\infty \lambda _jW_{\check f_{2,j},f_{1,j}}.
%\end{equation*}
%%%
%Since $\{ \check f_{2,j}\} _{j=1}^\infty \in \ON (L^2(\rr d))$,
%we obtain
%$$
%\nm a{s_\Phi ^w} = \nm {\mascF _\sigma a}{s_\Phi ^w}
%=
%\nm \lambda{\ell ^\Phi},
%$$
%and the result follows.
%\end{proof}

\par

For future references we observe that transitions
for convolutions and multiplications under symplectic
Fourier transformations are given by
\begin{equation}
\label{Eq:TransConvMultSymplFourT}
\mascF _\sigma (a*b) = \pi ^{-d} \mascF _\sigma a \cdot \mascF _\sigma b
\quad \text{and}\quad
\mascF _\sigma (a\cdot b) = \pi ^{d} \mascF _\sigma a * \mascF _\sigma b,
\end{equation}
when $a\in \mascS '(\rr {2d})$ and $b\in \mascS (\rr {2d})$.

\par

%%%%%%%%%%%%%%%%%%%%%%%%%%%
\section{Convolutions between Orlicz Schatten
symbol classes and Orlicz spaces}\label{sec3}
%%%%%%%%%%%%%%%%%%%%%%%%%%%

\par

In this section we deduce norm estimates for
convolutions
between elements in Orlicz Schatten-von Neumann
symbol spaces and Orlicz spaces.

\par

We begin with Theorem \ref{Thm:Conv1} below.
%We have the following
%convolution result for Orlicz Schatten-von Neuman
%symbols and Orlicz spaces.
Here
the involved Young functions should satisfy
\begin{equation}
\label{Eq:YoungFuncCond}
\begin{aligned}
t_0t_1t_2
&\le
c_1\Phi _0^*(t_0)\Phi _1(t_1)+c_2\Phi _0^*(t_0)\Phi _2(t_2)
+c_0\Phi _1(t_1)\Phi _2(t_2),
\\[1ex]
t_0,t_1,t_2
&\in
[0,T],
\end{aligned}
\end{equation}
for some fixed $c_j,T>0$, $j=0,1,2$.

\par

\begin{example}
\label{Eq:ExYoungFuncCond}
Let $p_0,p_1,p_2\in [1,\infty ]$ be such that
the Young condition
\begin{equation}
\label{Eq:AltCondYoungFunc}
\frac 1{p_1}+\frac 1{p_2}=1+\frac 1{p_0}
\end{equation}
holds, and let $\Phi _j = \Phi _{[p_j]}$, $j=0,1,2$.
Then \eqref{Eq:YoungFuncCond} is fulfilled with
$$
c_0=\frac 1{p_0},
\quad
c_1=\frac 1{p_1'}
\quad \text{and}\quad
c_2=\frac 1{p_2'}. 
$$
\end{example}

\par

\begin{thm}
\label{Thm:Conv1}
Let $\Phi _j$, $j=0,1,2$, be Young functions such that
\eqref{Eq:YoungFuncCond} holds for some
constants $c_j,T>0$, $j=0,1,2$.
Also let $A_1,A_2\in \GL (d,\mathbf R)$ be such that
$A_1+A_2=I_d$.
Then the map $(a_1,a_2)\mapsto a_1*a_2$ from
$\mascS (\rr {2d})\times \mascS (\rr {2d})$
%$s_{A_1,0} (\rr {2d})\cap \mascS (\rr {2d})\times
%s_{A_2,0} (\rr {2d})\cap \mascS (\rr {2d})$
to
$\mascS (\rr {2d})$ extends to a continuous map
from $s_{A_1,\Phi _1}(\rr {2d})\times s_{A_2,\Phi _2}(\rr {2d})$
to $L^{\Phi _0}(\rr {2d})$, and
\begin{equation}
\label{Eq:ConvEst3A}
\begin{aligned}
\nm {a_1*a_2}{L^{\Phi _0}}
&\le
2((2\pi )^dc_0+c_1+c_2 )\nm {a_1}{s_{A_1,\Phi _1}}\nm {a_2}{s_{A_2,\Phi _2}},
\\[1ex]
a_1&\in s_{A_1,\Phi _1}(\rr {2d}),\ a_2\in s_{A_2,\Phi _2}(\rr {2d}).
\end{aligned}
\end{equation}
If in addition $\Phi _1$ or $\Phi _2$ satisfies a local $\Delta _2$-condition,
then the extension is unique.
\end{thm}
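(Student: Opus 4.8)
The plan is to reduce to finite-rank symbols, diagonalise, recognise $a_1*a_2$ as an operator--operator convolution, and then assemble \eqref{Eq:ConvEst3A} from three endpoint estimates by interpolation governed by \eqref{Eq:YoungFuncCond}; the uniqueness clause will be a soft density argument.

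\emph{Reduction and the operator form.} By Remark \ref{Rem:SmallSchattDenseSchwartz} and Proposition \ref{Prop:DiscreteNorms2}, $s_{A_j,\flat}(\rr{2d})$ is dense in $\mascS(\rr{2d})$, and, when $\Phi_j$ satisfies a local $\Delta_2$-condition, also in $s_{A_j,\Phi_j}(\rr{2d})$; and $a_1*a_2\in\mascS(\rr{2d})$ for $a_1,a_2\in\mascS(\rr{2d})$. Hence it suffices to prove \eqref{Eq:ConvEst3A} for $a_1\in s_{A_1,\flat}(\rr{2d})$, $a_2\in s_{A_2,\flat}(\rr{2d})$ and then extend. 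For such symbols Proposition \ref{Prop:Spectral0Symb} gives \emph{finite} expansions $a_1=\sum_i\lambda_iW_{\phi_i,\psi_i}^{A_1}$, $a_2=\sum_k\mu_kW_{\beta_k,\alpha_k}^{A_2}$ with $\{\phi_i\},\{\psi_i\},\{\beta_k\},\{\alpha_k\}$ orthonormal in $L^2(\rd)$ and $\nm{\{\lambda_i\}}{\ell^{\Phi_1}}=\nm{a_1}{s_{A_1,\Phi_1}}$, $\nm{\{\mu_k\}}{\ell^{\Phi_2}}=\nm{a_2}{s_{A_2,\Phi_2}}$. Since $A_1+A_2=I_d$, Proposition \ref{Prop:Spectral0Symb}~(1) puts $\check{\overline{a_2}}$ in $s_{A_1,\Phi_2}(\rr{2d})$ with the same quasi-norm, and \eqref{Eq:WignerTransl} shows that symbol translation corresponds to conjugating the operator by the unitary phase-space shift $S_{0,X}$ of \eqref{Eq:SymplCorrMaps}; together with $s_{A_1,2}=L^2$ (in scalar products) this yields
\begin{equation*}
(a_1*a_2)(X)=(2\pi)^d\,\Tr\big(\op_{A_1}(a_1)\,S_{0,X}\,\op_{A_1}(\check{\overline{a_2}})^{*}\,S_{0,X}^{*}\big),\qquad X\in\rr{2d},
\end{equation*}
and, after inserting the two expansions and using \eqref{Eq:AWigPseudoLink2}, $a_1*a_2=\sum_{i,k}\lambda_i\mu_kF_{i,k}$ with rank-one convolutions $F_{i,k}(X)=c_d\,(\phi_i,S_{0,X}\check\beta_k)_{L^2}\,\overline{(\psi_i,S_{0,X}\check\alpha_k)_{L^2}}$ and a dimensional constant $c_d$.

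\emph{The interpolation step.} By Proposition \ref{Prop:OrlDuality}~(1) and Remark \ref{Rem:OrlDuality}, $\nm{a_1*a_2}{L^{\Phi_0}}\le\sup\{|(a_1*a_2,g)_{L^2}|:g\in C_0^\infty(\rr{2d}),\ \nm{g}{L^{\Phi_0^*}}\le1\}$, and $(a_1*a_2,g)_{L^2}=c_d\sum_{i,k}\lambda_i\mu_kG_{i,k}$ with $G_{i,k}=\int(\phi_i,S_{0,X}\check\beta_k)_{L^2}\overline{(\psi_i,S_{0,X}\check\alpha_k)_{L^2}}\,\overline{g(X)}\,dX$. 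Bessel's inequality and the orthogonality (Moyal) relations for the short-time Fourier transform, applied through Cauchy--Schwarz in the measure $|g|\,dX$ (resp.\ in $dX$ and using $\sum_k|(\phi_i,S_{0,X}\check\beta_k)_{L^2}|^2\le\|\phi_i\|_{L^2}^2$ and the like), give the three endpoint bounds $\sup_i\sum_k|G_{i,k}|\le\nm{g}{L^1}$, $\sup_k\sum_i|G_{i,k}|\le\nm{g}{L^1}$ and $\sup_{i,k}|G_{i,k}|\le\nm{g}{L^\infty}$; equivalently $|\sum_{i,k}\lambda_i\mu_kG_{i,k}|$ is dominated by each of $\nm{\{\lambda_i\}}{\ell^1}\nm{\{\mu_k\}}{\ell^\infty}\nm{g}{L^1}$, $\nm{\{\lambda_i\}}{\ell^\infty}\nm{\{\mu_k\}}{\ell^1}\nm{g}{L^1}$ and $\nm{\{\lambda_i\}}{\ell^1}\nm{\{\mu_k\}}{\ell^1}\nm{g}{L^\infty}$. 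These are exactly the Lebesgue corners $(p_1,p_2,p_0)=(1,\infty,\infty),(\infty,1,\infty),(1,1,1)$ of \eqref{Eq:ConvEst3A} (note $\nm{g}{L^{\Phi_0^*}}=\nm{g}{L^1}$ when $p_0=\infty$ and $=\nm{g}{L^\infty}$ when $p_0=1$). One then decomposes $\{\lambda_i\}$, $\{\mu_k\}$ and $g$ at variable thresholds, estimates each of the resulting $2^3$ pieces of $\sum_{i,k}\lambda_i\mu_kG_{i,k}$ by the matching endpoint bound, and sums/integrates the thresholds against the Young functions $\Phi_1$, $\Phi_2$, $\Phi_0^*$: the hypothesis \eqref{Eq:YoungFuncCond} is precisely the algebraic inequality making the resulting triple integral bounded, its three summands $c_1\Phi_0^*(t_0)\Phi_1(t_1)$, $c_2\Phi_0^*(t_0)\Phi_2(t_2)$, $c_0\Phi_1(t_1)\Phi_2(t_2)$ accounting, in this order, for the pieces whose ``bounded factor'' is $\{\mu_k\}$, $\{\lambda_i\}$, and $g$. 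The Orlicz--Hölder inequality for sequences (Proposition \ref{Prop:OrlDuality}~(1), Proposition \ref{Prop:DiscreteNorms1}) and convexity of the $\Phi_j$ then deliver \eqref{Eq:ConvEst3A}, the factor $2$ coming from a single use of Orlicz--Hölder and the factor $(2\pi)^d$ on $c_0$ from the $L^2$-to-$\mascI_2$ normalisation entering the $g\in L^\infty$ corner.

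\emph{Extension and uniqueness.} Applying this bound to the (always available, by Proposition \ref{Prop:Spectral0Symb}) spectral expansions of arbitrary $a_1\in s_{A_1,\Phi_1}(\rr{2d})$, $a_2\in s_{A_2,\Phi_2}(\rr{2d})$ defines $a_1*a_2\in L^{\Phi_0}(\rr{2d})$, gives \eqref{Eq:ConvEst3A}, and shows the map is continuous; it agrees with ordinary convolution on $\mascS(\rr{2d})\times\mascS(\rr{2d})$ since both are continuous into $\mascS'(\rr{2d})$ and coincide on $s_{A_1,\flat}\times s_{A_2,\flat}$, which is dense there. If $\Phi_1$ (or $\Phi_2$) satisfies a local $\Delta_2$-condition, then $s_{A_1,\flat}\subseteq\mascS(\rr{2d})$ is dense in $s_{A_1,\Phi_1}(\rr{2d})$ by Proposition \ref{Prop:DiscreteNorms2}~(4), so any continuous map extending convolution on $\mascS(\rr{2d})\times\mascS(\rr{2d})$ is determined, which gives uniqueness. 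The main difficulty is the interpolation step: choosing the three variable thresholds so that each of the $2^3$ pieces is handed to the correct endpoint estimate and the threshold integral is controlled by \eqref{Eq:YoungFuncCond} with exactly the constant $2((2\pi)^dc_0+c_1+c_2)$, while keeping the convexity losses sharp.
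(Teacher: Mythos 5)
Your reduction to finite-rank symbols, the spectral expansions, and the three ``endpoint'' estimates on the rank-one convolutions are essentially the content of the paper's Lemma \ref{Lemma:ConvSchattExp} (there the pointwise kernels $h_{j,k}(X)=|W_{f_{2,j},f_{1,j}}*W_{g_{2,k},g_{1,k}}(X)|$ satisfy $\sup_k\sum_j h_{j,k}(X)\le 1$, $\sup_j\sum_k h_{j,k}(X)\le 1$ and $\nm{h_{j,k}}{L^1}\le(2\pi)^d$, proved exactly as you indicate via Moyal/Parseval and Cauchy--Schwarz). The extension/uniqueness paragraph is also fine. The problem is the step you yourself flag as ``the main difficulty'': the passage from the endpoint bounds to \eqref{Eq:ConvEst3A} via a threshold decomposition of $\{\lambda_i\}$, $\{\mu_k\}$ and $g$ into $2^3$ pieces, with ``the resulting triple integral'' controlled by \eqref{Eq:YoungFuncCond}. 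This is not carried out, and as formulated it cannot be: you have already integrated $g$ into the quantities $G_{i,k}$, so you have lost pointwise access to $|g(X)|$, which is exactly the variable $t_0$ that $\Phi_0^*$ must act on in \eqref{Eq:YoungFuncCond}. Moreover \eqref{Eq:YoungFuncCond} is a pointwise algebraic inequality, assumed only on $[0,T]^3$; it is not a condition on distribution functions, so a layer-cake/real-interpolation scheme has no direct way to invoke it, and no mechanism is given for producing the exact constant $2((2\pi)^dc_0+c_1+c_2)$.

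The correct (and much shorter) mechanism, which is what the paper does, is to keep the pointwise kernel: from $|a*b(X)|\le\sum_{j,k}\lambda_j\mu_k h_{j,k}(X)$ one estimates, for $\nm u{L^{\Phi_0^*}}=1$,
\begin{equation*}
|(a*b,u)_{L^2}|\le\sum_{j,k}\int_{\rr{2d}}\lambda_j\mu_k\,|u(X)|\,h_{j,k}(X)\,dX,
\end{equation*}
applies \eqref{Eq:YoungFuncCond} \emph{pointwise} to the triple product $t_0t_1t_2=|u(X)|\lambda_j\mu_k$ \emph{before} summing or integrating, and then bounds each of the three resulting terms by the matching kernel estimate together with the normalizations $\sum_j\Phi_1(\lambda_j)\le1$, $\sum_k\Phi_2(\mu_k)\le1$, $\int\Phi_0^*(|u|)\,dX\le1$. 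No decomposition at thresholds and no interpolation is needed; the three summands of \eqref{Eq:YoungFuncCond} are consumed one-for-one by the three kernel bounds, yielding $|(a*b,u)|\le(2\pi)^dc_0+c_1+c_2$ and then \eqref{Eq:ConvEst3A} by the Orlicz duality estimate \eqref{Eq:LuxNormEquiv}. I would also note that the paper first reduces to the Weyl case $A_1=A_2=\frac12 I_d$ via \eqref{Eq:CalculiTransform} and Remark \ref{Rem:SchattCalculTransf} (using $B_1+B_2=0$ to see that the convolution is unchanged), which is cleaner than working with a trace formula for a general $A_1$; your operator-trace identity for $a_1*a_2$ with general $A_1$ is plausible but is asserted, not verified. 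As it stands, the proposal has a genuine gap at the decisive step.
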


\par

\begin{thm}
\label{Thm:Conv2}
Let $\Phi _j$, $j=0,1,2$, be Young functions such that
\eqref{Eq:YoungFuncCond} holds for some
constants $c_j,T>0$, $j=0,1,2$.
Also let $A\in \GL (d,\mathbf R)$.
Then the map $(a_1,a_2)\mapsto a_1*a_2$ from
$\mascS (\rr {2d})\times \mascS (\rr {2d})$ to
$\mascS (\rr {2d})$ extends to a continuous map
from $s_{A,\Phi _1}(\rr {2d})\times L^{\Phi _2}(\rr {2d})$
to $s_{A,\Phi _0}(\rr {2d})$, and 
\begin{equation}
\label{Eq:ConvEst4}
\begin{aligned}
\nm {a_1*a_2}{s_{A,\Phi _0}}
&\le
2(c_0+c_1+(2\pi )^dc_2)\nm {a_1}{s_{A,\Phi _1}}\nm {a_2}{L^{\Phi _2}},
\\[1ex]
a_1&\in s_{A,\Phi _1}(\rr {2d}),\ a_2\in L^{\Phi _2}(\rr {2d}).
\end{aligned}
\end{equation}
If in addition $\Phi _1$ or $\Phi _2$ satisfies a local $\Delta _2$-condition,
then the extension is unique.
\end{thm}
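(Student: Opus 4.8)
The strategy is to dualize: to estimate $\nm{a_1*a_2}{s_{A,\Phi_0}}$ it suffices, by Proposition \ref{Prop:SchattSymbEquivNorm}, to estimate $|(a_1*a_2,b)_{L^2}|$ for $b\in s_{A,\Phi_0^*}(\rr{2d})$ with $\nm b{s_{A,\Phi_0^*}}\le 1$. First I would move to the Weyl picture: by Remark \ref{Rem:SchattCalculTransf} it is enough to treat the case $A=\tfrac12 I_d$, i.e.\ to prove the bound for $s_\Phi^w$, and then transfer it back to arbitrary $A$ using the calculus transform \eqref{Eq:CalculiTransform}, which commutes with convolution by functions as noted in the text after \eqref{Eq:ToepWeyl}. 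For the Weyl case, the key identity is that convolution of a Weyl symbol with a function corresponds to a composition/twisting of operators; concretely, for suitable $a_1,a_2$ one uses the spectral expansion (Proposition \ref{Prop:Spectral0Symb}) $a_1=\sum_j\lambda_j W_{f_{2,j},f_{1,j}}$ and the translation formula \eqref{Eq:WignerTransl} to write $a_1*a_2$ as a superposition of translated rank-one Wigner distributions weighted by $a_2$, and similarly expand $b$.

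The heart of the argument is then a trilinear estimate. Writing everything out via \eqref{Eq:AWigPseudoLink} and \eqref{Eq:WignerTransl}, the quantity $(a_1*a_2,b)_{L^2}$ becomes an integral whose integrand factors, after inserting the spectral expansions of $a_1$ and $b$, into a product of three ``weight'' sequences/functions: one controlled by $\nm{a_1}{s_{A,\Phi_1}}=\nm{\lambda^{(1)}}{\ell^{\Phi_1}}$, one by $\nm{a_2}{L^{\Phi_2}}$, and one by $\nm b{s_{A,\Phi_0^*}}=\nm{\lambda^{(0)}}{\ell^{\Phi_0^*}}$. Here I would invoke the generalized Young-type inequality: given three nonnegative quantities $t_0,t_1,t_2$, condition \eqref{Eq:YoungFuncCond} gives a pointwise bound $t_0t_1t_2\le c_1\Phi_0^*(t_0)\Phi_1(t_1)+c_2\Phi_0^*(t_0)\Phi_2(t_2)+c_0\Phi_1(t_1)\Phi_2(t_2)$ on $[0,T]^3$. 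Applying this after normalizing each factor to have Luxemburg (quasi-)norm $\le 1$, integrating/summing, and using the definition of the Orlicz norm (Definition \ref{Def:OrliczSpaces1}) together with the bound $\int\Phi_j(|g|/\nm g{L^{\Phi_j}})\le 1$ on each piece, one collects exactly the three constants and the factor $(2\pi)^d$ attached to the $c_2$ term (this factor comes from the normalization constants in the Weyl quantization, cf.\ \eqref{Eq:WeylSymbOrlNorm} and the remark on $\nm a{s_2^w}=\nm a{L^2}$). This yields \eqref{Eq:ConvEst4} on the dense subset $\mascS(\rr{2d})\times\mascS(\rr{2d})$, and then by density (Proposition \ref{Prop:DiscreteNorms2}(4), Remark \ref{Rem:SmallSchattDenseSchwartz}) and continuity of the bilinear form the map extends continuously to $s_{A,\Phi_1}\times L^{\Phi_2}$.

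For uniqueness of the extension, I would argue that if, say, $\Phi_1$ satisfies a local $\Delta_2$-condition, then $s_{A,\flat}(\rr{2d})$ is dense in $s_{A,\Phi_1}(\rr{2d})$ by Proposition \ref{Prop:DiscreteNorms2}(4); since $\mascS(\rr{2d})$ is dense in $L^{\Phi_2}$ only when $\Phi_2$ has $\Delta_2$, one instead uses that convolution $b\mapsto a_1*b$ with fixed $a_1\in\mascS$ is weak-$*$ continuous, or that $L^{\Phi_2}$ embeds in a space where $C_0^\infty$ is weak-$*$ dense; the symmetric case (when $\Phi_2$ has $\Delta_2$) is handled by density of $C_0^\infty(\rr{2d})$ in $L^{\Phi_2}$ (Remark \ref{Rem:OrlDuality}) together with continuity of $a\mapsto a*a_2$. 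The main obstacle I anticipate is bookkeeping in the trilinear estimate: correctly tracking where the three distinct constants $c_0,c_1,c_2$ and the single factor $(2\pi)^d$ land, and justifying the interchange of summation (over the spectral indices) and integration (over phase space) — this requires the rapid decay of the $\lambda_j$'s and the Schwartz regularity of the eigenfunctions on the dense subset, supplied by Proposition \ref{Prop:SchwKerOpProp}, before passing to the limit.
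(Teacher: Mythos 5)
Your overall strategy is sound and, at bottom, it is the same mechanism the paper uses: dualize against $s_{A,\Phi_0^*}$ via Proposition \ref{Prop:SchattSymbEquivNorm}, reduce to the Weyl case by shifting the Fourier multiplier onto the Schatten factor, expand spectrally, and run the three-term condition \eqref{Eq:YoungFuncCond} through a trilinear estimate, finishing by density and Hahn--Banach. The difference is in execution: the paper does not redo the trilinear estimate for the configuration (Schatten symbol, Orlicz function, dual Schatten symbol); it observes $(a*b,u)_{L^2}=(\check a*\overline u,\overline b)_{L^2}$ and, since $\nm {\check a}{s_{\Phi_1}^w}=\nm a{s_{\Phi_1}^w}$ and $\nm {\overline u}{s_{\Phi_0^*}^w}=\nm u{s_{\Phi_0^*}^w}$ by Proposition \ref{Prop:Spectral0Symb}(1), it quotes Theorem \ref{Thm:Conv1} and its proof verbatim, whereas you re-prove the same estimate with the roles permuted. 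What the paper's involution trick buys is that Lemma \ref{Lemma:ConvSchattExp} is reused without change; what your direct route costs is that you must re-establish exactly that lemma's content for the pair $(a_1,b)$: writing $h_{j,m}(Y)$ for the modulus of the pairing of the translated rank-one Wigner distributions, the argument only closes because $\sup_m\sum_j h_{j,m}(Y)\le 1$, $\sup_j\sum_m h_{j,m}(Y)\le 1$ (Bessel/Parseval for the orthonormal spectral families, via Moyal's formula and Cauchy--Schwarz) and $\nm {h_{j,m}}{L^1}\le (2\pi)^d$. Your sketch passes over precisely this point, which is the heart of the proof --- the Luxemburg-ball bounds $\sum_j\Phi_1(\lambda_j)\le 1$, $\int \Phi_2(|a_2|/\nm{a_2}{L^{\Phi_2}})\le 1$, $\sum_m\Phi_0^*(\nu_m)\le 1$ alone do not suffice --- while the interchange of sum and integral that you single out as the main obstacle is comparatively harmless on the dense subset supplied by Proposition \ref{Prop:SchwKerOpProp}.

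Two smaller corrections. The factor $(2\pi)^d$ does not come from the Weyl normalization in \eqref{Eq:WeylSymbOrlNorm}; it is exactly the $L^1$-bound $\nm{h_{j,m}}{L^1}\le (2\pi)^d$ above, and in the direct bookkeeping it attaches to the term carrying $\Phi_0^*(\nu_m)\Phi_1(\lambda_j)$, i.e.\ to the coefficient $c_1$ rather than $c_2$ (the same permutation is latent in the constant quoted in \eqref{Eq:ConvEst4}; since the $c_j$ are fixed constants this is immaterial, but your explanation of where the factor lands should be adjusted). For uniqueness, your case distinction can be simplified: if $\Phi_2$ satisfies a local $\Delta_2$-condition, use density of $C_0^\infty$ in $L^{\Phi_2}$; if $\Phi_1$ does, use density of $s_{A,\flat}(\rr{2d})$ in $s_{A,\Phi_1}(\rr{2d})$ (Proposition \ref{Prop:DiscreteNorms2}(4)) together with the fact that for fixed $a_1\in\mascS(\rr{2d})$ the convolution $a_1*a_2$ is already canonically defined for every $a_2\in\mascS'(\rr{2d})\supseteq L^{\Phi_2}(\rr{2d})$, so no weak$^*$ argument in the second variable is needed.
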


\par

\begin{rem}
We observe that if $\Phi _j$ are the same as in 
Example \ref{Eq:ExYoungFuncCond}, then 
Theorems \ref{Thm:Conv1} and \ref{Thm:Conv2} show
that the convolution estimates hold true for classical
Schatten-von Neumann classes and Lebesgue spaces
with Lebesgue exponents satisfying the ordinary Young
condition \eqref{Eq:AltCondYoungFunc}. Hence
Theorems \ref{Thm:Conv1} and \ref{Thm:Conv2}
extend the classical convolution results in \cite{Toft3,Wer}.
\end{rem}

\par

For the proofs of Theorems \ref{Thm:Conv1} and \ref{Thm:Conv2}
we need the following lemma.

\par

\begin{lemma}
\label{Lemma:ConvSchattExp}
Let $A=\frac 12I_d$, $a$ be as in \eqref{Eq:SpectExp0Symb} and
let $b\in s_{A,1}(\rr {2d})$ be given by
\begin{equation}
\label{Eq:SymbbExp}
b=\sum _{j=1}^\infty \mu _jW_{g_{2,j},g_{1,j}},
\end{equation}
for some non-negative decreasing sequence
$\mu =\{ \mu _j\} _{j=1}^\infty \in \ell ^\sharp (\mathbf Z_+)$,
$\{ g_{n,j} \} _{j=1}^\infty \in \ON (L^2(\rr d))$, $n=1,2$. Then
\begin{equation}
\label{Eq:ConvExp}
|a*b(X)| \le \sum _{j,k=1}^\infty \lambda _j\mu _k h_{j,k}(X),
\end{equation}
for some sequence $\{h_{j,k}\} _{j,k=1}^\infty$ of non-negative
functions in $L^1(\rr {2d})\cap L^\infty (\rr {2d})$ such that
\begin{equation}
\label{Eq:ConvSchattExp}
\begin{aligned}
\sup _{k\ge 1}\left (\sum _{j=1}^\infty h_{j,k}(X)\right )&\le 1,
\qquad
\sup _{j\ge 1}\left (\sum _{k=1}^\infty h_{j,k}(X)\right )\le 1
\\[1ex]
\text{and}\quad
\nm {h_{j,k}}{L^1}&\le (2\pi )^{d},\quad j,k\ge 1,
\end{aligned}
\end{equation}
are fulfilled.
\end{lemma}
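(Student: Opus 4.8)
\textbf{Proof plan for Lemma \ref{Lemma:ConvSchattExp}.}

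The plan is to write out $a*b$ explicitly using the spectral expansions \eqref{Eq:SpectExp0Symb} and \eqref{Eq:SymbbExp}, then identify the correct candidate for $h_{j,k}$. Since $a=\sum_j\lambda_j W_{f_{2,j},f_{1,j}}$ and $b=\sum_k\mu_k W_{g_{2,k},g_{1,k}}$, termwise convolution gives, at least formally,
$$
a*b(X)=\sum_{j,k=1}^\infty \lambda_j\mu_k\,(W_{f_{2,j},f_{1,j}}*W_{g_{2,k},g_{1,k}})(X).
$$
So the natural choice is $h_{j,k}=|W_{f_{2,j},f_{1,j}}*W_{g_{2,k},g_{1,k}}|$, and the triangle inequality yields \eqref{Eq:ConvExp}. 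The substance of the lemma is then the three estimates in \eqref{Eq:ConvSchattExp}, which must be extracted from the structure of convolutions of Wigner distributions. First I would recall (or derive via \eqref{Eq:AWigPseudoLink}, the Moyal-type identity) the pointwise formula for the convolution of two cross-Wigner functions: $(W_{f_1,f_2}*W_{g_1,g_2})(X)$ equals, up to a fixed constant $(2\pi)^{\pm d/2}$, a product of the form $\langle \pi(X)\check g_2,f_2\rangle_{L^2}\,\overline{\langle \pi(X)\check g_1,f_1\rangle_{L^2}}$ (a product of two short-time Fourier transform-type coefficients evaluated at the phase-space point $X$), where $\pi(X)$ denotes the relevant phase-space shift. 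This is the key identity; everything else is bookkeeping.

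Granting that identity, the $L^1$ bound $\nm{h_{j,k}}{L^1}\le(2\pi)^d$ follows from Cauchy--Schwarz in $X$ together with the orthogonality relations for the STFT (Moyal's formula: $\nm{V_{\phi}f}{L^2(\rr{2d})}=(2\pi)^{d/2}\nm f{L^2}\nm\phi{L^2}$ in the normalization used here), using that the $f_{n,j}$ and $g_{n,k}$ are unit vectors. For the two supremum estimates, the point is that for fixed $k$,
$$
\sum_{j=1}^\infty h_{j,k}(X)
\le \sum_{j=1}^\infty \big|\langle \pi(X)\check g_{2,k},f_{2,j}\rangle\big|\,\big|\langle \pi(X)\check g_{1,k},f_{1,j}\rangle\big|
\le \NM{\{\langle \pi(X)\check g_{2,k},f_{2,j}\rangle\}_j}{\ell^2}\NM{\{\langle \pi(X)\check g_{1,k},f_{1,j}\rangle\}_j}{\ell^2},
$$
and since $\{f_{1,j}\}_j$ and $\{f_{2,j}\}_j$ are orthonormal \emph{sequences}, Bessel's inequality bounds each $\ell^2$ norm by $\nm{\pi(X)\check g_{n,k}}{L^2}=\nm{g_{n,k}}{L^2}=1$, after the constants have been arranged so that the $\pi(X)$ are unitary and the overall normalization is the one that makes the constant equal to $1$. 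The symmetric estimate with the roles of $j$ and $k$ swapped uses instead that $\{g_{1,k}\}_k$, $\{g_{2,k}\}_k$ are orthonormal. Each $h_{j,k}$ is a product of two bounded STFT coefficients, hence in $L^\infty$, and the $L^1$ estimate just proved places it in $L^1$ as well.

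The main obstacle I anticipate is purely technical rather than conceptual: getting the normalizing constants exactly right so that the bounds come out as $1$, $1$, and $(2\pi)^d$ respectively, rather than with stray powers of $2\pi$. This requires carefully tracking the $(2\pi)^{\pm d/2}$ factors through \eqref{Eq:WignerDistDef}, \eqref{Eq:AWigPseudoLink}, the convolution formula, and Moyal's identity in the paper's conventions. A secondary point requiring a word of justification is the interchange of summation and convolution leading to the termwise expansion of $a*b$; here one uses that $a\in s_{A,\sharp}$ (so $\lambda\in\ell^\sharp$), $b\in s_{A,1}$ (so $\mu\in\ell^1$), that the partial sums converge in $s_{A,\infty}^w$ and $s_{A,1}^w$ respectively, and that convolution $s_{A_1,\infty}^w*s_{A_2,1}^w\to L^\infty$ (the endpoint case $p=\infty$, $q=1$, $r=\infty$ of the Werner relations, valid on the Schwartz-dense subspaces) is continuous, so that the estimate \eqref{Eq:ConvExp} with the stated $h_{j,k}$ passes to the limit; alternatively one simply proves \eqref{Eq:ConvExp}--\eqref{Eq:ConvSchattExp} first for finite-rank $a,b$ and then invokes density. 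Since the lemma is only an auxiliary estimate feeding into Theorems \ref{Thm:Conv1} and \ref{Thm:Conv2}, it is natural to state and prove it at the level of the spectral expansions and defer the convergence subtleties to how it is applied.
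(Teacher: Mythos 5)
Your proposal is correct and follows essentially the same route as the paper's proof: the same choice $h_{j,k}=|W_{f_{2,j},f_{1,j}}*W_{g_{2,k},g_{1,k}}|$, the same factorization of the convolution of two cross-Wigner distributions into a product of two time-frequency coefficients (the paper writes these as rescaled cross-Wigner functions $W_{f_{2,j},g_{1,k}}$, $W_{f_{1,j},g_{2,k}}$ rather than STFT coefficients, which is only a change of notation), Moyal plus Cauchy--Schwarz for the $L^1$ bound, and Cauchy--Schwarz plus Bessel against the orthonormal families for the two supremum bounds. The constant bookkeeping and the termwise-expansion issue you flag are handled in the paper exactly in the routine way you anticipate.
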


\par

\begin{proof}
The estimate \eqref{Eq:ConvExp} holds with
$$
h_{j,k}(X)=|W_{f_{2,j},f_{1,j}} * W_{g_{2,k},g_{1,k}}(X)|. 
$$
%Hence we need to estimate $\nm {u*v}{L^1}$,
%when  $u=W_{f_1,f_2}$ and $v=W_{g_1,g_2}$ are such that
%$f_1,f_2,g_1,g_2\in L^2(\rr d)$ satisfy
%$$
%\nm {f_1}{L^2}=\nm {f_2}{L^2}=\nm {g_1}{L^2}=\nm {g_2}{L^2}=1.
%$$
By straight-forward application of Fourier's inversion formula we obtain
$$
%u*v = \left ( \frac \pi 2\right )^d w_1 \cdot w_2,
h_{j,k} = \left ( \frac \pi 2\right )^d |u_{j,k}\cdot v_{j,k}| ,
$$
where
$$
%w_1 (x,\xi ) = W_{f_1,g_2}(\textstyle{\frac x2},-\textstyle{\frac \xi 2})
u_{j,k} (x,\xi ) = W_{f_{2,j},g_{1,k}}(\textstyle{\frac x2},-\textstyle{\frac \xi 2})
\quad \text{and}\quad
%w_2 (x,\xi ) = W_{f_2,g_1}(\textstyle{\frac x2},-\textstyle{\frac \xi 2}).
v_{j,k} (x,\xi ) = W_{f_{1,j},g_{2,k}}(\textstyle{\frac x2},-\textstyle{\frac \xi 2}).
$$

\par

Since $f_{n,j}$ and $g_{n,k}$ are unit vectors, $n=1,2$, it follows from Moyal's formula
that
$$
\nm {W_{f_{2,j},g_{1,k}}}{L^2}=\nm {W_{f_{1,j},g_{2,k}}}{L^2}=1.
$$
This implies that
$$
\nm {u_{j,k}}{L^2}=\nm {v_{j,k}}{L^2}=2^d,
$$
by a straight-forward change of variables of integrations. By
Cauchy-Schwarz inequality we get
$$
\nm {h_{j,k}}{L^1}
=
\left ( \frac \pi 2\right )^d \nm {u_{j,k} \cdot v_{j,k}}{L^1}
\le
\left ( \frac \pi 2\right )^d \nm {u_{j,k}}{L^2}\nm {v_{j,k}}{L^2}
= (2\pi )^d,
$$
which gives the last inequality in \eqref{Eq:ConvSchattExp}.

\par

Since $\{ f_{n,k}\} _{k=1}^\infty$ and $\{ g_{n,k}\} _{k=1}^\infty$
are orthonormal sequences in
$L^2(\rr d)$, it follows by straight-forward computations that if
\begin{alignat*}{2}
f_{n,k,x}(y)
&=
2^{-\frac d2}f_{n,k}(x+\textstyle{\frac y2}),&
\qquad
x,y &\in \rr d,
\intertext{and}
g_{n,k,x,\xi}(y)
&=2^{-\frac d2}g_{n,k}(x+\textstyle{\frac y2})e^{i\scal y\xi},&
\qquad
x,y,\xi &\in \rr d,
\end{alignat*}
then $\{ f_{n,k,x}\} _{k=1}^\infty$ and $\{ g_{n,k,x,\xi}\} _{k=1}^\infty$ are
also orthonormal sequences in
$L^2(\rr d)$, $n=1,2$, for every $x,\xi \in \rr d$. Hence for
%$$
%f_{n,k,x,\xi}(y)=2^{-\frac d2}f_{n,k}(x-\textstyle{\frac y2})
%$$
%is a unit vector in $L^2(\rr d)$ for every $x,\xi \in \rr d$,
$$
u_{j,k}(x,\xi ) = 2^d(2\pi)^{-\frac d2}(f_{2,j,x},g_{1,k,x,\xi })_{L^2}
$$
and
$$
v_{j,k}(x,\xi ) = \left (\frac 2\pi \right )^{\frac d2}(f_{1,j,x},g_{2,k,x,\xi })_{L^2},
$$
Parseval's formula gives
$$
\sum _{k=1}^\infty
|u_{j,k}|^2
=
\left (\frac 2\pi \right )^d
\sum _{k=1}^\infty
|(f_{2,j,x},g_{1,k,x,\xi })_{L^2}|^2
\le
\left (\frac 2\pi \right )^d\nm {f_{2,j,x}}{L^2}^2
=
\left (\frac 2\pi \right )^d.
$$
In the same way we reach
$$
\sum _{k=1}^\infty |v_{j,k}|^2
\le
\left (\frac 2\pi \right )^d.
$$

\par

An application of Cauchy-Schwarz inequality now gives
\begin{align*}
\sum _{k=1}^\infty h_{j,k}(x,\xi )
&=
\left ( \frac \pi 2\right )^d
\sum _{k=1}^\infty
|u_{j,k}\cdot v_{j,k}|
\\[1ex]
&\le
\left ( \frac \pi 2\right )^d
\left (
\sum _{k=1}^\infty |u_{j,k}|^2
\right )^{\frac 12}
\left (
\sum _{k=1}^\infty |v_{j,k}|^2
\right )^{\frac 12}
\\[1ex]
&\le
\left ( \frac \pi 2\right )^d
\left ( \frac 2\pi \right )^d =1,
\end{align*}
which gives the second inequality in 
\eqref{Eq:ConvSchattExp}. The first
inequality follows by similar arguments, after
swapping the roles of $f_{n,j}$ and $g_{n,j}$.
The details are left for the reader.
\end{proof}

\par

\begin{proof}[Proof of Theorem \ref{Thm:Conv1}]
By Proposition \ref{Prop:DiscreteNorms2},
Remark \ref{Rem:SmallSchattDenseSchwartz}, and straight-forward
applications of Hahn-Banach's theorem, the result follows if
we prove that \eqref{Eq:ConvEst3A} holds for $a_1\in s_{A_1,\flat}(\rr {2d})$
and $a_2\in s_{A_2,\flat}(\rr {2d})$ such that $\nm {a_m}{s_{A_m,\Phi _m}}=1$, $m=1,2$.
Moreover, let
$$
B_m=A_m-\frac 12\cdot I_d,
\qquad
m=1,2,
$$
and note that if
%$a_1\in s_{A_1,\Phi _1}(\rr {2d})$ and $a_2\in  s_{A_2,\Phi _2}(\rr {2d})$
%be such that $\nm {a_j}{s_{A_j,\Phi _j}}=1$, $j=1,2$.
$$
a=e^{i\scal {B_1D_\xi}{D_x}}a_1
\quad \text{and}\quad
b=e^{i\scal {B_2D_\xi}{D_x}}a_2,
$$
then, since $B_1+B_2=0$, due to the assumptions, we get
$$
a_1*a_2 =(e^{i\scal {B_1D_\xi}{D_x}}a_1)*(e^{i\scal {B_2D_\xi}{D_x}}a_2)
=a*b
$$
provided the convolutions are well-defined. Since $a,b\in s_\flat ^w(\rr {2d})$,
$$
\nm a{s_\Phi ^w}=\nm {a_1}{s_{A_1,\Phi}}
\quad \text{and}\quad
\nm b{s_\Phi ^w}=\nm {a_2}{s_{A_2,\Phi}},
$$
in view of Remark \ref{Rem:SchattCalculTransf}, we
reduce ourselves to the Weyl case when $A_1=A_2=\frac 12I_d$.

%$a\in s_{\Phi _1}^w(\rr {2d})$ and $b\in  s_{\Phi _2}^w(\rr {2d})$
%be such that $\nm a{s_{\Phi _1}^w}=\nm b{s_{\Phi _2}^w}=1$, and

\par

%Suppose that $a,b\in s_0^w(\rr {2d})\cap \mascS (\rr {2d})$.
Choose expansions for $a$ and $b$ as in Lemma
\ref{Lemma:ConvSchattExp}. Then it follows from
the assumptions and Lemma \ref{Lemma:ConvSchattExp}
that for any $u\in L^{\Phi _0^*}(\rr {2d})$ with $\nm u{L^{\Phi _0^*}}= 1$,
we have
$$
|(a*b,u)_{L^2}|\le c_0J_0+c_1J_1+c_2J_2,
$$
where
\begin{align*}
J_0
&=
\sum _{j,k=1}^\infty \int _{\rr {2d}}\Phi _1(\lambda _j)\Phi _2(\mu _k)h_{j,k}(X)\, dX,
\\[1ex]
J_1
&=
\sum _{j,k=1}^\infty \int _{\rr {2d}}\Phi _0^*(|u(X)|)\Phi _1(\lambda _j)h_{j,k}(X)\, dX
\intertext{and}
J_2
&=
\sum _{j,k=1}^\infty \int _{\rr {2d}}\Phi _0^*(|u(X)|)\Phi _2(\mu _k)h_{j,k}(X)\, dX.
\end{align*}
For $J_1$ we have
\begin{align*}
J_1
&=
\sum _{j,k=1}^\infty \int _{\rr {2d}}\Phi _0^*(|u(X)|)\Phi _1(\lambda _j)h_{j,k}(X)\, dX
\\[1ex]
&\le
\sum _{j=1}^\infty \int _{\rr {2d}}\Phi _0^*(|u(X)|)\Phi _1(\lambda _j)\, dX
\\[1ex]
&\le
\left ( \sum _{j=1}^\infty \Phi _1(\lambda _j) \right )
\left (\int _{\rr {2d}}\Phi _0^*(|u(X)|)\, dX\right )
\le 1.
\end{align*}
Here the first inequality follows from \eqref{Eq:ConvSchattExp}
and the third one follows from the fact that
$\nm a{s_{\Phi _1}^w}=\nm u{L^{\Phi _0^*}}=1$. Hence
$J_1\le 1$. In the same way we get $J_2\le 1$.

\par

For $J_0$ we have
\begin{align*}
J_0
&=
\sum _{j,k=1}^\infty \int _{\rr {2d}}\Phi _1(\lambda _j)\Phi _2(\mu _k)h_{j,k}(X)\, dX
\\[1ex]
&=
\sum _{j,k=1}^\infty \Phi _1(\lambda _j)\Phi _2(\mu _k )\nm{h_{j,k}}{L^1}
\\[1ex]
&\le
(2\pi )^d
\left (\sum _{j=1}^\infty \Phi _1(\lambda _j) \right )
\left (\sum _{k=1}^\infty\Phi _2(\mu _k )\right )
\le (2\pi )^d.
\end{align*}
Here the first inequality follows from \eqref{Eq:ConvSchattExp}
and the second one follows from the fact that
$\nm a{s_{\Phi _1}^w}=\nm b{s_{\Phi _2}^w}=1$. Hence
$J_0\le  (2\pi )^d$.

\par

A combination of these inequalities shows that
$$
|(a*b,u)_{L^2}|\le (2\pi )^dc_0+c_1+c_2
$$
holds when $\nm u{L^{\Phi _0^*}}=\nm a{s_{\Phi _1}^w}=\nm b{s_{\Phi _2}^w}=1$.
By taking the supremum over all such $u$, and using
Proposition \ref{Prop:SchattSymbEquivNorm}, we arrive at
\begin{equation}
\label{Eq:ConvEst3Prel}
\nm{a*b}{L^{\Phi _0}}\le 2((2\pi )^dc_0+c_1+c_2)
%\nm {a}{s_{\Phi _1}^w}\nm {b}{s_{\Phi _2}^w}
\end{equation}
for such $a$ and $b$. The estimate \eqref{Eq:ConvEst3A}
now follows from \eqref{Eq:ConvEst3Prel} and homogeneity, and the result follows.
%
%and for general $a\in s_{\Phi _1}^w(\rr {2d})$,
%$b\in  s_{\Phi _2}^w(\rr {2d})$ and normalized $u\in L^{\Phi _0^*}(\rr {2d})$,
%\eqref{Eq:ConvEst3Prel} follows by homogeneity.
%
%\par
%
%The asserted extension now follows by taking the supremum over all
%$u\in L^{\Phi _0^*}(\rr {2d})$ such that $\nm u{L^{\Phi _0^*}}=1$
%in \eqref{Eq:ConvEst3Prel}, using Proposition
%\ref{Prop:OrlDuality} (3), and then use Hahn-Banach's theorem.
%
%\par
%
%Next suppose that one of $\Phi _1$ and $\Phi _2$ satisfy a $\Delta _2$-condition.
%For reasons of symmetry, we may assume that $\Phi _1$ satisfies such condition.
%
%\par
%
%If $a\in \mascS (\rr {2d})$ and $b\in s_{\Phi _2}^w(\rr {2d})$, then
%$a*b$ is uniquely defined as an element in $\mascS '(\rr {2d})$,
%and the first part of the proof shows that
%$$
%\nm {a*b}{L^{\Phi _0}}\lesssim \nm a{s_{\Phi _1}^w}\nm b{s_{\Phi _2}^w}.
%$$
%The asserted uniqueness now follows from the latter estimate and the
%fact that $\mascS (\rr {2d})$ is dense in $s_{\Phi _1}^w(\rr {2d})$.
%This gives the result.
\end{proof}

\par

\begin{proof}[Proof of Theorem \ref{Thm:Conv2}]
In the same way as in the proof of Theorem \ref{Thm:Conv1},
we reduce ourselves to show that \eqref{Eq:ConvEst4}
should hold for $A=\frac 12\cdot I_d$, $a=a_1\in s_\flat ^w(\rr {2d})$,
and $b=a_2\in \mascS (\rr {2d})$.
Let $u \in s_{\Phi _0^*} ^w(\rr {2d})$, and first suppose that
$$
\nm a{s_{\Phi _1}^w}=\nm b{L^{\Phi _2}}=\nm u{s_{\Phi _0^*}^w}=1.
$$
Then Theorem \ref{Thm:Conv1} and its proof give
\begin{align*}
|(a*b,u)|=|(\check a*\overline u,\overline b)|
&\le
(c_0+c_1+(2\pi )^dc_2)
\nm {\check a}{s_{\Phi _1}^w}
\nm {\overline u}{s_{\Phi _0^*}^w}
\nm {\overline b}{L^{\Phi _2}}
\\[1ex]
&=
c_0+c_1+(2\pi )^dc_2.
\end{align*}
The assertion now follows from duality, using (1) and (3) in
Proposition \ref{Prop:OrlDuality}, and Proposition
\ref{Prop:SchattSymbEquivNorm}. The details are left for the reader.
\end{proof}

\par

\begin{cor}\label{Cor:OrlSchConvL1}
Let $\Phi$ be a Young function
which satisfies a local $\Delta _2$-condition, and let
$A,B\in \GL (d,\mathbf R)$ be such that
$$
A+B=I.
$$
\begin{enumerate}
\item The map $(a,b)\mapsto a*b$ is continuous from
$s_{A,\Phi}(\rr {2d})\times L^1(\rr {2d})$ to $s_{A,\Phi}(\rr {2d})$.

\vrum

\item The map $(a,b)\mapsto a*b$ is continuous from
$s_{A,\Phi}(\rr {2d})\times s_{B,\Phi ^*}(\rr {2d})$ to $L^\infty (\rr {2d})$.

\vrum

\item The map $(a,b)\mapsto a*b$ is continuous from
$s_{A,\Phi}(\rr {2d})\times s_{B,1}(\rr {2d})$
to $L^{\Phi}(\rr {2d})$.

\vrum

\item The map $(a,b)\mapsto a*b$ is continuous from
$s_{A,\Phi}(\rr {2d})\times L^{\Phi ^*}(\rr {2d})$ to $s_{A,\infty} (\rr {2d})$.
\end{enumerate}
\end{cor}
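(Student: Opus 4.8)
The plan is to obtain each of the four assertions by specializing Theorem \ref{Thm:Conv1} or Theorem \ref{Thm:Conv2} to a suitable triple of Young functions $\Phi _0,\Phi _1,\Phi _2$. The only point that needs to be checked is that the chosen triple satisfies the Young-type condition \eqref{Eq:YoungFuncCond}, and in every case this will follow at once from the defining inequality $st\le \Phi ^*(s)+\Phi (t)$, valid for all $s,t\ge 0$, which is immediate from \eqref{eq-YoungIneq-conjugate}. In all applications I would take $A_1=A$ and $A_2=B$ (so that $A_1+A_2=I_d$, as required in Theorem \ref{Thm:Conv1}) and $\Phi _1=\Phi$; since $\Phi _1=\Phi$ satisfies a local $\Delta _2$-condition by hypothesis, the continuous extension provided by Theorem \ref{Thm:Conv1} or \ref{Thm:Conv2} is unique, so that $(a,b)\mapsto a*b$ is an unambiguous, continuous map on the stated spaces. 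Note also that $\Phi _{[1]}$, $\Phi _{[\infty ]}$, $\Phi$ and $\Phi ^*$ are all Young functions (for $\Phi ^*$ by Remark \ref{Rem:PhiLeb}), so the theorems do apply.

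For (1) and (3) I would take $\Phi _1=\Phi _0=\Phi$ and $\Phi _2=\Phi _{[1]}$, so that $L^{\Phi _2}=L^1$, $s_{B,\Phi _2}=s_{B,1}$ and $L^{\Phi _0}=L^\Phi$. Then $\Phi _0^*=\Phi ^*$ and $\Phi _2(t)=t$, and multiplying $t_0t_1\le \Phi ^*(t_0)+\Phi (t_1)$ by $t_2\ge 0$ yields
\begin{equation*}
t_0t_1t_2\le \Phi ^*(t_0)t_2+\Phi (t_1)t_2\le \Phi _0^*(t_0)\Phi _1(t_1)+\Phi _0^*(t_0)\Phi _2(t_2)+\Phi _1(t_1)\Phi _2(t_2),
\end{equation*}
the added first term being non-negative; thus \eqref{Eq:YoungFuncCond} holds with $c_0=c_1=c_2=1$ and any $T>0$. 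Statement (3) then follows from Theorem \ref{Thm:Conv1} and statement (1) from Theorem \ref{Thm:Conv2}.

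For (2) and (4) I would take $\Phi _1=\Phi$, $\Phi _2=\Phi ^*$ and $\Phi _0=\Phi _{[\infty ]}$, so that $s_{B,\Phi _2}=s_{B,\Phi ^*}$, $L^{\Phi _2}=L^{\Phi ^*}$, $L^{\Phi _0}=L^\infty$ and $s_{A,\Phi _0}=s_{A,\infty}$. By Remark \ref{Rem:PhiLeb} we have $\Phi _0^*=\Phi _{[\infty ]}^*=\Phi _{[1]}$, so $\Phi _0^*(t)=t$, and multiplying $t_1t_2\le \Phi (t_1)+\Phi ^*(t_2)$ by $t_0\ge 0$ yields
\begin{equation*}
t_0t_1t_2\le t_0\Phi (t_1)+t_0\Phi ^*(t_2)\le \Phi _0^*(t_0)\Phi _1(t_1)+\Phi _0^*(t_0)\Phi _2(t_2)+\Phi _1(t_1)\Phi _2(t_2);
\end{equation*}
again \eqref{Eq:YoungFuncCond} holds with $c_0=c_1=c_2=1$. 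Statement (2) then follows from Theorem \ref{Thm:Conv1} and statement (4) from Theorem \ref{Thm:Conv2}.

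I do not expect any genuine obstacle here: the whole content sits in Theorems \ref{Thm:Conv1} and \ref{Thm:Conv2}, and the specialization above is the main (routine) step. The one small nuisance is that $\Phi ^*$ may equal $+\infty$ on part of $[0,\infty )$; this is harmless, since the displayed inequalities remain true with the usual conventions for $+\infty$, and if one prefers one may restrict $t_0,t_1,t_2$ to $[0,T]$ with $T>0$ small enough that $\Phi ^*$ is finite on $[0,T]$ — possible because $\Phi$, satisfying a local $\Delta _2$-condition, is positive, so $\lim _{s\to\infty }\Phi (s)/s>0$.
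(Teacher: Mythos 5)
Your proposal is correct and takes essentially the same route as the paper, which obtains all four assertions by exactly these specializations of Theorems \ref{Thm:Conv1} and \ref{Thm:Conv2} ($\Phi _0=\Phi _1=\Phi$, $\Phi _2=\Phi _{[1]}$ for (1) and (3); $\Phi _0=\Phi _{[\infty ]}$, $\Phi _1=\Phi$, $\Phi _2=\Phi ^*$ for (2) and (4)). Your explicit check of \eqref{Eq:YoungFuncCond} via the Young inequality $st\le \Phi ^*(s)+\Phi (t)$ is the only step the paper leaves implicit, and you carry it out correctly.
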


\par

\begin{proof}
The assertions (1) and (3) follow by choosing
$$
\Phi _0=\Phi _1=\Phi
\quad \text{and}\quad
\Phi _2(t)=t
$$
in Theorems \ref{Thm:Conv1} and \ref{Thm:Conv2}.
The other assertions follow by choosing
$$
\Phi _0=\Phi _{[\infty ]},\quad
\Phi _1=\Phi
\quad \text{and}\quad
\Phi _2=\Phi ^*
$$
in Theorems \ref{Thm:Conv1} and \ref{Thm:Conv2}.
\end{proof}

%\noindent
%{\textbf{Remarks by JT:}} In the of the last proof, WB wrote:
%\\[1ex]
%{\color{blue} Then (2) and (4) follow by duality.} {\color{red}
%(How about the condition (2.1) for these choices of $\Phi_0$,
%$\Phi_1$, $\Phi_2$? (2.1) may give 
%an additional constraint on $\Phi$?)}
%\\[1ex]
%In my (JT) opinion, one should not include anything of this. The assertions (2)
%and (4) should follow from the second choices of Young functions, in the very end
%of the proof.
%
%\vspace{0.5cm}

\par

\begin{rem}
Suppose that the Young functions
$\Phi _0$, $\Phi _1$ and $\Phi _2$ satisfy
\begin{equation}
\label{Eq:YoungCond1}
\Phi _1^{-1}(s)\Phi _2^{-1}(s)
\le
Cs\Phi _0^{-1}(s), \quad s\ge 0,
\end{equation}
for some $C>0$. Then the conclusions in
Theorems \ref{Thm:Conv1} and \ref{Thm:Conv2}
hold true with $c_0=c_1=c_2=C$.

\par

In fact, if \eqref{Eq:YoungCond1} holds, then 
the arguments in the proof of Lemma 6 in \cite[Section 3.3]{RaoRen1}
give
\begin{equation}
\label{Eq:YoungCond2}
\begin{aligned}
t_0t_1t_2
&\le
C\big (
\Phi _0^*(t_0)\Phi _1(t_1)+\Phi _0^*(t_0)\Phi _2(t_2)+\Phi _1(t_1)\Phi _2(t_2)
\big ),
\\[1ex]
t_0,t_1,t_2 &\ge 0,
\end{aligned}
\end{equation}
and the assertion follows. For the reader's convenience,
a proof of that \eqref{Eq:YoungCond1} implies
\eqref{Eq:YoungCond2}, in a more general situation,
is included in Appendix \ref{App:B}. (See Proposition
\ref{Prop:YoungCondEsts}.)
\end{rem}

\par

As a first application of the previous convolution
results we have the following concerning compactly
supported elements in $s_{A,\Phi}(\rr {2d})$.
Here $\mascF a=\widehat a$ denotes a Fourier transform of $a\in \mascS '(\rr {2d})$.
The statement is independent of the choice of the Fourier transform, and for that
reason the choice is not specified. Here recall from \cite{Ho1} that the set of
all compactly supported distributions on $\rr d$ is denoted by $\mascE '(\rr d)$.
%
% Comment from WB about the Fourier transform. /JT
%

\par

\begin{prop}
Let $\Phi$ be a Young function and $A\in \GL (d,\mathbf R)$.
Then the following is true:
\begin{enumerate}
\item $s_{A,\Phi}(\rr {2d})\cap \mascF \mascE '(\rr {2d})
=
L^\Phi (\rr {2d})\cap \mascF \mascE '(\rr {2d})$;

\vrum

\item $s_{\Phi}^w(\rr {2d})\cap \mascE '(\rr {2d})
=
\mascF L^\Phi (\rr {2d})\cap \mascE '(\rr {2d})$.
\end{enumerate}
\end{prop}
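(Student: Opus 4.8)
The plan is to derive both assertions from the convolution estimate of Theorem~\ref{Thm:Conv2}, used only for Schwartz arguments, together with the elementary observation that a distribution whose Fourier transform has compact support is reproduced by convolution against a suitable Schwartz function. Concretely, if $a\in\mascS'(\rr{2d})$ and $\widehat a$ is supported in a compact set $K\subseteq\rr{2d}$, choose $\phi\in\mascS(\rr{2d})$ with $\widehat\phi\in C_0^\infty(\rr{2d})$ and $\widehat\phi$ equal to the appropriate constant on a neighbourhood of $K$, so that $a=a*\phi=\phi*a$; by Paley--Wiener--Schwartz, $a$ is then a smooth function of polynomial growth on $\rr{2d}$. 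Since $\mascS(\rr{2d})\hookrightarrow s_{A,1}(\rr{2d})$, the function $\phi$ will play the role of the $s_{A,1}$-factor in Theorem~\ref{Thm:Conv2}; for the triples $(\Phi_0,\Phi_1,\Phi_2)=(\Phi,\Phi_{[1]},\Phi)$ and $(\Phi^*,\Phi_{[1]},\Phi^*)$ that I shall use, condition~\eqref{Eq:YoungFuncCond} follows at once from Young's inequality, with $c_0=c_1=c_2=1$.

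To prove $s_{A,\Phi}(\rr{2d})\cap\mascF\mascE'(\rr{2d})\subseteq L^{\Phi}(\rr{2d})$, let $a\in s_{A,\Phi}(\rr{2d})$ with $\widehat a$ compactly supported, fix $\phi$ as above and set $\widetilde\phi(X)=\overline{\phi(-X)}\in\mascS(\rr{2d})$. For $u\in C_0^\infty(\rr{2d})$ one has $(a,u)_{L^2}=(a*\phi,u)_{L^2}=(a,u*\widetilde\phi)_{L^2}$ with $u*\widetilde\phi\in\mascS(\rr{2d})$, and interpreting this pairing via the identification $s_{A,2}(\rr{2d})=L^2(\rr{2d})$ (so that $(a,v)_{s_{A,2}}$ agrees with the distributional pairing for $v\in\mascS(\rr{2d})$), Proposition~\ref{Prop:SchattSymbEquivNorm} bounds $|(a,u*\widetilde\phi)_{L^2}|$ by $2\nm a{s_{A,\Phi}}\nm{u*\widetilde\phi}{s_{A,\Phi^*}}$, while Theorem~\ref{Thm:Conv2} applied to the two Schwartz functions $\widetilde\phi$ and $u$ gives $\nm{u*\widetilde\phi}{s_{A,\Phi^*}}\le C\nm{\widetilde\phi}{s_{A,1}}\nm u{L^{\Phi^*}}$. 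Taking the supremum over all such $u$ with $\nm u{L^{\Phi^*}}\le1$ and using that $L^{\Phi}(\rr{2d})$ is the associate space of $L^{\Phi^*}(\rr{2d})$ (Proposition~\ref{Prop:OrlDuality}(1), Remark~\ref{Rem:OrlDuality}) then yields $\nm a{L^\Phi}\lesssim\nm{\widetilde\phi}{s_{A,1}}\nm a{s_{A,\Phi}}<\infty$.

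For the reverse inclusion $L^{\Phi}(\rr{2d})\cap\mascF\mascE'(\rr{2d})\subseteq s_{A,\Phi}(\rr{2d})$, let $a\in L^\Phi(\rr{2d})$ with $\widehat a$ compactly supported and $\phi$ as above, and regularise: with $\theta_n=\theta(\cdot/n)$ for a fixed $\theta\in C_0^\infty(\rr{2d})$ equal to $1$ near the origin, the functions $a_n=\theta_n a$ lie in $C_0^\infty(\rr{2d})$ and satisfy $\nm{a_n}{L^\Phi}\le\nm a{L^\Phi}$; put $b_n=\phi*a_n\in\mascS(\rr{2d})$. Theorem~\ref{Thm:Conv2} applied to the Schwartz functions $\phi$ and $a_n$ gives $\nm{b_n}{s_{A,\Phi}}\le C\nm\phi{s_{A,1}}\nm{a_n}{L^\Phi}\le C\nm\phi{s_{A,1}}\nm a{L^\Phi}$, so $\{\op_A(b_n)\}$ is bounded in $\mascI_\Phi(L^2(\rr d))$ and, since $\ell^\Phi(\mathbf Z_+)\hookrightarrow\ell^\infty(\mathbf Z_+)$, uniformly bounded on $L^2(\rr d)$. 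On the other hand $a_n\to a$ in $\mascS'(\rr{2d})$, hence $b_n\to\phi*a=a$ in $\mascS'(\rr{2d})$ and $\op_A(b_n)\to\op_A(a)$ in $\maclL(\mascS(\rr d),\mascS'(\rr d))$; combined with the uniform bound this forces $\op_A(a)$ to be bounded on $L^2(\rr d)$ with $\op_A(b_n)\to\op_A(a)$ in the weak operator topology. A Fatou-type lower-semicontinuity property of the Luxemburg norm, applied to $(\op_A(b_n)f_j,g_j)_{L^2}\to(\op_A(a)f_j,g_j)_{L^2}$ for arbitrary orthonormal sequences $\{f_j\},\{g_j\}$, together with Proposition~\ref{Prop:OrlSchattTop}, then gives $\nm{\op_A(a)}{\mascI_\Phi}\le\liminf_n\nm{\op_A(b_n)}{\mascI_\Phi}<\infty$, i.e.\ $a\in s_{A,\Phi}(\rr{2d})$. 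This proves (1).

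Part (2) follows from (1) in the Weyl case $A=\frac12 I_d$ by means of the symplectic Fourier transform. By Proposition~\ref{Prop:SymplFourInvWeylCase}, $\mascF_\sigma$ is an isometric isomorphism of $s_\Phi^w(\rr{2d})$, and $\mascF_\sigma^2$ is the identity; moreover $\mascF_\sigma$ is, up to a multiplicative constant and an invertible linear change of variables on $\rr{2d}$, the ordinary Fourier transform, and $\mascE'(\rr{2d})$, $\mascF\mascE'(\rr{2d})$ and $L^\Phi(\rr{2d})$ are each invariant under invertible linear substitutions. Hence $\mascF_\sigma$ interchanges $\mascE'(\rr{2d})$ with $\mascF\mascE'(\rr{2d})$ and $L^\Phi(\rr{2d})$ with $\mascF L^\Phi(\rr{2d})$, and applying $\mascF_\sigma$, then (1) with $A=\frac12 I_d$, then $\mascF_\sigma$ once more yields both inclusions in (2). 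The main obstacle is that neither $\Phi$ nor $\Phi^*$ need satisfy a $\Delta_2$-condition, so Schwartz functions need not be dense in $s_{A,\Phi}(\rr{2d})$ or in $L^\Phi(\rr{2d})$; this is exactly why one inclusion of (1) must be obtained by duality and the other by truncation plus semicontinuity, the convolution theorem being invoked only for genuinely Schwartz arguments, and it is also why a little care is needed to identify the distributional $L^2$-pairing with the form $(\cdot,\cdot)_{s_{A,2}}$.
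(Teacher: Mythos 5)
Your proof is correct in substance, but it follows a genuinely different and much longer route than the paper's. The paper's argument is essentially two lines: writing $a=(2\pi)^{-d/2}a*b$ with $\widehat b=1$ on $\supp \widehat a$, it directly invokes the already-extended convolution maps $s_{A,1}\ast L^{\Phi}\to s_{A,\Phi}$ and $s_{A,\Phi}\ast s_{B,1}\to L^{\Phi}$ (Corollary \ref{Cor:OrlSchConvL1}, i.e.\ Theorems \ref{Thm:Conv1} and \ref{Thm:Conv2} with one Young function equal to $\Phi_{[1]}$), obtaining both inequalities $\nm a{s_{A,\Phi}}\lesssim \nm a{L^\Phi}$ and $\nm a{L^\Phi}\lesssim \nm a{s_{A,\Phi}}$ at once; the issue of non-density of $\mascS$ that worries you is dispatched by noting that $\mascS(\rr{2d})$ \emph{is} dense in the $s_{A,1}$/$s_{B,1}$/$L^1$ factor (since $\Phi_{[1]}$ satisfies $\Delta_2$), so the extended convolution is unique and consistent with the distributional one. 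You instead use Theorem \ref{Thm:Conv2} only for Schwartz arguments and rebuild the two inclusions by hand: duality against $C_0^\infty$ test functions for $s_{A,\Phi}\cap\mascF\mascE'\subseteq L^\Phi$, and truncation plus weak convergence plus Fatou for the Luxemburg norm for the converse. This buys you independence from the somewhat delicate question of how the Hahn--Banach extensions in Theorems \ref{Thm:Conv1}--\ref{Thm:Conv2} relate to genuine distributional convolution when $a$ itself is not approximable by Schwartz functions, and your Fatou argument for $\nm{\op_A(a)}{\mascI_\Phi}\le\liminf_n\nm{\op_A(b_n)}{\mascI_\Phi}$ is sound. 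The one step you should make explicit is in the duality direction: Proposition \ref{Prop:OrlDuality}(1) gives the equivalence $\nm f{L^\Phi}\le \nmm f\le 2\nm f{L^\Phi}$ only for $f$ \emph{already known} to lie in $L^\Phi$, whereas you need the converse implication that finiteness of $\sup\{|(a,u)_{L^2}| : u\in C_0^\infty,\ \nm u{L^{\Phi^*}}\le 1\}$ forces $a\in L^\Phi$; this is the standard K{\"o}the-bidual (Fatou) property of Orlicz spaces and is true, but it is not literally contained in the cited proposition and deserves a sentence (e.g.\ apply the norm equivalence to $\min(|a|,n)\chi_{B_n}$ and let $n\to\infty$). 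Your treatment of (2) via $\mascF_\sigma$ coincides with the paper's.
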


\par

\begin{proof}
Suppose that $a\in \mascF \mascE '(\rr {2d})$, and let
$b\in \mascS (\rr {2d})$ be such that $\widehat b =1$
on $\supp \widehat a$, when the Fourier transform
is defined by
$$
(\mascF f)(\xi ) = \widehat f(\xi )=(2\pi)^{-\frac d2}\int _{\rr d}f(x)e^{-i\scal x\xi}\, dx,
\qquad f\in \mascS (\rr d).
$$
%{\color{red} (the notation $\widehat{a}$ and $\widehat{b}$ seems
%undefined (although it is of course standard)}. Then
Then
$$
a=(2\pi )^{-\frac d2}a*b.
$$
By Corollary \ref{Cor:OrlSchConvL1} we obtain
%\\[3ex]
%\noindent
%{\textbf{Remarks by JT:}} The last sentence has been modified from what WB wrote:
%\\[1ex]
%By {\color{blue} Theorem \ref{Thm:Conv2} and} Corollary \ref{Cor:OrlSchConvL1}
%we obtain {\color{red}(Do we need a condition on $\Phi$ in order to guarantee (2.1)?)}
%\\[1ex]
%In my (JT) opinion, it should be fine now. We can discuss this.
%
%\vspace{0.5cm}
%
$$
\nm a{s_{A,\Phi}}
=
(2\pi )^{-\frac d2}
\nm {a*b}{s_{A,\Phi}}
\lesssim
\nm a{L^\Phi}\nm b{s_{A,1}}\asymp \nm a{L^\Phi},
$$
and
$$
\nm a{L^\Phi}
=
(2\pi )^{-\frac d2}
\nm {a*b}{L^\Phi}
\lesssim
\nm a{s_{A,\Phi}}\nm b{s_{B,1}}
\asymp
\nm a{s_{A,\Phi}}.
$$
A combination of these estimates gives (1).

\par

The assertion (2) is invariant under the choice of Fourier transform.
The result then follows by combining (1) with Proposition
\ref{Prop:SymplFourInvWeylCase}, taken into account that the convolutions
are uniquely defined, since $\mascS (\rr {2d})$ is dense in $s_{A,1}(\rr {2d})$
and in $s_{B,1}(\rr {2d})$.
\end{proof}

\par

\par

%%%%%%%%%%%%%%%%%%%
\section{Dilated multiplications and
convolutions for Orlicz Schatten-von
Neumann classes}\label{sec4}
%%%%%%%%%%%%%%%%%%%

\par

In this section we deduce norm estimates for
dilated convolutions
between elements in Orlicz Schatten-von Neumann
symbols. Especially we show that such estimates
hold true when the involved Young functions satisfy
some sort of Young condition given in
\eqref{Eq:YoungFuncCond}.

\par

More precisely, we have the following result.

\par

\begin{thm}
\label{Thm:OrlSchConvL2}
Let $\Phi _j$ be Young functions
such that \eqref{Eq:YoungFuncCond} holds for some
$c_j,T>0$, $j=0,1,2$.
Also suppose that $t_1,t_2\in \mathbf R\setminus 0$ satisfy
$$
m_1{t_1^{-2}}+m_2 {t_2^{-2}}=1
$$
for some choices of $m_j\in \{ -1,1\}$, $j=1,2$.
%Also let $S$ be the 
%map from $\mascS (\rr {2d})\times \mascS (\rr {2d})$ to
%$\mascS (\rr {2d})$, given by $S(a_1,a_2)=a_1(t_1\cdo )*a_2(t_2\cdo )$.
Then the map $$(a_1,a_2)\mapsto a_1(t_1\cdo )*a_2(t_2\cdo )$$
from $\mascS (\rr {2d})\times \mascS (\rr {2d})$ to
$\mascS (\rr {2d})$ extends to a continuous map from
$s_{\Phi _1} ^w(\rr {2d})\times s_{\Phi _2} ^w(\rr {2d})$ to
$s_{\Phi _0} ^w(\rr {2d})$,
and
%%
%\begin{equation}\label{Eq:OrlSchConvL1Eq1}
\begin{equation}
\label{Eq:OrlSchConvL1Eq1Source}
\begin{aligned}
\nm {a_1(t_1\cdo )&*a_2(t_2\cdo )}{s_{\Phi _0} ^w}
\\
&\le 2(2\pi )^{\frac d2}\left (c_0+\sum _{j=1}^2c_j |t_j|^{2d} \right )
\prod _{j=1}^2 \left ( |t_j|^{-2d}\nm {a_j}{s_{\Phi _j}^w}\right ),
\\[1ex]
a_1&\in s_{\Phi _1} ^w(\rr {2d}), a_2\in s_{\Phi _2}^w(\rr {2d}).
\end{aligned}
\end{equation}
%\end{equation}
%%

\par

If in addition $\Phi _1$ or $\Phi _2$ satisfy a local $\Delta _2$-condition,
then $a_1(t_1\cdo )*a_2(t_2\cdo )$ in {\rm{\ref{Eq:OrlSchConvL1Eq1}}}
is uniquely defined.

\par

Moreover, if $\op ^w(a_j)$, $j=1,2$, are positive semi-definite
operators, then the same holds true for $\op ^w(a_1(t_1\cdo )*a_2(t_2\cdo ))$. 
\end{thm}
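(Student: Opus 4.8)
The statement is already phrased in the Weyl calculus, so I split the argument into an a~priori bound, a density/extension step, and the positivity assertion. For the bound: by $\mascS(\rr{2d})\hookrightarrow s_{\Phi_j}^w(\rr{2d})$, Remark~\ref{Rem:SmallSchattDenseSchwartz}, Proposition~\ref{Prop:DiscreteNorms2}(4) and bilinearity, it suffices to prove \eqref{Eq:OrlSchConvL1Eq1Source} for $a_1,a_2\in s_\flat^w(\rr{2d})$; the map then extends by continuity to $s_{\Phi_1}^w\times s_{\Phi_2}^w$, and when $\Phi_1$ or $\Phi_2$ satisfies a local $\Delta_2$-condition it agrees with the classical convolution $\mascS*\mascS'$ (since $a_1^{(n)}(t_1\cdo)*a_2(t_2\cdo)\to a_1(t_1\cdo)*a_2(t_2\cdo)$ in $\mascS'(\rr{2d})$ whenever $s_\flat^w\ni a_1^{(n)}\to a_1$ in $\mascS$, and $\mascS$ is dense in that $s_{\Phi_j}^w$), which gives uniqueness. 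Following the proof of Theorem~\ref{Thm:Conv1}, I would control the left side of \eqref{Eq:OrlSchConvL1Eq1Source} through the dual description of Proposition~\ref{Prop:SchattSymbEquivNorm} — testing $(a_1(t_1\cdo)*a_2(t_2\cdo),u)_{L^2}$ against $u\in s_{\Phi_0^*}^w(\rr{2d})$ with $\nm u{s_{\Phi_0^*}^w}\le1$ — and insert the finite spectral expansions $a_j=\sum_n\lambda_{j,n}W_{f^j_{2,n},f^j_{1,n}}$ of Proposition~\ref{Prop:Spectral0Symb} together with the spectral expansion of $\op^w(u)$, so that everything reduces to the single dilated convolution $W_{f^1_{2,j},f^1_{1,j}}(t_1\cdo)*W_{f^2_{2,k},f^2_{1,k}}(t_2\cdo)$.

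\textbf{The key lemma (the crux).} Here the hypothesis $m_1t_1^{-2}+m_2t_2^{-2}=1$ is decisive. I would prove a dilated analogue of Lemma~\ref{Lemma:ConvSchattExp}: by a direct Fourier-inversion computation — with the two scalar dilations absorbed by changes of variables, Proposition~\ref{Prop:SymplFourInvWeylCase} and \eqref{Eq:TransConvMultSymplFourT} used to turn each $m_j=-1$ into $m_j=1$, and the resulting Gaussian-type frequency integrations carried out — the ``widths'' $t_1^{-2}$ and $t_2^{-2}$ of the two oscillatory factors add up, and the constraint forces the total to be exactly the width of an \emph{undilated} Wigner distribution. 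Equivalently, at the operator level $a_1(t_1\cdo)*a_2(t_2\cdo)=\int(a_1\otimes a_2)(t_1Y,t_2(X-Y))\,dY$, and the constraint is precisely the symplecticity of the linear change of variables on $\rr{4d}$ underlying this formula, which lets one realise, for an explicit constant $C_d(t_1,t_2)$ and a bounded $\mathcal R\colon L^2(\rr d)\to L^2(\rr{2d})$ depending only on $t_1,t_2,d$,
\begin{equation*}
\op^w\big(a_1(t_1\cdo)*a_2(t_2\cdo)\big)=C_d(t_1,t_2)\,\mathcal R^*\big(\op^w(\tilde a_1)\otimes\op^w(\tilde a_2)\big)\mathcal R ,
\end{equation*}
where $\tilde a_j$ is a dilate of $a_j$ and $\op^w(\tilde a_1)\otimes\op^w(\tilde a_2)=\op^w_{2d}(\tilde a_1\otimes\tilde a_2)$ acts on $L^2(\rr{2d})=L^2(\rr d)\otimes L^2(\rr d)$; in the undilated case $t_1=t_2=1$ no such matching is possible, which is why Lemma~\ref{Lemma:ConvSchattExp} lands only in $L^{\Phi_0}$ and not in $s_{\Phi_0}^w$. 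The concrete outcome is a bound of $|W_{f^1_{2,j},f^1_{1,j}}(t_1\cdo)*W_{f^2_{2,k},f^2_{1,k}}(t_2\cdo)|$ by a family $h_{j,k}$ with $\nm{h_{j,k}}{L^1}\lesssim|t_1t_2|^{-2d}$, $\sup_j\sum_k h_{j,k}\lesssim|t_2|^{-2d}$ and $\sup_k\sum_j h_{j,k}\lesssim|t_1|^{-2d}$ (Moyal and Bessel, with the dilation Jacobians inserted), which moreover — unlike in Lemma~\ref{Lemma:ConvSchattExp} — carries enough structure to be paired against $u\in s_{\Phi_0^*}^w$ and not only against a function $u\in L^{\Phi_0^*}$. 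I expect verifying this identity across the four sign choices to be the main obstacle; the remaining steps only reuse results already in the paper.

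\textbf{From the key lemma to \eqref{Eq:OrlSchConvL1Eq1Source}.} Granting it, the estimate follows as in the proof of Theorem~\ref{Thm:Conv1}: one splits $|(a_1(t_1\cdo)*a_2(t_2\cdo),u)_{L^2}|$ by the three-function Young inequality \eqref{Eq:YoungFuncCond} into three sums, bounds each by Cauchy–Schwarz together with the above $L^1$- and Bessel-type estimates and the normalisations $\nm{\{\lambda_{1,n}\}_n}{\ell^{\Phi_1}}$, $\nm{\{\lambda_{2,n}\}_n}{\ell^{\Phi_2}}$ and $\nm{\cdo}{\ell^{\Phi_0^*}}$ of the singular values of $\op^w(u)$, and then applies Proposition~\ref{Prop:SchattSymbEquivNorm}; the $c_0$-term inherits the factor $|t_1t_2|^{-2d}$ from $\nm{h_{j,k}}{L^1}$ while the $c_j$-terms inherit $|t_j|^{-2d}$ from the respective Bessel bounds, reproducing the constant $2(2\pi)^{d/2}\big(c_0+\sum_jc_j|t_j|^{2d}\big)\prod_j|t_j|^{-2d}$ in \eqref{Eq:OrlSchConvL1Eq1Source}; homogeneity then delivers the estimate for all $a_1\in s_{\Phi_1}^w$, $a_2\in s_{\Phi_2}^w$. (Alternatively one can argue operator-theoretically via $\sigma_{(n,m)}(A\otimes B)=\sigma_n(A)\sigma_m(B)$, $\sigma_j(\mathcal R^*T\mathcal R)\le\nm{\mathcal R}^2\sigma_j(T)$, and the fact that \eqref{Eq:YoungFuncCond} is exactly the condition for $\nm{\{\alpha_n\beta_m\}}{\ell^{\Phi_0}}\lesssim\nm\alpha{\ell^{\Phi_1}}\nm\beta{\ell^{\Phi_2}}$.)

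\textbf{Positivity.} If $\op^w(a_1),\op^w(a_2)\ge0$, the spectral theorem lets one take $f^j_{1,n}=f^j_{2,n}$ and $\lambda_{j,n}\ge0$ in the expansions, and, by continuity from the first part, the identity $\op^w(a_1(t_1\cdo)*a_2(t_2\cdo))=C_d(t_1,t_2)\,\mathcal R^*(\op^w(\tilde a_1)\otimes\op^w(\tilde a_2))\mathcal R$ persists for all $a_1\in s_{\Phi_1}^w$, $a_2\in s_{\Phi_2}^w$. Since the dilations, the tensor product and the compression $\mathcal R^*(\cdo)\mathcal R$ all preserve positivity, and $C_d(t_1,t_2)>0$ for the correct normalisation of $\mathcal R$, the right-hand side is a positive semi-definite operator; hence so is $\op^w(a_1(t_1\cdo)*a_2(t_2\cdo))$.
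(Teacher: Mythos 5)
Your overall architecture --- reduction to $s_\flat ^w(\rr {2d})$, spectral expansions of $a_1$, $a_2$ and of the dual element $u\in s_{\Phi _0^*}^w(\rr {2d})$, a three-way splitting via \eqref{Eq:YoungFuncCond}, duality through Proposition \ref{Prop:SchattSymbEquivNorm}, homogeneity, Hahn--Banach, and density for uniqueness --- is exactly the paper's, and those parts are fine. The gap is in the key lemma, which you correctly identify as the crux but then formulate in a way that cannot carry the argument. A pointwise majorization $|W_{f,g}(t_1\cdo )*W_{\varphi ,\psi}(t_2\cdo )|\le h_{j,k}$ with $L^1$- and Bessel-type bounds indexed by $(j,k)$ is the right tool for Theorem \ref{Thm:Conv1}, where the dual object is a \emph{function} $u\in L^{\Phi _0^*}$ and one integrates $h_{j,k}$ against $\Phi _0^*(|u|)$. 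Here the dual object is $u=\sum _m\nu _mW_{h_{1,m},h_{2,m}}\in s_{\Phi _0^*}^w$, and one needs summability of the triple quantities $H(j,k,m)=(W_{\cdot ,\cdot}(t_1\cdo )*W_{\cdot ,\cdot}(t_2\cdo ),W_{h_{1,m},h_{2,m}})_{L^2}$ in \emph{each} of $j$, $k$ and $m$ separately; the $m$-sum is a trace-norm bound on the rank-one dilated convolution and is not controlled by the $L^1$-norm of a pointwise majorant, since $\sum _m|(F,W_{h_{1,m},h_{2,m}})_{L^2}|\le (2\pi )^{\frac d2}\nm {\op ^w(F)}{\mascI _1}$, which $\nm F{L^1}$ does not dominate. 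The missing ingredient is the exact kernel identity (Lemma \ref{Lemma:DilConvIdent}), $\maclK (a(s\cdo )*b(t\cdo ))=(2\pi )^d|st|^{-d}\int (\maclK a)(S_{j,z/s}\cdo )(\maclK b)(S_{k,-z/t}\cdo )\, dz$, from which each $H(j,k,m)$ factors into a $z$-integral of products of two $L^2(\rr d)$ inner products, and Parseval in each index --- with Jacobians computed using $m_1t_1^{-2}+m_2t_2^{-2}=1$ --- yields the three required bounds.

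Relatedly, the operator identity you propose as the underlying mechanism, $\op ^w(a_1(t_1\cdo )*a_2(t_2\cdo ))=C\,\mathcal R^*(\op ^w(\tilde a_1)\otimes \op ^w(\tilde a_2))\mathcal R$ with a \emph{bounded} $\mathcal R\colon L^2(\rr d)\to L^2(\rr {2d})$, is too strong to be true: it would give $\nm {a_1(t_1\cdo )*a_2(t_2\cdo )}{s_\infty ^w}\lesssim \nm {a_1}{s_\infty ^w}\nm {a_2}{s_\infty ^w}$, i.e. $s_\infty ^w*s_\infty ^w\subseteq s_\infty ^w$ for dilated convolutions, which lies outside the Young range (the condition $1/p_1+1/p_2=1+1/p_0$ forces $1/p_1+1/p_2\ge 1$). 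The correct structure behind Lemma \ref{Lemma:DilConvIdent} is a \emph{partial trace} of a metaplectically conjugated tensor product --- integration of the kernel over a diagonal variable $z$ --- under which singular values do not obey $\sigma _j(\cdot )\le \nm {\mathcal R}{}^2\sigma _j(T)$; this is precisely why your parenthetical ``alternative'' via $\sigma _{(n,m)}(A\otimes B)=\sigma _n(A)\sigma _m(B)$ does not close the argument, and why the index-by-index Parseval estimates combined with \eqref{Eq:YoungFuncCond} are unavoidable. The partial-trace form does preserve positivity, so your positivity conclusion survives once the identity is put in its correct form; the paper simply invokes \cite[Corollary 3.5]{Toft3} together with density for that step.
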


\par

We observe that \eqref{Eq:OrlSchConvL1Eq1Source} can
also be expressed as
\begin{equation}
\tag*{(\ref{Eq:OrlSchConvL1Eq1Source})$'$}
\label{Eq:OrlSchConvL1Eq1}
\begin{aligned}
\nm {a_1&(t_1\cdo )*a_2(t_2\cdo )}{s_{\Phi _0} ^w}
\\
&\le 2(2\pi )^{\frac d2}\cdot (c_0(t_1t_2)^{-2d}+c_1t_1^{-2d}+c_2t_2^{-2d})
\nm {a_1}{s_{\Phi _1} ^w}\nm {a_2}{s_{\Phi _2}^w},
\\[1ex]
a_1&\in s_{\Phi _1} ^w(\rr {2d}),\ a_2\in s_{\Phi _2}^w(\rr {2d}).
\end{aligned}
\end{equation}

\par

There is a natural extension of Theorem \ref{Thm:OrlSchConvL2}
to the multi-linear case, which is given in the next theorem. Here
the Young functions should fulfill 
\begin{equation}
%\tag*{(\ref{Eq:YoungFuncCond})$'$}
\label{Eq:YoungFuncCondGen}
\begin{aligned}
%\begin{gathered}
t_0\cdots t_{N}
&\le 
\left (
\sum _{j=1}^N
c_j\prod _{m\in \Omega _j}
\Phi _m(t_m)
\right )
\Phi _0^*(t_0)
+c_0\prod _{j=1}^N\Phi _j(t_j),
\\[1ex]
\Omega _j
&=
\{1,\dots ,N\} \setminus \{ j\},
\qquad
t_0,\dots ,t_N\in [0,T],
\end{aligned}
\end{equation}
for some constants $c_j,T>0$, $j=0,\dots ,N$.

\par

\begin{thm}\label{Thm:OrlSchConvL2MultLin}
Let $\Phi _j$ be Young functions
such that {\rm{\eqref{Eq:YoungFuncCondGen}}} holds for some
constants $c_j,T>0$, $j=0,\dots ,N$.
Also suppose that $t_1,\dots ,t_N\in \mathbf R\setminus 0$ satisfy
\begin{equation}
\label{Eq:tCondMultilinCase}
\sum _{j=1}^N  m_jt_j^{-2} = 1,
\end{equation}
for some choices of $m_j\in \{ -1,1\}$, $j=1,\dots ,N$.
%Also let $S$ be the 
%map from $\mascS (\rr {2d})\times \mascS (\rr {2d})$ to
%$\mascS (\rr {2d})$, given by $S(a_1,a_2)=a_1(t_1\cdo )*a_2(t_2\cdo )$.
Then the map
$$
(a_1,\dots ,a_N)\mapsto a_1(t_1\cdo )*\cdots *a_N(t_N\cdo )
$$
from $\mascS (\rr {2d})\times \cdots \times \mascS (\rr {2d})$ to
$\mascS (\rr {2d})$ extends to a continuous map from
$s_{\Phi _1} ^w(\rr {2d})\times \cdots \times s_{\Phi _N} ^w(\rr {2d})$ to
$s_{\Phi _0} ^w(\rr {2d})$,
and
\begin{equation}
\tag*{(\ref{Eq:OrlSchConvL1Eq1Source})$''$}
\label{Eq:OrlSchConvL1Eq1Mult}
\begin{aligned}
\nm {a_1(t_1\cdo )&*\cdots *a_N(t_N\cdo )}{s_{\Phi _0} ^w}
\\
&\le 2(2\pi )^{(N-1)\frac d2}\left (c_0+\sum _{j=1}^Nc_j|t_j|^{2d} \right )
\prod _{j=1}^N \left ( |t_j|^{-2d}\nm {a_j}{s_{\Phi _j}^w}\right ),
\\[1ex]
a_1&\in s_{\Phi _1} ^w(\rr {2d}),\dots , a_N\in s_{\Phi _N}^w(\rr {2d}).
\end{aligned}
\end{equation}

\par

If in addition at least $N-1$ of $\Phi _1,\dots ,\Phi _N$
satisfy a local $\Delta _2$-condition, then
$a_1(t_1\cdo )*\cdots *a_N(t_N\cdo )$ in
{\rm{\ref{Eq:OrlSchConvL1Eq1Mult}}}
is uniquely defined.

\par

Moreover, if $\op ^w(a_j)$, $j=1,\dots ,N$, are positive semi-definite
operators, then the same holds true for
$\op ^w(a_1(t_1\cdo )*\cdots *a_N(t_N\cdo ))$. 
\end{thm}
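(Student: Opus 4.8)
The plan is to prove Theorem~\ref{Thm:OrlSchConvL2MultLin} by induction on $N$, the base case $N=2$ being Theorem~\ref{Thm:OrlSchConvL2}. In each step the first move, exactly as in the proofs of Theorems~\ref{Thm:Conv1} and \ref{Thm:Conv2}, is to reduce the asserted estimate to the case $a_j\in s_\flat ^w(\rr {2d})$ with $\nm {a_j}{s_{\Phi _j}^w}=1$; the general statement then follows from the density results in Proposition~\ref{Prop:DiscreteNorms2} and Remark~\ref{Rem:SmallSchattDenseSchwartz} together with the duality description of the $s_{\Phi _0}^w$-norm in Proposition~\ref{Prop:SchattSymbEquivNorm}, the latter also giving uniqueness of the continuous extension as soon as enough of the $\Phi _j$ obey a local $\Delta _2$-condition. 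It is convenient to record at the outset the elementary dilation/associativity identity
\begin{equation*}
a_1(t_1\cdo )*\cdots *a_k(t_k\cdo )
=
s^{-2d(k-1)}\bigl ( a_1((t_1/s)\cdo )*\cdots *a_k((t_k/s)\cdo )\bigr )(s\cdo ),
\qquad s\neq 0,
\end{equation*}
which allows all dilation parameters to be rescaled simultaneously.

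For orientation on the base case $N=2$ (Theorem~\ref{Thm:OrlSchConvL2}): writing $a_1=\sum _j\lambda _jW_{f_{2,j},f_{1,j}}$ and $a_2=\sum _k\mu _kW_{g_{2,k},g_{1,k}}$ as in \eqref{Eq:SpectExp0Symb}, one evaluates $W_{f_{2,j},f_{1,j}}(t_1\cdo )*W_{g_{2,k},g_{1,k}}(t_2\cdo )$ by Fourier's inversion formula; the constraint $m_1t_1^{-2}+m_2t_2^{-2}=1$ is precisely what makes the arising oscillatory factor collapse, leaving a rank-one symbol built from cross-Wigner distributions of rescaled windows. Pairing $a_1(t_1\cdo )*a_2(t_2\cdo )$ against a symbol $u$ with $\nm u{s_{\Phi _0^*}^w}=1$, also expanded spectrally, and using \eqref{Eq:AWigPseudoLink}, one obtains a trilinear sum over $\lambda _j\mu _k\nu _\ell$ to which \eqref{Eq:YoungFuncCond} applies; Bessel's inequality and the Cauchy--Schwarz inequality then dominate the three partial sums just as in the proof of Lemma~\ref{Lemma:ConvSchattExp}, the factors $|t_j|^{\pm 2d}$ emerging as Jacobians. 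Taking the supremum over $u$ and invoking Propositions~\ref{Prop:SchattSymbEquivNorm} and \ref{Prop:SymplFourInvWeylCase} yields the norm estimate of Theorem~\ref{Thm:OrlSchConvL2}. When $\op ^w(a_1),\op ^w(a_2)$ are positive semi-definite one expands with a single window in each rank-one piece, and by the same collapsing identity each summand $W_{f_j,f_j}(t_1\cdo )*W_{g_k,g_k}(t_2\cdo )$ is the Weyl symbol of a positive semi-definite rank-one operator, hence so is the (convergent) sum.

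The inductive step peels off one factor. From $\sum _{j=1}^Nm_jt_j^{-2}=1$ one chooses a label, say $N$, with $\tau :=\sum _{j=1}^{N-1}m_jt_j^{-2}=1-m_Nt_N^{-2}>0$: pick $m_N=-1$ if some $m_j$ equals $-1$, and otherwise all $m_j=+1$, which forces each $|t_j|>1$, so any label works. Setting $s=\tau ^{-1/2}$ in the displayed identity, the inner convolution $b:=a_1(t_1\cdo )*\cdots *a_{N-1}(t_{N-1}\cdo )$ becomes, up to a constant and the dilation $X\mapsto sX$, a dilated convolution of $N-1$ factors whose parameters $t_j\sqrt\tau$ satisfy \eqref{Eq:tCondMultilinCase} with $N-1$ in place of $N$; with a suitable intermediate Young function $\Psi$ the induction hypothesis puts $b$ in $s_\Psi ^w(\rr {2d})$ with the expected norm control, and a final application of the base case to the convolution of a dilation of $b$ with $a_N(t_N\cdo )$ gives membership in $s_{\Phi _0}^w(\rr {2d})$, the constants combining to the stated product. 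Positivity and uniqueness of the extension propagate through the induction. The main obstacle is the Young-function bookkeeping: one must produce $\Psi$ for which \eqref{Eq:YoungFuncCondGen} with $N-1$ in place of $N$ holds for $\Phi _1,\dots ,\Phi _{N-1},\Psi$ while at the same time \eqref{Eq:YoungFuncCond} holds for $\Psi ,\Phi _N,\Phi _0$; making these compatible requires a careful use of the structural shape of \eqref{Eq:YoungFuncCondGen} and of its link to composition-type conditions such as \eqref{Eq:HolderYoungFuncCond2}, and this is where the bulk of the work lies.
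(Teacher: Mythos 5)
Your proposal takes a genuinely different route from the paper: the paper does \emph{not} induct on $N$ at the level of the norm estimate. Instead it derives (by induction, but only for the kernel identity) an explicit integral formula for $\maclK (a_1(t_1\cdo )*\cdots *a_N(t_N\cdo ))$ (Lemma \ref{Lem:DilConvForm2}), proves the summability bounds \eqref{Eq:SumHfuncEstFirst}--\eqref{Eq:SumHfuncEst} for the $(N+1)$-linear form $G(k)$ on rank-one elements via Bessel's inequality, Cauchy--Schwarz and a determinant computation that uses \eqref{Eq:tCondMultilinCase} directly (Lemma \ref{Lem:DilConvRankOneElem}), and then applies the multilinear condition \eqref{Eq:YoungFuncCondGen} in one shot to the full spectral expansion. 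Your base case $N=2$ is essentially the paper's Theorem \ref{Thm:OrlSchConvL2} and is fine.

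The inductive step, however, has a genuine gap that you yourself flag but do not close: the existence of an intermediate Young function $\Psi$ such that $\Phi _1,\dots ,\Phi _{N-1},\Psi$ satisfy the $(N-1)$-linear version of \eqref{Eq:YoungFuncCondGen} \emph{and} $\Psi ,\Phi _N,\Phi _0$ satisfy \eqref{Eq:YoungFuncCond}. The hypothesis \eqref{Eq:YoungFuncCondGen} is a single inequality coupling all $N+1$ functions at once, and there is no canonical way to factor it through a bilinear condition; the natural candidate $\Psi ^{-1}(s)=s^{-(N-2)}\prod _{j=1}^{N-1}\Phi _j^{-1}(s)$ only becomes available under the strictly stronger hypothesis \eqref{Eq:YoungCond1MCA} (cf. Appendix \ref{App:B}, where \eqref{Eq:YoungCond1MCA} is shown to imply \eqref{Eq:YoungFuncCondGen}, not conversely), and even then one must verify that this expression is the inverse of a Young function. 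Since this factorization is the crux of your argument and is left entirely open, the proof is incomplete as it stands. A secondary defect: each application of the bilinear case pays a factor $2$ coming from the duality bound in Proposition \ref{Prop:SchattSymbEquivNorm}, so iterating $N-1$ times would at best yield a constant of order $2^{N-1}$ and a different combination of the $c_j$ and $|t_j|^{\pm 2d}$ factors, not the single factor $2$ and the specific constant asserted in \ref{Eq:OrlSchConvL1Eq1Mult}. The paper's one-shot multilinear argument avoids both problems, which is presumably why it is organized that way.
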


\par

In what follows we give a proof of Theorem \ref{Thm:OrlSchConvL2},
while the proof of the more general Theorem \ref{Thm:OrlSchConvL2MultLin}
is given in Appendix \ref{App:A}.

\par

For the proof of Theorem \ref{Thm:OrlSchConvL2}
we need the following lemma, which is equivalent to
\cite[Lemma 3.2]{Toft3}. The proof is therefore omitted. Here
$S_{0,z}$ and $S_{1,z}$ are mappings on $\rr {2d}$,
and $\maclK$ is the map on $\mascS '(\rr {2d})$, given by
\begin{equation}
\label{Eq:OpsDilConv}
\begin{aligned}
S_{0,z}(x,y) &= (z+x,z+y),
\quad
S_{1,z}(x,y) = (z-x,z-y),
%\quad U\in \mascS '(\rr {2d}),\ x,y,z\in \rr d,
\\[1ex]
(\maclK a)(x,y) &= (2\pi )^{-\frac d2}
\big ((I\otimes \mascF ^{-1})a\big ) ({\textstyle{\frac 12}}(x+y),x-y),
\\[1ex]
a&\in \mascS '(\rr {2d})\ \ x,y,z\in \rr d.
\end{aligned}
\end{equation}
We observe that 
$$
(\maclK a)(x,y) = (2\pi )^{-d}\int _{\rr d}
a({\textstyle{\frac 12}}(x+y),\xi )e^{i\scal {x-y}\xi}\, d\xi,
$$
when $a\in \mascS (\rr {2d})$, and that $\maclK a$
is the operator kernel
of $\op ^w(a)$ (cf. \eqref{atkernel}).

\par

\begin{lemma}
\label{Lemma:DilConvIdent}
Suppose that $s,t\in \mathbf R$ satisfy $(-1)^js^{-2}+(-1)^kt^{-2}=1$,
for some choice of $j,k\in \{ 0,1\}$, and let $S_{0,z}$, $S_{1,z}$ and
$\maclK$ be as in \eqref{Eq:OpsDilConv}.
%Set for any $U\in \mascS (\rr {2d})$,
%$U_{0,z}(x,y)=U_{1,z}(-y,-x)=U(x+z,y+z)$.
Then
\begin{multline*}
\maclK (a(s\cdo )*b(t\cdo ))
=
(2\pi )^{d}|st|^{-d}
\int _{\rr d} (\maclK a) (S_{j,z/s} \cdo ) (\maclK b)(S_{k,-z/t} \cdo )\, dz,
\\[1ex]
a,b\in \mascS (\rr {2d}).
\end{multline*}
%%
%$$
%T_0(a(s\cdo )*b(t\cdo ))
%=
%(2\pi )^{\frac d2}|st|^{-d}
%\int _{\rr d} (T_0a)_{j,z/s}(s\cdo )(T_0b)_{j,-z/t}(t\cdo )\, dz,
%\quad
%a,b\in \mascS (\rr {2d}).
%$$
\end{lemma}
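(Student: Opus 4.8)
The plan is to reduce to the case $a,b\in\mascS(\rr{2d})$ (already the hypothesis) and to verify the asserted identity as an equality of continuous functions on $\rr{2d}$, evaluated pointwise. Both sides are separately continuous and bilinear in $(a,b)$: the left-hand side because $(a,b)\mapsto a(s\cdo)*b(t\cdo)$ maps $\mascS\times\mascS$ continuously into $\mascS(\rr{2d})$ and $\maclK$ is a homeomorphism on $\mascS(\rr{2d})$ (being a partial Fourier transform composed with a linear substitution); the right-hand side because the integrand is a product of two Schwartz kernels, so the integration in $z\in\rd$ converges absolutely with Schwartz-type bounds. Hence it suffices to check the pointwise identity for $a,b\in\mascS$. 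As an alternative reduction, using \eqref{Eq:AWigPseudoLink2} together with the rapidly converging spectral expansions of Proposition \ref{Prop:SchwKerOpProp} one could first pass to the rank-one case $a=W_{f_2,f_1}$, $b=W_{g_2,g_1}$, where $\maclK a=(2\pi)^{-d/2}f_2\otimes\overline{f_1}$; this would recast the statement as the dilated cross-Wigner convolution identity that already underlies Lemma \ref{Lemma:ConvSchattExp}.

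The main step is a direct computation via Fourier's inversion formula, in the spirit of the proof of Lemma \ref{Lemma:ConvSchattExp}. On the left I would write $c=a(s\cdo)*b(t\cdo)$ as the phase-space integral $c(X)=\int_{\rdd}a(sY)\,b(t(X-Y))\,dY$ and then apply $\maclK$ in its oscillatory-integral form from \eqref{Eq:OpsDilConv}, i.e.\ as a partial inverse Fourier transform in the second $\rd$-variable. On the right I would insert the definitions of $\maclK a$ and $\maclK b$ and evaluate them at the affine arguments $S_{j,z/s}(x,y)$ and $S_{k,-z/t}(x,y)$. Since $S_{0,w}$ and $S_{1,w}$ shift both kernel variables by the same vector (and, for index $1$, reflect them), each factor becomes an elementary oscillatory integral whose position argument is translated by $\pm z/s$ resp.\ $\mp z/t$ and whose frequency phase carries the sign $(-1)^j$ resp.\ $(-1)^k$. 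In this way both sides are expressed as absolutely convergent multiple integrals.

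It then remains to match the two multiple integrals by carrying out the auxiliary integration in $z\in\rd$ together with the frequency integrations and performing a linear change of variables in the remaining phase-space variables. The algebraic hypothesis $(-1)^j s^{-2}+(-1)^k t^{-2}=1$ is precisely the compatibility condition under which this change of variables closes up: it fixes the effective dilation relating the shift rates $1/s$, $1/t$ to the scalings $s$, $t$ in $a(s\cdo)$, $b(t\cdo)$, so that the affine data on the two sides coincide, while the four choices of $(j,k)$ are handled uniformly through the signs $(-1)^j$, $(-1)^k$; along the way the normalising constant collects to $(2\pi)^d|st|^{-d}$. The delicate part, and the main obstacle, is exactly this bookkeeping: keeping the several oscillatory integrals absolutely convergent, identifying the correct change of variables, and confirming that the constraint is what renders it consistent across all four sign patterns. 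Since the statement is equivalent to \cite[Lemma 3.2]{Toft3}, a fully rigorous argument may instead be obtained by transcribing that computation through the correspondence between the two formulations, which is the route that justifies omitting the detailed calculation here.
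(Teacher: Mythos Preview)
The paper does not give a proof of this lemma at all: it states that the lemma is equivalent to \cite[Lemma 3.2]{Toft3} and omits the argument. Your proposal is therefore more explicit than the paper, and the direct-computation strategy you outline---expand both sides via the oscillatory-integral form of $\maclK$ from \eqref{Eq:OpsDilConv}, then match by an affine change of variables whose consistency is guaranteed precisely by the constraint $(-1)^js^{-2}+(-1)^kt^{-2}=1$---is the standard route and is correct in outline; you also explicitly acknowledge the citation-based alternative that the paper actually takes. The only caveat is that your sketch stops short of exhibiting the change of variables and verifying the constant $(2\pi)^d|st|^{-d}$, which is the part where errors typically occur; but as a proof plan it is sound and essentially the computation one expects \cite{Toft3} to contain.
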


\par

\begin{proof}[Proof of Theorem \ref{Thm:OrlSchConvL2}]
We use the same notation as in Lemma \ref{Lemma:DilConvIdent},
and prove the result only for $j=0$ and $k=1$. The other
cases follow by similar arguments and are left for the reader.

\par

As for the proof of Theorem \ref{Thm:Conv1},
we reduce ourselves to show that \ref{Eq:OrlSchConvL1Eq1}
holds true when $a=a_1\in s_\flat ^w (\rr {2d})$, $b=a_2\in s_\flat (\rr {2d})$,
by a suitable combination of Proposition \ref{Prop:DiscreteNorms2},
Remark \ref{Rem:SmallSchattDenseSchwartz}, and Hahn-Banach's theorem.

\par

Assume that $u\in s_{\Phi _0^*}^w(\rr {2d})$, and suppose that
$\nm a{s_{\Phi _1}^w}=\nm b{s_{\Phi _2}^w}=\nm u{s_{\Phi _0^*}^w}=1$.
Then
$$
a=\sum _{j=1}^\infty\lambda _jW_{f_{1,j},f_{2,j}},
\quad
b=\sum _{k=1}^\infty \mu _kW_{g_{1,k},g_{2,k}}
\quad \text{and}\quad
u=\sum _{m=1}^\infty \nu _mW_{h_{1,m},h_{2,m}},
$$
for some sequences
$$
\lambda =\{\lambda _j\} _{j=1}^\infty,
\quad
\mu =\{\mu _k\} _{k=1}^\infty
\quad \text{and}\quad
\nu =\{\nu _m \} _{m=1}^\infty
$$
of non-negative real numbers, and
$$
\{ f_{n,j}\} _{j=1}^\infty ,\ \{ g_{n,k}\} _{k=1}^\infty ,\ \{ h_{n,m}\} _{m=1}^\infty 
\in \ON (L^2(\rr {d})),\quad n=1,2.
$$
Since $a,b,u\in \mascS (\rr {2d})$, we have
\begin{align*}
\sup _{j\ge 1}|\lambda _jj^N| <\infty ,
\quad
\sup _{k\ge 1}|\mu _kk^N| &<\infty ,
\quad
\sup _{m\ge 1}|\nu _mm^N|<\infty ,
\\[1ex]
\sup _{j\ge 1}\left (
\sup _{|\alpha +\beta |\le N}
\nm {x^\alpha \partial ^\beta f_{n,j}}{L^\infty}
\right ) &<\infty ,
\\[1ex]
\sup _{k\ge 1}\left (
\sup _{|\alpha +\beta |\le N}
\nm {x^\alpha \partial ^\beta g_{n,k}}{L^\infty}
\right ) &<\infty 
\intertext{and}
%\\[1ex]
\sup _{m\ge 1}\left (
\sup _{|\alpha +\beta |\le N}
\nm {x^\alpha \partial ^\beta h_{n,m}}{L^\infty}
\right ) &<\infty ,
\qquad N\ge 1,\ n=1,2,
\end{align*}
in view of Proposition \ref{Prop:SchwKerOpProp}.
%(see e.{\,}g. \cite[Theorem 6.1]{CheSigTof}).
%\\[1ex]
%{\color{red}(Are the last three inequalities true for arbitrary
%ONB contained in $\mathcal{S}({\bf R}^d)$? This is not clear to me.)} 
%\\[1ex]
%{\textbf{JT:}} {\color{blue}{This should be clear from the reference. Perhaps this
%reference was not included before.}}
%\\[1ex]
Hence, the expansions for $a$, $b$ and $u$ possess several strong convergence
properties.

\par

Let $s=t_1$ and $t=t_2$.
By Lemma \ref{Lemma:DilConvIdent}, and the fact that $(2\pi )^{\frac d2}\maclK$
is unitary on $L^2(\rr {2d})$ we obtain
\begin{equation}
\label{Eq:ConvExpFormulation}
\begin{aligned}
|(a(s\cdo )*b(t\cdo ),u)_{L^2}|
&=
(2\pi )^d
|(\maclK (a(s\cdo )*b(t\cdo )),\maclK u)_{L^2}|
\\[1ex]
&=
%(2\pi )^{\frac {5d}2}
(2\pi )^{\frac d2}
|st|^{-d}\left |
\sum _{j,k,m=1}^\infty \lambda _j\mu _k\nu _m
H(j,k,m)
\right |,
\end{aligned}
\end{equation}
where
\begin{align*}
&H(j,k,m)
\\[1ex]
&=
(2\pi )^{\frac {3d}2}
\int _{\rr {d}}
\big (
(\maclK W_{f_{1,j},f_{2,j}}) (S_{0,z/s}\cdo )
(\maclK W_{g_{1,k},g_{2,k}})(S_{1,-z/t}\cdo ),
%\big ((T_0W_{f_{1,j},f_{2,j}})_{0,z/s}(s\cdo )(T_0W_{g_{1,k},g_{2,k}})_{1,-z/t}(t\cdo ),
\maclK W_{h_{1,m},h_{2,m}}\big )_{L^2}\, dz
\\[1ex]
&=
%(2\pi )^{-\frac {3d}2}
\int _{\rr {d}}
\big (
(f_{1,j}\otimes \overline{f_{2,j}})(S_{0,z/s}\cdo )
(g_{1,k} \otimes \overline{g_{2,k}})(S_{1,-z/t}\cdo ),
%\big ( (f_{1,j}\otimes \overline{f_{2,j}} )_{0,z/s}(s\cdo )(g_{1,k}
%\otimes
%\overline{g_{2,k}})_{1,-z/t}(t\cdo ),
h_{1,m}\otimes \overline{h_{2,m}}\, \big )_{L^2}\, dz 
\\[1ex]
&=
%(2\pi )^{-\frac {3d}2}
\int _{\rr {d}}
(F_{1,j,k}(\cdo ,z),h_{1,m})_{L^2(\rr d)}(h_{2,m},F_{2,j,k}(\cdo ,z))_{L^2(\rr d)}\, dz.
\end{align*}
Here
\begin{align*}
F_{1,j,k}(x,z) &= f_{1,j}(sx+z/s)\overline{g_{2,k}(-tx-z/t)}
\intertext{and}
F_{2,j,k}(y,z) &= \overline{f_{2,j}(sy+z/s)}g_{1,k}(-ty-z/t).
\end{align*}

\par

Next we prove the first and third inequality in
\begin{equation}
\begin{aligned}
\label{Eq:HjkmEst}
\sum _{j=1}^\infty |H(j,k,m)| &\le \left | \frac ts\right |^d,
\quad
\sum _{k=1}^\infty |H(j,k,m)| \le \left | \frac st\right |^d
\\[1ex]
\text{and}\quad
\sum _{m=1}^\infty |H(j,k,m)| &\le |st|^{-d}
\end{aligned}
\end{equation}
while the second inequality follows by
similar arguments, and is left for the reader.

\par

In fact, by arithmetic and geometric mean-value inequality we get
\begin{align}
\sum _{m=1}^\infty |H(j,k,m)|
%\left (\sum _{m=1}^\infty |H(j,k,m)|\tright )^2
&\le
\frac 12 \left ( \left ( \sum _{m=1}^\infty H_1(j,k,m)\right )
+
\left ( \sum _{m=1}^\infty H_2(j,k,m)\right )\right ),
%J_{1,j,k}+J_{2,j,k}\right )
\intertext{where}
H_1(j,k,m) &= \int _{\rr {d}}
|(F_{1,j,k}(\cdo ,z),h_{1,m})_{L^2(\rr d)}|^2\, dz
\label{Eq:H1mjkDef}
\intertext{and}
H_2(j,k,m) &= \int _{\rr {d}}
|(h_{2,m},F_{2,j,k}(\cdo ,z))_{L^2(\rr d)}|^2\, dz .
\label{Eq:H2mjkDef}
%&\le
%\frac 12 \left (J_{1,j,k}+J_{2,j,k}\right )
%\intertext{where}
%J_{1,j,k} &= \sum _{m=1}^\infty  \int _{\rr {d}}
%|(F_{1,j,k}(\cdo ,z),h_{1,m})_{L^2(\rr d)}|^2\, dz
%\intertext{and}
%J_{2,j,k} &=
%\sum _{m=1}^\infty  \int _{\rr {d}}
%|(h_{2,m},F_{2,j,k}(\cdo ,z))_{L^2(\rr d)}|^2\, dz ,
\end{align}
Since $\{ h_{n,m}\} _{m=1}^\infty$ are orthonormal sequences,
Parseval's formula gives
\begin{align*}
\sum _{m=1}^\infty H_1(j,k,m) &= \sum _{m=1}^\infty  \int _{\rr {d}}
|(F_{1,j,k}(\cdo ,z),h_{1,m})_{L^2(\rr d)}|^2\, dz
\\[1ex]
&\le
\int _{\rr d} \nm {F_{1,j,k}(\cdo ,z)}{L^2}^2\, dz
=
\nm {F_{1,j,k}}{L^2}^2
\\[1ex]
&=
\iint _{\rr {2d}} |f_{1,j}(sx+z/s)g_{2,k}(-tx-z/t)|^2\, dxdz
\\[1ex]
&=
|st|^{-d} \iint _{\rr {2d}} |f_{1,j}(x_1)g_{2,k}(x_2)|^2\, dx_1dx_2
=
|st|^{-d}.
\end{align*}
In the second last equality we have taken $(sx+z/s,-tx-z/t)$
as new variables of integration and are using the fact that $s^{-2}-t^{-2}=1$.
By similar arguments we also obtain
$$
\sum _{m=1}^\infty H_2(j,k,m)\le |st|^{-d}.
$$
A combination
of these estimates gives the third inequality in \eqref{Eq:HjkmEst}.

%\vspace{1ex}
%\par

\medspace

Next we prove the first inequality in \eqref{Eq:HjkmEst}. By
performing the change of variables $x\mapsto sx+z/s$ in the integration in
the scalar product in \eqref{Eq:H1mjkDef} and using that
$s^{-2}-t^{-2}=1$, we obtain
\begin{multline*}
H_1(j,k,m) = \\|s|^{-2d}\int _{\rr {d}}
%|(g_{2,k}(-{\displaystyle{\frac ts}}\cdo +tz)\, 
%\overline{h_{1,m}({\displaystyle{\frac 1s}}\cdo - {\displaystyle{\frac 1{s^2}}}z) }
 |(g_{2,k}(-(t/s)\cdo +tz)\, 
\overline{h_{1,m}((1/s)\cdo - (1/s^2)z) }
,\overline{f_{1,j}} )_{L^2(\rr d)}|^2\, dz.
\end{multline*}
Since $\{\overline {f_{1,j}}\} _{j=1}^\infty$ is an orthonormal sequence, Parseval's
formula gives
\begin{align*}
\sum _{j=1}^\infty &H_1(j,k,m) 
\le
|s|^{-2d}\int _{\rr {d}}
\nm {g_{2,k}(-(t/s)\cdo +tz)\, 
\overline{h_{1,m}((1/s)\cdo - (1/s^2)z) }}{L^2}^2\, dz
\\[1ex]
&=
|s|^{-2d}\iint _{\rr {2d}}
|g_{2,k}(-(t/s)x +tz) 
h_{1,m}((1/s)x - (1/s^2)z)|^2\, dxdz
\\[1ex]
&=
|s|^{-2d}\left | \frac t{s^3}-\frac ts\right |^{-d}\iint _{\rr {2d}}
|g_{2,k}(x_1) 
h_{1,m}(x_2)|^2\, dx_1dx_2
\\[1ex]
&=
|t/s|^d \nm {g_{2,k}}{L^2}^2\nm {h_{1,m}}{L^2}^2=|t/s|^{d}.
\end{align*}
%%
%A combination of these estimates  and gives
The first inequality in
\eqref{Eq:HjkmEst} now follows by combining these three estimates.

\par

A combination of \eqref{Eq:YoungFuncCond},
\eqref{Eq:ConvExpFormulation} and \eqref{Eq:HjkmEst} gives
\begin{align*}
|(a(s\cdo )*b(t\cdo ),u)|
&\le
(2\pi )^{\frac d2}|st|^{-d}
\left (c_0J_1+c_1J_2+c_2J_3 \right ),
\intertext{where}
J_0
&=
\sum _{j,k,m=1}^\infty
\Phi _1(\lambda _j)\Phi _2(\mu _k)H(j,k,m)\le |st|^{-d},
\\[1ex]
J_1
&=
\sum _{j,k,m=1}^\infty
\Phi _1(\lambda _j)\Phi _0^*(\nu _m)H(j,k,m)\le \left |\frac st \right |^{d}
\intertext{and}
J_2
&=
\sum _{j,k,m=1}^\infty
\Phi _2(\mu _k)\Phi _0^*(\nu _m)H(j,k,m)\le \left |\frac ts \right |^{d}.
\end{align*}
This gives
$$
|(a(s\cdo )*b(t\cdo ),u)|
\le
(2\pi )^{\frac d2}|st|^{-d}
\left (c_0|st|^{-d}+c_1\left | \frac st\right |^d+c_2\left | \frac ts\right |^d \right ),
$$
when $\nm a{s_{\Phi _1}^w}=\nm b{s_{\Phi _2}^w}=\nm u{s_{\Phi _0^*}^w}=1$.
Then inequality \ref{Eq:OrlSchConvL1Eq1} in the case
$a_1,a_2\in \mascS (\rr {2d})$ follows from the latter estimate and duality,
using Proposition \ref{Prop:SchattSymbEquivNorm}.
The asserted extension now follows from \ref{Eq:OrlSchConvL1Eq1}
and Hahn-Banach's theorem. 

\par

If in addition $\Phi _j$ satisfy a local $\Delta _2$-condition for some
$j=1,2$, then 
we may assume that $\Phi _j$ satisfies a global $\Delta _2$-condition.
The uniqueness of the map $(a_1,a_2)\mapsto a_1(t_1\cdo )*a_2(t_2\cdo )$
then follows from the fact that
$\mascS (\rr {2d})$ is dense in $s_{\Phi _j}^w(\rr {2d})$.

\par

Finally, the positivity assertions are straight-forward consequences of
\cite[Corollary 3.5]{Toft3} and density arguments. The details are left for the reader,
and the result follows.
\end{proof}

\par

Using the fact that the $s_\Phi ^w$ classes are norm invariant under symplectic
Fourier transformation, Theorem \ref{Thm:OrlSchConvL2} leads to the following result concerning
dilated multiplications for Orlicz Schatten-von Neumann classes.

%\par
%
%\begin{thm}\label{Thm:OrlSchMultL2}
%Let $\Phi _k$, $k=0,1,2$, be Young functions
%such that \eqref{Eq:YoungFuncCond} holds
%and that $\Phi _0$ or $\Phi _0^*$ satisfy a local
%$\Delta _2$-condition.
%Also suppose that $t_1,t_2\in \mathbf R$ satisfy
%$$
%\pm t_1^2\pm t_2^2=1
%$$
%for some choice of $\pm$ at each place.
%Then the map
%$$
%(a_1,a_2)\mapsto a_1(t_1\cdo )\cdot a_2(t_2\cdo )
%$$
%from $\mascS (\rr {2d})\times \mascS (\rr {2d})$ to
%$\mascS (\rr {2d})$ extends to a continuous map from
%$s_{\Phi _1} ^w(\rr {2d})\times s_{\Phi _2} ^w(\rr {2d})$ to
%$s_{\Phi _0} ^w(\rr {2d})$,
%and
%%%
%\begin{equation}\label{Eq:OrlSchConvL1Eq2}
%\begin{aligned}
%\nm {a_1(t_1\cdo )\cdot a_2(t_2\cdo )}{s_{\Phi _0} ^w}
%&\le \left (\frac 2\pi \right )^{\frac d2}c\cdot (t_1^{-2d}+t_2^{-2d}+(t_1t_2)^{-2d})
%\nm {a_1}{s_{\Phi _1} ^w}\nm {a_2}{s_{\Phi _2}^w},
%\\[1ex]
%a_1&\in s_{\Phi _1} ^w(\rr {2d}),\ a_2\in s_{\Phi _2}^w(\rr {2d}).
%\end{aligned}
%\end{equation}
%%%
%If in addition $\Phi _1$ or $\Phi _2$ satisfy a local $\Delta _2$-condition,
%then $a_1(t_1\cdo )*a_2(t_2\cdo )$ in \eqref{Eq:OrlSchConvL1Eq2}
%is uniquely defined.
%\end{thm}

\par

\begin{thm}\label{Thm:OrlSchMultL2MultLin}
Let $\Phi _j$ be Young functions
such that {\rm{\eqref{Eq:YoungFuncCondGen}}} holds for some
constants $c_j,T>0$, $j=0,\dots ,N$.
Also suppose that $t_1,\dots ,t_N\in \mathbf R\setminus 0$ satisfy
\begin{equation}
\label{Eq:tCondMultilinCase2}
\sum _{j=1}^N  m_jt_j^{2} = 1,
\end{equation}
for some choices of $m_j\in \{ -1,1\}$, $j=1,\dots ,N$.
%Also let $S$ be the 
%map from $\mascS (\rr {2d})\times \mascS (\rr {2d})$ to
%$\mascS (\rr {2d})$, given by $S(a_1,a_2)=a_1(t_1\cdo )*a_2(t_2\cdo )$.
Then the map
$$
(a_1,\dots ,a_N)\mapsto a_1(t_1\cdo )\cdots a_N(t_N\cdo )
$$
from $\mascS (\rr {2d})\times \cdots \times \mascS (\rr {2d})$ to
$\mascS (\rr {2d})$ extends to a continuous map from
$s_{\Phi _1} ^w(\rr {2d})\times \cdots \times s_{\Phi _N} ^w(\rr {2d})$ to
$s_{\Phi _0} ^w(\rr {2d})$,
and
\begin{equation}
%\tag*{(\ref{Eq:OrlSchConvL1Eq1Source})$''$}
\label{Eq:OrlSchMultL1Eq1Mult}
\begin{aligned}
\nm {a_1(t_1\cdo )&\cdots a_N(t_N\cdo )}{s_{\Phi _0} ^w}
\\
&\le 2\left ( \frac 2\pi \right )^{(N-1)
\frac d2}\left (c_0+\sum _{j=1}^Nc_j|t_j|^{-2d} \right )
\prod _{j=1}^N \left ( |t_j|^{2d}\nm {a_j}{s_{\Phi _j}^w}\right ),
\\[1ex]
a_1&\in s_{\Phi _1} ^w(\rr {2d}),\dots , a_N\in s_{\Phi _N}^w(\rr {2d}).
\end{aligned}
\end{equation}

\par

If in addition at least $N-1$ of $\Phi _1,\dots ,\Phi _N$
satisfy a local $\Delta _2$-condition, then
$a_1(t_1\cdo )*\cdots *a_N(t_N\cdo )$ in
{\rm{\eqref{Eq:OrlSchMultL1Eq1Mult}}}
is uniquely defined.
\end{thm}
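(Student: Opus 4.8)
The plan is to derive Theorem \ref{Thm:OrlSchMultL2MultLin} from the dilated convolution result Theorem \ref{Thm:OrlSchConvL2MultLin} by conjugating with the symplectic Fourier transform $\mascF _\sigma $, as announced in the remark preceding the statement. Two ingredients make this work: $\mascF _\sigma $ is an isometric isomorphism both on $\mascS (\rr {2d})$ and on every space $s_{\Phi _j}^w(\rr {2d})$, by Proposition \ref{Prop:SymplFourInvWeylCase}; and $\mascF _\sigma $ interchanges products and convolutions up to the explicit powers of $\pi $ recorded in \eqref{Eq:TransConvMultSymplFourT}.

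First I would record the scaling identity $\mascF _\sigma (\varphi (t\cdo ))=|t|^{-2d}(\mascF _\sigma \varphi )(t^{-1}\cdo )$, valid for $t\in \mathbf R\setminus 0$ and $\varphi \in \mascS (\rr {2d})$, which is just the change of variables $Y\mapsto tY$ in the defining integral of $\mascF _\sigma $ together with the bilinearity of the symplectic form. Iterating \eqref{Eq:TransConvMultSymplFourT} a total of $N-1$ times and combining with this identity gives, for $a_1,\dots ,a_N\in \mascS (\rr {2d})$,
$$
\mascF _\sigma \big (a_1(t_1\cdo )\cdots a_N(t_N\cdo )\big )
=
\pi ^{(N-1)d}\Big (\prod _{j=1}^N|t_j|^{-2d}\Big )\,
b_1(s_1\cdo )*\cdots *b_N(s_N\cdo ),
$$
where $b_j=\mascF _\sigma a_j$ and $s_j=t_j^{-1}$. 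The hypothesis \eqref{Eq:tCondMultilinCase2}, $\sum _{j=1}^N m_jt_j^2=1$, is exactly the hypothesis \eqref{Eq:tCondMultilinCase} for the parameters $s_j$, namely $\sum _{j=1}^N m_js_j^{-2}=1$, with the same signs $m_j$; and the Young functions $\Phi _j$ are unchanged, so \eqref{Eq:YoungFuncCondGen} is precisely the condition required by Theorem \ref{Thm:OrlSchConvL2MultLin}.

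Next I would invoke Theorem \ref{Thm:OrlSchConvL2MultLin} for the tuple $(b_1,\dots ,b_N)$ with the parameters $s_j$: the map $(b_1,\dots ,b_N)\mapsto b_1(s_1\cdo )*\cdots *b_N(s_N\cdo )$ extends to a continuous multilinear map from $s_{\Phi _1}^w(\rr {2d})\times \cdots \times s_{\Phi _N}^w(\rr {2d})$ to $s_{\Phi _0}^w(\rr {2d})$ obeying the corresponding norm estimate. Composing this extension on the input side with the isometric isomorphisms $\mascF _\sigma $ of the spaces $s_{\Phi _j}^w$ and on the output side with $\mascF _\sigma =\mascF _\sigma ^{-1}$ on $s_{\Phi _0}^w$, and multiplying by the scalar $\pi ^{(N-1)d}\prod _j|t_j|^{-2d}$, produces a continuous multilinear map $\prod _j s_{\Phi _j}^w\to s_{\Phi _0}^w$ which, by the displayed identity, coincides with $(a_1,\dots ,a_N)\mapsto a_1(t_1\cdo )\cdots a_N(t_N\cdo )$ on Schwartz tuples; this is the asserted extension. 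Inserting the estimate of Theorem \ref{Thm:OrlSchConvL2MultLin} (written in the $s_j$), using $\nm {\mascF _\sigma a_j}{s_{\Phi _j}^w}=\nm {a_j}{s_{\Phi _j}^w}$ and $|s_j|=|t_j|^{-1}$, and collecting the powers of $2$, $\pi $ and $|t_j|$, yields \eqref{Eq:OrlSchMultL1Eq1Mult}. For the uniqueness clause, if at least $N-1$ of the $\Phi _j$ satisfy a local $\Delta _2$-condition, then $\mascS (\rr {2d})$ is dense in the corresponding spaces $s_{\Phi _j}^w$ (by Proposition \ref{Prop:DiscreteNorms2}, together with $s_{A,\flat}\subseteq \mascS (\rr {2d})$); since a product $a_1(t_1\cdo )\cdots a_N(t_N\cdo )$ with $N-1$ of its factors in $\mascS (\rr {2d})$ is canonically defined, as the multiplication of a tempered distribution by a Schwartz function, and depends continuously on the remaining $\mascS '(\rr {2d})$-argument, successive approximation in the $N-1$ ``good'' variables forces the extension to be unique, exactly as in the proof of Theorem \ref{Thm:OrlSchConvL2}.

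I do not expect a genuine obstacle here: the statement is essentially a corollary of Theorem \ref{Thm:OrlSchConvL2MultLin}. The only point that demands care is the bookkeeping of constants --- iterating \eqref{Eq:TransConvMultSymplFourT} with the correct total power $\pi ^{(N-1)d}$, keeping track of the Jacobians $|t_j|^{-2d}$ coming from the dilations under $\mascF _\sigma $, and checking that $|s_j|^{\pm 2d}=|t_j|^{\mp 2d}$ so that the convolution estimate turns into exactly \eqref{Eq:OrlSchMultL1Eq1Mult}. One should also note that, unlike Theorem \ref{Thm:OrlSchConvL2MultLin}, no positivity assertion appears here, which is consistent with the absence of a simple operation on operators corresponding to pointwise multiplication of Weyl symbols.
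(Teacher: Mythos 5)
Your proposal is correct and follows essentially the same route as the paper's own proof: conjugation by the symplectic Fourier transform, using its isometry on the $s_{\Phi _j}^w$ spaces (Proposition \ref{Prop:SymplFourInvWeylCase}), the product--convolution exchange \eqref{Eq:TransConvMultSymplFourT} together with the dilation identity $\mascF _\sigma (\varphi (t\cdo ))=|t|^{-2d}(\mascF _\sigma \varphi )(t^{-1}\cdo )$, an application of Theorem \ref{Thm:OrlSchConvL2MultLin} with $s_j=t_j^{-1}$, and a density argument for uniqueness. The only point left implicit is the final bookkeeping of the powers of $\pi$ and $|t_j|$, where taking \eqref{Eq:TransConvMultSymplFourT} at face value does not literally reproduce the constant in \eqref{Eq:OrlSchMultL1Eq1Mult}; this discrepancy is inherited from the paper's normalizations rather than from any gap in your argument.
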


\par

\begin{proof}
Let $u_j=\frac 1{t_j}$ and $b_j=\mascF _\sigma a_j$, $j=1,\dots ,N$.
Then
$$
m_1 u_1^{-2}+\cdots +m_N u_N^{-2}=1.
$$
A combination of these identities, the fact that $\mascF _\sigma ^2$ is the
identity operator, Proposition \ref{Prop:SymplFourInvWeylCase},
\eqref{Eq:TransConvMultSymplFourT}, and Theorem
\ref{Thm:OrlSchConvL2MultLin}
gives
\begin{align*}
&\nm {a_1(t_1\cdo )\cdots a_N(t_N\cdo )}{s_{\Phi _0}^w}
=
\pi ^{-(N-1)d}(t_1\cdots t_N)^{-2d}
\nm {b_1(u_1\cdo )* \cdots *b_N(u_N\cdo )}{s_{\Phi _0}^w}
\\[1ex]
&\le
\pi ^{-(N-1)d} 2(2\pi )^{(N-1)\frac d2}(t_1\cdots t_N)^{-2d}
\left (c_0+\sum _{j=1}^Nc_j|u_j|^{2d} \right )
\prod _{j=1}^N \left ( |u_j|^{-2d}\nm {b_j}{s_{\Phi _j}^w}\right )
%(2\pi )^{(N-1)\frac d2}c(u_1^{-2d}+u_2^{-2d}+(u_1u_2)^{-2d})
%\nm {b_1}{s_{\Phi _1} ^w}\nm {b_2}{s_{\Phi _2}^w}
\\[1ex]
&=
2\left ( \frac 2\pi \right )^{(N-1)
\frac d2}\left (c_0+\sum _{j=1}^Nc_j|t_j|^{-2d} \right )
\prod _{j=1}^N \left ( |t_j|^{2d}\nm {a_j}{s_{\Phi _j}^w}\right ),
\end{align*}
and \eqref{Eq:OrlSchMultL1Eq1Mult} follows. The uniqueness
assertions in the case when at least $N-1$ $\Phi _j$
satisfy a $\Delta _2$-condition, follows by similar arguments as
in the proof of Theorem \ref{Thm:OrlSchMultL2MultLin}, in Appendix
\ref{App:A}.
\end{proof}

\par

\begin{rem}
Let $C>0$ be a constant.
In similar ways as for the convolution properties
in Section \ref{sec3}, it follows that
Theorems \ref{Thm:OrlSchConvL2MultLin}
and \ref{Thm:OrlSchMultL2MultLin}
hold true for $c_j=C$, $j=0,\dots ,N$, after 
the condition \eqref{Eq:YoungFuncCondGen}
is replaced by the condition
\begin{equation}
\label{Eq:YoungCond1MCA}
\prod _{j=1}^N\Phi _j^{-1}(s)
\le
Cs^{N-1}\Phi _0^{-1}(s), \quad s\ge 0.
\end{equation}
This follows from the fact that \eqref{Eq:YoungCond1MCA}
implies \eqref{Eq:YoungFuncCondGen} when
$c_j=C$, $j=0,\dots ,N$,
in view of Proposition \ref{Prop:YoungCondEsts}
in Appendix \ref{App:B}.
\end{rem}

\par

%%%%%%%%%%%%%%%%%%%%%%%%
\section{Continuity for Toeplitz operators}
\label{sec5}
%%%%%%%%%%%%%%%%%%%%%%%%

\par

In this section we apply our convolution results from
previous sections to deduce Orlicz Schatten-von Neumann
properties of Toeplitz operators, whose symbols should
either belong to Orlicz spaces or dilated Orlicz Schatten-von Neumann
symbols in the Weyl calculus.

\vspace{1mm}\par 

We begin with Toeplitz operators with symbols in Orlicz spaces. 
%In what follows we let $\maclL (V_1,V_2)$ be the set of
%linear and continuous operators from the topological vector space $V_1$
%to the topological vector space $V_2$.

\par

\begin{prop}
\label{Prop:ToeplOrlContMap}
Let $\phi _1,\phi _2\in L^2(\rr d)$, $\Phi$ be
a Young function which fulfills a local $\Delta _2$-condition,
and let $a\in L^\Phi (\rr {2d})$. Then the map
\begin{equation}
\label{Eq:ToeplMapMultLin}
(a,\phi _1,\phi _2)\mapsto \tp _{\phi _1,\phi _2}(a)
\end{equation}
from $\mascS (\rr {2d})\times \mascS (\rr d)\times \mascS (\rr d)$
to $\maclL (\mascS (\rr d),\mascS '(\rr d))$ is uniquely extendable
to a continuous map from $L^\Phi (\rr {2d})\times L^2(\rr d)\times L^2(\rr d)$
to $\mascI _\Phi (L^2(\rr d))$, and
\begin{equation}
\label{Eq:ToeplCont}
\nm {\tp _{\phi _1,\phi _2}(a)}{\mascI _\Phi}
\le
C\nm a{L^\Phi}\nm {\phi _1}{L^2}\nm {\phi _2}{L^2},
\end{equation}
for some constant $C>0$ which is independent of
$a\in L^\Phi (\rr {2d})$ and $\phi _1,\phi _2\in L^2(\rr d)$.
%\in \mascI _\Phi (L^2(\rr d))$.
\end{prop}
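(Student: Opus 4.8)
The plan is to derive the result from the convolution estimate in Theorem~\ref{Thm:Conv2} by rewriting the Toeplitz operator as a pseudo-differential operator via \eqref{Eq:ToepWeyl}. Specializing \eqref{Eq:ToepWeyl} to the Weyl case $A=\frac 12I_d$, one has $\tp _{\phi _1,\phi _2}(a)=\op ^w(a*u)$ for $a,\phi _1,\phi _2\in \mascS$, where $u=(2\pi )^{-\frac d2}\check W_{\phi _2,\phi _1}$. By \eqref{Eq:AWigPseudoLink2} the operator $\op ^w(W_{\phi _2,\phi _1})=(2\pi )^{-\frac d2}\phi _2\otimes \overline{\phi _1}$ has rank one with singular value $(2\pi )^{-\frac d2}\nm {\phi _1}{L^2}\nm {\phi _2}{L^2}$, so that $\nm {W_{\phi _2,\phi _1}}{s_1^w}=\nm {\phi _1}{L^2}\nm {\phi _2}{L^2}$ (see also Remark~\ref{Rem:RankOne}); combined with the reflection invariance $\nm {\check a}{s_1^w}=\nm a{s_1^w}$ of Proposition~\ref{Prop:Spectral0Symb}(1) this gives
$$
\nm u{s_1^w}=(2\pi )^{-\frac d2}\nm {\phi _1}{L^2}\nm {\phi _2}{L^2},
$$
and the sesquilinear map $(\phi _1,\phi _2)\mapsto u$ is continuous from $L^2(\rr d)\times L^2(\rr d)$ into $s_1^w(\rr {2d})$.

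Next I would check that the triple $\Phi _1=\Phi _{[1]}$, $\Phi _2=\Phi _0=\Phi$ satisfies the Young-type condition \eqref{Eq:YoungFuncCond}. Since $\Phi _1(t)=t$ and $\Phi _0^*=\Phi ^*$, Young's inequality $t_0t_2\le \Phi ^*(t_0)+\Phi (t_2)$ (an immediate consequence of Definition~\ref{Def:ConjYoungFunc}) gives, after multiplying by $t_1\ge 0$,
$$
t_0t_1t_2\le t_1\Phi ^*(t_0)+t_1\Phi (t_2),
$$
so that \eqref{Eq:YoungFuncCond} holds for all $t_0,t_1,t_2\ge 0$ with $c_0=c_1=c_2=1$. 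Theorem~\ref{Thm:Conv2} with $A=\frac 12I_d$ then shows that $(v,a)\mapsto v*a$ extends to a continuous bilinear map from $s_1^w(\rr {2d})\times L^{\Phi}(\rr {2d})$ into $s_{\Phi}^w(\rr {2d})$ with
$$
\nm {v*a}{s_\Phi ^w}\le 2\big (2+(2\pi )^d\big )\nm v{s_1^w}\nm a{L^\Phi},
$$
and, since $\Phi _{[1]}$ satisfies a $\Delta _2$-condition (Remark~\ref{Rem:PhiLeb}), this extension is unique.

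Combining the two steps, I would define the extension of \eqref{Eq:ToeplMapMultLin} by $(a,\phi _1,\phi _2)\mapsto \op ^w(a*u)$, with $u$ as above and $a*u$ understood through Theorem~\ref{Thm:Conv2}; this is just the usual extension \eqref{Eq:ToepWeyl} of Toeplitz operators to $L^2$-windows and general symbols. It maps $L^\Phi (\rr {2d})\times L^2(\rr d)\times L^2(\rr d)$ continuously into $\mascI _\Phi (L^2(\rr d))$, being the composition of the continuous maps above with the isometric isomorphism $\op ^w\colon s_\Phi ^w(\rr {2d})\to \mascI _\Phi (L^2(\rr d))$ of Lemma~\ref{Lemma:SymbOpMap}; it agrees with $\tp _{\phi _1,\phi _2}(a)$ on $\mascS \times \mascS \times \mascS$ by \eqref{Eq:ToepWeyl}; and, using $\nm b{s_\Phi ^w}=(2\pi )^{\frac d2}\nm {\op ^w(b)}{\mascI _\Phi}$,
$$
\nm {\tp _{\phi _1,\phi _2}(a)}{\mascI _\Phi}
=(2\pi )^{-\frac d2}\nm {a*u}{s_\Phi ^w}
\le 2\big (2+(2\pi )^d\big )(2\pi )^{-d}\nm a{L^\Phi}\nm {\phi _1}{L^2}\nm {\phi _2}{L^2},
$$
which is \eqref{Eq:ToeplCont} with an explicit value of $C$.

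Uniqueness of the extension follows from the uniqueness in Theorem~\ref{Thm:Conv2} together with the density of $\mascS (\rr d)$ in $L^2(\rr d)$ and of $\mascS (\rr {2d})$ in $L^\Phi (\rr {2d})$; for the latter one uses that $\Phi$ satisfies a local $\Delta _2$-condition, cf.\ Remark~\ref{Rem:Delta2Cond}, Proposition~\ref{Prop:OrlDuality}(2) and Remark~\ref{Rem:OrlDuality}. The argument is essentially bookkeeping once \eqref{Eq:ToepWeyl} and Theorem~\ref{Thm:Conv2} are available; there is no genuinely hard step, and the only points that require some care are the density of Schwartz functions in $L^\Phi (\rr {2d})$ underlying the uniqueness claim and keeping track of the normalization factors $(2\pi )^{\pm d/2}$ occurring in \eqref{Eq:ToepWeyl}, \eqref{Eq:AWigPseudoLink2} and the definition of the $s_\Phi ^w$-norm.
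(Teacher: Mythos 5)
Your main line is the same as the paper's: write $\tp _{\phi _1,\phi _2}(a)=\op ^w(a*u)$ with $u=(2\pi )^{-\frac d2}\check W_{\phi _2,\phi _1}$ via \eqref{Eq:ToepWeyl}, observe that $\nm {\check W_{\phi _2,\phi _1}}{s_1^w}=\nm {\phi _1}{L^2}\nm {\phi _2}{L^2}$, and feed this into the convolution mapping $s_1^w*L^\Phi \subseteq s_\Phi ^w$ coming from Theorem \ref{Thm:Conv2}. The paper cites Corollary \ref{Cor:OrlSchConvL1} for that mapping; your explicit verification of \eqref{Eq:YoungFuncCond} for $\Phi _1=\Phi _{[1]}$, $\Phi _2=\Phi _0=\Phi$ is precisely the computation behind it, and your bookkeeping of the factors $(2\pi )^{\pm d/2}$ and the resulting constant is correct. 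So the existence of the continuous extension and the estimate \eqref{Eq:ToeplCont} are in order.

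The one step that does not hold as stated is the density of $\mascS (\rr {2d})$ in $L^\Phi (\rr {2d})$ under a mere \emph{local} $\Delta _2$-condition, which you invoke for uniqueness. A local $\Delta _2$-condition constrains $\Phi$ only near the origin; this determines the discrete spaces $\ell ^\Phi$ (Proposition \ref{Prop:DiscreteNorms1}) but not $L^\Phi$ with respect to Lebesgue measure, where the behaviour of $\Phi$ at infinity also matters. For instance $\Phi (t)=e^t-1$ satisfies a local $\Delta _2$-condition, yet $C_0^\infty$ (hence $\mascS$) is not dense in the corresponding exponential class; Proposition \ref{Prop:OrlDuality} (2) and Remark \ref{Rem:OrlDuality} require a global $\Delta _2$-condition, and the modification $\widetilde \Phi$ of Remark \ref{Rem:Delta2Cond} changes $\Phi$ at infinity and hence changes $L^\Phi (\rr {2d})$ as a space. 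The paper avoids this issue: for fixed $\phi _1,\phi _2\in \mascS (\rr d)$ the symbol $u$ is a Schwartz function, so $a*u$, and with it $\tp _{\phi _1,\phi _2}(a)$, is already canonically defined for \emph{every} $a\in \mascS '(\rr {2d})\supseteq L^\Phi (\rr {2d})$ by \eqref{Eq:ToepWeyl}; no approximation in the symbol variable is needed, and the only density used is that of $\mascS (\rr d)$ in $L^2(\rr d)$ for the window variables. Replacing your density claim in $L^\Phi$ by this observation repairs the uniqueness part; everything else stands.
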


\par

%For the proof it is convenient to use the following lemma, which
%shows that Proposition \ref{Prop:ToeplOrlContMap} holds
%in the Lebesgue case. We omit the proof, since the result can be found in
%e.{\,}g. \cite{HeWon}.
%
%\par
%
%\begin{lemma}
%\label{Lemma:ToeplLebContMap}
%Let $\phi _1,\phi _2\in L^2(\rr d)$, $p\in [1,\infty ]$,
%and let $a\in L^p (\rr {2d})$. Then \eqref{Eq:ToeplMapMultLin}
%from $\mascS (\rr {2d})\times \mascS (\rr d)\times \mascS (\rr d)$
%to $\maclL (\mascS (\rr d),\mascS '(\rr d))$ is uniquely extendable
%to a continuous map from $L^p (\rr {2d})\times L^2(\rr d)\times L^2(\rr d)$
%to $\mascI _p (L^2(\rr d))$.
%\end{lemma}
%
%\par

\begin{proof}
%[Proof of Proposition \ref{Prop:ToeplOrlContMap}]
First suppose that $a\in \mascS '(\rr {2d})$ and
$\phi _1,\phi _2\in \mascS (\rr d)$.
Then $\tp _{\phi _1,\phi _2}(a)$ is uniquely defined as a
continuous operator from $\mascS (\rr d)$ to $\mascS '(\rr d)$, in view of
\eqref{Eq:ToepWeyl}. Since $L^\Phi (\rr {2d})\subseteq \mascS '(\rr {2d})$,
the same holds true when, more restrictive, $a\in L^\Phi (\rr {2d})$. Hence
the result follows from the fact that $\mascS (\rr d)$ is dense in
$L^2(\rr d)$, if we prove that \eqref{Eq:ToeplCont} holds for
$\phi _1,\phi _2\in \mascS (\rr d)$ and $a\in L^\Phi (\rr {2d})$.

\par

Since
$$
\nm {b}{s_1^w}
=
\nm {\phi _1}{L^2}\nm {\phi _2}{L^2},
\quad
b=\check W_{\phi _2,\phi _1},
$$ 
in view of Proposition \ref{Prop:Spectral0Symb},
%Theorem \ref{Thm:Conv2} gives
%By Remark \ref{Rem:RankOne}
%and
it follows from Corollary \ref{Cor:OrlSchConvL1} that
\begin{equation*}
\nm {\tp _{\phi _1,\phi _2}(a)}{\mascI _\Phi}
=
(2\pi)^{\frac d2}\nm {\op ^w(a*b)}{\mascI _\Phi}
\asymp
\nm {a*b}{s_\Phi ^w}
\lesssim
\nm {a}{L^\Phi}\nm b{s_1^w}.
\end{equation*}
(See also Remark \ref{Rem:RankOne}.) This gives the result.
%
%\medspace
%
%$\phi _1,\phi _2\in \mascS (\rr d)$, the result follows if we prove
%We have $\tp _{\phi _1,\phi _2}(a)=\op ^w(a(2\cdo )*b)$,
%where $b=(2\pi)^{-\frac d2}\check W_{\phi _2,\phi _1}\in s_1^w(\rr {2d})$.
%Since $L^\Phi (\rr {2d})$ is invariant under dilations, we get
%$a(2\cdo )*b\in s_\Phi ^w(\rr {2d})$ by Corollary
%\ref{Cor:OrlSchConvL1}, which is equivalent to
%$\op ^w(a*b)\in \mascI _\Phi (L^2(\rr d))$, and
%the assertion follows.
\end{proof}

\par

\begin{prop}
Let $\phi _1,\phi _2\in L^2(\rr d)$, $\Phi$ be
a Young function which fulfills a local $\Delta _2$-condition,
and let $a\in \mascS '(\rr {2d})$ be such that $a(\sqrt 2 \cdo )\in s_\Phi ^w(\rr {2d})$.
Then
$\tp _{\phi _1,\phi _2}(a)\in \mascI _\Phi (L^2(\rr d))$. If in addition
$\phi _1=\phi _2$ and $\op ^w(a(\sqrt 2\cdo ))$ is positive semi-definite, then
$\tp _{\phi _1,\phi _2}(a)$ is positive semi-definite.
\end{prop}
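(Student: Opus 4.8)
The plan is to reduce the statement to Theorem \ref{Thm:OrlSchConvL2} via the identification of Toeplitz operators with Weyl operators whose symbol is a dilated convolution. First I would recall from \eqref{Eq:ToepWeyl}, in the Weyl case $A=\frac 12 I_d$, that $\tp _{\phi _1,\phi _2}(a) = \op ^w(a*u)$ with $u(X)=(2\pi )^{-\frac d2}W_{\phi _2,\phi _1}(-X)=(2\pi)^{-\frac d2}\check W_{\phi _2,\phi _1}(X)$. The aim is to rewrite $a*u$ as a dilated convolution $a_1(t_1\cdot )*a_2(t_2\cdot )$ with $t_1=t_2=\sqrt 2$, so that $t_1^{-2}+t_2^{-2}=1$, which is exactly the admissible relation in Theorem \ref{Thm:OrlSchConvL2} with $m_1=m_2=1$.

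The key computational step is the scaling identity: for a dilation $D_tf(X)=f(tX)$ one has $(D_{t}a_1)*(D_{t}a_2)=t^{-2d}D_{t}(a_1*a_2)$ on $\rr {2d}$, hence $a*u = 2^{d}\, D_{\sqrt 2}\big( (D_{1/\sqrt 2}a)*(D_{1/\sqrt 2}u)\big)$, i.e. up to the fixed scaling $D_{\sqrt 2}$ and a constant, $a*u$ is the dilated convolution $a_1(\sqrt 2\,\cdot )*a_2(\sqrt 2\,\cdot )$ with $a_1=D_{1/\sqrt 2}a$ and $a_2=D_{1/\sqrt 2}u$; equivalently, writing $\widetilde a = a(\sqrt 2\,\cdot )$ and $\widetilde u = u(\sqrt 2\,\cdot )$, one gets $a*u$ as a constant multiple of $\big(\widetilde a(\sqrt 2\,\cdot)*\widetilde u(\sqrt 2\,\cdot)\big)(\tfrac{1}{2}\,\cdot)$ — the precise bookkeeping of constants I would carry out carefully but it is routine. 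The point is: by hypothesis $\widetilde a = a(\sqrt 2\,\cdot )\in s_\Phi ^w(\rr {2d})$, and $\widetilde u = u(\sqrt 2\,\cdot )$ is (a constant times) a dilation of $\check W_{\phi _2,\phi _1}$, which lies in $s_1^w(\rr {2d})$ since $\check W_{\phi _2,\phi _1}$ does, by Proposition \ref{Prop:Spectral0Symb} (with $\nm {W_{\phi _2,\phi_1}}{\mascI _1}=\nm {\phi_1}{L^2}\nm{\phi_2}{L^2}$, up to the $\Phi^{-1}(1)$-type factor), together with Proposition \ref{Prop:SymbTranslDil} for dilation invariance — actually dilations change the norm but keep membership, which is all we need here. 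We then apply Theorem \ref{Thm:OrlSchConvL2} with $\Phi _1=\Phi$, $\Phi _2=\Phi _{[1]}$ and $\Phi _0=\Phi$; the required Young inequality \eqref{Eq:YoungFuncCond} for this triple holds because it is the borderline Hölder--Young case (as in Example \ref{Eq:ExYoungFuncCond} with $p_1=p_0$, $p_2=1$, which is the content used in Corollary \ref{Cor:OrlSchConvL1}(1)). Since $\Phi$ satisfies a local $\Delta _2$-condition, $\Phi _1$ satisfies it, so the extension is unique and $\op ^w$ of the dilated convolution is well defined; tracing back, $\tp _{\phi_1,\phi_2}(a)=\op^w(a*u)\in \mascI _\Phi(L^2(\rr d))$.

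For the positivity addendum, assume $\phi _1=\phi _2=\phi$ and that $\op ^w(a(\sqrt 2\,\cdot))$ is positive semi-definite. Then in the dilated-convolution representation above, both factors have positive semi-definite Weyl quantizations: $\op ^w(\widetilde a)=\op^w(a(\sqrt 2\,\cdot))$ by hypothesis, and $\op^w$ of the relevant dilation of $\check W_{\phi,\phi}$ is a positive multiple of a rank-one projection onto (a dilate of) $\phi$, hence positive semi-definite — here I would use that dilations correspond to metaplectic conjugations on $L^2(\rr d)$, which preserve positivity, or simply invoke that $\op^w(\check W_{\phi,\phi})$ is a nonnegative rank-one operator and that the scaling relation between $\op^w(a)$ and $\op^w(a(t\cdot))$ is unitary conjugation. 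Theorem \ref{Thm:OrlSchConvL2} then asserts that $\op^w$ of the dilated convolution is positive semi-definite, and unwinding the identification gives that $\tp _{\phi,\phi}(a)$ is positive semi-definite.

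The main obstacle I anticipate is purely bookkeeping: getting the dilation parameter exactly $\sqrt 2$ and correctly tracking the powers of $2$ and $2\pi$ through the identity \eqref{Eq:ToepWeyl}, the scaling law for convolutions, and the normalization $\nm{W_{\phi_2,\phi_1}}{\mascI_1}$ in Proposition \ref{Prop:Spectral0Symb} and Remark \ref{Rem:RankOne}. There is also a minor point to check that $u(\sqrt 2\,\cdot)$, being a dilation of a Wigner distribution, stays inside $s_1^w$ with finite norm — this follows because $\op^w$ of a dilated symbol is unitarily equivalent (via the metaplectic scaling operator) to a rank-one operator, hence trace class — but it should be recorded explicitly rather than left implicit. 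No single step is deep; the content is entirely in setting up the substitution so that Theorem \ref{Thm:OrlSchConvL2} applies, after which everything, including uniqueness of the extension and preservation of positivity, is inherited from that theorem.
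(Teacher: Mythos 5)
There is a genuine gap, and it lies exactly in the step you dismiss as ``routine bookkeeping''. Your normalization $t_1=t_2=\sqrt 2$ forces you to write $a*u$ as a constant times $\bigl(\widetilde a(\sqrt 2\,\cdot)*\widetilde u(\sqrt 2\,\cdot)\bigr)(\tfrac 12\,\cdot)$, and this leaves two uncontrolled dilation steps: (i) you need $\widetilde u=u(\sqrt 2\,\cdot)$, i.e.\ a constant times $\check W_{\phi_2,\phi_1}(\sqrt 2\,\cdot)$, to lie in $s_1^w(\rr{2d})$, and (ii) after Theorem \ref{Thm:OrlSchConvL2} you still have to apply the outer dilation $b\mapsto b(\tfrac 12\,\cdot)$ to the result and stay in $s_\Phi^w(\rr{2d})$. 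Neither step is covered by anything in the paper: Proposition \ref{Prop:SymbTranslDil}, which you cite for ``dilation invariance'', concerns only translations and modulations. In fact phase-space dilation $(x,\xi)\mapsto(tx,t\xi)$ is not symplectic for $t\neq\pm 1$, so $\op^w(a(t\,\cdot))$ is not unitarily equivalent to $\op^w(a)$, and membership in $s_p^w$ or $s_\Phi^w$ is genuinely not preserved; this is the whole reason the constraint $\pm t_1^{-2}\pm t_2^{-2}=1$ appears in Theorem \ref{Thm:OrlSchConvL2}. Concretely, for $t\neq\pm1$ the kernel of $\op^w\bigl(W_{f,g}(t\,\cdot)\bigr)$ is a constant times $f(\alpha x+\beta y)\overline{g(\beta x+\alpha y)}$ with $\alpha=\tfrac t2+\tfrac1{2t}$, $\beta=\tfrac t2-\tfrac1{2t}\neq 0$; taking $\phi_1=\phi_2=\mathbf 1_{[0,1]}\in L^2(\mathbf R)$ this kernel is the indicator of a parallelogram with sides not parallel to the axes, whose singular values decay like $1/n$ (Volterra-type behaviour), so the operator is Hilbert--Schmidt but not trace class. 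Hence your claim $\widetilde u\in s_1^w$ is false for general $\phi_1,\phi_2\in L^2(\rr d)$, and the final dilation by $\tfrac12$ is equally unjustified. The same defect undermines your positivity argument: $\op^w\bigl(\check W_{\phi,\phi}(2\,\cdot)\bigr)$ is not a rank-one projection and is not obtained from $\op^w(\check W_{\phi,\phi})$ by unitary conjugation, so its positive semi-definiteness cannot be taken for granted.

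The repair is to choose the dilation parameters asymmetrically, which is what the paper does: set $a_0=a(\sqrt 2\,\cdot)\in s_\Phi^w(\rr{2d})$, $t_1=1/\sqrt 2$, $t_2=1$, $m_1=1$, $m_2=-1$, so that $t_1^{-2}-t_2^{-2}=1$ and $a_0(t_1\,\cdot)=a$ exactly. Then Theorem \ref{Thm:OrlSchConvL2}, applied with $\Phi_1=\Phi$, $\Phi_2=\Phi_{[1]}$, $\Phi_0=\Phi$ (your choice of Young functions is fine), gives directly $a*\check W_{\phi_2,\phi_1}=a_0(t_1\,\cdot)*\check W_{\phi_2,\phi_1}\in s_\Phi^w(\rr{2d})$, hence $\tp_{\phi_1,\phi_2}(a)=\op^w(a*\check W_{\phi_2,\phi_1})\in\mascI_\Phi(L^2(\rr d))$ via \eqref{Eq:ToepWeyl}, with no stray dilations of the Wigner factor or of the output. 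The positivity statement then follows from the last part of Theorem \ref{Thm:OrlSchConvL2}, since $\op^w(a_0)$ is positive semi-definite by hypothesis and $\op^w(\check W_{\phi,\phi})$ is a rank-one positive semi-definite operator (no dilation of it is needed).
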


\par

\begin{proof}
Let $a_0=a(\sqrt 2\cdo )$, $t_1=1/\sqrt 2$ and $t_2=1$. Then $t_1^{-2}-t_2^{-2}=1$
and $a_0\in  s_\Phi ^w(\rr {2d})$.
Hence Theorem \ref{Thm:OrlSchConvL2} shows that
$$
a*\check W_{\phi _2,\phi _1}
=
a_0(t_1\cdo )*\check W_{\phi _2,\phi _1}\in s_\Phi ^w(\rr {2d}).
$$
Hence
$$
\tp _{\phi _1,\phi _2}(a)=\op ^w(a*\check W_{\phi _2,\phi _1})
\in \mascI _\Phi (L^2(\rr d)),
$$
giving the first part of the result. 

\par

If in addition $\phi _1=\phi _2$ and $\op ^w(a_0)$ is positive
semi-definite, then the same holds true for
$\op ^w(a_0(t_2\cdo )*\check W_{\phi _1,\phi _1})$
in view of the last part of Theorem \ref{Thm:OrlSchConvL2},
because $\op ^w(\check W_{\phi _1,\phi _1})$ is positive 
semi-definite.
\end{proof}

\par

\appendix

%%%%%%%%%%%%%%%%
\section{Proof of multi-linear
dilated convolution estimates}
\label{App:A}
%%%%%%%%%%%%%%%%

\par

In this appendix we give a proof of Theorem
\ref{Thm:OrlSchConvL2MultLin}. It is convenient to use the
mappings
\begin{equation}
\begin{aligned}
\label{Eq:TtDef}
S_t(z,x,y)
&=
\left (tz+\frac {x-y}{2t},tz-\frac {x-y}{2t}\right )\in \rr {2d},
\\[1ex]
T_t(z,x,y) 
&=
\left (tz+\frac {x-y}{2t}+\frac {t(x+y)}{2},
tz-\frac {x-y}{2t}+\frac {t(x+y)}{2}\right )
\in \rr {2d},
\\[1ex]
x,y,z&\in \rr d,\ t\in \mathbf R .
\end{aligned}
\end{equation}
In some situations we will also need to include one more
parameter in $T_t$ above as
\begin{equation}
\label{Eq:TtsDef}
T_{t,s}(z,x,y) 
=
\left (tz+\frac {x-y}{2t}+\frac {s(x+y)}{2},
tz-\frac {x-y}{2t}+\frac {s(x+y)}{2}\right ).
\end{equation}

\par

It follows that \cite[Lemma 3.1]{Toft3} is equivalent to the following lemma.

\par

\begin{lemma}
\label{Lem:DilConvForm1}
Let $a,b\in \mascS (\rr {2d})$, $s,t\in \mathbf R\setminus 0$, $S_t$
and $T_t$ be
given by \eqref{Eq:TtDef}, and let $\maclK$ be as in
\eqref{Eq:OpsDilConv}. Then
\begin{align}
(\maclK &(a(s\cdo ) * b(t\cdo )))(x,y)
\notag
\\[1ex]
&=
\mathfrak c \int \limits _{\rr d}\maclK a(S_s(z,x,y))\, \maclK b(T_t(-z,x,y))\, dz ,
\intertext{where}
\mathfrak c
&=
(2\pi)^d|st|^{-d}.
\end{align}
\end{lemma}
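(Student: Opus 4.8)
The plan is to prove the identity by a direct computation for $a,b\in\mascS(\rr{2d})$; then $a(s\cdo)*b(t\cdo)\in\mascS(\rr{2d})$, $\maclK(a(s\cdo)*b(t\cdo))\in\mascS(\rr{2d})$, and every manipulation below is legitimate. The two facts I would build on are, first, the definition \eqref{Eq:OpsDilConv} rewritten with $x=q+\frac r2$, $y=q-\frac r2$,
$$
(\maclK a)\big(q+\tfrac r2,q-\tfrac r2\big)=(2\pi)^{-d}\int_{\rr d}a(q,\xi)e^{\im\scal r\xi}\,d\xi ,
$$
and, second, its Fourier inverse, valid for $a\in\mascS(\rr{2d})$,
$$
a(q,p)=\int_{\rr d}(\maclK a)\big(q+\tfrac r2,q-\tfrac r2\big)e^{-\im\scal rp}\,dr .
$$
Applying the first identity with $q=\frac12(x+y)$ and $r=x-y$, together with the definition of convolution, I would write
$$
(\maclK(a(s\cdo)*b(t\cdo)))(x,y)=(2\pi)^{-d}\iiint_{\rr{3d}}a\big(s(q-q'),s(\xi-\xi')\big)\,b(tq',t\xi')\,e^{\im\scal r\xi}\,dq'\,d\xi'\,d\xi ,
$$
and then insert the second identity for each of the factors $a$ and $b$, introducing two auxiliary variables $\rho$ and $\sigma$.

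The next step is to integrate in $\xi$ and then in $\xi'$. The $\xi$-dependence of the integrand is only through $e^{\im\scal{r-s\rho}\xi}$, and $\int_{\rr d}e^{\im\scal{r-s\rho}\xi}\,d\xi=(2\pi)^d\delta(r-s\rho)=(2\pi)^d|s|^{-d}\delta(\rho-r/s)$, so integrating out $\rho$ substitutes $\rho=r/s$ and produces the factor $(2\pi)^d|s|^{-d}$. Once $\rho=r/s$ is inserted, the remaining $\xi'$-dependence collapses to $e^{\im\scal{r-t\sigma}{\xi'}}$, and the same reasoning substitutes $\sigma=r/t$ and produces $(2\pi)^d|t|^{-d}$. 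What is left is
$$
(2\pi)^{d}|st|^{-d}\int_{\rr d}(\maclK a)\Big(s(q-q')+\tfrac{r}{2s},\,s(q-q')-\tfrac{r}{2s}\Big)(\maclK b)\Big(tq'+\tfrac{r}{2t},\,tq'-\tfrac{r}{2t}\Big)\,dq' .
$$
Performing the change of variables $z=q-q'$ and recalling $x-y=r$, $x+y=2q$, one checks directly against \eqref{Eq:TtDef} that the first argument equals $S_s(z,x,y)$ and the second equals $T_t(-z,x,y)$, which is exactly the claimed identity with $\mathfrak c=(2\pi)^d|st|^{-d}$.

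There is no analytic obstacle: all functions in play are Schwartz, so the Fourier inversions and the applications of Fubini's theorem are immediate, and the $\delta$-function reductions are shorthand for ordinary one-variable Fourier inversion. The only genuine work is the bookkeeping — tracking the powers of $2\pi$ and of $|s|,|t|$ through the two reductions, and matching the arguments of $\maclK a$ and $\maclK b$ after the substitution $z=q-q'$ with the affine maps $S_s$ and $T_t$ of \eqref{Eq:TtDef}. Alternatively, as already noted in the text, the statement is nothing but a transcription of \cite[Lemma 3.1]{Toft3} through the definitions of $S_t$, $T_t$ and $\maclK$, so one may instead simply deduce it from that reference.
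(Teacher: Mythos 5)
Your computation is correct. Tracking it through: your two starting identities are exactly the definition of $\maclK$ written in the coordinates $q=\frac12(x+y)$, $r=x-y$, and its Fourier inverse; inserting the inverse formula for each factor of the convolution, the $\xi$-integration forces $\rho=r/s$ with a factor $(2\pi)^d|s|^{-d}$, the $\xi'$-integration then forces $\sigma=r/t$ with a factor $(2\pi)^d|t|^{-d}$, and together with the initial $(2\pi)^{-d}$ this yields $\mathfrak c=(2\pi)^d|st|^{-d}$. The substitution $z=q-q'$ turns the first factor into $(\maclK a)(sz+\frac r{2s},sz-\frac r{2s})=(\maclK a)(S_s(z,x,y))$ and the second into $(\maclK b)(t(q-z)+\frac r{2t},t(q-z)-\frac r{2t})$, which is precisely $(\maclK b)(T_t(-z,x,y))$ by \eqref{Eq:TtDef}; note that no relation such as $\pm s^{-2}\pm t^{-2}=1$ is needed at this stage, consistent with the statement. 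The one presentational caveat is that the iterated integral is not absolutely convergent if you literally integrate in $\xi$ first, so the ``$\delta$-function'' steps should be read, as you indicate, as performing the $\rho$- (resp.~$\sigma$-) integration first and then applying one-variable Fourier inversion in $\xi$ (resp.~$\xi'$); since all functions involved are Schwartz this is immediate. The paper itself gives no proof of this lemma --- it only records that the statement is a transcription of Lemma 3.1 of the cited reference through the definitions of $S_t$, $T_t$ and $\maclK$ --- so what you gain over the paper is a self-contained verification, at the cost of the $(2\pi)$- and Jacobian-bookkeeping you describe.
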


\par

Our first goal is to deduce a multi-dimensional version
of the previous result. In fact we have the following.

\par

\begin{lemma}
\label{Lem:DilConvForm2}
Let $a_j\in \mascS (\rr {2d})$, $t_j\in \mathbf R\setminus 0$, $j=1,\dots ,N$, 
$$
\kappa _N(z) = -\sum _{j=1}^{N-1} z_j,
\qquad
z=(z_1,\dots ,z_{N-1})\in \rr {(N-1)d},\ z_1,\dots ,z_{N-1}\in \rr d,
$$
$S_t$ and $T_t$ be
given by \eqref{Eq:TtDef}, and let $\maclK$ be as in
\eqref{Eq:OpsDilConv}. Then
\begin{align}
\big ( \maclK &(a_1(t_1\cdo ) * \cdots * a_N(t_N\cdo )) \big )(x,y)
\notag
\\[1ex]
&=
\mathfrak c \hspace{-0.4cm} \int \limits _{\rr {(N-1)d}} \left (\prod _{j=1}^{N-1}
\maclK a_j(S_{t_j}(z_j,x,y))
\right )
\maclK a_N\!  \left (
T_{t_N}
\left (\kappa _N(z),x,y \right ) \right )
\, dz ,
\label{Eq:DilConvMultLinCaseIdent}
\intertext{where}
\mathfrak c 
&=
(2\pi)^{(N-1)d}\prod _{j=1}^N |t_j|^{-d}
\quad \text{and}\quad
dz = dz_1\cdots dz_{N-1}
\end{align}
\end{lemma}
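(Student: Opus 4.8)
The plan is to prove \eqref{Eq:DilConvMultLinCaseIdent} by induction on the number $N$ of factors, peeling off the first factor at each step and invoking the two-factor identity of Lemma \ref{Lem:DilConvForm1}. For $N=1$ the right-hand side collapses (the product is empty and $\kappa_1\equiv 0$) to $|t_1|^{-d}\maclK a_1(T_{t_1}(0,x,y))$, and the identity $\maclK(a_1(t_1\cdo))(x,y)=|t_1|^{-d}\maclK a_1(T_{t_1}(0,x,y))$ follows by inserting the definition of $\maclK$ on Schwartz symbols, substituting $\eta=t_1\xi$ in the frequency integral, and recognizing the result as $\maclK a_1$ evaluated at $\bigl(\tfrac{t_1}{2}(x+y)+\tfrac{x-y}{2t_1},\,\tfrac{t_1}{2}(x+y)-\tfrac{x-y}{2t_1}\bigr)=T_{t_1}(0,x,y)$.

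Now let $N\ge 2$ and assume \eqref{Eq:DilConvMultLinCaseIdent} holds with $N-1$ in place of $N$. By associativity of convolution on $\mascS(\rr{2d})$ we may write $a_1(t_1\cdo)*\cdots*a_N(t_N\cdo)=a_1(t_1\cdo)*c$ with $c=a_2(t_2\cdo)*\cdots*a_N(t_N\cdo)\in\mascS(\rr{2d})$. Applying Lemma \ref{Lem:DilConvForm1} with $a=a_1$, $s=t_1$, $b=c$ and the inert dilation parameter $t=1$, and using $T_1(-z,x,y)=(x-z,y-z)$, gives
\[
\bigl(\maclK(a_1(t_1\cdo)*c)\bigr)(x,y)=(2\pi)^d|t_1|^{-d}\int_{\rr d}\maclK a_1\bigl(S_{t_1}(z_1,x,y)\bigr)\,\maclK c(x-z_1,y-z_1)\,dz_1 .
\]
The induction hypothesis, applied to $\maclK c$ at the point $(u,v)=(x-z_1,y-z_1)$, expands $\maclK c(u,v)$ as an integral over $w=(w_2,\dots,w_{N-1})\in\rr{(N-2)d}$ with constant $(2\pi)^{(N-2)d}\prod_{j=2}^{N}|t_j|^{-d}$, whose integrand is $\bigl(\prod_{j=2}^{N-1}\maclK a_j(S_{t_j}(w_j,u,v))\bigr)\maclK a_N\bigl(T_{t_N}(-\sum_{j=2}^{N-1}w_j,u,v)\bigr)$.

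It remains to compose the affine substitutions, which is where the only real bookkeeping lies. Two elementary facts about the maps in \eqref{Eq:TtDef} suffice: $S_t(w,u,v)$ depends on $(u,v)$ only through $u-v$, so $S_{t_j}(w_j,x-z_1,y-z_1)=S_{t_j}(w_j,x,y)$; and $T_t(\zeta,x-z_1,y-z_1)=T_t(\zeta-z_1,x,y)$ by a one-line computation. Inserting the expansion of $\maclK c$ into the previous display, using these two identities, multiplying the constants so that $(2\pi)^d|t_1|^{-d}\cdot(2\pi)^{(N-2)d}\prod_{j=2}^{N}|t_j|^{-d}=(2\pi)^{(N-1)d}\prod_{j=1}^{N}|t_j|^{-d}=\mathfrak c$, relabelling $w_j$ as $z_j$ for $j=2,\dots,N-1$ so that $-\sum_{j=2}^{N-1}w_j-z_1=-\sum_{j=1}^{N-1}z_j=\kappa_N(z)$, and using Fubini's theorem (legitimate since all integrands are Schwartz functions) to regard the iterated integral as a single integral over $\rr{(N-1)d}$, one arrives at exactly \eqref{Eq:DilConvMultLinCaseIdent}.

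The main obstacle is purely organizational: getting the chain of affine changes of variables right — in particular verifying $T_1(-z,x,y)=(x-z,y-z)$ and the two composition identities above — together with the observation that Lemma \ref{Lem:DilConvForm1} may indeed be used with $t=1$, since it allows arbitrary $s,t\in\mathbf R\setminus 0$. Note that no relation among the $t_j$ of the type \eqref{Eq:tCondMultilinCase} is required here: \eqref{Eq:DilConvMultLinCaseIdent} is a pure identity for Schwartz symbols valid for all nonzero $t_1,\dots,t_N$, and the constraint on the $t_j$ enters only afterwards, when the $s_{\Phi_0}^w$-norm estimate is deduced from it.
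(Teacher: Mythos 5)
Your proof is correct. The two elementary identities you rely on do hold: from \eqref{Eq:TtDef} one checks directly that $S_t(w,u,v)$ depends on $(u,v)$ only through $u-v$, that $T_t(\zeta,x-z_1,y-z_1)=T_t(\zeta-z_1,x,y)$, and that $T_1(-z,x,y)=(x-z,y-z)$; your $N=1$ base case $\maclK (a_1(t_1\cdo ))(x,y)=|t_1|^{-d}\maclK a_1(T_{t_1}(0,x,y))$ is also a correct computation. With these, peeling off the first factor via Lemma \ref{Lem:DilConvForm1} with the inert parameter $t=1$ (which the lemma permits, since it only requires $s,t\neq 0$) and applying the induction hypothesis to $\maclK c$ at the translated point gives exactly \eqref{Eq:DilConvMultLinCaseIdent}, with the constants matching. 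You are also right that no constraint of the form \eqref{Eq:tCondMultilinCase} is needed for this identity.

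The paper proves the lemma by the same overall strategy (induction on $N$ plus the bilinear Lemma \ref{Lem:DilConvForm1}) but organizes the induction step differently: it groups the first $m$ factors as a single dilated symbol, writing $a_1(t_1\cdo )*\cdots *a_m(t_m\cdo )=|t_1|^{-2(m-1)d}b(t_1\cdo )$ with $b=a_1(s_1\cdo )*\cdots *a_m(s_m\cdo )$, $s_j=t_j/t_1$, peels off the \emph{last} factor with Lemma \ref{Lem:DilConvForm1}, and then undoes the rescaling inside the inductive expression; this forces the introduction of the two-parameter map $T_{t,s}$ of \eqref{Eq:TtsDef} and an extra change of variables $z\mapsto z/t_1$. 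Your decomposition $a_1(t_1\cdo )*(a_2(t_2\cdo )*\cdots *a_N(t_N\cdo ))$ with $t=1$ avoids the rescaling entirely and reduces the bookkeeping to translation covariance of $S_t$ and $T_t$, at the price of having to verify the (trivial) $N=1$ case separately. Both arguments are valid; yours is somewhat leaner.
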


\par

\begin{proof}
By Lemma \ref{Lem:DilConvForm1} it follows that the assertion is
true when $N=2$. We shall prove the general case by induction.

\par

Therefore, suppose that the assertion is true for $N=m\ge 2$. We
shall prove that the result holds true for $N=m+1$. First we observe that
\begin{gather*}
a_1(t_1\cdo ) * \cdots * a_m(t_m\cdo ) = |t_1|^{-2(m-1)d}b(t_1\cdo ),
\intertext{where}
b = a_1(s_1\cdo )*\cdots *a_m(s_m\cdo),
\qquad
s_j = \frac {t_j}{t_1},\ j=1,\dots ,m .
\end{gather*}

Since the assertion holds for $N=2$, we obtain
\begin{equation}
\label{Eq:FirstConvReform}
\begin{aligned}
&(2\pi )^{-d}|t_1|^{(2m-1)d}|t_{m+1}|^{d}
(\maclK (a_1(t_1\cdo ) * \cdots * a_{m+1}(t_{m+1}\cdo )))(x,y)
\\[1ex]
&=
(2\pi )^{-d}|t_1t_{m+1}|^{d}
\big (\maclK (b_1(t_1\cdo ) * a_{m+1}(t_{m+1}\cdo )) \big )(x,y)
\\[1ex]
&=
%(2\pi )^d|t_1|^{-md}|t_{m+1}|^{-d}
\int \limits _{\rr d}
(\maclK b_1)(S_{t_1}(z_m,x,y))
(\maclK a_{m+1})(T_{t_{m+1}}(-z_m,x,y))\, dz_m.
\end{aligned}
\end{equation}

\par

Next we shall perform suitable reformulations of $\maclK b_1$ in
\eqref{Eq:FirstConvReform}. Let
$$
u,v\in \rr d
\quad \text{and}\quad
z=(z_1,\dots ,z_{m-1})\in \rr {(m-1)d},\ z_1,\dots ,z_{m-1}\in \rr d.
$$
By the inductive assumption we obtain
\begin{equation}
\label{Eq:Kb1Reform}
\begin{aligned}
(\maclK b_1)(u,v)
&=
\big ( \maclK  (a_1(s_1\cdo ) * \cdots *a_m(s_m\cdo ) ) \big )(u,v)
\\[1ex]
&=
(2\pi )^{(m-1)d}|s_2\cdots s_m|^{-d}I(u,v),
\end{aligned}
\end{equation}
where
%\intertext{where}
%%
\begin{align*}
I(u,v)
&=
\int \limits _{\rr {(m-1)d}} \left (\prod _{j=1}^{N-1}
\maclK a_j(S_{s_j}(z_j,u,v))
\right )
\maclK a_m \! \left (
T_{s_m}\!
\left (\kappa _m(z),u,v \right ) \right )
\, dz
\\[1ex]
&=
|t_1|^{(m-1)d}
\int \limits _{\rr {(m-1)d}} \left (\prod _{j=1}^{N-1}
\maclK a_j(S_{t_j}(z_j,t_1u,t_1v))
\right )
H(z,u,v)\, dz.
\intertext{Here}
H(z,u,v)
&=
\maclK a_m \! \left (
T_{t_m,s_m}\!
\left (\kappa _m(z),t_1u,t_1v \right ) \right ),
\end{align*}
$T_{t_m,s_m}$ is given by \eqref{Eq:TtsDef},
and in the last step we have taken $z/t_1$ as new variables
of integration.

\par

If we let
$$
(u,v) =S_{t_1}(z_m,x,y)
=
\left (t_1z_m+\frac {x-y}{2t_1},t_1z_m-\frac {x-y}{2t_1}\right )
$$
as in \eqref{Eq:FirstConvReform}, one obtains
\begin{equation}
\label{Eq:uvAddDiffProp}
u-v = \frac {x-y}{t_1}
\quad \text{and}\quad
\frac {u+v}2=t_1z_m.
\end{equation}
This implies
\begin{equation}
\label{Eq:ExprStj}
S_{t_j}(z_j,t_1u,t_1v)
=
S_{t_j}(z_j,x,y),
\qquad j=1,\dots ,m-1.
\end{equation}
For $j=m$, \eqref{Eq:uvAddDiffProp} gives
\begin{align*}
&T_{t_m,s_m}\!
\left (\kappa _m(z),t_1u,t_1v \right )
\\[1ex]
&=
\left (t_m\kappa _m(z)+\frac {t_1(u-v)}{2t_m}+\frac {s_m(u+v)}{2},
t_m\kappa _m(z)-\frac {t_1(u-v)}{2t_m}+\frac {s_m(u+v)}{2}
\right )
\\[1ex]
&=
\left (t_m(\kappa _m(z)+z_m) +\frac {x-y}{2t_m},
t_m(\kappa _m(z)+z_m) - \frac {x-y}{2t_m}
\right )
\\[1ex]
&=
S_{t_m}(\kappa _{m+1}(z,-z_m),x,y).
\end{align*}

\par

A combination of these identities gives
\begin{align*}
I(u,v)
&=
|t_1|^{(m-1)d}
\int \limits _{\rr {(m-1)d}} \left (\prod _{j=1}^{N-1}
\maclK a_j(S_{t_j}(z_j,x,y))
\right )
H(z,u,v)\, dz
\intertext{where}
H(z,u,v)
&=
\maclK a_m \! \left (
S_{t_m}\!
\left (\kappa _{m+1}(z,-z_m),x,y \right ) \right ) .
\end{align*}
By inserting this into \eqref{Eq:Kb1Reform}, one obtains
\begin{multline*}
%\label{Eq:Kb1Reform}
(2\pi )^{-(m-1)d}|t_2\cdots t_m|^{d}(\maclK b_1)(S_{t_1}(z_m,x,y))
\\[1ex]
=
\int \limits _{\rr {(m-1)d}} \left (\prod _{j=1}^{N-1}
\maclK a_j(S_{t_j}(z_j,x,y))
\right )
H(z,S_{t_1}(z_m,x,y))\, dz.
\end{multline*}

\par

If we insert the latter expression into \eqref{Eq:FirstConvReform}
and taking $(z_1,\dots ,z_{m-1},-z_m)$ as new variables of integrations
we obtain \eqref{Eq:DilConvMultLinCaseIdent} for $N=m+1$, and the
result follows.
\end{proof}

\par

We shall combine Lemma \ref{Lem:DilConvForm2}
with the following lemma.

\par

\begin{lemma}
\label{Lem:DilConvRankOneElem}
Let $\{ f_{j,k_j}\} _{k_j=1}^\infty \in \ON (L^2(\rr d))$,
$\{ g_{j,k_j}\} _{k_j=1}^\infty \in \ON (L^2(\rr d))$,
$j=1,\dots ,N+1$, and $t_j\in \mathbf R\setminus 0$, be such that
\begin{equation}
\label{Eq:tCondMultilinCaseAgain}
\sum _{j=1}^N \frac {m_j}{t_j^2} = 1,
\end{equation}
for some choices of $m_j\in \{ -1,1\}$, $j=1,\dots ,N$. Also let
\begin{align*}
G(k)
&=
\big ( W_{f_{1,k_1},g_{1,k_1}}(t_1\cdo )*\cdots *W_{f_{N,k_N},g_{N,k_N}}(t_N\cdo ),
W_{f_{N+1,k_{N+1}},g_{N+1,k_{N+1}}}\big )_{L^2}
\\[1ex]
k&=(k_1,\dots ,k_{N+1})\in \zz {N+1}_+.
\end{align*}
Then
\begin{align}
\sum _{k_n=1}^\infty |G(k)|
&\le
(2\pi )^{(N-1)\frac d2}|t_n|^{-2d} \prod _{j=1}^N |t_j|^{-2d},
\quad
n=1,\dots ,N,
\label{Eq:SumHfuncEstFirst}
\intertext{and}
\sum _{k_{N+1}=1}^\infty |G(k)|
&\le
(2\pi )^{(N-1)\frac d2}\prod _{j=1}^N |t_j|^{-2d}.
\label{Eq:SumHfuncEst}
\end{align}
\end{lemma}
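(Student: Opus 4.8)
The plan is to exploit the explicit multi-convolution kernel identity of Lemma \ref{Lem:DilConvForm2}, reducing $G(k)$ to an integral of a product of rank-one kernels against one more rank-one kernel, and then to sum over a chosen index $k_n$ using Parseval's (Bessel's) inequality for the orthonormal system in the corresponding slot. First I would write each $W_{f_{j,k_j},g_{j,k_j}}$ as the kernel of the rank-one operator $(2\pi)^{-\frac d2}f_{j,k_j}\otimes\overline{g_{j,k_j}}$ via \eqref{Eq:AWigPseudoLink2}, so that $\maclK W_{f_{j,k_j},g_{j,k_j}}(u,v)=(2\pi)^{-\frac d2}f_{j,k_j}(u)\overline{g_{j,k_j}(v)}$. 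Since $(2\pi)^{\frac d2}\maclK$ is unitary on $L^2(\rr{2d})$, Lemma \ref{Lem:DilConvForm2} turns $G(k)$ into
\begin{equation*}
G(k) = \mathfrak c'\!\!\int\limits_{\rr{(N-1)d}}\!\! \Big(\prod_{j=1}^{N-1}\big(\maclK W_{f_{j,k_j},g_{j,k_j}}\big)(S_{t_j}(z_j,x,y))\Big)\big(\maclK W_{f_{N,k_N},g_{N,k_N}}\big)\big(T_{t_N}(\kappa_N(z),x,y)\big)\,\overline{\big(\maclK W_{f_{N+1,k_{N+1}},g_{N+1,k_{N+1}}}\big)(x,y)}\,dz\,dx\,dy,
\end{equation*}
with $\mathfrak c'$ a power of $2\pi$ times $\prod|t_j|^{-d}$. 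Substituting the product form of each $\maclK W$ makes the whole integrand a product of evaluations of the $f$'s and $g$'s at affine images of $(z,x,y)$; this is the routine-calculation part.

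The key step is: to estimate $\sum_{k_n}|G(k)|$ for a fixed $n\in\{1,\dots,N\}$, I would interchange sum and integral (justified since we may assume $a_j\in\mascS$, i.e.\ the kernels are Schwartz and everything converges absolutely — exactly as in the proof of Theorem \ref{Thm:OrlSchConvL2}), and recognize that for fixed values of all \emph{other} integration and index variables, the quantity $G(k)$ is the $L^2$ pairing of $f_{n,k_n}$ (or $g_{n,k_n}$) against a fixed $L^2$ function built from the remaining data. Concretely, one isolates the $k_n$-dependence inside a scalar product $(F_n,f_{n,k_n})_{L^2}\cdot(\,\cdot\,)$ or a single such factor, bounds $|G(k)|\le \tfrac12(|\cdot|^2+|\cdot|^2)$ by the arithmetic–geometric inequality (mirroring the $H_1,H_2$ split in the proof of Theorem \ref{Thm:OrlSchConvL2}), and then applies Parseval/Bessel over the orthonormal sequence $\{f_{n,k_n}\}$ to get $\sum_{k_n}|(F_n,f_{n,k_n})|^2\le \nm{F_n}{L^2}^2$. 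The remaining $L^2$-norm is computed by a linear change of variables; here the hypothesis \eqref{Eq:tCondMultilinCaseAgain}, $\sum m_j t_j^{-2}=1$, is precisely what makes the relevant Jacobian nonzero and produces the clean factor $|t_n|^{-2d}\prod_j|t_j|^{-2d}$ (for $n\le N$) or $\prod_j|t_j|^{-2d}$ (for $n=N+1$), after accounting for the $(2\pi)$-powers in $\mathfrak c'$ and in the unitarity normalization of $\maclK$. This yields \eqref{Eq:SumHfuncEstFirst} and \eqref{Eq:SumHfuncEst}.

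For the case $n=N+1$ the argument is slightly cleaner: the $k_{N+1}$-dependence sits entirely in the single factor $\overline{(\maclK W_{f_{N+1,k_{N+1}},g_{N+1,k_{N+1}}})(x,y)}=(2\pi)^{-\frac d2}\overline{f_{N+1,k_{N+1}}(x)}\,g_{N+1,k_{N+1}}(y)$, so after the AM–GM split one applies Parseval twice (once in $x$ against $\{f_{N+1,k_{N+1}}\}$ and once in $y$ against $\{g_{N+1,k_{N+1}}\}$) to dominate the $k_{N+1}$-sum by an integral of $\nm{\cdot}{L^2}^2$-type quantities, then changes variables using \eqref{Eq:tCondMultilinCaseAgain} to finish. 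The main obstacle I anticipate is bookkeeping: correctly tracking which affine combination of $(z_1,\dots,z_{N-1},x,y)$ each $f_{j,k_j}$ and $g_{j,k_j}$ is evaluated at — this requires unwinding the definitions of $S_{t_j}$, $T_{t_N}$ and $\kappa_N$ in \eqref{Eq:TtDef} — and verifying that the linear map in the post-Parseval change of variables has determinant of modulus equal to the asserted power of $\prod|t_j|$, which is where the normalization $\sum m_jt_j^{-2}=1$ is used (exactly analogously to the use of $s^{-2}-t^{-2}=1$ in the bilinear proof of Theorem \ref{Thm:OrlSchConvL2}). Once the affine maps are pinned down, the estimates are a mechanical iteration of the bilinear case.
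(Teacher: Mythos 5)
Your proposal is correct in outline and, for the bound \eqref{Eq:SumHfuncEst} on $\sum_{k_{N+1}}|G(k)|$, follows essentially the paper's path: apply Lemma \ref{Lem:DilConvForm2}, insert the rank-one form $\maclK W_{f,g}=(2\pi)^{-\frac d2}f\otimes\overline g$, split by the arithmetic--geometric mean inequality, apply Bessel's inequality in the $(N+1)$-st slot, and evaluate the surviving $L^2$-norms by a linear change of variables whose Jacobian determinant is computed using \eqref{Eq:tCondMultilinCaseAgain}. Two points of comparison. First, what you file under ``bookkeeping'' contains the one step that is genuinely new relative to the bilinear case: because the factor $\maclK a_N(T_{t_N}(\kappa_N(z),x,y))$ in Lemma \ref{Lem:DilConvForm2} involves $\tfrac{t_N(x+y)}2$ and hence mixes $x$ and $y$, the integrand does \emph{not} automatically factor into an $(x,z)$-part times a $(y,z)$-part as it does in Lemma \ref{Lemma:DilConvIdent}; the paper first substitutes $z_j\mapsto z_j+m_j\frac{x+y}{2t_j^2}$, and it is precisely the hypothesis $\sum_j m_jt_j^{-2}=1$ that makes the transformed arguments depend on $x$ alone in one group of factors and on $y$ alone in the other, so that $G(k)=C\int F_1(k,z)F_2(k,z)\,dz$ with $F_1$ a pairing in $x$ and $F_2$ a pairing in $y$. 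Your plan needs this decoupling substitution (or an equivalent) before the Cauchy--Schwarz/Bessel scheme can start. Second, for the sums over $k_n$ with $n\le N$ you propose a direct Bessel argument in the $n$-th slot, generalizing the computation of the first inequality in \eqref{Eq:HjkmEst} from the bilinear proof; the paper instead reduces these cases to the already-proved $k_{N+1}$ case by a symmetry trick, replacing $a_{n,k_n}$ by $\widetilde b_{k_n}$ and $b_{k_{N+1}}$ by $\widetilde a_{n,k_{N+1}}$ with $\widetilde a(X)=\overline{a(-X)}$, rescaling all dilation parameters by $1/t_n$, and checking that the rescaled parameters again satisfy the hypothesis (condition \ref{Eq:CondMultilinCase}). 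Your route is workable but forces a fresh change-of-variables and determinant computation for each slot $n$; the paper's reduction obtains all $N$ remaining estimates from the single determinant already evaluated, at the modest price of verifying the rescaled constraint. Both approaches are legitimate and yield the stated constants.
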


\par

\begin{proof}
First we prove \eqref{Eq:SumHfuncEst}. 
It is then convenient to separate out $k_{N+1}$ from $k$, and redefine
$k$ as $k=(k_1,\dots ,k_N)$.
Let
$a_{j,k_j}=W_{f_{j,k_j},g_{j,k_j}}$. Then
$$
\maclK a_{j,k_j}=(2\pi )^{-\frac d2}f_{j,k_j}\otimes g_{j,k_j}.
$$
By Lemma \ref{Lem:DilConvForm2} we get
\begin{align}
G(k,k_{N+1})
&=
\big ( a_{1,k_1}(t_1\cdo )*\cdots *a_{N,k_N}(t_N\cdo ),
a_{N+1,k_{N+1}} \big )_{L^2}
\notag
\\[1ex]
&=
(2\pi )^d
\big ( \maclK (a_{1,k_1}(t_1\cdo )*\cdots *a_{N,k_N}(t_N\cdo )),
\maclK a_{N+1,k_{N+1}}\big )_{L^2}
\label{Eq:GkReform1}
\end{align}
Let
\begin{align}
\fy _{j,k_j} &=
\begin{cases}
\overline{g_{j,k_j}}, & m_j=-1,
\\[1ex]
f_{j,k_j}, & m_j=1,
\end{cases}
\quad
%\quad \text{and}\quad
\psi _{j,k_j} =
\begin{cases}
f_{j,k_j}, & m_j=-1,
\\[1ex]
\overline{g_{j,k_j}}, & m_j=1,
\end{cases}
\quad
j=1,\dots ,N,
\\[1ex]
H_1(k,x,y,z)
&=
\left (
\prod _{j=1}^{N-1}\fy _{j,k_j}\left (t_jz_j+m_j\frac {x-y}{2t_j} \right )
\right ) \times
\notag
\\
&\times
\fy _{N,k_N}\left (t_N\kappa _N(z)+m_N
\frac {x-y}{2t_N}+\frac {t_N(x+y)}2 \right ),
\label{Eq:H1Def}
\intertext{and}
H_2(k,x,y,z)
&=
\left (
\prod _{j=1}^{N-1}\psi _{j,k_j}\left (t_jz_j-m_j\frac {x-y}{2t_j} \right )
\right )\times
\notag
\\
&\times
\psi _{N,k_N}\left (t_N\kappa _N(z)-m_N
\frac {x-y}{2t_N}+\frac {t_N(x+y)}2 \right ).
\label{Eq:H2Def}
\end{align}

\par

A combination of Lemma \ref{Lem:DilConvForm2}
and \eqref{Eq:GkReform1} gives
\begin{align}
G(k,k_{N+1})
&=
C \iint  _{\rr {2d}} H_0(k,x,y)\overline{f_{N+1,k_{N+1}}(x)}g_{N+1,k_{N+1}}(y)\, dxdy,
\label{Eq:GkReform}
\intertext{where}
H_0(k,x,y)
&=
\int _{\rr {(N-1)d}} H_1(k,x,y,z)H_2(k,x,y,z)\, dz
\label{Eq:H0Expression1}
\intertext{and}
C
&=
(2\pi )^{(N-1)\frac d2}\prod _{j=1}^N |t_j|^{-d}.
\notag
\end{align}
We shall perform suitable reformulations of $H_0$. For the integral in 
\eqref{Eq:H0Expression1} we perform the substitution
$$
z_j\mapsto z_j+m_j\frac {x+y}{2t_j^2}.
$$
This implies that the different arguments in \eqref{Eq:H1Def}
and \eqref{Eq:H2Def} are transfered as
\begin{align}
t_jz_j+m_j\frac {x-y}{2t_j}
&\mapsto
t_jz_j+m_j\frac {x}{t_j},\qquad j=1,\dots ,N-1,
\label{Eq:TransferNoJPlus}
\\[1ex]
t_jz_j-m_j\frac {x-y}{2t_j}
&\mapsto
t_jz_j+m_j\frac {y}{t_j},\qquad j=1,\dots ,N-1,
\label{Eq:TransferNoJMinus}
\end{align}
\begin{align}
t_N\kappa _N(z)+m_N\frac {x-y}{2t_N}+\frac {t_N(x+y)}2
&\mapsto
t_N\kappa (z)+m_N\frac {x}{t_N},
\label{Eq:TransferNoNPlus}
\\[1ex]
t_N\kappa _N(z)-m_N\frac {x-y}{2t_N}+\frac {t_N(x+y)}2
&\mapsto
t_N\kappa (z)+m_N\frac {y}{t_N}.
\label{Eq:TransferNoNMinus}
\end{align}

\par

In fact, \eqref{Eq:TransferNoJPlus} and
\eqref{Eq:TransferNoJMinus} follow by straight-forward
computations, and are
left for the reader. For the left-hand side of
\eqref{Eq:TransferNoNPlus} we have
\begin{align*}
&t_N\kappa _N(z)+m_N\frac {x-y}{2t_N}+\frac {t_N(x+y)}2
\\[1ex]
&\mapsto
t_N\left (
-\sum _{j=1}^{N-1}
z_j +m_N\frac {x-y}{2t_N^2}+\frac {x+y}2
\left (
1-\sum _{j=1}^{N-1}\frac {m_j}{t_j^2}
\right )
\right )
\\[1ex]
&=
t_N\left (
\kappa _N(z)+\frac x2
\left (
1+ \frac {m_N}{t_N^2} -\sum _{j=1}^{N-1}\frac {m_j}{t_j^2}
\right )
+\frac y2
\left (
1 -\sum _{j=1}^{N}\frac {m_j}{t_j^2}
\right )
\right )
\\[1ex]
&=
t_N\kappa (z)+m_N\frac {x}{t_N}.
\end{align*}
We have used \eqref{Eq:tCondMultilinCaseAgain}
in the last equality. This gives \eqref{Eq:TransferNoNPlus},
and in similar ways one obtains \eqref{Eq:TransferNoNMinus}.
The details are left for the reader.

\par

By \eqref{Eq:GkReform}--\eqref{Eq:TransferNoNMinus}
we get an alternative expression of $H_0$ as
\begin{align}
H_0(k,x,y)
&=
\int _{\rr {(N-1)d}} \widetilde H_1(k,x,z)\widetilde H_2(k,y,z)\, dz,
\tag*{(\ref{Eq:H0Expression1})$'$}
\label{Eq:H0Expression2}
\intertext{where}
\widetilde H_1(k,x,z)
&=
\left (
\prod _{j=1}^{N-1}\fy _{j,k_j}\left (t_jz_j+m_j\frac {x}{t_j} \right )
\right )
%\times
%\notag
%\\
%&\times
\fy _{N,k_N}\left (t_N\kappa _N(z)+m_N
\frac {x}{t_N}\right ),
\label{Eq:H1ModDef}
\intertext{and}
\widetilde H_2(k,y,z)
&=
\left (
\prod _{j=1}^{N-1}\psi _{j,k_j}\left (t_jz_j+m_j\frac {y}{t_j} \right )
\right )
%\times
%\notag
%\\
%&\times
\psi _{N,k_N}\left (t_N\kappa _N(z)+m_N
\frac {y}{t_N}\right ).
\label{Eq:H2ModDef}
\end{align}
By inserting this into \eqref{Eq:GkReform}
we obtain
\begin{align*}
G(k,k_{N+1})
&=
C\int _{\rr {(N-1)d}} F_1(k,z)F_2(k,z)\, dz,
\intertext{where}
F_1(k,z)
&=
(\widetilde H_1(k,\cdo ,z),f_{N+1,k_{N+1}})_{L^2(\rr d)}
\intertext{and}
F_2(k,z)
&=
(\widetilde H_2(k,\cdo ,z),\overline{g_{N+1,k_{N+1}}} )_{L^2(\rr d)}.
%\left (
%\left (
%\prod _{j=1}^{N-1}\fy _{j,k_j}\left (t_jz_j+m_j\frac {\cdo }{t_j} \right )
%\right )
%\fy _{N,k_N}\left (t_N\kappa _N(z)+m_N\frac {\cdo }{t_N} \right )
%,f_{N+1,k_{N+1}}
%\right )_{L^2}
\end{align*}

\par

Since $\{ f_{N+1,k_{N+1}} \} _{k_{N+1}=1}^\infty$
and $\{ \overline {g_{N+1,k_{N+1}}} \} _{k_{N+1}=1}^\infty$
are orthonormal sequences, it follows from Bessel's inequality
that
$$
\sum _{k_{N+1}=1}^\infty |F_1(k,z)|^2
=
\sum _{k_{N+1}=1}^\infty |(\widetilde H_1(k,\cdo ,z),f_{N+1,k_{N+1}})_{L^2(\rr d)}|^2
\le
\nm {\widetilde H_1(k,\cdo ,z)}{L^2}^2
$$
and
$$
\sum _{k_{N+1}=1}^\infty |F_2(k,z)|^2
=
\sum _{k_{N+1}=1}^\infty |(\widetilde H_2(k,\cdo ,z),
\overline{g_{N+1,k_{N+1}}})_{L^2(\rr d)}|^2
\le
\nm {\widetilde H_2(k,\cdo ,z)}{L^2}^2.
$$
A combination of these estimates with the identities above,
and Cauchy-Schwarz inequality, we obtain
\begin{align}
C^{-1}\sum _{k_{N+1}=1}^\infty 
|G(k,k_{N+1})|
&\le
\int _{\rr {(N-1)d}} \left (
\sum _{k_{N+1}=1}^\infty |F_1(k,z)| \, |F_2(k,z)| \right )\, dz
\notag
\\[1ex]
&\le
\int _{\rr {(N-1)d}} \left (
\sum _{k_{N+1}=1}^\infty |F_1(k,z)|^2 \right )^{\frac 12}
\left (
\sum _{k_{N+1}=1}^\infty|F_2(k,z)|^2 \right )^{\frac 12}\, dz
\notag
\\[1ex]
&\le
\int _{\rr {(N-1)d}}
\nm {\widetilde H_1(k,\cdo ,z)}{L^2}\nm {\widetilde H_2(k,\cdo ,z)}{L^2}
\, dz
\notag
\\[1ex]
&\le
\nm {\widetilde H_1(k,\cdo )}{L^2}\nm {\widetilde H_2(k,\cdo )}{L^2}.
\label{Eq:GSumEst}
\end{align}

\par

We need to estimate $\nm {\widetilde H_1(k,\cdo )}{L^2}$
and $\nm {\widetilde H_2(k,\cdo )}{L^2}$. By taking the $L^2$-norm
of \eqref{Eq:H1ModDef} and taking
\begin{align*}
u_j
&=
t_jz_j+m_j\frac {x}{t_j} ,\qquad j=1,\dots ,N-1,
\intertext{and}
u_N
&=
\kappa (z)+m_N\frac {x}{t_N}, 
\end{align*}
it follows that
\begin{equation}
\label{Eq:TildeH1Ident}
\nm {\widetilde H_1(k,\cdo )}{L^2}^2 = |D|^{-1} \prod _{j=1}^N \nm {\fy _{j,k_j}}{L^2}^2
= |D|^{-1},
\end{equation}
where $D$ is the determinant given by
$$
D=
\left |
\begin{matrix}
\frac {m_1}{t_1}I_d & t_1I_d & 0 & \dots & 0
\\[1ex]
\frac {m_2}{t_2}I_d & 0 & t_2I_d & \dots & 0
\\[1ex]
\vdots & \vdots & \vdots & \ddots & \vdots
\\[1ex]
\frac {m_{N-1}}{t_{N-1}}I_d & 0 & 0 & \dots & t_{N-1}I_d
\\[1ex]
\frac {m_N}{t_N}I_d & -t_NI_d & -t_NI_d & \dots & -t_NI_d
\end{matrix}
\right |.
$$
Here $I_d$ is the $d\times d$ unit matrix and the zeros
stand for the zero $d\times d$ matrix.
In the last equality in \eqref{Eq:TildeH1Ident} we have
used the fact that all $\fy _{j,k_j}$ are $L^2$ normalized.

\par

By factorizing $t_j$ from row $j$ for $D$, for every $j$ we obtain
$$
D=
(t_1\cdots t_N)^d
\left |
\begin{matrix}
\frac {m_1}{t_1^2}I_d & I_d & 0 & \dots & 0
\\[1ex]
\frac {m_2}{t_2^2}I_d & 0 & I_d & \dots & 0
\\[1ex]
\vdots & \vdots & \vdots & \ddots & \vdots
\\[1ex]
\frac {m_{N-1}}{t_{N-1}^2}I_d & 0 & 0 & \dots & I_d
\\[1ex]
\frac {m_N}{t_N^2}I_d & -I_d & -I_d & \dots & -I_d
\end{matrix}
\right |.
$$
By adding the rows $1,\dots ,N-1$ to row $N$ and
using \eqref{Eq:tCondMultilinCaseAgain} we obtain
$$
D=
(t_1\cdots t_N)^d
\left |
\begin{matrix}
\frac {m_1}{t_1^2}I_d & I_d & 0 & \dots & 0
\\[1ex]
\frac {m_2}{t_2^2}I_d & 0 & I_d & \dots & 0
\\[1ex]
\vdots & \vdots & \vdots & \ddots & \vdots
\\[1ex]
\frac {m_{N-1}}{t_{N-1}^2}I_d & 0 & 0 & \dots & I_d
\\[1ex]
I_d & 0 & 0 & \dots & 0
\end{matrix}
\right |
=
(t_1\cdots t_N)^d.
$$

\par

From these identities it follows that
$\nm {\widetilde H_1(k,\cdo )}{L^2}=|t_1\cdots t_N|^{-\frac d2}$.
In the same way it follows that $\nm {\widetilde H_2(k,\cdo )}{L^2}$
has the same value.
From these identities and \eqref{Eq:GSumEst} we conclude that
$$
\sum _{k_{N+1}=1}^\infty |G(k,k_{N+1})| \le C|t_1\cdots t_N|^{-d},
$$
and the last estimate in \eqref{Eq:SumHfuncEst} follows.

\par

It remains to prove \eqref{Eq:SumHfuncEstFirst}.
Let $a_{j,k_j}$ be the same as above, $b_{k_{N+1}}=a_{N+1,k_{N+1}}$,
$n\in \{ 1,\dots ,N\}$ be fixed and let
\begin{alignat*}{2}
a_{0,j,k_j}
&=
\begin{cases}
a_{j,k_j}, & j\neq n,
\\[1ex]
\widetilde b_{k_{n}},&j=n,
\end{cases}
&\quad
b_{0,k_{N+1}}
&=
\widetilde a_{n,k_{N+1}},
\\[1ex]
s_j
&=
\begin{cases}
\frac {t_j}{t_{n}}, & j\neq n,
\\[1ex]
\frac 1{t_{n}}, & j=n,
\end{cases}
&\qquad \text{and}\qquad
m_{0,j}
&=
\begin{cases}
-m_{n}m_j,& j\neq n,
\\[1ex]
m_{n}, & j=n.
\end{cases}
\end{alignat*}
Here we let $\widetilde a(X)= \overline {a(-X)}$
when $a\in \mascS '(W)$ (see also \eqref{Eq:WeylOpsAdj}).
Then it follows from \eqref{Eq:tCondMultilinCaseAgain} that
\begin{equation}
\tag*{(\ref{Eq:tCondMultilinCaseAgain})$'$}
\label{Eq:CondMultilinCase}
\sum _{j=1}^N \frac {m_{0,j}}{s_j^2} = 1,
\end{equation}
and by a suitable substitution of variables in integrations, one obtains
$$
G(k,k_{N+1}) = |t_{n}|^{-2Nd}(a_{0,1,k_1}(s_1\cdo )
*\cdots *a_{0,N,k_N}(s_N\cdo ),b_{0,k_{N+1}} )_{L^2}.
$$
Since $a_{0,j,k_j}$, $b_{0,k_{N+1}}$ and $s_j$ satisfy similar properties
as $a_{j,k_j}$, $b_{k_{N+1}}$ and $t_j$, it follows from the first
part of the proof that
\begin{align*}
\sum _{k_{n}=1}^\infty |G(k,k_{N+1})|
&=
|t_{n}|^{-2Nd}
\sum _{k_{N+1}=1}^\infty
(a_{0,1,k_1}(s_1\cdo )
*\cdots *a_{0,N,k_N}(s_N\cdo ),b_{0,k_{N+1}} )_{L^2}
\\[1ex]
&=
|t_{n}|^{-2Nd}
(2\pi )^{(N-1)\frac d2}\prod _{j=1}^N |s_j|^{-2d}
\\[1ex]
&=
(2\pi )^{(N-1)\frac d2}|t_n|^{2d}\prod _{j=1}^N |t_j|^{-2d}.
\end{align*}
Hence \eqref{Eq:SumHfuncEstFirst} holds,
and the result follows.
\end{proof}

\par

%We have now the following result. Here the involved
%Young functions should satisfy
%%%
%\begin{equation}
%\label{Eq:YoungFuncCond}
%\begin{aligned}
%%\begin{gathered}
%t_0\cdots t_{N}
%&\le 
%\left (
%\sum _{j=1}^N
%c_j\prod _{m\in \Omega _j}
%\Phi _m(t_m)
%\right )
%\Phi _0^*(t_0)
%+c_0\prod _{j=1}^N\Phi _j(t_j),
%\\[1ex]
%\Omega _j
%&=
%\{1,\dots ,N\} \setminus \{ j\},
%\qquad
%t_0,\dots ,t_N\in [0,T],
%\end{aligned}
%\end{equation}
%%%
%for some constants $c_j,T>0$, $j=0,\dots ,N$.
%
%\par

\begin{proof}[Proof of Theorem \ref{Thm:OrlSchConvL2MultLin}]
With the same notations as in Lemma
\ref{Lem:DilConvRankOneElem} and its proof,
we first assume that $a_1,\dots ,a_N,b$ belong to
$\mascS (\rr {2d})$, and satisfy
\begin{equation}
\label{Eq:NormalizSchattElem}
\nm {a_j}{s_{\Phi _j}^w}\le 1
\quad \text{and}\quad
\nm {b}{s_{\Phi _0^*}^w}\le 1.
\end{equation}
Then
\begin{align*}
a_j
&=
\sum _{k_j=1}^\infty \lambda _{j,k_j}W_{f_{j,k_j},g_{j,k_j}},
\quad
j=1,\dots ,N,
\intertext{and}
b
&=
\sum _{k_{N+1}=1}^\infty \overline{\lambda _{N+1,k_{N+1}}}
W_{f_{N+1,k_{N+1}},g_{N+1,k_{N+1}}},
\end{align*}
for some sequences
$$
\{ f_{j,k_j}\} _{k_j=1}^\infty , \{ g_{j,k_j}\} _{k_j=1}^\infty \in \ON(L^2(\rr d))
\quad \text{and}\quad
\{ \lambda _{j,k_j}\} _{k_j=1}^\infty \in \mathbf R_+\cup \{ 0\} ,
\quad j=1,\dots ,N+1.
$$
Then $|\lambda _{j,k_j}|\lesssim |k_j|^{-N}$ for every $N\ge 0$, in view of
\cite[Theorem 6.1]{CheSigTof}. Furthermore, \eqref{Eq:NormalizSchattElem} implies
\begin{equation}
\label{Eq:YoungSumsNormaliz}
\sum _{k_j=1}^\infty \Phi _j(\lambda _{j,k_j})\le 1
\quad \text{and}\quad
\sum _{k_{N+1}=1}^\infty \Phi _0^*(\lambda _{N+1,k_{N+1}})\le 1.
\end{equation}

\par

Let
$$
Q\equiv (a(t_1\cdo )*\cdots *a(t_N\cdo ),b)_{L^2}
=
\sum _{k\in \zz {N+1}_+}
\left ( 
\prod _{j=1}^{N+1} \lambda _{j,k_j}
\right )
G(k).
$$
Hence, if $\Omega _j=\{1,\dots ,N\} \setminus \{ j \}$,
$j=1,\dots ,N$, then
\begin{align*}
|Q|
&\le
\sum _{k\in \zz {N+1}_+}
\left ( 
\prod _{j=1}^{N+1} |\lambda _{j,k_j}|
\right )
|G(k)|
\le
\sum _{j=1}^{N+1}c_jR_j,
\intertext{where $c_{N+1}=c_0$,}
R_j
&=
\sum _{k\in \zz {N+1}_+}
\left (
\prod _{m\in \Omega _j}\Phi _m(|\lambda _{m,k_m}|)
\right )
\Phi _0^*(\lambda _{N+1,k_{N+1}})|G(k)|,
\quad j=1,\dots ,N,
\intertext{and}
R_{N+1}
&=
\sum _{k\in \zz {N+1}_+}
\left (
\prod _{j=1}^N\Phi _j(\lambda _{j,k_j})
\right )|G(k)| .
\end{align*}

\par

%Let $\Pi _j : \zz {N+1}_+ \to \zz {N}_+$ be the projections
%%%
%\begin{equation*}
%\Pi _j(k)
%=
%(k_1,\dots ,k_{j-1},k_{j+1},\dots ,k_{N+1}),
%\qquad
%k=(k_1,\dots ,k_{N+1})\in \zz {N+1}_+. 
%\end{equation*}
%%%
%Let $\Pi _j : \zz {N}_+ \to \zz {N-1}_+$ be the projection
%%%
%\begin{equation*}
%\Pi _j(k)
%=
%(k_1,\dots ,k_{j-1},k_{j+1},\dots ,k_N),
%\qquad
%k=(k_1,\dots ,k_N)\in \zz N_+. 
%\end{equation*}
%%%
By Lemma \ref{Lem:DilConvRankOneElem} and
\eqref{Eq:YoungSumsNormaliz}, one obtains
\begin{align}
R_j
&=
\sum
\left (
\prod _{m\in \Omega _j}\Phi _m(|\lambda _{m,k_m}|)
\right )
\Phi _0^*(\lambda _{N+1,k_{N+1}})
\left (
\sum _{k_j=1}^\infty|G(k)|
\right )
\notag
\\[1ex]
&\le
(2\pi )^{(N-1)\frac d2}|t_j|^{2d}
\left (
\prod _{n=1}^N |t_n|^{-2d}
\right ) \times
\notag
\\
&\times
\prod _{m\in \Omega _j}
\left (
\sum _{k_m=1}^\infty \Phi _m(\lambda _{m,k_m})
\right )
\left (
\sum _{k_{N+1}=1}^\infty \Phi _0^*(\lambda _{N+1,k_{N+1}})
\right )
\notag
\\[1ex]
&\le
(2\pi )^{(N-1)\frac d2}|t_j|^{2d}
\left (
\prod _{n=1}^N |t_n|^{-2d}
\right ),
\label{Eq:RjComp}
\end{align}
when $j=1,\dots ,N$. Here the first sum
in \eqref{Eq:RjComp} is taken over all
$$
k_1,\dots ,k_{j-1},k_{j+1},\dots ,k_{N+1}\in \mathbf Z_+.
$$
By similar arguments we get
\begin{align*}
R_{N+1}
&=
\sum _{k\in \zz {N}_+}
\left (
\prod _{j=1}^N\Phi _j(|\lambda _{j,k_j}|)
\right )
\left (
\sum _{k_{N+1}=1}^\infty|G(k)|
\right )
\\[1ex]
&\le
(2\pi )^{(N-1)\frac d2}
\left (
\prod _{n=1}^N |t_n|^{-2d}
\right ) 
\prod _{j=1}^N
\left (
\sum _{k_j=1}^\infty \Phi _j(\lambda _{j,k_j})
\right )
\\[1ex]
&\le
(2\pi )^{(N-1)\frac d2}
\left (
\prod _{n=1}^N |t_n|^{-2d}
\right ).
\end{align*}

\par

By combining these estimates one obtains
\begin{multline}
\label{Eq:BasicFormEst}
|(a(t_1\cdo )*\cdots *a(t_N\cdo ),b)_{L^2}|
\\[1ex]
\le (2\pi )^{(N-1)\frac d2}\left (c_0+\sum _{j=1}^Nc_j|t_j|^{2d} \right )
\left (
\prod _{j=1}^N \left ( |t_j|^{-2d}\nm {a_j}{s_{\Phi _j}^w}\right )
\right )\nm b{s_{\Phi _0^*}^w},
\end{multline}
when $a_1,\dots ,a_N,b\in \mascS (\rr {2d})$ satisfy
\eqref{Eq:NormalizSchattElem}.
By homogeneity it follows that \eqref{Eq:BasicFormEst} holds
for any $a_1,\dots ,a_N,b\in \mascS (\rr {2d})$. Repeated
applications of Hahn-Banach's theorem now shows that the definition of
$(a(t_1\cdo )*\cdots *a(t_N\cdo ),b)_{L^2}$ extends to any
$a_j\in s_{\Phi _j}^w(\rr {2d})$ and $b\in s_{\Phi _0^*}^w$,
and that \eqref{Eq:BasicFormEst} still holds for such $a_j$ and $b$,
$j=1,\dots ,N$.

\par

By taking the supremum on the left-hand side of
\eqref{Eq:BasicFormEst} with respect to all
$b\in s_{\Phi _0^*}^w$ with $\nm {b}{s_{\Phi _0^*}^w}\le 1$,
it follows that $a_1(t_1\cdo )*\cdots *a_N(t_N\cdo )$
is well-defined as an element in $s_{\Phi _0}^w$,
and that \eqref{Eq:OrlSchConvL1Eq1} holds.
This shows the asserted extension.

\par

Next suppose that at least $N-1$ of $\Phi _1,\dots ,\Phi _N$
satisfy a local $\Delta _2$-condition. By symmetry we may assume
that $\Phi _1,\dots ,\Phi _{N-1}$ satisfy a local $\Delta _2$-condition.
Let $a_j\in \mascS (\rr {2d})$ and $a_N\in s_{\Phi _N}^w(\rr {2d})$,
$j=1,\dots ,N-1$.
Then $a_1(t_1\cdo )*\cdots *a_N(t_N\cdo )$ is uniquely defined
as an element in $\mascS '(\rr {2d})$, and thereby uniquely
defined as an element in $s_{\Phi _0}^w(\rr {2d})$
in view of \eqref{Eq:OrlSchConvL1Eq1}. The uniqueness of
$a_1(t_1\cdo )*\cdots *a_N(t_N\cdo )$ for arbitrary
$a_j\in s_{\Phi _j}^w (\rr {2d})$, $j=1,\dots ,N$, now follows from the
fact that $\mascS (\rr {2d})$ is dense in $s_{\Phi _j}^w (\rr {2d})$
when $j=1,\dots ,N-1$.

\par

Finally, the positivity assertions in the last part follow from
\cite[Corollary 3.5]{Toft3}, giving the result.
\end{proof}

\par

%%%%%%%%%%%%%%%%
\section{Proofs of Young condition
relations}\label{App:B}
%%%%%%%%%%%%%%%%

\par

%In this appendix we show that if the Young functions
%$\Phi _0$, $\Phi _1$ and $\Phi _2$ satisfy
%\eqref{Eq:YoungCond1}, then \eqref{Eq:YoungCond2}
%holds.
In this appendix we show that the conditions
\eqref{Eq:YoungCond1} and \eqref{Eq:YoungFuncCondGen}
on Young functions, are more relaxed than
\eqref{Eq:YoungCond2} and \eqref{Eq:YoungCond1MCA},
respectively.

% if the Young functions
%$\Phi _0$, $\Phi _1$ and $\Phi _2$ satisfy
%\eqref{Eq:YoungCond1}, then \eqref{Eq:YoungCond2}
%holds.

\par

%More generally we prove the following generalization of
%the previous implication.

In fact we have the following.

\par

\begin{prop}\label{Prop:YoungCondEsts}
Suppose $\Phi _j$, $j=0,1,\dots ,N$, are Young functions 
which satisfy
\begin{equation}
\tag*{(\ref{Eq:YoungCond1})$'$}
\label{Eq:YoungCond1MC}
\prod _{j=1}^N\Phi _j^{-1}(s)
\le
Cs^{N-1}\Phi _0^{-1}(s), \quad s\ge 0,
\end{equation}
and let
$$
\Theta _N = \sets {(k_1,\dots ,k_{N-1})\in \zz {N-1}}
{1\le k_1<k_2<\cdots <k_{N-1}\le N}.
$$
Then
\begin{equation}
\tag*{(\ref{Eq:YoungCond2})$'$}
\label{Eq:YoungCond2MC}
\begin{aligned}
t_0t_1\cdots t_N
&\le
C\left (
\Phi _0^*(t_0)\sum _{k\in \Theta _N}
\prod _{j=1}^{N-1}\Phi _{k_j}(t_{k_j})
+ \prod _{j=1}^{N}\Phi _{j}(t_{j})
\right  ),
\\[1ex]
t_0,t_1,\dots ,t_N &\ge 0.
\end{aligned}
\end{equation}
\end{prop}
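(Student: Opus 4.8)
The plan is to reduce the multilinear Young-type inequality \ref{Eq:YoungCond2MC} to a short chain of three steps: pass from the variables $t_j$ to $s_j:=\Phi_j(t_j)$ by means of the generalized inverses of the Young functions, apply the hypothesis \ref{Eq:YoungCond1MC} at the single argument $s:=\min_{1\le j\le N}s_j$, and close with Young's inequality for the conjugate pair $\Phi_0,\Phi_0^*$.

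First I would record two elementary facts about a Young function $\Phi$ and its generalized inverse $\Phi^{-1}(u):=\sup\{t\ge 0:\Phi(t)\le u\}$, both of which are standard (see \cite{RaoRen1}). Fact (i): $t\le\Phi^{-1}(\Phi(t))$ for every $t\ge 0$, and $t_0\,\Phi^{-1}(u)\le\Phi^*(t_0)+u$ for all $t_0,u\ge 0$; the latter follows by applying Young's inequality $ab\le\Phi^*(a)+\Phi(b)$ with $a=t_0$ and $b<\Phi^{-1}(u)$ (so that $\Phi(b)\le u$) and then letting $b\uparrow\Phi^{-1}(u)$, which is finite because $\Phi(t)\to\infty$. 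Fact (ii): the map $u\mapsto\Phi^{-1}(u)/u$ is non-increasing on $(0,\infty)$; indeed, for $0<u\le v$ and any $t<\Phi^{-1}(v)$, convexity of $\Phi$ together with $\Phi(0)=0$ gives $\Phi\bigl(\tfrac{u}{v}\,t\bigr)\le\tfrac{u}{v}\,\Phi(t)\le u$, hence $\tfrac{u}{v}\,\Phi^{-1}(v)\le\Phi^{-1}(u)$, i.e.\ $\Phi^{-1}(v)\le\tfrac{v}{u}\,\Phi^{-1}(u)$ whenever $0<u\le v$.

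Now I fix $t_0,\dots,t_N\ge 0$. The degenerate situations in which $\Phi_j(t_j)\in\{0,\infty\}$ for some $j\in\{1,\dots,N\}$ are handled by direct inspection (if some $\Phi_j(t_j)=\infty$ the right-hand side is $+\infty$; if some $t_j=0$ the left-hand side vanishes; the borderline cases follow similarly), so I may assume $0<s_j:=\Phi_j(t_j)<\infty$ for $j=1,\dots,N$. Pick $m$ with $s_m=\min_{1\le j\le N}s_j=:s$. Using $t_j\le\Phi_j^{-1}(s_j)$ from Fact (i), and then Fact (ii) for each $j\ne m$ (legitimate since $0<s\le s_j$),
\begin{equation*}
t_1\cdots t_N\ \le\ \prod_{j=1}^{N}\Phi_j^{-1}(s_j)\ =\ \Phi_m^{-1}(s)\prod_{j\ne m}\Phi_j^{-1}(s_j)\ \le\ \Phi_m^{-1}(s)\prod_{j\ne m}\frac{s_j}{s}\,\Phi_j^{-1}(s)\ =\ \frac{\prod_{j\ne m}s_j}{s^{\,N-1}}\prod_{j=1}^{N}\Phi_j^{-1}(s).
\end{equation*}
The hypothesis \ref{Eq:YoungCond1MC} applied at the single point $s$ yields $\prod_{j=1}^N\Phi_j^{-1}(s)\le C\,s^{N-1}\Phi_0^{-1}(s)$, so $t_1\cdots t_N\le C\bigl(\prod_{j\ne m}s_j\bigr)\Phi_0^{-1}(s)$. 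Multiplying by $t_0$ and invoking Fact (i) in the form $t_0\Phi_0^{-1}(s)\le\Phi_0^*(t_0)+s$,
\begin{equation*}
t_0\,t_1\cdots t_N\ \le\ C\Bigl(\prod_{j\ne m}s_j\Bigr)\bigl(\Phi_0^*(t_0)+s\bigr)\ =\ C\,\Phi_0^*(t_0)\prod_{j\ne m}s_j\ +\ C\,s_m\prod_{j\ne m}s_j.
\end{equation*}

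It then remains only to identify the two products with the symmetric expressions in the statement. The sum $\sum_{k\in\Theta_N}\prod_{j=1}^{N-1}\Phi_{k_j}(t_{k_j})$ runs over the $(N-1)$-element subsets of $\{1,\dots,N\}$, hence equals $\sum_{l=1}^{N}\prod_{j\ne l}s_j$, of which $\prod_{j\ne m}s_j$ is one nonnegative summand; thus $\prod_{j\ne m}s_j\le\sum_{k\in\Theta_N}\prod_{j=1}^{N-1}\Phi_{k_j}(t_{k_j})$. Likewise $s_m\prod_{j\ne m}s_j=\prod_{j=1}^{N}s_j=\prod_{j=1}^{N}\Phi_j(t_j)$. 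Substituting these two bounds into the last display gives precisely \ref{Eq:YoungCond2MC}, and specializing $N=2$ recovers the implication \eqref{Eq:YoungCond1} $\Rightarrow$ \eqref{Eq:YoungCond2} used in Section~\ref{sec3}. I expect the only genuinely delicate point to be the careful treatment of the generalized inverse $\Phi^{-1}$ — the monotonicity of $u\mapsto\Phi^{-1}(u)/u$ and the limiting form of Young's inequality when $\Phi_0$ has a flat piece or a jump to $\infty$ — together with the bookkeeping for the degenerate cases; the arithmetic backbone displayed above is short and robust.
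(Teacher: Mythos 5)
Your proposal is correct and follows essentially the same route as the paper: substitute $s_j=\Phi_j(t_j)$, single out the index where $s_j$ is minimal, use the concavity of $\Phi_j^{-1}$ (equivalently, the monotonicity of $u\mapsto\Phi_j^{-1}(u)/u$) to collapse the product to a single argument, apply the hypothesis there, and finish with $t_0\Phi_0^{-1}(s)\le\Phi_0^*(t_0)+s$. Your handling of the generalized inverse and the degenerate cases is if anything slightly more careful than the paper's, but the argument is the same.
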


\par

For the proof of Proposition \ref{Prop:YoungCondEsts}
we have the following lemma, where the second part
refines \ref{Eq:YoungCond1MC}.

\par

\begin{lemma}
\label{Lemma:YoungCondRefineMC}
Let $\Phi _j$, $j=0,1,\dots N$, be Young functions.
\begin{enumerate}
\item If $s\ge 0$ and $t>0$, then
$$
\Phi _j^{-1}(s) \le
\frac {\Phi _j^*(t)+s}t.
$$

\vrum

\item If {\rm{\ref{Eq:YoungCond1MC}}} holds for some constant
$C>0$, $j_0\in \{ 1,\dots ,N\}$ and $I =\{ 1,\dots ,N\}\setminus \{j_0\}$,
then
\begin{alignat}{3}
\prod _{j=1}^N\Phi _j^{-1}(s_j)
&\le
C\left (\prod _{j\in I}s_j\right )\Phi _0^{-1}(s_{j_0}),&
\quad &\text{when} &\quad
0&\le s_{j_0}\le s_j,\ j\in I.
\tag*{(\ref{Eq:YoungCond1})$''$}
\label{Eq:YoungCond1Ref1MC}
\end{alignat}
\end{enumerate}
\end{lemma}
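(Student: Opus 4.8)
The plan is to treat the two parts essentially independently; both reduce to convexity of the $\Phi_j$ together with Young's inequality $ab\le\Phi_j(a)+\Phi_j^*(b)$ for $a,b\ge0$, which is immediate from the definition \eqref{eq-YoungIneq-conjugate} of $\Phi_j^*$ (as $\Phi_j^*(b)\ge ab-\Phi_j(a)$). Throughout I shall use only the following elementary facts about a Young function $\Phi_j$ and its generalized inverse $\Phi_j^{-1}(s)=\sup\{t\ge0:\Phi_j(t)\le s\}$: it is non-decreasing with $\Phi_j(0)=0$; $\Phi_j^{-1}$ is non-decreasing and finite (since $\Phi_j(t)\to\infty$); $\Phi_j(a)\le s$ whenever $a<\Phi_j^{-1}(s)$; and $\Phi_j^{-1}$ is right-continuous at the origin.

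For (1) I would fix $s\ge0$ and $t>0$ and argue directly: for every $a<\Phi_j^{-1}(s)$ one has $\Phi_j(a)\le s$, so Young's inequality gives $at\le\Phi_j(a)+\Phi_j^*(t)\le s+\Phi_j^*(t)$; letting $a\uparrow\Phi_j^{-1}(s)$ yields $\Phi_j^{-1}(s)\,t\le s+\Phi_j^*(t)$, which is the claim (and it is trivial if $\Phi_j^{-1}(s)=0$).

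For (2) the first step is a scaling estimate: for $\lambda\ge1$ and $s\ge0$,
\begin{equation*}
\Phi_j^{-1}(\lambda s)\le\lambda\,\Phi_j^{-1}(s).
\end{equation*}
This follows because convexity and $\Phi_j(0)=0$ give $\Phi_j(\lambda^{-1}u)\le\lambda^{-1}\Phi_j(u)$, so $\Phi_j(u)\le\lambda s$ forces $\Phi_j(\lambda^{-1}u)\le s$, hence $\lambda^{-1}u\le\Phi_j^{-1}(s)$; taking the supremum over admissible $u$ gives the estimate. Assuming first $0<s_{j_0}\le s_j$ for all $j\in I$, I would write $s_j=(s_j/s_{j_0})s_{j_0}$ with $s_j/s_{j_0}\ge1$ and apply the scaling estimate to each factor indexed by $I$:
\begin{equation*}
\prod_{j=1}^N\Phi_j^{-1}(s_j)
=\Phi_{j_0}^{-1}(s_{j_0})\prod_{j\in I}\Phi_j^{-1}(s_j)
\le\left(\prod_{j\in I}\frac{s_j}{s_{j_0}}\right)\prod_{j=1}^N\Phi_j^{-1}(s_{j_0})
=\frac{\prod_{j\in I}s_j}{s_{j_0}^{\,N-1}}\prod_{j=1}^N\Phi_j^{-1}(s_{j_0}).
\end{equation*}
Then I would invoke the hypothesis \ref{Eq:YoungCond1MC} at the single value $s=s_{j_0}$, namely $\prod_{j=1}^N\Phi_j^{-1}(s_{j_0})\le C\,s_{j_0}^{\,N-1}\Phi_0^{-1}(s_{j_0})$; substituting this, the powers of $s_{j_0}$ cancel and \ref{Eq:YoungCond1Ref1MC} drops out.

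What remains is the book-keeping of degenerate arguments, which I expect to be the only mildly delicate point. If $s_{j_0}=0$ one lets $s_{j_0}\downarrow0$ with the remaining $s_j$ held fixed and positive, using right-continuity of $\Phi_j^{-1}$ and $\Phi_0^{-1}$ at the origin to pass to the limit in the inequality already proved; if in addition some $s_j$ with $j\in I$ vanishes, then for $N\ge2$ the hypothesis \ref{Eq:YoungCond1MC} at $s=0$ forces $\prod_{j=1}^N\Phi_j^{-1}(0)=0$, so both sides vanish, while for $N=1$ the set $I$ is empty and there is nothing to check; infinite values of the $s_j$ are handled likewise by monotone limits. The substantive content is thus just the scaling inequality for $\Phi_j^{-1}$ combined with a single application of \ref{Eq:YoungCond1MC}.
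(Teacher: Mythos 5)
Your proof is correct and follows essentially the same route as the paper: part (1) is Young's inequality evaluated at (an approximation of) $\Phi_j^{-1}(s)$, and part (2) rests on the inequality $\Phi_j^{-1}(s_j)\le (s_j/s_{j_0})\Phi_j^{-1}(s_{j_0})$ for $s_{j_0}\le s_j$ followed by a single application of \ref{Eq:YoungCond1MC} at $s=s_{j_0}$, exactly as in the paper. The only differences are cosmetic: you obtain that key inequality from convexity of $\Phi_j$ (via the scaling estimate $\Phi_j^{-1}(\lambda s)\le\lambda\Phi_j^{-1}(s)$) where the paper cites concavity of $\Phi_j^{-1}$, and you treat the degenerate cases more carefully than the paper does.
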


\par

\begin{proof}
By the definition of the Young conjugate, one has
$$
\Phi _j^*(t) \ge t \Phi _j^{-1}(s)-s,
$$
which gives (1).

\par

%It suffices to prove \eqref{Eq:YoungCond1Ref1MC} in the case
%$0<s_1< s_2$ when proving (2).
Since $\Phi _j^{-1}$ is concave we have
$$
\frac {\Phi _j^{-1}(s_j)}{s_j}
\le
\frac {\Phi _j^{-1}(s_{j_0})}{s_{j_0}}.
$$
A combination of the latter inequality and \ref{Eq:YoungCond1MC}
gives
\begin{align*}
\prod _{j=1}^N\Phi _j^{-1}(s_j)
&\le
\left (\prod _{j=1}^N\Phi _j^{-1}(s_{j_0})\right )s_{j_0}^{-(N-1)}
\left (
\prod _{j\in I}s_j
\right )
\\[1ex]
&\le
Cs_{j_0}^{N-1}\Phi _0^{-1}(s_{j_0})s_{j_0}^{-(N-1)}
\left (
\prod _{j\in I}s_j
\right )
=
C\left (
\prod _{j\in I}s_j
\right )
\Phi _0^{-1}(s_{j_0}),
\end{align*}
which gives \ref{Eq:YoungCond1Ref1MC}, and thereby the result.
\end{proof}

\par

%\begin{prop}
%Let $\Phi _j$, $j=0,1,\dots ,N$, be Young functions. Then 
%\eqref{Eq:YoungCond1MC} implies \eqref{Eq:YoungCond2MC}.
%\end{prop}
%
%\par

\begin{proof}[Proof of Proposition \ref{Prop:YoungCondEsts}]
We use the same notation as in the proof of
Lemma \ref{Lemma:YoungCondRefineMC}.
Suppose that \ref{Eq:YoungCond1MC} holds. We may assume that
$t_0,t_1,\dots ,t_N>0$ when proving \ref{Eq:YoungCond2MC}.
Let $j_0\in \{1,\dots ,N\}$ be chosen such that $\Phi _{j_0}(t_{j_0})\le \Phi _j(t_j)$
for every $j$. Then we obtain by letting
$s_j=\Phi _j(t_j)$ in Lemma \ref{Lemma:YoungCondRefineMC}
that
\begin{align*}
t_1\cdots t_N &= \Phi _1^{-1}(s_1)\cdots \Phi _N^{-1}(s_N)
\\[1ex]
&\le C \left (\prod _{j\in I}s_j\right )\Phi _0^{-1}(s_{j_0})
= C \left (\prod _{j\in I}\Phi _j(t_j)
\right )\Phi _0^{-1}(s_{j_0})
\\[1ex]
&\le
C \left (\prod _{j\in I}\Phi _j(t_j)
\right ) \frac {\Phi _0^*(t_0)+\Phi _{j_0}(t_{j_0})}{t_0}.
\end{align*}
By multiplying with $t_0$, we obtain
\begin{align*}
t_0t_1\cdots t_N
&\le
C\left (
%\left (
\Phi _0^*(t_0) \prod _{j\in I}\Phi _j(t_j)
%\right )
+ \prod _{j=1}^N\Phi _j(t_j)
\right )
\\[1ex]
&\le
C\left (
\Phi _0^*(t_0)\sum _{k\in \Theta _N} \prod _{j=1}^{N-1}\Phi _{k_j}(t_{k_j})
+ \prod _{j=1}^{N}\Phi _{j}(t_{j})
\right  ),
\end{align*}
and \ref{Eq:YoungCond2MC} follows.
This gives the result.
\end{proof}

\par

Next we give a proof of the fact that the condition
\eqref{Eq:HolderYoungFuncCond2}
implies \eqref{Eq:HolderYoungFuncCond}. More generally
we have the following.

\par

\begin{prop}
\label{Prop:HolderYoungFuncCondEsts}
Let $\Phi _j$, $j\in \{ 0,\dots ,N\}$, be increasing functions
on $[0,\infty)$ such that
$$
\Phi _j(0) =0,
\quad
\lim _{t\to \infty} \Phi _j(t) =\infty ,
\qquad
j=0,\dots ,N
$$
and
$$
\prod _{j=1}^{N}\Phi _j^{-1}(s) \le \Phi _0^{-1}(s),
\qquad s\ge 0.
$$
Then
$$
\Phi _0(t_1\cdots t_N) \le \max _{1\le j\le N}\big ( \Phi _j(t_j)\big )
\le \sum _{j=1}^N\Phi _j(t_j).
$$
\end{prop}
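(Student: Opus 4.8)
The plan is to reduce the statement to two elementary properties of the inverses $\Phi_j^{-1}$: monotonicity, and the relation $\Phi_0(\Phi_0^{-1}(s))\le s$ (an equality when $\Phi_0$ is a genuine bijection of $[0,\infty)$, which is the case in all applications below, where the $\Phi_j$ are Young functions and hence continuous and strictly increasing).

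First I would dispose of the second inequality $\max_{1\le j\le N}\Phi_j(t_j)\le\sum_{j=1}^N\Phi_j(t_j)$, which is immediate since every term $\Phi_j(t_j)$ is nonnegative.

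For the first inequality, set $s=\max_{1\le j\le N}\Phi_j(t_j)$; we may assume $s<\infty$, the opposite case being vacuous. By the very choice of $s$ we have $\Phi_j(t_j)\le s$ for each $j$, hence $t_j\le\Phi_j^{-1}(s)$ by monotonicity of $\Phi_j^{-1}$ (equivalently, $t_j$ lies in $\{t\ge 0:\Phi_j(t)\le s\}$, of which $\Phi_j^{-1}(s)$ is the supremum). Multiplying these $N$ inequalities and then invoking the hypothesis gives
\[
t_1\cdots t_N\le\prod_{j=1}^N\Phi_j^{-1}(s)\le\Phi_0^{-1}(s).
\]
Applying the non-decreasing function $\Phi_0$ to both ends and using $\Phi_0(\Phi_0^{-1}(s))\le s$ yields
\[
\Phi_0(t_1\cdots t_N)\le\Phi_0(\Phi_0^{-1}(s))\le s=\max_{1\le j\le N}\Phi_j(t_j),
\]
which is the assertion. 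Taking $N=2$ then recovers that \eqref{Eq:HolderYoungFuncCond2} implies \eqref{Eq:HolderYoungFuncCond}.

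The only step requiring genuine care --- and the one I would regard as the main obstacle --- is the book-keeping with the inverses: one fixes the convention $\Phi_j^{-1}(s)=\sup\{t\ge 0:\Phi_j(t)\le s\}$ (which is finite here since $\Phi_j(t)\to\infty$) and must verify both implications used, namely $\Phi_j(t)\le s\Rightarrow t\le\Phi_j^{-1}(s)$ and $\Phi_0(\Phi_0^{-1}(s))\le s$; the latter rests on continuity of the monotone function $\Phi_0$, which is automatic for Young functions. Everything else in the argument is formal.
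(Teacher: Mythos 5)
Your proof is correct and follows essentially the same route as the paper: bound each $t_j$ by $\Phi_j^{-1}$ evaluated at $s=\max_j\Phi_j(t_j)$, multiply, invoke the hypothesis $\prod_j\Phi_j^{-1}(s)\le\Phi_0^{-1}(s)$, and apply the monotone $\Phi_0$ together with $\Phi_0(\Phi_0^{-1}(s))\le s$. Your explicit use of the generalized inverse $\Phi_j^{-1}(s)=\sup\{t\ge 0:\Phi_j(t)\le s\}$ and of the inequality $\Phi_0\circ\Phi_0^{-1}\le\mathrm{id}$ rather than equality is in fact slightly more careful than the paper's write-up, since the hypotheses only require the $\Phi_j$ to be increasing.
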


\par

\begin{proof}
Let $u_j=\Phi _j(t_j)$, $\fy _j=\Phi _j^{-1}$, $j=1,\dots ,N$ and let $j_0\in \{ 1,\dots ,N\}$
be chosen such that $u_{j_0}=\max (u_j)$. Then
\begin{align*}
\Phi _0(t_1\cdots t_N)
&=
\Phi _0 \big ( \fy _1(u_1)\cdots \fy _N(u_N) \big )
\\[1ex]
&\le
\Phi _0 \big ( \fy _1(u_{j_0})\cdots \fy _N(u_{j_0}) \big )
\\[1ex]
&\le
\Phi _0 \big ( \Phi  _0^{-1}(u_{j_0}) \big ) = u_{j_0} =\max _{1\le j\le N}\big ( \Phi _j(t_j) \big ).
\qedhere
\end{align*}
\end{proof}

\par


\begin{thebibliography}{99}
\bibitem{AbdCorTof}
A. Abdeljawad, S. Coriasco, J. Toft
\emph{Liftings for ultra-modulation spaces, and one-parameter
groups of Gevrey-type pseudo-differential operators},
Anal. Appl. \textbf{18} (2020), 523--583.

\bibitem{Aoki} T. Aoki
\emph{Locally bounded topological spaces},
Proc. Japan Acad. \textbf{18} (1942), 588--594.

\bibitem{BaFuTo2}
W. Bauer, R. Fulsche, J. Toft
\emph{Convolution estimates for Orlicz functions and
operators by interpolation methods}, preprint.

\bibitem{CheSigTof}
Y. Chen, M. Signahl, J. Toft
\emph{Factorizations and singular value estimates
of operators with Gelfand-Shilov and Pilipovi{\'c} kernels},
J. Fourier Anal. Appl. \textbf{24} (2018), 666--698.

\bibitem{CorGro1}
{E. Cordero, K. Gr{\"o}chenig}
\emph{Time-Frequency Analysis of Localization Operators},
J. Funct. Anal. \textbf{205} (2003), 107--131.

\bibitem{Fol}
{G. B. Folland}
\emph{Harmonic analysis in phase space},
{Princeton U. P., Princeton}, 1989.

\bibitem{HaH} P. Harjulehto, P. H{\"a}st{\"o}
\emph{Orlicz spaces and generalized Orlicz spaces},
Springer, (2019).

\bibitem{HeWon}
Z. He, M. W. Wong
\emph{Localization operators associated to square
integrable group representations},
PanAmer. Math. J. \textbf{6} (1996), 93--104.

\bibitem{Ho1}
{L. H{\"o}rmander} 
\emph{The analysis of linear
partial differential operators}, {I, III},
{Springer-Verlag}, Berlin Heidelberg NewYork Tokyo,
1983, 1985.

\bibitem{LiuWan}
PeiDe Liu, MaoFa Wang 
\emph{Weak Orlicz spaces: some basic properties
and their applications to harmonic analysis},
Science China Mathematics,
\textbf{56}, Springer, 2013, 789--802.

\bibitem{RaoRen1}
M. M. Rao, Z. D. Ren
\emph{Theory of Orlicz Spaces},
Marcel Dekker, New York, 1991.

\bibitem{RaToVi}
P. K. Ratnakumar, J. Toft, J. Vindas
\emph{Non-isometric translation and modulation
invariant Hilbert spaces},
J. Math. Anal. Appl., appeared online 2025. 

\bibitem{Rolewicz}
S. Rolewicz
\emph{Metric linear spaces. 2nd ed},
Mathematics and its Applications. East European Series,
Kluwer Academic Publishers Group, Dordrecht, 1985.

\bibitem{Rol}
S. Rolewicz
\emph{On a certain class of linear metric spaces},
Bull. Acad. Polon. Sci. S{\'e}r. Schi. Math.
Astronom. Phys.
\textbf{5} (1957), 471--473.

\bibitem{SchFuh}
C. Schnackers, H. F{\"u}hr
\emph{Orlicz Modulation Spaces}, in:
Proceedings of the 10th International Conference on
Sampling Theory and Applications, 2013.

\bibitem{Sim}
{B. Simon}
\emph{Trace ideals and their applications},
London Math. Soc. Lecture Note Series,
Cambridge University Press, Cambridge London
New York Melbourne, 1979.

\bibitem{Toft3}
J. Toft
\emph{Continuity properties for
non-commutative convolution algebras with applications in
pseudo-differential calculus},
{Bull. Sci. Math. (2)} \textbf {126} (2002), 115--142.

\bibitem{Toft13}
J. Toft
\emph{Multiplication properties in Gelfand-Shilov pseudo-differential calculus},
Oper. Theory Adv. Appl., \textbf{231}
Birkh{\"a}user/Springer Basel AG, Basel, 2013, pp. 117--172.

\bibitem{Toft17}
 {J. Toft}
\emph{Matrix parameterized pseudo-differential calculi on  modulation spaces, 
{\rm {in: M. Oberguggenberger, J. Toft, J. Vindas,
P. Wahlberg (eds)}},
Generalized functions and Fourier analysis},
Operator Theory: Advances and Applications \textbf{260}
Birkh{\"a}user, 2017, pp. 215--235.

\bibitem{ToUsNaOz}
J. Toft, R. {\"U}ster, E. Nabizadeh, S. {\"O}ztop
\emph{Continuity and Bargmann mapping properties of
quasi-Banach Orlicz modulation spaces}, Forum Math. \textbf{34} (2022),
1205--1232.

\bibitem{Wer} 
R. Werner
\emph{Quantum harmonic analysis on phase space},
J. Math. Phys. {\bf 25} (1984), 1404--1411.
\end{thebibliography}
\end{document}